\documentclass[11pt]{article} \pagestyle{plain}
\usepackage{a4,amssymb, amsmath, amsthm,amstext,amsfonts}
\usepackage{color}
\usepackage{float}
\usepackage{graphicx}
\usepackage{esint}
\usepackage{a4wide}
\usepackage{babel}
\usepackage{bbm}
\usepackage{mathrsfs}
\usepackage{upgreek} 
\usepackage{setspace}
\usepackage{multirow}
\usepackage{placeins}
\usepackage{array}
\usepackage{hyperref}
\usepackage{algorithmic}
\usepackage{algorithm}
\usepackage{bm}
\usepackage{nicefrac}
\usepackage{subfig}
\usepackage{wrapfig}
\usepackage{graphicx}

\usepackage[title]{appendix}
  
\newtheorem{theorem}{Theorem}[section]
\newtheorem{corollary}[theorem]{Corollary}
\newtheorem{lemma}[theorem]{Lemma}
\newtheorem{proposition}[theorem]{Proposition}

\theoremstyle{definition}
\newtheorem{definition}[theorem]{Definition}

\theoremstyle{remark}
\newtheorem{remark}[theorem]{Remark}


\textheight=23.75cm \textwidth=17.5cm \oddsidemargin=-0.4cm
\topmargin=-1.4cm

\newcommand{\R}{\mathbb{R}}
\newcommand{\C}{\mathbb{C}}
\newcommand{\D}{\mathbb{D}}

\begin{document}

\providecommand{\keywords}[1]{{\noindent \textit{Key words:}} #1}
\providecommand{\msc}[1]{{\noindent \textit{Mathematics Subject Classification:}} #1}

\title{Higher-order time domain boundary elements  for elastodynamics --  graded meshes and \textit{hp} versions\\\vskip 0.8cm}
\author{ Alessandra Aimi\thanks{Department of Mathematical, Physical and Computer Sciences,
University of Parma, Parco Area delle Scienze, 53/A, 43124, Parma,
Italy, email: alessandra.aimi@unipr.it, giulia.dicredico@unipr.it. \newline Members of the INDAM-GNCS Research Group, Italy.} \and Giulia Di Credico${}^\ast$\thanks{Engineering Mathematics, University of Innsbruck, Innsbruck, Austria,  email: heiko.gimperlein@uibk.ac.at.} \and Heiko Gimperlein${}^\dagger$ \and Ernst P.~Stephan\thanks{Institute of Applied Mathematics, Leibniz University Hannover, 30167 Hannover, Germany, email: stephan@ifam.uni-hannover.de.}}\date{}

\maketitle \vskip 0.5cm
\begin{abstract}
\noindent  The solution to the elastodynamic equation in the exterior of a polyhedral domain or a screen exhibits singular behavior from the corners and edges. The detailed expansion of the singularities implies quasi-optimal estimates for piecewise polynomial approximations of the Dirichlet trace of the solution and the traction. The results are applied to $hp$ and graded versions of the  time domain boundary element method for the weakly singular and the hypersingular integral equations. Numerical examples confirm the theoretical results for the Dirichlet and Neumann problems  for screens and for polygonal domains in 2d. They exhibit the expected quasi-optimal convergence rates and the singular behavior of the solutions.
\end{abstract}

\msc{65M38 (primary); 65M15; 74S15; 35L67 (secondary)}\\
\keywords{time domain boundary element method, graded mesh, $hp$ version, elastodynamics.}

\section{Introduction}

Solutions to elliptic and parabolic boundary value problems in polyhedral domains exhibit singularities in a neighborhood of the corners and edges. Numerical approximations by finite or boundary element methods take into account the nonsmooth behavior with local mesh refinements or higher polynomial degrees to recover optimal convergence rates. The resulting $h$, $p$ and $hp$ methods have been studied for several decades, see e.g.~\cite{schwab} for finite elements and \cite{gwinsteph} for boundary elements.\\

For hyperbolic equations in conical or wedge domains the singular behavior of the solution has been clarified by Plamenevski\v{\i} and collaborators since the late 1990's \cite{kokotov, kokotov2, matyu, plamenevskii}. The explicit singular expansions were used by M\"{u}ller and Schwab to prove optimal convergence rates for a finite element method on algebraically graded meshes for the wave and elastodynamic equations in polygonal domains in $\mathbb{R}^2$ \cite{mueller, mueller2}. Corresponding results for the wave equation in $\mathbb{R}^3$ were obtained by two of the authors, leading to approximation results for boundary element methods (TDBEM) on graded meshes \cite{graded}, $hp$ versions \cite{hp} and the efficiency of a posteriori error estimates for adaptive refinement procedures \cite{adaptive}.\\
 
In this article we initiate the study of $h$, $p$ and $hp$ time domain boundary element methods for the Dirichlet and Neumann problems of elastodynamics in a polyhedral domain $\Omega \subset \mathbb{R}^n$, $n=2,3$. Based on the approach by Plamenevski\v{\i} and singular expansions for the time independent Lam\'{e} equation, we obtain a detailed description of the singularities of the solution for the model 3d geometries of a wedge and a cone, as well as 2d polygonal domains. The expansions imply quasi-optimal convergence rates for piecewise polynomial approximations on graded meshes and by $hp$ versions. \\

To be specific, we formulate the set-up and results for exterior problems. Let $\Gamma \subset \mathbb{R}^n$, $n=2,3$, be a screen or closed surface and denote by $\Omega$ the connected exterior $\Omega \subset \mathbb{R}^n$ of $\Gamma$. This article considers the dynamics of a linear elastic body with \textit{Lam\'e parameters}  $\lambda,\mu>0$ and mass density $\rho$, as described by the time dependent elastodynamic equation
\begin{equation}\label{Navierintro}
(\lambda+\mu)\nabla(\nabla \cdot \textbf{u})+\mu\Delta\textbf{u}-\rho\ddot{\textbf{u}}=0, \quad  \textbf{x}\in\Omega,\: t\in(0,T]\ .
\end{equation}
We impose homogeneous initial conditions $\textbf{u}(0,\textbf{x}) = \partial_t \textbf{u}(t, \textbf{x}) = 0$ and consider either Dirichlet boundary conditions, $\textbf{u} = \textbf{g}$, or Neumann boundary conditions involving the traction, $\textbf{p}(\textbf{u}) = \textbf{h}$. \\

To solve \eqref{Navierintro} numerically, we formulate it as an equivalent time dependent integral equation on $\Gamma$. For Dirichlet boundary conditions we study 
\begin{align} \label{BIEintro}
\mathcal{V}\pmb{\Phi}(\textbf{x},t)&=\left(\mathcal{K}+\frac{1}{2}\right) \textbf{g}(\textbf{x},t), \qquad (\textbf{x},t)\in\Gamma \times [0,T]\ ,
\end{align}
involving the weakly singular integral operator $\mathcal{V}$ and the double layer integral operator $\mathcal{K}$. $\mathcal{V}$ and $\mathcal{K}$ are defined from a fundamental solution $\textbf{G}$ to \eqref{Navierintro} and its traction $\textbf{p}_{\pmb{\xi}}(\textbf{G})$
 \begin{align*}\mathcal{V} \pmb{\Phi}(\textbf{x},t) &= \int_0^t\int_{\Gamma}\textbf{G}(\textbf{x},\pmb{\xi};t,\tau)\pmb{\Phi}(\pmb{\xi},\tau)d\Gamma_{\pmb{\xi}}d\tau\ , \\ \mathcal{K} \pmb{\Phi}(\textbf{x},t) &= \int_0^t\int_{\Gamma}\textbf{p}_{\pmb{\xi}}(\textbf{G})(\textbf{x},\pmb{\xi};t,\tau)^T\pmb{\Phi}(\pmb{\xi},\tau)d\Gamma_{\pmb{\xi}}d\tau\ .\end{align*}
The Neumann problem is similarly formulated as an equation for the hypersingular integral operator $\mathcal{W}$, see \eqref{explicit BIE hypersingular}. The weak formulation of these integral equations is approximated using Galerkin boundary elements $\pmb{\Phi}_{h,\Delta t} \in \left(V_{\Delta t,q}\otimes X^{-1}_{h,p}\right)^n$, based on tensor products of piecewise polynomial functions on a quasi-uniform or graded mesh in space and a uniform mesh in time. \\

The convergence rate of the error is determined by the singularities of the solution of \eqref{Navierintro} at non-smooth boundary points of the domain $\Omega$. Near an edge or a cone point of the boundary $\Gamma \subset \mathbb{R}^3$ we obtain a singular expansion of the solution into a leading part of explicit singular functions plus smoother remainder terms. {Expansions in a wedge, respectively a cone, are obtained in \eqref{wedgeexpand} and \eqref{coneexpand}: if we treat the variable along the edge as a parameter, the expansion in a wedge reduces to the case of a polygon in 2d, where in a neighborhood of a vertex it takes the form} 
\begin{align*}
\textbf{u}(t,\textbf{x}) &= \chi(r)r^{\nu^*} \bold{a}(t,\phi) + \textbf{u}_{0}(t,r, \phi) \ ,\\
\textbf{p}(\textbf{u})(t,\textbf{x}) &=\chi(r)r^{\nu^*-1} \bold{b}(t,\phi) + \pmb{\phi}_{0}(t,r, \phi) \ .
\end{align*}
{Here, $(r,\phi)$ are polar coordinates centered at the vertex, the exponent $\nu^*$ is determined by the opening angle $\omega$ at the vertex and by the elastic parameters,}  and $\textbf{u}_{0}$, $\pmb{\phi}_{0}$ are remainder terms of lower order. 
In particular, for a fixed time $t$ the solution to \eqref{Navierintro} admits an explicit singular expansion with the same behavior as the time independent Lam\'{e} equation. 

This asymptotic expansion of the solution $\textbf{u}$ and the traction $\textbf{p}(\textbf{u})$ gives rise to quasi-optimal convergence rates {in space-time anisotropic Sobolev norms. See \eqref{sobdef} for the definition of the Sobolev space $H^r_\sigma(\mathbb{R}^+,\widetilde{H}^s({\Gamma}))$ and \eqref{sobnormdef} for the definition of the norm $\|\cdot\|_{r,s,\Gamma,\ast} $.} We consider the approximation error of the solution on graded meshes, as defined in \eqref{gradedmesh}, in Corollary \ref{approxcor2}a) and the $hp$ version on quasi-uniform meshes in Corollary \ref{approxcor1}a). There the approximation error is determined by an exponent $\tilde{\alpha}$, which depends on the geometry (wedge, cone) and the elastic parameters, see \eqref{alphadef}: 

\vspace{0.3cm}

\noindent \textbf{Theorem.} Let $\varepsilon>0$ and $\sigma>0$.
a) Let $\pmb{ \Phi}$ be the solution to the single layer integral equation  \eqref{BIEintro} and  $\pmb{ \Phi}_{h,\Delta t}^{\tilde{\beta}}  \in \left(V_{\Delta t,q}\otimes X^{-1}_{h,0}\right)^n$ the best approximation to $\pmb{ \Phi}$ in the norm of ${H}^{r}_\sigma(\R^+, \widetilde{H}^{-\frac{1}{2}}(\Gamma))^n$ on a ${\tilde{\beta}}$-graded spatial mesh  with $\Delta t \lesssim h_1$. Then for $p=1,2,3,\dots$ $\|\pmb{ \Phi}-\pmb{ \Phi}_{h, \Delta t}^{\tilde{\beta}}\|_{r,-\frac{1}{2}, \Gamma, \ast} \leq C_{{\tilde{\beta}},\varepsilon} h^{\min\{{\tilde{\beta}} \tilde{\alpha}-\varepsilon, \frac{3}{2}\}}$. \\

\noindent b) Let $\pmb{\Phi}$ be the solution to the single layer integral equation \eqref{BIEintro} and  $\pmb{\Phi}_{h,\Delta t} \in \left(V_{\Delta t,p}\otimes X^{-1}_{h,p}\right)^n$ the best approximation  in the norm of ${H}^{r}_\sigma(\R^+, \widetilde{H}^{-\frac{1}{2}}(\Gamma))^n$ to $\pmb{\Phi}$  on a quasiuniform spatial mesh  with $\Delta t \lesssim h$. Then for $p=0,1,2,\dots$ $$\|\pmb{\Phi} - \pmb{\Phi}_{h,\Delta t}\|_{r, -\frac{1}{2}, \Gamma, \ast} \lesssim \left(\frac{h}{(p+1)^2}\right)^{\tilde{\alpha}{-\varepsilon}} + \left(\frac{\Delta t}{p+1}\right)^{p+1-r}+ \left(\frac{h}{p+1}\right)^{\frac{1}{2}+\eta}\ ,$$
 where {$r \in [0,p+1)$}  and $\pmb{\phi}_0 \in {H}^{p+1}_\sigma(\R^+, \widetilde{H}^{\eta}(\Gamma))^n$ is the regular part of the singular expansion of $\pmb{p}(\bold{u})|_\Gamma$.

\vspace{0.3cm}

Corresponding results in the case of a 2d polygon are obtained as a result of the edge problem. Corollaries \ref{approxcor2}b)  and \ref{approxcor1}b) contain analogous results for the hypersingular integral equation of the Neumann problem. As the analysis is local on $\Gamma$, the extension to the single layer and hypersingular integral equations for interior problems is immediate.\\

Numerical experiments are presented for the weakly singular and hypersingular integral operators in polygonal and crack geometries in $\mathbb{R}^2$. They achieve the predicted convergence rates on graded meshes and for the $hp$ version. Furthermore, they confirm the leading singular exponents of the solution, and the $hp$ version on a geometrically graded mesh \eqref{geometric points} exhibits faster than algebraic convergence. \\

Boundary element methods for time dependent problems have attracted much recent interest, see \cite{costabel,review,hd,sayas} for an overview. They are of particular relevance for problems which cannot be reduced to the frequency domain, such as nonlinear problems or problems involving a broad range of frequencies \cite{contact}. While their application to elasticity has long been studied in engineering \cite{antes}, their analysis for elastodynamic scattering and crack problems was initiated by B\'{e}cache and Ha Duong in \cite{Becache1993, Becache1994}. Recent developments include space-time Galerkin and convolution quadrature methods, fast discretizations \cite{Aimi20,hsiao1,schanz2,schanz}, as well as more complex elastic behavior \cite{hsiao2}.

For the time independent Lam\'{e} equation in  singular domains, such as with a crack, detailed asymptotic expansions have been studied extensively, partly motivated by applications to computing quantities of interest like stress intensity factors, see e.g.~\cite{bs,dcy,grisvard87,grisvard89,oy}. Using such expansions, von Petersdorff \cite{disspetersdorff} derived quasi-optimal error estimates for boundary elements on graded meshes. The $hp$ version on geometrically graded meshes  was studied in \cite{ms}. 
Sharp $hp$-explicit estimates on smooth open surfaces with quasiuniform meshes are due to Bespalov \cite{besp}, following earlier work of Bespalov and Heuer for the Laplace and Lam\'{e} equations \cite{heuer01, heuer2}.  \\

\noindent \emph{Structure of this article:} Section \ref{sec 2} reviews the {Dirichlet and Neumann boundary value problems for \eqref{Navierintro}} and their formulation as boundary integral equations {in terms of the weakly singular, respectively hypersingular operators. {Proposition} \ref{wellposedness} establishes the well-posedness of these equations.} The regularity of solutions to the elastodynamic problem is addressed in Section \ref{regularitysection}, see also Appendix B for the theoretical setting used to formulate the results. Taking their traces we get corresponding results for the solutions of the integral equations. In Subsection \ref{regularitywedge} the solution of the elastodynamic problem in a wedge is analyzed, in Subsection \ref{regularitycone} in a cone. Special consideration is given to 2d problems in Subsection \ref{regularity2d}. The BEM discretization and time integration are discussed in Section \ref{sec 3}. In Section \ref{approxsection} approximation results are derived, both for the $h$ version TDBEM on graded meshes and the $hp$ version. Both a circular wedge and a cone geometry are considered. The 2d case of a polygon corresponds to the theoretical error estimates for the numerical results in Section \ref{sec:numer}. {Section  \ref{sec:algo} discusses algorithmic aspects 
of the implementation.} Appendix A introduces the relevant Sobolev space setting for the error analysis together with the mapping properties of the integral operators and the associated weak formulations. {Appendix B describes crucial theoretical ingredients for the analysis of the elastodynamic problem in a wedge and in a cone.} In Appendix C we collect some additional auxiliary results for the error analysis.\\ 


\noindent {\emph{Notation:}  For vectors/vector fields (written in bold letters) the operators and
norms are understood component\-wise and not marked additionally. We write $f \lesssim g$ provided there exists a constant $C$ such that $f \leq Cg$. If the constant
$C$ is allowed to depend on a parameter $\sigma$, we write $f \lesssim_\sigma g$.}

\section{Model problem and boundary integral equations}\label{sec 2}

We consider elastic wave propagation in a Lipschitz domain $\Omega=\mathbb{R}^n\setminus \overline{\Omega'}$ exterior to the bounded domain $\Omega'$, with piecewise smooth boundary $\Gamma = \partial \Omega$, $n=2$ or $3$. As a limiting case, also screen problems in $\Omega = \mathbb{R}^n\setminus \overline{\Gamma}$ are considered, outside an open arc $\Gamma \subset \mathbb{R}^2$ or open surface $\Gamma \subset \mathbb{R}^3$. In the absence of external body forces the displacement field $\textbf{u}(\textbf{x},t)=(u_1,\dots,u_n)^{\top}(\textbf{x},t)$, $\textbf{x}=(x_1,\dots, x_n)^{\top}\in\mathbb{R}^n$, satisfies the \textit{elastodynamic equation}:
\begin{equation}\label{Navier equation}
(\lambda+\mu)\nabla(\nabla \cdot \textbf{u})+\mu\Delta\textbf{u}-\varrho\ddot{\textbf{u}}=0, \quad  \textbf{x}\in\Omega,\: t\in(0,T],
\end{equation}
where $\lambda,\mu>0$ are the \textit{Lam\'e parameters} and $\varrho$ represents the mass density. Upper dots indicate the derivative with respect to time, and we later in particular consider $T=\infty$. 
Using the \textit{Hooke tensor} $C_{ih}^{kl}=\lambda\delta_{ih}\delta_{kl}+\mu(\delta_{ik}\delta_{hl}+\delta_{il}\delta_{hk})$, $i,h,k,l=1, \dots, n$, we rewrite equation \eqref{Navier equation} in components as  
\begin{equation}\label{components with hooke tensor}
\sum_{h,k,l=1}^n\dfrac{\partial}{\partial x_h}\left( C_{ih}^{kl}\dfrac{\partial u_k}{\partial x_l}(\textbf{x},t)\right)-\varrho\ddot{u}_i(\textbf{x},t)=0,\quad \textbf{x}\in\Omega,\: t\in (0,T],\:i=1,\dots,n.
\end{equation}
%
We also define the traction $\textbf{p}=(p_1,\dots,p_n)^{\top}$ along $\Gamma$,
$$
p_i(\textbf{x},t)=p_i(\textbf{u})(\textbf{x},t)=\sum_{h,k,l=1}^nC_{ih}^{kl}\dfrac{\partial u_k}{\partial x_l}(\textbf{x},t)n_{\textbf{x} h},\quad \textbf{x}\in\Gamma,\:t\in(0,T],\: i=1,\dots,n,
$$
where $\textbf{n}_{\textbf{x}}$ is the unit normal vector to $\Gamma$ calculated in $\textbf{x}$, pointing from $\Omega$ to $\Omega'$. To emphasize that $\textbf{p}$ is defined on $\Gamma$, we also use the notation $\textbf{p}|_\Gamma$.
Equation \eqref{Navier equation} is equipped with initial vanishing conditions \eqref{initial vanishing condition} and a Dirichlet boundary condition on $\Gamma$, modelling a \textit{soft scattering} by the boundary:
\begin{align}
& \textbf{u}(\textbf{x},0)=\dot{\textbf{u}}(\textbf{x},0)=0, \quad \textbf{x}\in\Omega,\label{initial vanishing condition}\\
& \textbf{u}(\textbf{x},t)=\textbf{g}(\textbf{x},t),\qquad \quad  (\textbf{x},t)\in\Sigma:=\Gamma\times(0,T].\label{dirichlet condition}
\end{align}
In addition to \eqref{dirichlet condition}, also \textit{hard scattering} is considered, corresponding to a prescribed Neumann boundary condition 
\begin{align}
& \textbf{p}(\textbf{u})(\textbf{x},t)=\textbf{h}(\textbf{x},t),\quad (\textbf{x},t)\in\Sigma:=\Gamma\times(0,T].\label{neumann condition} 
\end{align}
We remark that the unknown $\textbf{u}$ can be written as the sum of two displacements $\textbf{u}=\textbf{u}_{\mathtt{P}}+\textbf{u}_{\mathtt{S}}$ {(Chapter V of \cite{Eringen1975}):} the term $\textbf{u}_{\mathtt{P}}$, called \textit{primary wave}, spreads in $\Omega$ with phase speed $c_{\mathtt{P}}=\sqrt{(\lambda+2\mu)/\varrho}>0$,
while $\textbf{u}_{\mathtt{S}}$, called \textit{secondary wave}, propagates in $\Omega$ with phase speed $c_{\mathtt{S}}=\sqrt{\mu/\varrho}>0$.
\subsection{Representation formula and direct boundary integral formulation}
If pure Dirichlet conditions \eqref{dirichlet condition} are imposed, to describe the unknown $\textbf{u}$ in $\Omega\times (0,T]$ we consider the following \textit{direct integral representation formula}:
\begin{align}\label{representation formula}
u_i(\textbf{x},t)=\sum_{j=1}^n\int_0^t\int_{\Gamma}G_{ij}(\textbf{x},\pmb{\xi};t,\tau)p_j(\pmb{\xi},\tau)d\Gamma_{\pmb{\xi}}d\tau-\sum_{j=1}^n\int_0^t\int_{\Gamma}
\sum_{h,k,l=1}^nC_{jh}^{kl}\dfrac{\partial G_{ik}}{\partial \xi_l
}(\textbf{x},\pmb{\xi};t,\tau)u_j(\pmb{\xi},\tau)n_{\pmb{\xi} h}d\Gamma_{\pmb{\xi}}d\tau,&\nonumber\\\quad(\textbf{x},t)\in\Omega\times (0,T],\:i=1,\dots,n,&
\end{align} 
where the traction $\textbf{p}$ is unknown on the boundary $\Gamma$. This formula is compactly written as 
$$\textbf{u}(\textbf{x},t)=\mathcal{V}\textbf{p}(\textbf{x},t)-\mathcal{K}\textbf{u}(\textbf{x},t),\quad \quad(\textbf{x},t)\in\Omega\times (0,T],$$
with the space-time \textit{single layer integral operator} $\mathcal{V} = (V_{ij})_{i,j = 1}^n$  and the \textit{double layer integral operator} $\mathcal{K} = (K_{ij})_{i,j = 1}^n$.


%
The second order tensor $\textbf{G} = (G_{ij})_{i,j = 1}^n$ in formula \eqref{representation formula} is the fundamental solution of the considered differential problem: in 2d
\begin{align}
G_{ij}(\textbf{x},\pmb{\xi};t,\tau):=& \dfrac{H[c_{\mathtt{P}}(t-\tau)-r]}{2\pi\varrho c_{\mathtt{P}}}\left\lbrace \dfrac{r_i r_j}{r^4}\dfrac{2 c^2_{\mathtt{P}}(t-\tau)^2-r^2}{\sqrt{c_{\mathtt{P}}^2(t-\tau)^2-r^2}}-\dfrac{\delta_{ij}}{r^2}\sqrt{c^2_{\mathtt{P}}(t-\tau)^2-r^2}\right\rbrace \nonumber\\
-& \dfrac{H[c_{\mathtt{S}}(t-\tau)-r]}{2\pi\varrho c_{\mathtt{S}}}\left\lbrace \dfrac{r_i r_j}{r^4}\dfrac{2 c^2_{\mathtt{S}}(t-\tau)^2-r^2}{\sqrt{c_{\mathtt{S}}^2(t-\tau)^2-r^2}}-\dfrac{\delta_{ij}}{r^2}\dfrac{c^2_{\mathtt{S}}(t-\tau)^2}{\sqrt{c^2_{\mathtt{S}}(t-\tau)^2-r^2}}\right\rbrace,\quad i,j=1,2,\label{fundamental solution}
\end{align}
while in 3d
\begin{align}
G_{ij}(\textbf{x},\pmb{\xi};t,\tau):=&  \frac{t-\tau}{4\pi\varrho r^2} \left(\frac{r_i r_j}{r^3} - \frac{\delta_{ij}}{r}\right)(H[c_{\mathtt{P}}(t-\tau)-r]-H[c_{\mathtt{S}}(t-\tau)-r]) \nonumber \\ &+ \frac{r_i r_j}{4\pi\varrho r^{3}} \left(c_{\mathtt{P}}^{-2}\delta(c_{\mathtt{P}}(t-\tau)-r)-c_{\mathtt{S}}^{-2}\delta(c_{\mathtt{S}}(t-\tau)-r) \right)\nonumber \\& + \frac{\delta_{ij}}{4\pi\varrho r c_{\mathtt{S}}^2} \delta(c_{\mathtt{S}}(t-\tau)-r), \qquad i,j=1,2,3.\label{fundamental solution3d}
\end{align}
Here we set the vector $\textbf{r}=(r_1,\dots,r_n)^\top=\textbf{x}-\pmb{\xi}=(x_1-\xi_1,\dots,x_n-\xi_n)^\top$, $r = |\textbf{r}|$, $H$ is the Heaviside function and $\delta$ the Dirac distribution.\\
Exploiting the Dirichlet boundary condition \eqref{dirichlet condition}, we obtain the following \textit{boundary integral equation}:
\begin{equation}\label{explicit BIE}
\mathcal{V}\pmb{\Phi}(\textbf{x},t)=\left(\mathcal{K}+\frac{1}{2}\right) \textbf{g}(\textbf{x},t),\quad(\textbf{x},t)\in\Sigma,
\end{equation}
with solution $\pmb{\Phi} = \textbf{p}|_\Gamma$. This solution can then be used in the representation formula \eqref{representation formula}.

In case of \textit{hard scattering problems}, namely with assigned condition \eqref{neumann condition}, the unknown displacement can be calculated in $\Omega$ considering the representation formula \eqref{representation formula} with the Hooke tensor applied:
\begin{align}
\sum_{h,k,l=1}^nC_{ih}^{kl}\dfrac{\partial u_k}{\partial x_l}(\textbf{x},t)n_{\textbf{x} h}=\sum_{j=1}^n\sum_{h,k,l=1}^n\int_0^t\int_{\Gamma}C_{ih}^{kl} \dfrac{\partial G_{jk}}{\partial x_l} (\textbf{x},\pmb{\xi};t,\tau)p_j(\pmb{\xi},\tau)n_{\textbf{x} h}d\Gamma_{\pmb{\xi}}d\tau &\nonumber\\
-\sum_{j=1}^n\sum_{h,k,l=1}^n\sum_{h',k',l'=1}^n\int_0^t\int_{\Gamma}C_{ih}^{kl}C_{jh'}^{k'l'} \frac{\partial G_{kk'}}{\partial x_l \partial \xi_{l'}} (\textbf{x},\pmb{\xi};t,\tau)u_j(\pmb{\xi},\tau) n_{\pmb{\xi} h'} n_{\textbf{x} h}d\Gamma_{\pmb{\xi}}d\tau,&\nonumber\\\quad(\textbf{x},t)\in\Omega\times (0,T],\:k=1,\dots,n,& \label{representation formula double layer}
\end{align} 
where the the displacement $\textbf{u}$ is unknown on the boundary $\Gamma$. The related compact notation is
$$\textbf{p}(\textbf{x},t)=\mathcal{K'}\textbf{p}(\textbf{x},t)-\mathcal{W}\textbf{u}(\textbf{x},t),\quad \quad(\textbf{x},t)\in\Omega\times (0,T],$$
where the operator $\mathcal{K'}= (K'_{ij})_{i,j = 1}^n$ is the \textit{adjoint double layer operator} and $\mathcal{W}= (W_{ij})_{i,j = 1}^n$ is the space-time \textit{hypersingular integral operator}.


Letting $\textbf{x}\in\Omega$ tend to $\Gamma$ in \eqref{representation formula double layer}, we obtain the time dependent boundary integral equation
\begin{equation}\label{explicit BIE hypersingular}
\mathcal{W}\pmb{\Psi}(\textbf{x},t)=\left(\mathcal{K'}-\frac{1}{2}\right) \textbf{h}(\textbf{x},t),\quad(\textbf{x},t)\in\Sigma, 
\end{equation}
with solution $\pmb{\Psi} = \textbf{u}|_\Gamma$ depending on the Neumann condition $\textbf{p}(\textbf{u})=\textbf{h}$ as prescribed in \eqref{neumann condition}. 
Therefore, our purpose is the numerical solution of the system \eqref{explicit BIE hypersingular} through the approximation of $\pmb{\Psi}$, which can then be used in the representation formula \eqref{representation formula}. \\

The Galerkin approximations to the integral equations \eqref{explicit BIE} and \eqref{explicit BIE hypersingular} are based on their weak formulations. The weak formulation of  \eqref{explicit BIE} {in the space-time cylinder $\Sigma$ is given in terms of the bilinear form
\begin{equation}\label{bilinearBD}
B_{D,\Sigma}(\pmb{\Phi},\pmb{ \tilde{\Phi}}) := \langle{\mathcal{V} \partial_t{\pmb{\Phi}}},\pmb{ \tilde{\Phi}}\rangle_{{L^2(\Sigma)}}.
\end{equation}}


\noindent \emph{Find $\pmb{\Phi}\in H^1_{\sigma}((0,T],\widetilde{H}^{-\frac{1}{2}}(\Gamma))^n$, such that}
\begin{equation}\label{energetic weak formulation}
{B_{D,\Sigma}(\pmb{\Phi},\pmb{ \tilde{\Phi}})}=\langle\partial_t{\left(\mathcal{K}+1/2\right){\textbf{g}}},\pmb{ \tilde{\Phi}}\rangle_{{L^2(\Sigma)}},
\end{equation}
\textit{for all $\pmb{\tilde{\Phi}}=(\tilde{\Phi}_1,\dots, \tilde{\Phi}_n)^{\top} \in H^1_{\sigma}((0,T],\widetilde{H}^{-\frac{1}{2}}(\Gamma))^n$.}\\
\\
Similarly, the weak formulation of  \eqref{explicit BIE hypersingular} is given {in terms of the bilinear form 
\begin{equation}\label{bilinearBN}
B_{N,\Sigma}(\pmb{\Psi},\pmb{ \tilde{\Psi}}) :=\langle{\mathcal{W} \partial_t{\pmb{\Psi}}},\pmb{\tilde{\Psi}}\rangle_{{L^2(\Sigma)}}.
\end{equation}} \\ 

\noindent \emph{Find $\pmb{\Psi}\in H^1_{\sigma}((0,T],\widetilde{H}^{\frac{1}{2}}(\Gamma))^n$,  such that}
\begin{equation}\label{hypersingeq}
{B_{N,\Sigma}(\pmb{\Psi},\pmb{ \tilde{\Psi}})} =\langle\partial_t{\left(\mathcal{K'}-1/2\right){\textbf{h}}},\pmb{ \tilde{\Psi}}\rangle_{{L^2(\Sigma)}},
\end{equation}
\textit{for all $\pmb{\tilde{\Psi}}=(\tilde{\Psi}_1,\dots, \tilde{\Psi}_n)^{\top} \in H^1_{\sigma}((0,T],\widetilde{H}^{-\frac{1}{2}}(\Gamma))^n$.}\\

{As in previous works the theoretical analysis requires a $\sigma$-dependent weight in the inner product for $T=\infty$, see \eqref{eq:bilinearform}. Then the} boundary integral equation \eqref{energetic weak formulation} for the Dirichlet problem {in the infinite space-time cylinder $\Gamma \times \mathbb{R}^+$} is well-posed, as follows from the coercivity and continuity of $\mathcal{V}$ shown in Appendix A, together with a proper setting of the functional spaces. Corresponding results for the hypersingular operator $\mathcal{W}$ in formulation \eqref{hypersingeq} go back to \cite{Becache1993, Becache1994}, where the 2d case is analyzed. The results easily generalize to 3d, for example, following the arguments in Appendix A.  

\begin{proposition}\label{wellposedness}{Let $\sigma>0$, $r \in \R$.}\\
a) Assume that ${\mathbf{g}} \in H^{r+1}_{\sigma}(\mathbb{R}^+,H^{\frac{1}{2}}(\Gamma))^n$. Then there exists a unique solution $\pmb{ \Phi} \in H^r_{\sigma}(\mathbb{R}^+,\widetilde{H}^{-\frac{1}{2}}(\Gamma))^n$  of \eqref{energetic weak formulation} and
\begin{equation}
\|\pmb{ \Phi}\|_{r, -\frac{1}{2}, \Gamma, \ast} \lesssim_\sigma \|{\mathbf{g}}\|_{r+1, \frac{1}{2}, \Gamma}\ .
\end{equation}
b) Assume that ${\mathbf{h}}\in H^{r+1}_{\sigma}(\mathbb{R}^+,H^{-\frac{1}{2}}(\Gamma))^n$. Then there exists a unique solution $\pmb{ \Psi} \in H^{r}_{\sigma}(\mathbb{R}^+,\widetilde{H}^{\frac{1}{2}}(\Gamma))^n$  of \eqref{hypersingeq} and
\begin{equation}
\|\pmb{ \Psi}\|_{r,\frac{1}{2}, \Gamma, \ast}\lesssim_\sigma \|{\mathbf{h}}\|_{r+1,-\frac{1}{2}, \Gamma} \ .
\end{equation}
{The proof for $r=0$ follows from Proposition \ref{DPbounds} and the mapping properties of $\mathcal{K}, \mathcal{K}'$, as found in \cite{chud}. The result for general $r$ then follows by the result for $r=0$ by differentiating the equation $r$ times, and complex interpolation for non-integer $r$. }
\end{proposition}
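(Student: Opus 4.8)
The plan is to reduce the well-posedness of the space-time variational problems \eqref{energetic weak formulation} and \eqref{hypersingeq} to the coercivity and continuity estimates for $\mathcal{V}$ and $\mathcal{W}$ collected in Appendix A (Proposition~\ref{DPbounds}), combined with the mapping properties of the remaining operators $\mathcal{K},\mathcal{K}'$. I would begin by treating the base case $r=0$. Here the bilinear form $B_{D,\Sigma}(\pmb{\Phi},\pmb{\tilde\Phi}) = \langle \mathcal{V}\partial_t\pmb{\Phi},\pmb{\tilde\Phi}\rangle_{L^2(\Sigma)}$, read with the $\sigma$-weighted inner product on the infinite cylinder $\Gamma\times\mathbb{R}^+$ as in \eqref{eq:bilinearform}, is continuous on $H^1_\sigma(\mathbb{R}^+,\widetilde H^{-1/2}(\Gamma))^n$ and satisfies the coercivity bound $B_{D,\Sigma}(\pmb{\Phi},\pmb{\Phi}) \gtrsim_\sigma \|\pmb{\Phi}\|_{0,-1/2,\Gamma,\ast}^2$ from Appendix~A. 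A Lax--Milgram / Lions-type argument in the weighted space-time setting (as in the classical Bamberger--Ha Duong framework used in the references) then yields existence and uniqueness of $\pmb{\Phi}\in H^0_\sigma(\mathbb{R}^+,\widetilde H^{-1/2}(\Gamma))^n$ together with the stability estimate $\|\pmb{\Phi}\|_{0,-1/2,\Gamma,\ast}\lesssim_\sigma \|(\mathcal{K}+1/2)\mathbf{g}\|$ in the appropriate norm; invoking the continuity of $\mathcal{K}$ on the relevant space-time Sobolev scale (from \cite{chud}) to bound the right-hand side by $\|\mathbf{g}\|_{1,1/2,\Gamma}$ completes part a) for $r=0$. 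Part b) for $r=0$ is entirely analogous, using the coercivity and continuity of $\mathcal{W}$ on $H^1_\sigma(\mathbb{R}^+,\widetilde H^{1/2}(\Gamma))^n$ together with the mapping properties of $\mathcal{K}'$.

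For general integer $r>0$, I would differentiate the variational equation $r$ times in time. Since the operators $\mathcal{V},\mathcal{W},\mathcal{K},\mathcal{K}'$ are convolutions in time (translation invariant), $\partial_t$ commutes with them, so $\partial_t^r\pmb{\Phi}$ solves the same type of equation with data $\partial_t^r\mathbf{g}$; applying the $r=0$ result and summing the resulting estimates over the derivatives up to order $r$ (using the definition \eqref{sobnormdef} of $\|\cdot\|_{r,s,\Gamma,\ast}$ as a weighted $H^r$-in-time norm) gives the claim. For negative integer $r$ one argues dually, or by antidifferentiating, exploiting that the weighted time-Sobolev scale is stable under $\partial_t$ up to shifts in the weight parameter; the $\sigma$-weight is precisely what makes integration in time a bounded operation here. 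Finally, for non-integer $r$ the estimate follows by complex interpolation between two consecutive integer values, using that the spaces $H^r_\sigma(\mathbb{R}^+,\widetilde H^{s}(\Gamma))$ form an interpolation scale in $r$ (this is standard for the Bamberger--Ha Duong / Costabel weighted spaces, see the references in the introduction).

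The main obstacle, and the point requiring the most care, is the base case $r=0$: one must ensure that the coercivity estimate for $\mathcal{V}$ (resp. $\mathcal{W}$) in the $\sigma$-weighted inner product genuinely controls the full $H^0_\sigma(\mathbb{R}^+,\widetilde H^{-1/2}(\Gamma))^n$ norm of the solution (not merely a weaker norm), and that the right-hand side functional $\pmb{\tilde\Phi}\mapsto \langle \partial_t(\mathcal{K}+1/2)\mathbf{g},\pmb{\tilde\Phi}\rangle_{L^2(\Sigma)}$ is bounded on the test space with the asserted dependence on $\|\mathbf{g}\|_{r+1,1/2,\Gamma}$; this is exactly where the mapping properties of $\mathcal{K}$ from \cite{chud} and the loss of one time derivative in the coercivity estimate (hence the shift from $r$ to $r+1$ in the data regularity) enter. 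Once $r=0$ is secured, the passage to general $r$ is routine, so I would allocate the bulk of the writeup to carefully stating which Appendix~A estimate is used and how the two Heaviside/Dirac-type kernels in \eqref{fundamental solution}--\eqref{fundamental solution3d} are subsumed into those abstract estimates.
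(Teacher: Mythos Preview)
Your proposal is correct and follows essentially the same route as the paper: the base case $r=0$ via the coercivity/continuity of $\mathcal{V}$ and $\mathcal{W}$ from Proposition~\ref{DPbounds} together with the mapping properties of $\mathcal{K},\mathcal{K}'$ from \cite{chud}, then differentiation in time for integer $r$ and complex interpolation for non-integer $r$. Your write-up is more explicit than the paper's (which condenses the argument to a single sentence), in particular in spelling out why $\partial_t$ commutes with the convolution-in-time operators and in addressing negative $r$ by duality/antidifferentiation, but these are elaborations of the same strategy rather than a different one.
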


\section{Regularity of solutions to the Dirichlet problem}\label{regularitysection}


In this section we obtain precise results for the singular behaviour of the solution to the original initial-boundary value problem {of elastodynamics} with Dirichlet conditions \eqref{components with hooke tensor} - \eqref{dirichlet condition} for {two model geometries, the circular cone and the wedge.} The {decomposition results for the solution of} the differential equation lead {(by taking traces)} to decompositions also for the solutions of the integral equations in singular terms and more regular remainders. The problem with Neumann conditions can be dealt with by appropriate modifications; therefore this is omitted for brevity. {The analysis is local and therefore applies to both exterior and interior problems. While we treat arbitrary  polygonal domains in $\mathbb{R}^2$, an extension to arbitrary  polyhedral domains in $\mathbb{R}^3$ would require the extension of the analysis recalled in Appendix \ref{sec:polygonallame} to general corner singularities. Results in this generality are not currently available in the analysis literature and beyond the scope of this article.}\\

Subsection \ref{regularity2d} outlines the asymptotics of solutions near a vertex in a polygonal domain in $\mathbb{R}^2$, corresponding to the numerical experiments in Section \ref{sec:numer}. It includes a detailed discussion of the singular exponents for both the elastodynamic boundary problem and the scalar wave equation. {First, we consider the time-independent case (Proposition \ref{th1.1}). The results for the time-dependent case in a polygon} follow from the analysis for a wedge in $\mathbb{R}^3$, see  Corollary \ref{polygonlemma} in Subsection \ref{regularitywedge}, by explicit calculation of the singular exponents and the singular functions. Theorem \ref{wedgelemma} in  Subsection \ref{regularitywedge} presents the abstract asymptotic expansion for the solution in a wedge.  {It turns out that the singular exponents are the same as in the time-independent case, but the coefficients of the singular functions depend additionally on time. The behavior of the solution in a wedge is obtained by applying a partial Fourier transform along the edge and in time. Then the leading term of the resulting system \eqref{principalpartsystem} decouples into a 2d elastic system for the plane components of the elastodynamic field and into a scalar inhomogeneous wave equation \eqref{wavez2} for the $z$ component along the edge. In Theorem \ref{maintheorem} we therefore recall our results for the wave equation in a wedge. Then we apply Dauge's approach \cite{dauge} to the full system \eqref{principalpartsystem} with parameter $(\xi,\tau)$ by inserting the expansions \eqref{decomp1Lame}  and \eqref{decomp1dir}  of the time-independent, elliptic situation. In this way we obtain the expansion \eqref{eq44}  and via inverse Fourier transform the expansion \eqref{wedgeexpand} for the time-dependent problem.}

 The solution of the elastodynamic boundary problem in a circular cone is discussed in Subsection \ref{regularitycone}. {We consider the elastodynamic system in spherical coordinates. For fixed time $t$ we derive rotationally symmetric solutions \eqref{usym}.} Its asymptotic expansion is obtained in Theorem \ref{conefinalthm}.

We denote model geometries by $\mathbb{D}$. For ease of reference to the work of Plamenevski\v{\i} and coauthors, as well as to Appendix B and to \cite{hp}, this section adopts some of the notation from the analysis community, rather than the notation commonly found in numerical works. In particular, the  $\sigma>0$ from other sections in the article is here called $\gamma$, singular exponents {$\lambda_k$ are denoted by $i\lambda_k$}, and the definition of the Fourier transform and its inverse are interchanged.

\subsection{Behavior of solutions in a 2d sector}\label{regularity2d}


In the 2d case, for the inhomogeneous elastodynamic equation in a polygonal interior or exterior domain $ \Omega $, we introduce the radial and tangential components of $\bold{u}, u_r=r^{\nu^*} \varphi_r(\phi, t)$ and $ u_{\phi}=r^{\nu^*}\varphi_{\phi}(\phi,t)$ locally near a vertex of interior opening angle $\omega$. The system then becomes
\begin{align}\label{2dsystem1}
&\mu \partial^2_{\phi}\varphi_{r}+(\lambda+2\mu)((\nu^*)^2-1)\varphi_{r}+((\lambda+\mu)\nu^*-(\lambda+3\mu))\partial_{\phi}\varphi_{\phi}-r^{2-\nu^*}F_r = \varrho r^{2}\partial_t^2 u_r \ , \\& \label{2dsystem2}
(\lambda+2\mu) \partial^2_{\phi}\varphi_{\phi}+\mu((\nu^*)^2-1)\varphi_{\phi}+((\lambda+\mu)\nu^*+(\lambda+3\mu))\partial_{\phi}\varphi_{r}-r^{2-\nu^*}F_\phi = \varrho r^{2}\partial_t^2 u_\phi\ .
\end{align}
The time independent solutions of this system with right hand side $(F_r,F_\phi)=(0,0)$ are given by $(\cos(1+\nu^*)\phi, -\sin(1+\nu^*)\phi)^T, (\sin(1+\nu^*)\phi, \cos(1+\nu^*)\phi)^T,  (\cos (1-\nu^*)\phi, -\bar{\nu}\sin(1-\nu^*)\phi)^T,(\sin(1-\nu^*)\phi, \bar{\nu}\cos(1-\nu^*)\phi)^T $ with $ \bar{\nu}=\frac{3+\nu^*-4\nu}{3-\nu^*-4\nu}$ where
$\nu= \frac{\lambda}{2(\lambda+\mu)} $ is the Poisson number.

We briefly review the time independent problem with Dirichlet conditions $u_r(\pm\omega/2)=u_{\varphi}(\pm\omega/2)=0$: with arbitrary constants $A,B,C,D$ we obtain
\begin{equation*}
A \cos (1+\nu^*)\omega/2\pm B\sin(1-\nu^*)\omega/2 +C \cos(1-\nu^*)\omega/2\pm D\sin(1-\nu^*)\omega/2=0
\end{equation*}
\begin{equation*}
\mp A \sin(1+\nu^*)\omega/2+B\cos(1+\nu^*)\omega/2 \mp \bar{\nu}C \sin(1-\nu^*)\omega/2+\bar{\nu}D\cos(1-\nu^*)\omega/2=0\ ,
\end{equation*}
and therefore  the plane strain condition
\begin{equation}\label{singdirichlet}
\sin\nu^*\omega=\pm\frac{\bar{\nu}-1}{\bar{\nu} +1}\sin\omega \ \ \text{ with }\ \  \frac{\bar{\nu}-1}{\bar{\nu} +1}=\frac{\nu^*}{3-4\nu}.
\end{equation}
Since one can proceed analogously for Neumann boundary conditions one gets the following theorem for the time-independent problem.
\begin{proposition}\label{th1.1}
Let $\bold{f} \in H^{s-1}(\Omega)^2$ and $s>0$, $s  \notin \mathrm{Re}\ \nu^{*}_{jk} $ with $ \nu^{*}_{jk} $ as in \eqref{eq1.11},  \eqref{eq1.12}. Then the weak solution $\bold{u} \in H^{1}(\Omega)^2$ of the time-independent equations \eqref{2dsystem1}, \eqref{2dsystem2} admits with $C^{\infty}$ cut-off functions $\chi_{j}$ near the vertex $t_{j}$ with interior opening angle $\omega_j$ the decomposition
\begin{equation}\label{decomp1Lame} 
\bold{u}=\bold{u}_{0} + \sum_{\mathrm{Re}\ \nu^{*}_{jk}<s} a_{jk}^\ast \bold{S}^{*}_{jk}(r, \phi) \chi_j(r)
\end{equation}
with a regular part $  \bold{u}_{0}  \in H^{1+s}(\Omega)^2$, $a_{jk} \in \mathbb{C} $ and the singularity functions
\begin{equation}\label{eq1.10}
\bold{S}^{*}_{jk}(r, \phi)  = \begin{cases} r^{\nu^{*}_{jk} }\pmb{\varphi}_{jk}^*(\phi) \text{ for } \nu^{*}_{jk} \notin \mathbb{N}, \\
 r^{\nu^{*}_{jk} } \ln r\  \pmb{\varphi}_{jk}^*(\phi) + r^{\nu^{*}_{jk} }\pmb{\tilde{\varphi}}_{jk}^*(\phi)  \text{ for } \nu^{*}_{jk} \in \mathbb{N},\end{cases}
\end{equation}
Here the singular exponents $  \nu^{*}_{jk} \in \mathbb{C}  $ with $\mathrm{Re}\ \nu^{*}_{jk} > 0$ are solutions of the following equations depending on the kind of boundary conditions 
at the two sides meeting at the corner $ t_j $
\begin{equation}\label{eq1.11}
\text{Dirichlet: }  \sin\nu^{*}_{jk} \omega_j =\pm \textstyle{\frac{\nu^{*}_{jk}}{k^*}} \sin \omega_j
\end{equation}
\begin{equation}\label{eq1.12}
\text{Neumann: } \sin\nu^{*}_{jk} \omega_j =\pm \nu^{*}_{jk} \sin \omega_j 
\end{equation}
The functions $  \pmb{\varphi}_{jk} $ with the components $( \varphi_{jk})_r $ in r-direction and   $  (\varphi_{jk})_{\phi} $ in $ \phi $-direction  are of the form
\begin{equation}
(\varphi_{jk}^*)_{r}=A \cos(1+\nu^{*}_{jk})\phi + B \sin(1+\nu^{*}_{jk})\phi + C \cos(1-\nu^{*}_{jk})\phi +D  \sin(1-\nu^{*}_{jk})\phi
\end{equation}
\begin{equation}
(\varphi_{jk}^*)_{\phi}=-A \sin(1+\nu^{*}_{jk})\phi + B \cos(1+\nu^{*}_{jk})\phi - \gamma_{jk} C \sin(1-\nu^{*}_{jk})\phi + \gamma_{jk} D \cos(1-\nu^{*}_{jk})\phi
\end{equation}
with constants $A, B, C, D  \in \mathbb{C} $ depending on the type of boundary conditions at the corner and the constants $$ \gamma_{jk}=\textstyle{\frac{3+\nu^{*}_{jk}-4 \nu}{3- \nu^{*}_{jk}-4 \nu}},\ k^{*} =3-4 \nu\ .$$ 
\end{proposition}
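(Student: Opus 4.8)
The plan is to reduce the time-independent elastodynamic decomposition to the well-established theory of corner singularities for the Lamé system, which is essentially classical (Kondrat'ev-type analysis; cf.\ Grisvard, Dauge, and the references cited in the excerpt). The statement concerns a second-order elliptic system on a plane polygon with piecewise Dirichlet/Neumann data, so the general machinery applies once one identifies the relevant operator pencil and its spectrum. First I would localize: fix a vertex $t_j$ with opening angle $\omega_j$ and, using the cut-off function $\chi_j$, reduce to the model problem on an infinite sector $\Gamma_{\omega_j} = \{(r,\phi): r>0,\ |\phi|<\omega_j/2\}$, transported to the strip $\mathbb{R}\times(-\omega_j/2,\omega_j/2)$ via $r = e^{-s}$ (the Mellin transform). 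The Lamé system written in polar components \eqref{2dsystem1}--\eqref{2dsystem2} with $\partial_t^2 = 0$ becomes, after the Mellin transform $r \mapsto \nu^*$, a parameter-dependent boundary value problem for an ordinary differential operator $\mathcal{A}(\nu^*)$ on $(-\omega_j/2,\omega_j/2)$ in the angular variable $\phi$.

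The key steps are then: (i) compute explicitly the kernel of $\mathcal{A}(\nu^*)$ on the sector without boundary conditions — this is exactly the four-parameter family of solutions $(\cos(1\pm\nu^*)\phi,\ \mp\sin(1\pm\nu^*)\phi)$ etc.\ displayed in the excerpt, which one verifies directly by substitution into \eqref{2dsystem1}--\eqref{2dsystem2} with $F = 0$; (ii) impose the boundary conditions $u_r(\pm\omega_j/2) = u_\phi(\pm\omega_j/2) = 0$ (Dirichlet) or the corresponding traction conditions (Neumann) on the general linear combination $A,B,C,D$, obtaining a $4\times 4$ linear system whose determinant must vanish — splitting into the symmetric and antisymmetric parts under $\phi \mapsto -\phi$ yields the two scalar transcendental equations \eqref{eq1.11}, \eqref{eq1.12}, where the algebraic simplification of $\frac{\bar\nu - 1}{\bar\nu + 1} = \frac{\nu^*}{3-4\nu}$ identifies the constant $k^* = 3-4\nu$; (iii) invoke the standard regularity/decomposition theorem for elliptic systems in domains with conical points (Kondrat'ev, or the formulation in \cite{dauge} and the references \cite{bs,grisvard87,grisvard89} of the excerpt): since $s \notin \operatorname{Re}\nu^*_{jk}$, the line $\operatorname{Re}\nu^* = s$ contains no eigenvalue of the pencil, so shifting the contour in the inverse Mellin transform picks up residues at the finitely many eigenvalues $\nu^*_{jk}$ with $0 < \operatorname{Re}\nu^*_{jk} < s$, each residue producing a singular term $r^{\nu^*_{jk}}\pmb\varphi^*_{jk}(\phi)$ (with a logarithmic factor at the eigenvalues that coincide with the "trivial" exponents in $\mathbb{N}$, where the pencil has a Jordan block or an eigenvalue collision between the $1+\nu^*$ and $1-\nu^*$ families); (iv) the remainder, given by the inverse Mellin integral along $\operatorname{Re}\nu^* = s$, lies in $H^{1+s}$ by the Parseval/Plancherel identity for the Mellin transform in weighted Sobolev spaces, together with the a priori estimate for $\mathcal{A}(\nu^*)^{-1}$ on that line; (v) sum over the finitely many vertices $t_j$ with their local cut-offs to recover the global decomposition \eqref{decomp1Lame}.

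The main obstacle — though it is technical rather than conceptual — is the careful bookkeeping of the exceptional exponents $\nu^*_{jk} \in \mathbb{N}$ in \eqref{eq1.10}: here one must distinguish the genuine singular eigenvalues of the Lamé pencil from the "false" exponents coming from the polynomial solutions (rigid body motions and linear displacement fields correspond to $\nu^* = 0, 1$), determine the algebraic multiplicity, and produce the correct logarithmic singular function $r^{\nu^*}\ln r\,\pmb\varphi^*_{jk} + r^{\nu^*}\tilde{\pmb\varphi}^*_{jk}$; this requires analyzing the structure of the resolvent $\mathcal{A}(\nu^*)^{-1}$ near a double pole. A secondary point requiring care is the precise transition between the weak $H^1$ solution and the Mellin framework: one must ensure the cut-off $\chi_j \mathbf{u}$ solves a model-sector problem with right-hand side in the appropriate weighted space (commutators of $\chi_j$ with the Lamé operator produce lower-order terms supported away from the vertex, hence contributing only to the regular part), which is routine but must be stated. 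Everything else — the explicit formulas for $\pmb\varphi^*_{jk}$, the constant $\gamma_{jk}$, the characteristic equations — follows from the direct computation in steps (i)--(ii), which the excerpt has already carried out for the Dirichlet case and which is entirely analogous for Neumann.
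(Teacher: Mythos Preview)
Your proposal is correct and aligns with the paper's treatment: the paper does not give a self-contained proof of this proposition but derives the characteristic equation \eqref{eq1.11} for the Dirichlet case by the direct computation you describe in steps (i)--(ii), remarks that the Neumann case is analogous, and otherwise relies on the classical Kondrat'ev/Grisvard theory (your steps (iii)--(v)) as known background. Your outline is therefore more detailed than what the paper provides, but the underlying approach is the same.
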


As remarked in \cite{grisvardcr}, p.~73, for Dirichlet boundary conditions there exist two leading real roots of the equation \eqref{eq1.11} in $(0,1)$.

\begin{remark}\label{angledependence}
For a crack, i.e.~$\omega_j=2\pi$ for Dirichlet and Neumann boundary conditions $\nu^{*}_{j1}=1/2 $.

More generally, we can use \eqref{eq1.11} to study the leading singular exponents for the solution of the Dirichlet problem near an angle $\omega$ when $\omega \to 0$, respectively $\omega \to 2\pi$. 

{To do so, note that for the leading singular exponent $\nu^{*} = \nu^{*}_{j1}$}
\begin{equation}
\sin\nu^* \omega=\frac{\nu^*}{k^{\ast}}\sin\omega = \frac{\nu^* \omega}{k^\ast} + o(\omega) 
\end{equation}
for $\omega \to 0$, or $$\frac{\sin\nu^* \omega}{\nu^* \omega} \to \frac{1}{k^\ast}. 
$$
We conclude $\nu^* = \frac{c}{\omega} + O(1)$, where $c$ satisfies $\frac{\sin c}{c} = \frac{1}{k^\ast}$.

For the corresponding exterior angle, $\omega = 2 \pi - \varepsilon$ with $\varepsilon \to 0$, we set $\nu^* = \frac{1}{2} + \tilde{\nu}(\varepsilon)$. Then 
$\sin\nu^*\omega= \sin\left((\frac{1}{2}+\tilde{\nu}(\varepsilon))(2\pi - \varepsilon)\right)$, and Taylor expanding for $\varepsilon, \tilde{\nu}(\varepsilon) \to 0$ leads to 
$$\sin\nu^*\omega = -2\pi \tilde{\nu}(\varepsilon) + \frac{\varepsilon }{2} + o(\varepsilon)\ .$$ 
On the other hand, from equation \eqref{eq1.11} $\sin\nu^*\omega = \frac{\nu^*}{k^{\ast}}\sin\omega = - \frac{\nu^*} {k^{\ast}}\varepsilon 
+ o(\varepsilon)$, so that 
$-2\pi \tilde{\nu}(\varepsilon) + \frac{\varepsilon }{2} = - \frac{1}{2k^{\ast}}\varepsilon 
+ o(\varepsilon)$,
or $\tilde{\nu}(\varepsilon) = \frac{\varepsilon}{4\pi}\left(1 + \frac{1}{k^\ast}\right)+ o(\varepsilon)$ and $$\nu^* = \frac{1}{2}+ \frac{\varepsilon}{4\pi}\left(1 + \frac{1}{k^\ast}\right)+ o(\varepsilon)\ .$$

Figure \ref{fig:mesh_expected_exponent} numerically illustrates $\nu^\ast$ as a function of $\omega$, when $\lambda=2$, $\mu=1$ and $\rho=1$. It confirms the above analysis.
\end{remark}

In the next section we also require a corresponding description of the singularities for the scalar wave equation \cite{hp}
$$
 \varrho\partial_t^2 u = (\partial_x^2 + \partial_y^2) u - F \ .
$$
in $\Omega$ with Dirichlet or Neumann boundary conditions. 

Again, we first describe the singularities for the well-studied time independent problem. In this case near the vertex $t_{j}$ with interior opening angle $\omega_j$ the weak solution $u$ admits   the decomposition
\begin{equation}\label{decomp1dir} 
u=u_{0} + \sum_{ \nu_{jk}<s} a_{jk} S_{jk}(r, \phi) \chi(r)
\end{equation}
with $C^{\infty}$ cut-off functions $\chi_{j}$, a regular part $  u_{0}  \in H^{1+s}(\Omega), a_{jk} \in \mathbb{C} $ and the singularity functions
\begin{equation}\label{eq1.10dir}
S_{jk}(r, \phi) = \begin{cases} r^{\nu_{jk} }\varphi_{jk}(\phi) \text{ for } \nu_{jk} \notin \mathbb{N}, \\
 r^{\nu_{jk} } \ln r\  \varphi_{jk}(\phi) + r^{\nu_{jk} }{\tilde{\varphi}}_{jk}(\phi)  \text{ for } \nu_{jk} \in \mathbb{N},\end{cases}
\end{equation}
where $\nu_{jk} = \frac{k\pi}{\omega_j}$. For Dirichlet boundary conditions $\varphi_{jk,D} = \sin(\nu_{jk}\phi)$, $k \in \mathbb{N}$, while for Neumann boundary conditions $\varphi_{jk,N} = \cos(\nu_{jk}\phi)$, $k \in \mathbb{N}_0$.


\subsection{Behavior of solutions in a wedge}\label{regularitywedge}

The behavior of solutions in a wedge of opening angle $\omega$, {$\mathbb{D} = \mathbb{K} \times \mathbb{R}$ with $\mathbb{K}= \{(r,\phi) : r>0, \ \phi \in (0,\omega)\}$,} generalizes the discussion in Section \ref{regularity2d} from dimension $n=2$ to $n=3$. {As long as we discuss this model geometry with only one non-smooth subset $\{\mathbf{0}\}\times \mathbb{R}$ of $\partial \mathbb{D}$, we omit the index numbering the non-smooth subsets ($j$ in Subsection \ref{regularity2d}).}

We here consider the elastodynamic system \eqref{Navier equation} in the space-time cylinder {$\mathcal{Q} = \mathbb{D} \times \mathbb{R}$}  with a right hand side $\bold{f}$
\begin{equation}
L(\partial_x, \partial_y, \partial_z, \partial_t)\bold{u}:=-(\lambda+\mu)\nabla(\nabla \cdot \textbf{u})-\mu\Delta\textbf{u}+\varrho\ddot{\textbf{u}}=\bold{f}\ .
\end{equation}

Applying a partial Fourier transform $\mathcal{F}_{(z,t)\mapsto (\xi,\tau)}$  along the edge and in time, the equation becomes 
\begin{equation}\label{eq1.42}
 L(\partial_x, \partial_y,-i\xi, -i\tau)\hat{\bold{u}}(x,y,\xi,\tau)=\hat{\bold{f}}(x,y, \xi,\tau) ,
\end{equation}
posed in the sector $\mathbb{K}$. 

More precisely, the operator $L$ here takes the form
\begin{align}\nonumber
&L(\partial_x, \partial_y,\partial_z, \partial_t) =\\ \label{Fourierwedgesystemtime} & \textstyle{\begin{pmatrix}-(\lambda+2\mu) \partial_x^2 - \mu (\partial_y^2 + \partial_z^2) + \varrho \partial_t^2& -(\lambda + \mu) \partial_x \partial_y & -(\lambda+\mu) \partial_x \partial_z \\  -(\lambda+\mu) \partial_x \partial_y& -(\lambda+2\mu) \partial_y^2 - \mu (\partial_x^2 + \partial_z^2) + \varrho \partial_t^2&  -(\lambda + \mu) \partial_y \partial_z \\ -(\lambda + \mu) \partial_x \partial_z & -(\lambda + \mu) \partial_y \partial_z & -(\lambda+2\mu) \partial_z^2 - \mu (\partial_x^2 + \partial_y^2) + \varrho \partial_t^2\end{pmatrix}} \ . 
\end{align}
The Fourier transform $\mathcal{F}_{(z,t)\mapsto (\xi,\tau)}$ transforms the system into 
\begin{align}\nonumber
&L(\partial_x, \partial_y,-i\xi, -i\tau) =\\& \label{Fourierwedgesystem}\textstyle{\begin{pmatrix}-(\lambda+2\mu) \partial_x^2 - \mu \partial_y^2 + \mu \xi^2 - \varrho \tau^2& -(\lambda + \mu) \partial_x \partial_y & i(\lambda+\mu) \xi \partial_x \\  -(\lambda+\mu) \partial_x \partial_y& -(\lambda+2\mu) \partial_y^2 - \mu \partial_x^2 +\mu\xi^2 - \varrho \tau^2&  i(\lambda + \mu) \xi \partial_y  \\ i(\lambda + \mu) \xi \partial_x  & i(\lambda + \mu) \xi \partial_y &  -\mu (\partial_x^2 + \partial_y^2) +(\lambda+2\mu) \xi^2- \varrho \tau^2\end{pmatrix}}\ . 
\end{align}
With $\zeta^2 = (\mu \xi^2 - \varrho \tau^2)^{-1}$, we obtain
\begin{align} 
&M(\partial_x,\partial_y,\xi, \tau) = \zeta^2 L(\zeta^{-1} \partial_x,\zeta^{-1}\partial_y,-i\xi, -i\tau) = L_0 + L_1 + L_2\nonumber \\& =\begin{pmatrix}-(\lambda+2\mu) \partial_x^2 - \mu \partial_y^2 & -(\lambda + \mu) \partial_x \partial_y & 0 \\ -(\lambda+\mu) \partial_x \partial_y& -(\lambda+2\mu) \partial_y^2 - \mu \partial_x^2 & 0  \\ 0  & 0 & - \mu (\partial_x^2 + \partial_y^2) \end{pmatrix}\nonumber \\&\qquad +\begin{pmatrix}0&0 & i(\lambda+\mu) \xi \zeta \partial_x \\ 0& 0&  i(\lambda + \mu) \xi \zeta \partial_y  \\ i(\lambda + \mu) \xi \zeta \partial_x  & i(\lambda + \mu) \xi \zeta \partial_y & 0\end{pmatrix}+\begin{pmatrix}1 & 0 &0 \\0&   1& 0  \\ 0  & 0 & \zeta^2 [(\lambda+2\mu) \xi^2- \varrho \tau^2]\end{pmatrix}\ .\label{principalpartsystem}
\end{align}

The principal part $  L_0 $ of  the operator $M$ in \eqref{principalpartsystem} is 
\begin{equation}\label{eq1.46}
L_0 := -\begin{pmatrix}  \Delta^{*}_{x,y} &  0 \\0 & \mu\Delta_{x,y}\end{pmatrix},
\end{equation}
and \eqref{eq1.42} becomes \begin{equation}\label{Meq}M \bold{v} = \zeta^2 \hat{\bold{f}} =:\bold{k}_{(\zeta)}\ .\end{equation}
{We study this equation in rescaled variables $\bold{v}(\tilde{x},\tilde{y}) = \hat{\bold{u}}(x,y,\xi,\tau)$, with $(\tilde{x},\tilde{y}) = \zeta^{-1}(x,y)$ and $\tilde{r}= |(\tilde{x},\tilde{y})| = r/\zeta$, and in this way obtain uniform assertions for $\hat{\bold{u}}$ in $\zeta$ below.}

The leading term $ L_0  $ decomposes  into the Laplace operator $\Delta_{x,y}$ (in direction of the edge) and into the 
two-dimensional elasticity operator $\Delta^{*}_{x,y}$ on the cross section $\mathbb{K}$. $L_0$ decouples the equations for the components $(v_{x}, v_{y})$ and $v_{z}$ into a 2d elastic system for the plane components of ${\mathbf{v}}$, discussed in Section  \ref{regularity2d}, and a scalar problem for the $z$-component, both posed in the sector $\mathbb{K}$.  


The singularities for $M$ result from the singularities of $ L_0 $ plus correction terms of higher regularity, which come from the differential
operators of lower order. {For time-independent problems this is shown in Proposition 16.8 and equation (5.9) in \cite{dauge}, as well as in \cite{disspetersdorff}.}

For the Dirichlet problem 
the singularities for $L_0$ follow directly from Proposition \ref{th1.1}, giving for $\hat{\bold{u}}$ the expansion \eqref{eq1.123} for $p=0$. Here the singularities $\bold{S}_{k,0} = (0,0,S_k)$, $\bold{S}^*_{k,0}$ are those in  \eqref{eq1.10dir}, respectively \eqref{eq1.10}. {(Recall that we omit the index $j$ numbering the vertices in Subsection \ref{regularity2d}.)}

The singularities for the whole operator $M$ are then obtained as follows. First, one moves the lower-order terms in the operator to the right hand side of the differential equation and repeats this process.

The additional correction terms  $\bold{S}_{k,\ell}$, $\bold{S}^*_{k,\ell}$  for $\ell>0$ are defined recursively as 
\begin{equation}\label{itersing}
\bold{S}_{k,1} = -R L_1 \bold{S}_{k,0},\qquad \bold{S}_{k,\ell} = -R L_2 \bold{S}_{k,\ell-2}-R L_1 \bold{S}_{k,\ell-1} \qquad (\ell>1),
\end{equation}
and correspondingly for $\bold{S}_{k,\ell}^*$. Here $R = (R_\Delta^*, R_\Delta)$ is the solution operator for $\Delta$, respectively $\Delta^*$.

More explicitly, we obtain
$$L_1 \bold{S}_{k,0} = \begin{pmatrix} i (\lambda + \mu) \xi \zeta \nabla S_{k}\\ 0\end{pmatrix}\ ,$$
and we make the ansatz $\bold{S}_{k,1} = (\bold{B}_{k,1}, A_{k,1})$ with a scalar function $A_{k,1}$ in the edge direction and a two-component vector $\bold{B}_{k,1}$ for the components in the cross section.

Then  $$A_{k,1} = -(\lambda+\mu)R_\Delta 0 = 0,\qquad \bold{B}_{k,1} = -(\lambda+\mu)\xi \zeta R_\Delta^*\nabla \bold{S}_{k}\ .$$

Corresponding formulas can be derived for the higher singular functions $\bold{S}_{k,\ell}$. They satisfy $\bold{S}_{k,\ell}^\ast(r,\phi) \sim r^{\nu_k^*+\ell} \pmb{\varphi}_{k,\ell}^\ast(\phi)$, respectively $\bold{S}_{k,\ell}(r,\phi) \sim r^{\nu_k+\ell} \pmb{\varphi}_{k,\ell}(\phi)$, with $\pmb{\varphi}_{k,0}= (0,0,\varphi_{k})$ from \eqref{eq1.10dir}. This is abstractly described in \cite{matyu}, p.~495, relying on Proposition 3.9 in \cite{nazarov}, and explicit formulas are not easily derived for the wedge. While only the leading terms are given explicitly, and confirmed in our numerical experiments, the general structure of the singular functions is sufficient for the error analysis in Section \ref{approxsection}.

For the time-dependent situation we first consider the third equation, for $u_z$ in \eqref{Fourierwedgesystemtime}, which up to operators of lower order in $x$ and $y$ is simply the wave equation {in the wedge geometry $\mathbb{D} \times \mathbb{R}$. As above, $\mathbb{D} = \mathbb{K}  \times \mathbb{R} \subset\mathbb{R}^3$ and $\mathbb{K} $ is the sector $\{(r,\phi) : r>0, \ \phi \in (0,\omega)\}$.} {Using \eqref{Fourierwedgesystem} in cylindrical coordinates and taking the Fourier transform $\mathcal{F}_{(z,t)\mapsto (\xi,\tau)}$, we obtain} 
\begin{equation}\label{wavez2}-\Delta_{x,y} \hat u_z(r,\phi,\xi,\tau) + \left(\frac{\lambda+2\mu}{\mu}\xi^2 - \frac{\varrho}{\mu} \tau^2\right) \hat u_z(r,\phi,\xi,\tau) = \mu^{-1}\hat{k}\ ,\end{equation}
{up to lower order terms.} Here $k$ is the third component of $\bold{k}_{(\zeta)}$. To find  the behavior of the solutions of \eqref{wavez2}, after rescaling $\tau,\xi$ it suffices to study the wave equation \begin{equation}\label{auxwave}-\Delta_{x,y} \hat{u}_z - (\tau^2-\xi^2)\hat{u}_z = \hat{k}\ .\end{equation} 
The approach in \cite{hp} makes an ansatz $$\hat{u}_z  =r^{i\lambda_{-k}}\varphi_{-k}(\phi) \rho_{-k}(r\eta)= r^{i\lambda_{-k}} \sin\left(i\lambda_{-k} \phi \right) \rho_{-k}(r\eta)$$ with 
$\eta^2 =  \xi^2 - \tau^2$ and reduces \eqref{wavez2} for $\hat{k}=0$ to a Bessel differential equation:
$$r^2 \eta^2 \rho_{-k}''(r\eta) + \left(2i\lambda_{-k}+1\right) r\eta \rho_{-k}'(r\eta) + r^2\eta^2 \rho_{-k}(r\eta) = 0\ .$$
For the edge with Dirichlet or Neumann boundary conditions, $i\lambda_{-k} = \frac{\pi k}{\omega}$.
The  solution of the Bessel differential equation can be given explicitly in terms of a Bessel function as in \cite{hp}:
\begin{equation*}
 \rho_{-k}(t \tau) = c \  (r \tau)^{i\lambda_{-k}}  K_{i\lambda_{-k}}(i r \tau).
\end{equation*}
The resulting asymptotic expansion obtained {for $\rho_{-k}(t\tau)$ in Theorem 14 from \cite{graded}} corresponds to the expansion of $\hat{u}_z$. Theorem \ref{conetheorem} describes the general singular behavior in the space-time cylinder $\mathcal{Q} = \mathbb{D}\times \mathbb{R}$. The above arguments lead to the following more precise expansion in Theorem \ref{maintheorem} for the wedge {$\mathbb{D}= \mathbb{K}\times \mathbb{R}$}, involving the following special solutions $w_{-k,B}$ of the Dirichlet {($B=D$)} or Neumann  {($B=N$)} problem with $\varphi_{k,B}$ as at the end of Section \ref{regularity2d} (see \cite[(3.5)]{kokotov3}, respectively \cite[(4.4)]{kokotov}): 
$$w_{-k,B}(r,\phi, \xi, \overline{\tau}) = \frac{2^{1-i\lambda_{k,B}}}{\Gamma(i\lambda_{k,B})}(ir\sqrt{-|\xi|^2+\overline{\tau}^2})^{i\lambda_{k,B}} K_{i\lambda_{k,B}}(ir\sqrt{-|\xi|^2+\overline{\tau}^2}) r^{-i \lambda_{k,B}} \varphi_{k,B}(\phi) \ .$$ 
{We recall the following theorem for the wave equation in the wedge, which gives an expansion of the solution in terms of singular functions (Theorem 14 in \cite{graded}, $n=3$, $d=1$ in their notation).}
\begin{theorem}[\cite{graded}]\label{maintheorem}
Let $\beta\leq 1$ and $\gamma>0$, $(f,g) \in \mathcal{R}H_{\beta, q}(\mathcal{Q}, \gamma)$,  
and assume that the line $\mathrm{Im} \ \lambda = \beta-1$ does not intersect the spectrum of $\mathcal{A}_B$ from \eqref{pencildef}. 
Further, define $$J_{\beta,B} = \left\{k: 0> \mathrm{Im}\ \lambda_{k,{B}} > \beta-1\right\}\cup A\ ,$$ with $A = \{0\}$ for $\beta\leq 0$ and $A=\emptyset$ otherwise.\\
 If $u$ is a strong solution to the inhomogeneous wave equation with homogeneous Dirichlet or Neumann boundary conditions ($B=D$, resp.~$N$), 
then near the edge $u$ is of the form 
$$\sum_{j \in J_{\beta,B}} \Gamma(1+\nu_{j,B}) r^{i \lambda_{j,B}} \varphi_{j,B}(\phi)\sum_{m=0}^{N_j}\frac{(\partial_t^2-\Delta_z)^m(ir)^{2m}}{2^{2m}m! \Gamma(m+\nu_{j,B}+1)} \left(\mathcal{F}^{-1}_{(\xi,\tau) \to (z,t)}c_{j,B}(r, \phi,{\xi,\tau})\right) + \check{u}_0(r,\phi,z,t)\ ,$$
assuming that {$i \lambda_{j,B} = \nu_{j,B} = \frac{\pi}{\omega} \not \in \mathbb{N}_0$}. Here $N_j$ sufficiently large, and depending on the boundary conditions 
$$c_{j,D}(\xi,\tau) = \langle \hat{f}(\cdot, \xi, \tau), w_{-j,D}(\cdot, \xi, \overline{\tau})\rangle_{L^2(\mathbb{K})} + (\hat{g}(\cdot, \xi,\tau), \partial_\nu w_{-j,D}(\cdot, \xi, \overline{\tau}))_{L^2(\partial \mathbb{K})};$$ 
$$c_{j,N}(\xi,\tau) = \langle \hat{f}(\cdot, \xi, \tau), w_{-j,N}(\cdot, \xi, \overline{\tau})\rangle_{L^2(\mathbb{K})} + (\hat{h}(\cdot, \xi,\tau), w_{-j,N}(\cdot, \xi, \overline{\tau}))_{L^2(\partial \mathbb{K})}.$$ 
{The regularity of $c_{j,B}$ is determined by the right hand side, and the remainder $\check{u}_0$ is less singular than $u$,} in the sense that  $\|\check{u}_0\|_{DV_{\beta, q}(\mathcal{Q}; \gamma)} \lesssim \|(f,g)\|_{\mathcal{R}H_{\beta, q}(\mathcal{Q}, \gamma)}$ for the Dirichlet problem, with analogous results in the Neumann case. We refer to Appendix B for the definition of the weighted spaces $DV_\beta(\mathcal{Q}, \gamma), \mathcal{R}H_{\beta, q}(\mathcal{Q}, \gamma)$.  If $i \lambda_{j,B} \in \mathbb{N}_0$, additional terms $r^{i \lambda_{j,B}} \log(r)$ appear. 
\end{theorem}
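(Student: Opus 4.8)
\textbf{Proof plan for Theorem \ref{maintheorem} (the wave equation in a wedge).}

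The plan is to reduce the three-dimensional problem in the wedge $\mathcal{Q} = \mathbb{D}\times\mathbb{R}$ to a one-dimensional family of Bessel-type ODEs parametrized by the dual variables $(\xi,\tau)$, and then to reassemble the singular expansion by inverse Fourier transform. First I would apply the partial Fourier transform $\mathcal{F}_{(z,t)\mapsto(\xi,\tau)}$ along the edge and in time, which turns the wave equation into the parametrized Helmholtz-type equation \eqref{wavez2}–\eqref{auxwave} on the cross-sectional sector $\mathbb{K}$, namely $-\Delta_{x,y}\hat u - (\tau^2-\xi^2)\hat u = \hat k$. The operator pencil $\mathcal{A}_B$ associated with the sector (Dirichlet or Neumann Laplacian) has eigenvalues $i\lambda_{k,B} = \frac{\pi k}{\omega}$, so the Mellin/spectral theory of \cite{kokotov,kokotov3} (see also Appendix B) gives the meromorphic structure of the solution operator $(\mathcal{A}_B - \lambda^2)^{-1}$ on the relevant weighted spaces $DV_{\beta,q}$, with poles exactly at the $i\lambda_{j,B}$.

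Next I would extract the singular part. The assumption that the line $\mathrm{Im}\,\lambda = \beta-1$ avoids the spectrum of $\mathcal{A}_B$ lets me shift the inverse Mellin contour past the poles indexed by $J_{\beta,B}$; each pole contributes a residue term. For the homogeneous ($\hat k = 0$) solution with angular profile $\varphi_{j,B}(\phi)$, the radial factor solves the Bessel ODE displayed before the theorem statement, whose decaying solution is $K_{i\lambda_{j,B}}(ir\sqrt{\tau^2-\xi^2})$ up to normalization; this is exactly the special solution $w_{-j,B}$. The coefficient of each singular term is then the pairing of the data $(\hat f,\hat g)$ (resp.\ $(\hat f,\hat h)$) against $w_{-j,B}$ and its conormal derivative on $\partial\mathbb{K}$ — this is how $c_{j,B}(\xi,\tau)$ arises, as a Green's-formula / duality computation. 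The finite Taylor-type sum $\sum_{m=0}^{N_j}$ in $(ir)^{2m}$ with the $(\partial_t^2-\Delta_z)^m$ operators is obtained by expanding the Bessel function $K$ in its series and truncating: the tail of the series is absorbed into the remainder $\check u_0$, and $N_j$ is chosen large enough that the truncation error lies in the target weighted space. The inverse Fourier transform $\mathcal{F}^{-1}_{(\xi,\tau)\to(z,t)}$ applied to $c_{j,B}$ produces the stated $z,t$-dependent coefficients, and the $(\partial_t^2-\Delta_z)^m$ factors are precisely the inverse transforms of the powers of $\tau^2-\xi^2$ appearing in the Bessel series.

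Finally, the remainder estimate $\|\check u_0\|_{DV_{\beta,q}(\mathcal{Q};\gamma)} \lesssim \|(f,g)\|_{\mathcal{R}H_{\beta,q}(\mathcal{Q},\gamma)}$ follows from the boundedness of the shifted-contour solution operator on the weighted $DV_{\beta,q}$ spaces together with a Parseval/Plancherel identity in $(\xi,\tau)$ that converts the parametrized cross-sectional estimates into the space-time-anisotropic weighted norm; uniformity in the parameter is secured by the rescaling $\tilde r = r/\zeta$ indicated in the text, which makes the sector estimates independent of $(\xi,\tau)$. The logarithmic correction $r^{i\lambda_{j,B}}\log r$ in the exceptional case $i\lambda_{j,B}\in\mathbb{N}_0$ comes from a double pole (or a pole colliding with a zero of the Bessel index) in the Mellin symbol, handled by the standard modification of the residue calculus. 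The main obstacle I anticipate is not any single computation but the careful bookkeeping of the weighted-space mapping properties under the Fourier transform in $(\xi,\tau)$ — making sure the contour shift, the Bessel-series truncation, and the duality pairing defining $c_{j,B}$ are all compatible with one fixed choice of weight $\beta$ and decay parameter $\gamma$, uniformly in the parameter, so that the remainder genuinely lands in $DV_{\beta,q}$; this is exactly the point where one must invoke the detailed results of \cite{graded,kokotov} rather than reprove them.
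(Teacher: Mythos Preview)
Your plan is essentially correct and matches the approach the paper itself sketches in the paragraphs immediately preceding the theorem: partial Fourier transform $\mathcal{F}_{(z,t)\to(\xi,\tau)}$ to the Helmholtz-type equation on the sector, separation via the angular eigenfunctions $\varphi_{j,B}$, reduction of the radial part to the Bessel ODE with solution $\rho_{-k}(r\tau) = c\,(r\tau)^{i\lambda_{-k}}K_{i\lambda_{-k}}(ir\tau)$, and then inverse Fourier transform to recover the time-domain expansion. Note, however, that the paper does \emph{not} prove this theorem here---it is quoted verbatim as Theorem~14 from \cite{graded}, so there is no ``paper's own proof'' to compare against beyond that heuristic discussion; your sketch is a faithful reconstruction of what that reference carries out.
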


{While Theorem \ref{maintheorem} is for homogeneous Dirichlet or Neumann boundary conditions, it is readily translated into inhomogeneous boundary conditions, as for elliptic problems \cite[Section 5]{petersdorff2}: For Dirichlet boundary conditions $u = g$, choose an extension $\widetilde{g}$ in the domain with Dirichlet trace $g$. The function $U = u-\widetilde{g}$ then satisfies homogeneous Dirchlet boundary conditions  $U=0$. 
Theorem \ref{maintheorem} then assures an asymptotic expansion of $U$, and therefore of $u = U + \widetilde{g}$.}\\
{An analogous argument applies to Neumann boundary conditions, using an extension $\widetilde{g}$ with the given Neumann trace.}

In particular, we mention the leading term of the expansion for the Dirichlet problem:
\begin{corollary}
Let $\gamma>0$, $\beta<1$, and  assume that $i\lambda_1 = \frac{\pi}{\omega}$ is the only eigenvalue in the strip $\beta -1 \leq \mathrm{Im}\ \lambda \leq0$. Then for $(f,g) \in \mathcal{R}V_\beta(\mathcal{Q},\gamma)$ the solution $u \in DV_1(\mathcal{Q},\gamma)$ of the inhomogenous boundary problem admits the representation 
$$u(r,\phi,z,t) = \chi(r) r^{\pi/\omega} \varphi(\phi) Xc(r,\phi,z,t) + u_0(r,\phi,z,t),$$ 
where $u_0 \in DV_\beta(\mathcal{Q},\gamma)$, $\gamma>\gamma_0$, $\chi$ is a cut-off function, $X$ as in \eqref{Xdef}, and \begin{equation}\label{cdef}c(r,\phi,z,t) = \int \left\{\langle f(t'), W(t-t')\rangle_{\mathbb{D}} + \langle g(t'), \partial_\nu W(t-t')\rangle_{\partial \mathbb{D}}\right\}dt'\ .\end{equation}
Here, $$W(r,\phi,z,t) = \mathcal{F}^{-1}_{(\xi,\tau)\to (z,t)} w(r, \phi,\xi,\tau)$$ and $w$ solves   \eqref{auxwave} with Dirichlet boundary condition $w|_{\partial \mathbb{K}} =0$.
\end{corollary}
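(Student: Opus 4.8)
The plan is to specialize Theorem~\ref{maintheorem} to the case $n=3$, $d=1$ with the single eigenvalue assumption, and then unpack the notation to arrive at the stated leading-term representation. First I would observe that the hypothesis that $i\lambda_1 = \tfrac{\pi}{\omega}$ is the only eigenvalue of the pencil $\mathcal{A}_B$ (with $B=D$) in the strip $\beta-1 \leq \mathrm{Im}\ \lambda \leq 0$ means precisely that $J_{\beta,D} = \{1\}$, since $\beta<1$ forces $\beta-1<0$ and (for $\beta\leq 0$ the set $A=\{0\}$, which must be excluded by requiring the eigenvalue be \emph{the only} one, or treating $\beta\in(0,1)$ so $A=\emptyset$). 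Thus the sum in Theorem~\ref{maintheorem} collapses to the single index $j=1$. The inner sum over $m$ from $0$ to $N_1$, together with the prefactor $\Gamma(1+\nu_{1,D})$, the Gamma-function denominators, and the powers $(ir)^{2m}(\partial_t^2-\Delta_z)^m/(2^{2m}m!)$, is then repackaged into the operator $X$ from \eqref{Xdef} acting on $c = \mathcal{F}^{-1}_{(\xi,\tau)\to(z,t)} c_{1,D}$; this is a definitional matching, so I would simply cite the definition of $X$ and verify the constants agree.

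Next I would identify the coefficient function. In Theorem~\ref{maintheorem} one has $c_{j,D}(\xi,\tau) = \langle \hat f(\cdot,\xi,\tau), w_{-j,D}(\cdot,\xi,\overline\tau)\rangle_{L^2(\mathbb{K})} + (\hat g(\cdot,\xi,\tau), \partial_\nu w_{-j,D}(\cdot,\xi,\overline\tau))_{L^2(\partial\mathbb{K})}$, a pairing on the two-dimensional cross-section $\mathbb{K}$. Taking the inverse Fourier transform $\mathcal{F}^{-1}_{(\xi,\tau)\to(z,t)}$ turns the multiplication in $(\xi,\tau)$ into a convolution in $(z,t)$; since $w_{-1,D}$ is (up to the Gamma-prefactor that is absorbed into $W$) the Fourier symbol of the function $W(r,\phi,z,t) = \mathcal{F}^{-1}_{(\xi,\tau)\to(z,t)} w(r,\phi,\xi,\tau)$ solving \eqref{auxwave} with $w|_{\partial\mathbb{K}}=0$, the cross-sectional $L^2(\mathbb{K})$ pairing combined with the inverse transform in $(z,t)$ becomes the full space-time pairing $\langle f(t'), W(t-t')\rangle_{\mathbb{D}}$ and $\langle g(t'), \partial_\nu W(t-t')\rangle_{\partial\mathbb{D}}$ integrated against $dt'$, exactly formula \eqref{cdef}. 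Here I would be careful about the complex conjugate $\overline\tau$ appearing in $w_{-j,D}$ versus $w$ in \eqref{auxwave}: this reflects the convention that the time-convolution kernel is the time-reversed fundamental solution, and the identification of $W$ as the retarded solution of the wave equation in the wedge handles this automatically.

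Finally I would address the regularity bookkeeping: the remainder $\check u_0$ from Theorem~\ref{maintheorem} satisfies $\|\check u_0\|_{DV_{\beta,q}(\mathcal{Q};\gamma)} \lesssim \|(f,g)\|_{\mathcal{R}H_{\beta,q}(\mathcal{Q},\gamma)}$, and under the corollary's hypothesis $(f,g)\in \mathcal{R}V_\beta(\mathcal{Q},\gamma)$ (the $q=2$ or unindexed case), this gives $u_0 \in DV_\beta(\mathcal{Q},\gamma)$ for $\gamma$ beyond a threshold $\gamma_0$, and the hypothesis $u\in DV_1(\mathcal{Q},\gamma)$ is the well-posedness input. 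The cut-off $\chi(r)$ localizes the singular term near the edge; away from the edge everything is absorbed into $u_0$. The main obstacle I anticipate is not any single hard estimate but rather the careful alignment of three different notational conventions --- the $i\lambda$ versus $\nu$ exponent convention, the interchanged Fourier transform convention flagged at the start of Section~\ref{regularitysection}, and the precise definition of the operator $X$ --- so that the constants $\Gamma(1+\nu_{1,D})$, $2^{-2m}$, and the Gamma denominators in Theorem~\ref{maintheorem} reassemble correctly into the compact form $Xc$; getting the normalization of $W$ (whether the $\tfrac{2^{1-i\lambda}}{\Gamma(i\lambda)}$ factor in $w_{-k,D}$ is inside $W$ or inside $X$) exactly right is the delicate point.
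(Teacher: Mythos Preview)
Your approach is essentially the paper's: the corollary is stated immediately after Theorem~\ref{maintheorem} as its leading-term specialization under the single-eigenvalue hypothesis, with no separate proof given. Your identification of $J_{\beta,D}=\{1\}$ and the rewriting of $c_{1,D}(\xi,\tau)$ as a time-convolution via inverse Fourier transform to obtain \eqref{cdef} are exactly what is intended.

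One correction: the inner sum over $m$ in Theorem~\ref{maintheorem} is \emph{not} what the operator $X$ from \eqref{Xdef} encodes. The operator $X$ is a frequency-dependent spatial cut-off, $Xc(\bold{y},z,t)=\mathcal{F}^{-1}_{(\xi,\tau)\to(z,t)}\chi(p\bold{y})\hat c(\xi,\tau)$, and carries no sum. The $m\geq 1$ terms in Theorem~\ref{maintheorem} contribute powers $r^{\pi/\omega+2m}$, which are strictly more regular than the leading $r^{\pi/\omega}$ term; under the hypothesis that $\pi/\omega$ is the only eigenvalue in the strip, these higher-order contributions land in $DV_\beta(\mathcal{Q},\gamma)$ and are absorbed into the remainder $u_0$, not into $X$. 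So the corollary is genuinely the $m=0$ term (up to the Gamma constant, absorbed into the normalization of $W$), with everything else pushed into $u_0$. Once you adjust this bookkeeping, your argument goes through.
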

Near the edge, the function $w$ behaves like $r^{\frac{\pi}{\omega}}\varphi(\phi)$ from \eqref{dualfunctions}. 


Now the expansions \eqref{decomp1Lame} and \eqref{decomp1dir} can be applied to $\bold{v}$ in \eqref{Meq}, yielding with $(\tilde{x},\tilde{y}) = \zeta^{-1}(x,y)$ and $\tilde{r}= |(\tilde{x},\tilde{y})| = r/\zeta$, 
\begin{align}
 \bold{v}&= \bold{v}_{0}+ \chi(\tilde{r}) \Big( \sum_{\mathrm{Re}\ \nu_{k}<s} a_{k,(\zeta)} \sum_{0 \leq \ell<s-\mathrm{Re}\ \nu_{k}}  \bold{S}_{k,\ell}(\tilde{r}, \phi)+ \sum_{\mathrm{Re}\ \nu^{*}_{k}<s} a^{*}_{k, (\zeta)} \sum_{0 \leq \ell<s-\mathrm{Re}\ \nu^{*}_{k}} \bold{S}^{*}_{k,\ell}(\tilde{r}, \phi) \Big)\label{eq1.123} 
\end{align}
with $\bold{v}_{0} \in H^{s+1}(K)^n$,  $a_{k,(\zeta)}, a^{*}_{k,(\zeta)}\in \mathbb{C}$ for fixed $\zeta$. {Here, as before, the singular functions $\bold{S}_{k,\ell}$
are to leading order those of the wave equation, in the third component $(0,0,S_{k})$, while $\bold{S}_{k,\ell}^*$ are to leading order those of the 2d elastostatic system \eqref{eq1.10dir}.}  
 In the following we consider the case of large $\zeta$ (see \cite{dauge}). We transform \eqref{eq1.123} back in the coordinates $ \xi,x,y$. When $\bold{S}_{k,\ell}$ and $\bold{S}^*_{k,\ell}$ have no log term, then $\bold{S}_{k,\ell}(\zeta^{-1}r,\phi) = \zeta^{-\nu_k - \ell}\bold{S}_{k,\ell}(r,\phi)$ and correspondingly for $\bold{S}^*_{k,\ell}$. Using that $\hat{c}_{k,(\zeta)} = \zeta^{-\nu_k} a_{k,(\zeta)}$ and $\hat{c}_{k,(\zeta)}^\ast = \zeta^{-\nu_k^*} a_{k,(\zeta)}^*$ we obtain $a_{k,(\zeta)} \sum_{0 \leq \ell<s-\mathrm{Re}\ \nu_{k}} \bold{S}_{k,\ell}(\zeta^{-1}r,\phi) = \sum_{0 \leq \ell<s-\mathrm{Re}\ \nu_{k}} \zeta^{-\ell}\hat{c}_{k,(\zeta)} \bold{S}_{k,\ell}(r,\phi)$ and correspondingly for $\bold{S}^*_{k,\ell}$. With $\bold{v}(\tilde{x},\tilde{y}) = \hat{\bold{u}}(x,y,\xi,\tau)$ and $\bold{v}_0(\tilde{x},\tilde{y}) = \hat{\bold{u}}_0(x,y,\xi,\tau)$ we obtain
\begin{align}
\hat{\bold{u}}(x,y,\xi,\tau)&=\hat{\bold{u}}_{0}(x,y,\xi,\tau)+ \chi(r/\zeta) \Big( \sum_{\mathrm{Re}\ \nu_{k}<s} a_{k,(\zeta)} \sum_{0 \leq \ell<s-\mathrm{Re}\ \nu_{k}}  \bold{S}_{k,\ell}(\zeta^{-1}r, \phi) \nonumber\\ & \quad+ \sum_{\mathrm{Re}\ \nu^{*}_{k}<s} a^{*}_{k,(\zeta)} \sum_{0 \leq \ell<s-\mathrm{Re}\ \nu^{*}_{k}} \bold{S}^{*}_{k,\ell}(\zeta^{-1}r, \phi) \Big)\nonumber \\& = \hat{\bold{u}}_{0}(x,y,\xi,\tau)+ \chi(r/\zeta) \Big( \sum_{\mathrm{Re}\ \nu_{k}<s} \sum_{0 \leq \ell<s-\mathrm{Re}\ \nu_{k}}  \zeta^{-\ell}\hat{c}_{k,(q)}\bold{S}_{k,\ell}(\zeta^{-1}r, \phi) \nonumber\\ & \quad+ \sum_{\mathrm{Re}\ \nu^{*}_{k}<s} \sum_{0 \leq \ell<s-\mathrm{Re}\ \nu^{*}_{k}} \zeta^{-\ell}\hat{c}^*_{k,(\zeta)} \bold{S}^{*}_{k,\ell}(\zeta^{-1}r, \phi) \Big) \ .\label{eq44}
\end{align}
In the notation of Appendix B, we obtain by applying the inverse Fourier transform $\mathcal{F}_{(\xi,\tau) \mapsto (z,t)}^{{-1}}$
\begin{align}\bold{u}(x,y,z,t)&=\bold{u}_{0}(x,y,z,t)+ \sum_{\mathrm{Re}\ \nu_{k}<s} \sum_{0 \leq \ell<s-\mathrm{Re}\ \nu_{k}}
(X c_{k,\ell})(\bold{y},z,t) \bold{S}_{k,\ell}(r, \phi)\nonumber \\
&\qquad + \sum_{\mathrm{Re}\ \nu^{*}_{k}<s} \sum_{0 \leq \ell<s-\mathrm{Re}\ \nu_{k}}
(X c_{k,\ell}^*)(\bold{y},z,t) \bold{S}^{*}_{k,\ell}(r, \phi).
\label{wedgeexpand}
\end{align}
Here, $\hat{c}_{k,\ell} = \zeta^{-\ell}\hat{c}_{k,(\zeta)}$, $\hat{c}_{k,\ell}^\ast = \zeta^{-\ell}\hat{c}_{k,(\zeta)}^\ast$, with $\zeta^2 = (\mu \xi^2 - \varrho \tau^2)^{-1}$ as before. As in Appendix B, the smoothing operator $X$ is given by $$X c(\bold{y},z,t) = \mathcal{F}^{-1}_{(\xi,\tau) \to (z,t)} \chi({\sqrt{|\xi|^2 + |\tau|^2}}\bold{y}) \hat{c}(\xi,\tau)$$ for $\hat{c} = \hat{c}_{k,\ell}, \hat{c}_{k,\ell}^*$.
The regularity of $\bold{u}_0$ and of the edge functions $ c_{k,p}, c_{k,p}^{*} $ follows corresponding to the case of the scalar wave equation in Theorem \ref{maintheorem}, generalizing the results of \cite{hp} to elastodynamics.

Altogether, we obtain the following theorem, formulated corresponding to Theorem \ref{conetheorem} in Appendix B.

\begin{theorem}\label{wedgelemma} 
Let $\gamma>0$, $q \in \mathbb{N}_0$, $\beta \in (\beta_{r+1},\beta_r)$ with $0<\beta_r-\beta<1$, 
$(\bold{f},\bold{g}) \in \mathcal{R}V_{\beta,q}(\mathcal{Q},\gamma)$ and assume that the orthogonality condition 
\eqref{orthorelation} holds for all $\nu_k, \nu_k^*$ with $\mathrm{Re}\ \nu_k, \mathrm{Re}\ \nu_k^* \in [1-\beta_r, 1-\beta_1]$.  Then the solution of the initial-boundary value problem \eqref{components with hooke tensor} - \eqref{dirichlet condition} admits the expansion
\eqref{wedgeexpand} {in terms of the singular functions $\bold{S}_{k,\ell}$, $\bold{S}^*_{k,\ell}$ constructed from \eqref{eq1.10dir}, respectively \eqref{eq1.10}.} 
{Further, in \eqref{wedgeexpand}} $s<\mathrm{min}\{\mathrm{Re}\ \nu_k, \mathrm{Re}\ \nu_k^\ast\}+{\ell}+1+\beta$ for all $k$ and  $\bold{u}_0 \in DV_{\beta,q}(\mathcal{Q}, \gamma)$. 
\end{theorem}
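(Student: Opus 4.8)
The plan is to derive the expansion \eqref{wedgeexpand} by combining three ingredients already prepared in the text: the elliptic (time-independent) expansions \eqref{decomp1Lame} and \eqref{decomp1dir} for the 2d Lam\'e system and the scalar Laplacian on the sector $\mathbb{K}$, the scalar wave-equation result for the wedge in Theorem \ref{maintheorem}, and Dauge's perturbation argument \cite{dauge} that upgrades the expansion for the decoupled principal part $L_0$ to the full operator $M$ in \eqref{principalpartsystem}. The argument proceeds in Fourier variables $(\xi,\tau)$ along the edge and in time, uniformly in the parameter $\zeta$ with $\zeta^2 = (\mu\xi^2 - \varrho\tau^2)^{-1}$.

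First I would take the partial Fourier transform $\mathcal{F}_{(z,t)\mapsto(\xi,\tau)}$ to reduce \eqref{components with hooke tensor}--\eqref{dirichlet condition} to \eqref{Meq}, $M\bold{v} = \bold{k}_{(\zeta)}$, on the sector $\mathbb{K}$, with the rescaled variables $(\tilde x,\tilde y)=\zeta^{-1}(x,y)$. For the principal part $L_0 = \mathrm{diag}(\Delta^*_{x,y},\mu\Delta_{x,y})$ the system decouples: the plane components $(v_x,v_y)$ solve a 2d elastostatic problem, whose singular expansion is \eqref{decomp1Lame}, while $v_z$ solves a scalar Dirichlet problem for $\Delta_{x,y}$, with expansion \eqref{decomp1dir}. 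This gives \eqref{eq1.123} for $p=0$ with singular functions $\bold{S}_{k,0}=(0,0,S_k)$ and $\bold{S}^*_{k,0}$. Next, to pass from $L_0$ to $M = L_0 + L_1 + L_2$, one moves the lower-order operators $L_1,L_2$ to the right-hand side and iterates: the correction terms $\bold{S}_{k,\ell}$, $\bold{S}^*_{k,\ell}$ defined recursively in \eqref{itersing} via the solution operator $R=(R^*_\Delta,R_\Delta)$ have regularity increasing in $\ell$ and asymptotics $\sim \tilde r^{\nu_k+\ell}\pmb{\varphi}_{k,\ell}(\phi)$ (resp.\ with $\nu_k^*$), so the series terminates once $\mathrm{Re}\,\nu_k+\ell \geq s$. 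One then needs to check, following Dauge (Proposition 16.8 and (5.9) in \cite{dauge}) and \cite{disspetersdorff}, that the remainder $\bold{v}_0$ lies in $H^{s+1}(\mathbb{K})^n$ with norm controlled by the data, and that all estimates are uniform in $\zeta$ — this is precisely why the rescaling by $\zeta$ was introduced, and it is the technical heart of the uniformity claim.

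The third step is to restore the $\zeta$-dependence and invert the Fourier transform. Using homogeneity of the log-free singular functions, $\bold{S}_{k,\ell}(\zeta^{-1}r,\phi) = \zeta^{-\nu_k-\ell}\bold{S}_{k,\ell}(r,\phi)$, together with the relation $\hat c_{k,(\zeta)} = \zeta^{-\nu_k}a_{k,(\zeta)}$ between the coefficient in the $\tilde r$-variables and the ``physical'' coefficient, one rewrites \eqref{eq1.123} as \eqref{eq44}, in which each term factors as $\zeta^{-\ell}\hat c_{k,(\zeta)}$ times a $\zeta$-independent singular function times the cutoff $\chi(r/\zeta)$. Applying $\mathcal{F}^{-1}_{(\xi,\tau)\mapsto(z,t)}$ and recognizing the action of the smoothing operator $X$ — which by its definition $Xc(\bold y,z,t) = \mathcal{F}^{-1}\chi(\sqrt{|\xi|^2+|\tau|^2}\,\bold y)\hat c(\xi,\tau)$ absorbs both the $\zeta$-dependent cutoff and the $\zeta$-powers into the edge functions $c_{k,\ell}$, $c^*_{k,\ell}$ — yields exactly \eqref{wedgeexpand}. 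The regularity of the edge functions $c_{k,\ell}, c^*_{k,\ell}$ and of $\bold u_0$ is inherited from the scalar wave-equation analysis: in the decoupled $z$-component this is literally Theorem \ref{maintheorem}, and for the plane components the same Kondratiev/Plamenevski\v\i{} machinery (the weighted spaces $\mathcal{R}V_{\beta,q}$, $DV_{\beta,q}$ of Appendix B, cf.\ \cite{matyu,nazarov}) applies verbatim with the elastic singular exponents $\nu_k^*$ in place of the Dirichlet/Neumann exponents $\nu_k = k\pi/\omega$. Tracking the constraint $s < \min\{\mathrm{Re}\,\nu_k,\mathrm{Re}\,\nu_k^*\}+\ell+1+\beta$ and the membership $\bold u_0 \in DV_{\beta,q}(\mathcal{Q},\gamma)$ through the inverse transform, under the hypotheses $(\bold f,\bold g)\in\mathcal{R}V_{\beta,q}(\mathcal{Q},\gamma)$, the orthogonality condition \eqref{orthorelation}, and the assumption that $\mathrm{Im}\,\lambda = \beta_r - 1$ avoids the spectrum of the operator pencil, gives the theorem.

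\textbf{Main obstacle.}
I expect the principal difficulty to be the \emph{uniformity in $\zeta$} of the elliptic expansion \eqref{eq1.123} and of the remainder estimate $\bold v_0 \in H^{s+1}(\mathbb{K})^n$. The decoupled principal-part expansion and the recursive corrections \eqref{itersing} are essentially algebraic, but ensuring that the implied constants, the splitting into singular-plus-regular, and the coefficient functionals $a_{k,(\zeta)}$ depend on $\zeta$ only through the explicit powers $\zeta^{-\nu_k-\ell}$ — so that after inverse Fourier transform the $\zeta$-factors can be cleanly reassembled into the smoothing operator $X$ acting on well-defined edge distributions $c_{k,\ell}$ — requires careful use of Dauge's parameter-dependent framework in \cite{dauge} and the weighted-space estimates of Appendix B. A secondary technical point is the bookkeeping for exceptional exponents $\nu_k, \nu_k^* \in \mathbb{N}$, where logarithmic factors $r^{\nu}\log r$ appear in \eqref{eq1.10}, \eqref{eq1.10dir}; these break exact homogeneity under the $\zeta$-rescaling and must be handled by the usual limiting argument, but they do not affect the structure of the final expansion.
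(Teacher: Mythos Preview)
Your proposal is correct and follows essentially the same route as the paper: the derivation preceding the theorem statement carries out exactly the steps you describe --- Fourier transform to the sector problem \eqref{Meq}, decoupling of the principal part $L_0$ via \eqref{decomp1Lame} and \eqref{decomp1dir}, Dauge's iteration \eqref{itersing} to handle $L_1+L_2$, the homogeneity-based rewriting \eqref{eq44}, and inverse Fourier transform with the $X$ operator to obtain \eqref{wedgeexpand}, with the regularity of $\bold{u}_0$ and the edge coefficients read off from Theorem \ref{maintheorem} and the Appendix B framework. Your identification of the $\zeta$-uniformity as the main technical point and the logarithmic terms for integer exponents as a secondary bookkeeping issue also matches the paper's emphasis.
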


{By considering the coordinate $z$ along the edge as a parameter, we recover and refine the results for  polygonal domains in 2d from Section \ref{regularity2d}. More precisely, we obtain for the solution of the elastodynamic problem \eqref{2dsystem1}-\eqref{2dsystem2}:}
\begin{corollary}\label{polygonlemma} 
Let $\gamma>0$, $q \in \mathbb{N}_0$,  $\beta \in (\beta_{r+1},\beta_r)$ with $0<\beta_r-\beta<1$, 
$(\bold{f},\bold{g}) \in \mathcal{R}V_{\beta,q}(\mathcal{Q},\gamma)$ and assume that the orthogonality condition 
\eqref{orthorelation} holds for all $\nu_k^*$ with $\mathrm{Re}\ \nu_k^* \in [1-\beta_r, 1-\beta_1]$.   Then in the neighborhood of a vertex $t_j$ with interior opening angle $\omega_j$ the solution to \eqref{components with hooke tensor} - \eqref{dirichlet condition} admits the expansion 
\begin{equation}\label{polygexpand}\bold{u}(x,y,t) =\bold{u}_0(r,\phi,t) + \sum_{k,\ell} (Xc_{k,\ell}^*)(t) \bold{S}_{k,\ell}^*(r,\phi),\end{equation}
where $s<\mathrm{min}\{\mathrm{Re}\ \nu_k, \mathrm{Re}\ \nu_k^\ast\}+{\ell}+1+\beta$ for all $k$ and $\bold{u}_0 \in DV_{\beta,q}(\mathcal{Q}, \gamma)$.
\end{corollary}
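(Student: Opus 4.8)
The plan is to obtain Corollary \ref{polygonlemma} as a direct specialization of Theorem \ref{wedgelemma}, exploiting the fact that the wedge geometry $\mathbb{D} = \mathbb{K}\times\mathbb{R}$ is a product of the two-dimensional sector $\mathbb{K}$ with a straight line, and that the elastodynamic problem in a polygon is precisely the cross-sectional problem when no dependence on the edge variable $z$ is present. First I would observe that a solution $\bold{u}(x,y,t)$ of the 2d system \eqref{2dsystem1}--\eqref{2dsystem2} in the polygonal domain, with data $(\bold{f},\bold{g})$ independent of $z$, extends trivially to a $z$-independent solution $\widetilde{\bold{u}}(x,y,z,t) = \bold{u}(x,y,t)$ of the 3d elastodynamic system \eqref{Navier equation} in $\mathcal{Q} = \mathbb{D}\times\mathbb{R}$; indeed, setting $\partial_z = 0$ in \eqref{Fourierwedgesystemtime} the third equation decouples and the first two reduce exactly to the 2d Lamé system on the cross section. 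The data $(\bold{f},\bold{g})$ lies in $\mathcal{R}V_{\beta,q}(\mathcal{Q},\gamma)$ precisely when the corresponding 2d data lies in the analogous weighted class on the polygon, so the hypotheses transfer.

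Next I would apply Theorem \ref{wedgelemma} to $\widetilde{\bold{u}}$. This yields the expansion \eqref{wedgeexpand}, with singular functions $\bold{S}_{k,\ell}$ built from the scalar wave singularities \eqref{eq1.10dir} and $\bold{S}^*_{k,\ell}$ built from the 2d elastostatic singularities \eqref{eq1.10}. The key simplification is that, because $\widetilde{\bold{u}}$ is independent of $z$, the edge coefficients $c_{k,\ell}, c_{k,\ell}^*$ are likewise independent of $z$, and the smoothing operator $X$, which acts only through the Fourier variable $(\xi,\tau)$ dual to $(z,t)$, reduces to a purely temporal smoothing operator $Xc(t)$. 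Moreover, the $z$-component of the displacement is now governed by a homogeneous scalar equation with zero data (since there is no forcing in the $z$-direction and the original $\bold{u}$ has no $z$-component), so the contributions $\bold{S}_{k,\ell} = (0,0,S_{k,\ell})$ drop out of the expansion and only the $\bold{S}^*_{k,\ell}$ terms — the genuine 2d elastic corner singularities — survive. This produces exactly \eqref{polygexpand}, and the regularity statements $\bold{u}_0 \in DV_{\beta,q}(\mathcal{Q},\gamma)$ and the range $s<\min\{\mathrm{Re}\ \nu_k,\mathrm{Re}\ \nu_k^*\}+\ell+1+\beta$ carry over verbatim from Theorem \ref{wedgelemma}, now interpreted on the $z$-independent subspace, i.e.\ as the corresponding weighted spaces on the polygon.

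The main obstacle I anticipate is bookkeeping rather than anything conceptually deep: one must check that the weighted Sobolev spaces $\mathcal{R}V_{\beta,q}(\mathcal{Q},\gamma)$ and $DV_{\beta,q}(\mathcal{Q},\gamma)$ of Appendix B restrict sensibly to their $z$-independent counterparts, and that the orthogonality condition \eqref{orthorelation} for the wedge, when specialized to $\partial_z=0$, reproduces exactly the condition on the $\nu_k^*$ stated in the corollary (the $\nu_k$ conditions for the scalar wave part being vacuous since that component vanishes). One should also note that the leading singular exponents $\nu_k^*$ appearing here are those computed in Proposition \ref{th1.1} from the plane strain condition \eqref{singdirichlet}, since the elliptic cross-sectional operator in \eqref{principalpartsystem} is the 2d Lamé operator $\Delta^*_{x,y}$; this is consistent with the claim in the surrounding text that the singular exponents in the time-dependent case coincide with those in the time-independent case, with only the coefficients $Xc_{k,\ell}^*$ acquiring time dependence. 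Once these identifications are in place, the corollary follows immediately, and I would present it in roughly a paragraph, emphasizing the $z$-independence reduction and referring to Theorem \ref{wedgelemma} and Proposition \ref{th1.1} for the substance.
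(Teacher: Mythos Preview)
Your proposal is correct and follows essentially the same approach as the paper: the paper derives the corollary in one line by ``considering the coordinate $z$ along the edge as a parameter,'' i.e.\ specializing the wedge expansion of Theorem \ref{wedgelemma} to $z$-independent data and solutions, which is exactly the reduction you spell out in detail. Your additional remarks---that the smoothing operator $X$ becomes purely temporal and that the scalar-wave contributions $\bold{S}_{k,\ell}$ disappear because the 2d displacement has no $z$-component---are the natural elaborations of that one-line argument.
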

Corollary \ref{polygonlemma} recovers Theorem 3.5 in \cite{mueller2}; note that the sum in $k$ in both \eqref{wedgeexpand} and \eqref{polygexpand} implicitly includes multiplicities of the eigenvalues.

We finally recall embedding theorems $DV_{\beta,q}(\mathcal{Q}, \gamma) \subset H^r_\sigma(\mathbb{R}^+, H^s(\mathbb{D}))^n$ \cite{mueller}. Corollary \ref{polygonlemma} then says that given parameters $\beta$, $\gamma$, the solution may be written as the
sum of a remainder term $\bold{u}_0 \in DV_{\beta,q}(\mathcal{Q}, \gamma) \subset H^r_\sigma(\mathbb{R}^+, H^s(\mathbb{D}))^n$ and, depending on the order $s$, a finite number of  singular functions $\bold{S}_{k,\ell}^*$.

\subsection{Behaviour of solutions in a circular cone}\label{regularitycone}

We consider the elastodynamic system in spherical coordinates $(r,\theta, \phi)$ with origin at the apex. It takes the form
\begin{equation}\label{cone1}
(\lambda+\mu)\partial_{r}(\bold{\nabla}\cdot \bold{u})+\mu[\nabla^2u_{r}-\frac{2u_r}{r^2}-\frac{2}{r^2\sin\theta}\partial_{\theta}(u_{\theta}\sin\theta)-\frac{2}{r^2\sin\theta}\partial_{\phi}u_{\phi}] + f_{r}=\varrho\partial_t^2 u_r
\end{equation}
\begin{equation}\label{cone2}
\frac{(\lambda+\mu)}{r}\partial_{\theta}(\bold{\nabla}\cdot \bold{u})+\mu[\nabla^2u_{\theta}+\frac{2}{r^2}\partial_{\theta}u_{r}-\frac{u_{\theta}}{r^2\sin^2\theta}
-\frac{2\cos\theta}{r^2 \sin^2\theta}\partial_{\phi}u_{\phi}] + f_{\theta}=\varrho\partial_t^2 u_\theta
\end{equation}
\begin{equation}\label{cone3}
\frac{(\lambda+\mu)}{r \sin\theta}\partial_{\phi}(\bold{\nabla}\cdot \bold{u})+\mu[\nabla^2u_{\phi}+\frac{2}{r^2\sin\theta}\partial_{\phi}u_{r}
+\frac{2cos\theta}{r^2\sin^2\theta}\partial_{\phi}u_{\theta}-\frac{u_{\phi}}{r^2\sin^2\theta}] + f_{\phi}=\varrho\partial_t^2 u_\phi
\end{equation}
with
\begin{equation}
\bold{\nabla}\cdot \bold{u} = \frac{1}{ r^2}\partial_{r}(r^2 u_r)+\frac{1}{r \sin(\theta)}\partial_{\theta}(\sin(\theta) u_{\theta})+\frac{1}{r \sin(\theta)}\partial_{\phi}u_\phi
\end{equation}
\begin{equation}
\nabla^2 u_{i}=\frac{1}{ r^2}\partial_{r}(r^2\partial_{r} u_i)+\frac{1}{r^2 \sin(\theta)}\partial_{\theta}(\sin(\theta) \partial_\theta u_{i})+\frac{1}{r^2 \sin(\theta)} \partial_{\phi}^2u_i, \quad \text{with}\ i=r,\phi,\theta.
\end{equation}
Note that we include a force term $\bold{f}=(f_r, f_\phi, f_\theta)^{\top}$ in the domain. 

We denote by $\bold{x}$ the point with spherical coordinates $(r,\phi,\theta)$. The local orthonormal basis vectors are 
\begin{align*}
\bold{e}_r &= (\sin(\theta)\cos(\phi), \sin(\theta)\sin(\phi),\cos(\theta))^{\top},\\
\bold{e}_\theta &= (\cos(\theta)\cos(\phi), \cos(\theta)\sin(\phi),-\sin(\theta))^{\top},\\
\bold{e}_\phi&=(-\sin(\phi), \cos(\phi), 0)^{\top},
\end{align*}
and we write the components of an arbitrary vector $\bold{u}$ in this basis as $\bold{u} = u_r \bold{e}_r + u_\theta \bold{e}_\theta + u_\phi \bold{e}_\phi$. 

Any vector field symmetric under rotations in $\phi$ will take the form 
$$\bold{u}(\bold{x},t) = u_r(r,\theta,t)\bold{e}_r + u_\theta(r,\theta,t)\bold{e}_\theta =: (\bold{e}_r, \bold{e}_\theta)^{\top} \tilde{\bold{u}}(r,\theta,t)\ .$$ 
{First we consider the  system \eqref{cone1} - \eqref{cone3} for fixed $t$.} Beagles and S\"{a}ndig \cite{bs} use Papkovich-Neuber potentials to construct solutions from the ansatz  
\begin{equation} \label{ansatzbs}
\bold{u} = 4(1-\nu) \bold{B} - \nabla(\bold{x}\cdot \bold{B} + B_4)
\end{equation}
with Poisson's ratio $\nu$ and where the components of $\bold{B} = (B_1,B_2,B_3)^{\top}$ and $B_4$ are harmonic functions. In spherical coordinates \eqref{ansatzbs} becomes
\begin{align} 
\bold{u} &= (u_r, u_\theta,u_\phi)^{\top} = (3-4\nu)(\bold{B}\cdot \bold{e}_r, \bold{B}\cdot \bold{e}_\theta, \bold{B}\cdot \bold{e}_\phi)^{\top} \nonumber\\&\qquad - (r \bold{e}_r \cdot \partial_r\bold{B} + \partial_r B_4, \bold{e}_r \cdot \partial_\theta \bold{B} + \frac{1}{r} \partial_\theta B_4, \frac{1}{\sin(\theta)} \bold{e}_r \cdot \partial_\phi \bold{B} + \frac{1}{r\sin(\theta)}\partial_\phi B_4)^{\top}.\label{ansatzbs2}
\end{align}
Set $B_1=B_2 =0$, $$B_3 = c_1 r^\alpha P_\alpha(\cos(\theta)), \ B_4 = c_2 r^{\alpha+1}P_{\alpha+1}(\cos(\theta)),$$ where $P_\alpha(\cos(\theta))$ are Legendre functions of the first kind  and $\alpha >0$ will be specified below. Substituting this ansatz into \eqref{ansatzbs2} gives the general form of the rotationally symmetric solutions to \eqref{cone1} - \eqref{cone3} {at fixed time $t$},
\begin{equation}\label{usym}
\bold{u}(r,\theta) = c_1 r^{\alpha} 
\begin{pmatrix}A_{11}(\alpha,\theta)\\ A_{21}(\alpha,\theta)\end{pmatrix}  + c_2 r^\alpha \begin{pmatrix}B_{11}(\alpha,\theta)\\B_{21}(\alpha,\theta) \end{pmatrix} 
\end{equation}
with $(A_{11}(\alpha,\theta), A_{21}(\alpha,\theta))= ((3-4\nu - \alpha)P_\alpha \cos(\theta),  P_\alpha' \cos(\theta) \sin(\theta) - (3-4\nu) P_\alpha \sin(\theta))$ as well as $(B_{11}(\alpha,\theta),B_{21}(\alpha,\theta)) = ( -(\alpha+1) P_{\alpha+1}, \sin(\theta) P_{\alpha+1}')$.

Using the Mellin transform with respect to $r$, 
$$\widetilde{w}(\alpha,\theta,\phi)= \frac{1}{\sqrt{2\pi}} \int_0^\infty r^{-\alpha-1} w(r,\theta,\phi) \ dr$$ 
the system \eqref{cone1} - \eqref{cone3} with Dirichlet boundary conditions transforms into a parameter-dependent boundary value problem. 
{The exponents $\alpha$ are given by the roots of the equation} 
$$\mathrm{det}
\begin{pmatrix}A_{11}(\alpha,\omega) & B_{11}(\alpha,\omega)\\ A_{21}(\alpha,\omega) & B_{21}(\alpha,\omega)\end{pmatrix} = 0\ ,$$
where $\omega$ is the opening angle. The vanishing of the determinant is equivalent to the following transcendental equation for $\alpha$:
\begin{equation}\label{transcendentalcone}
0 = \frac{-(\alpha+1)}{\sin(\omega)} \left(P_\alpha^2 \cos(\omega)(\alpha+4\nu-3) + P_\alpha P_{\alpha+1} (3-4\nu - \cos^2(\omega) (2\alpha+1)) + P_{\alpha+1}^2 \cos(\omega)(\alpha+1)\right)\ .
\end{equation}
Imposing homogeneous Dirichlet conditions on ${\bf u}$ in \eqref{usym} determines the coefficients $c_1$, $c_2$ and hence the corresponding eigenfunction. 
For numerical results for $\alpha_\ell$ and their dependence on $\omega$, see \cite{bs}.

{Now we apply the partial Fourier transform $\mathcal{F}_{t\to\tau}$ to the system \eqref{cone1} - \eqref{cone3} and obtain the following  parameter dependent Lam\'{e} equation in the cone $\mathbb{K}$ with opening angle $\omega$, 
\begin{equation}\label{eq55}
(\lambda+\mu)\nabla(\nabla \cdot \hat{\textbf{u}})+\mu\Delta\hat{\textbf{u}} + \tau^2 \hat{\textbf{u}}=\hat{\bold{f}}, \quad  \textbf{x}\in\mathbb{K},
\end{equation}
with Dirichlet boundary condition $\hat{\textbf{u}}|_{\partial\mathbb{K}} = \hat{\bold{g}}$. 
Let $\hat{\bold{f}} \in H^0_\beta(\mathbb{K})^n$, $\hat{\bold{g}} \in H^{3/2}_\beta(\partial \mathbb{K})^n$ 
Assume that no eigenvalues of the pencil $\mathcal{A}_D$ from \eqref{pencildef}, more concretely no roots of \eqref{transcendentalcone}, lie on the lines
\begin{equation}\label{alphaeqn}\mathrm{Re}\ \alpha = -\beta + \frac{1}{2} =: h\ \quad \mathrm{Re}\ \alpha = -\beta' + \frac{1}{2} =: h'.\end{equation}
We apply the framework of Appendix B, especially Section B.1. We observe that the eigenfunctions $A_{11}(\alpha, \theta)$, $A_{21}(\alpha, \theta)$, $B_{11}(\alpha, \theta)$, $B_{21}(\alpha, \theta)$ with $\alpha$ from \eqref{transcendentalcone}  for the homogeneous Dirichlet problem  are just the eigenfunctions $\pmb{\varphi}_\ell^{(k)}$ in the power-like solution \eqref{powerlike} of the homogeneous Dirichlet boundary value problem \eqref{111}, \eqref{111b}. Now equation \eqref{eq55} with Dirichlet boundary conditions is just \eqref{119}, \eqref{119b} in Appendix B. We can therefore apply Theorem \ref{thm95} in Appendix B with $i \lambda_\ell = \alpha_\ell$ and $\mathrm{Re}\ \alpha_\ell = -\mathrm{Im}\ \lambda_\ell$. Now if} $h < \mathrm{Re}\ \alpha_\ell< h'$, then there holds the following result as a consequence of Theorem \ref{thm95} (with inhomogeneous Dirichlet data ${\bf g}$): the solution of \eqref{cone1} - \eqref{cone3} 
has the expansion 
\begin{equation}\label{symconebs}
\hat{\bold{u}}(r,\theta,\phi,\tau) = \chi(p r) \sum_{\ell} \sum_{k,j}\hat{c}^{(k,j)}_\ell(\phi,\tau) \bold{u}_\ell^{(k,j)}(r,\theta) + \hat{\bold{u}}_0(r,\theta,\phi,\tau)\ ,
\end{equation}
with $\hat{\bold{u}}_0 \in DH_{\beta'}(\mathbb{K},\tau)$, $\bold{u}_\ell^{(k,j)}$ as in \eqref{usym} with $\alpha= \alpha_\ell$ a root of \eqref{transcendentalcone} and $-h < \mathrm{Re}\ \alpha_\ell < - h'$. The sum extends over the index $k$ of the roots $\alpha_\ell$. The coefficients $\hat{c}^{(k,j)}_\ell$ in the expansion \eqref{symconebs} can be computed from the results by Maz'ya and Plamenevski\v{\i}, see \cite{bs}.\\

Taking an inverse Fourier transform from $\tau$ to $t$, the results by Matyukevich and Plamenevski\v{\i} \cite{matyu} in Section \ref{sec:polygonallame} give through Theorem \ref{conetheorem}  the following result, using the function spaces in \eqref{DVQ}, \eqref{RVQ}: 
\begin{theorem} \label{conefinalthm}Let $\gamma>0$ and $\beta \in (\beta_{r+1},\beta_r)$ with $0<\beta_r-\beta<1$, $(\bold{f},\bold{g}) \in \mathcal{R}V_\beta(\mathcal{Q},\gamma)$ and assume that the orthogonality condition \eqref{orthorelation} holds for all $\alpha_\ell$ with $\mathrm{Re}\ \alpha_\ell \in [\frac{1}{2}-\beta_1, \frac{1}{2}-\beta_r]$. Then the solution of \eqref{cone1} - \eqref{cone3} with Dirichlet condition $\bold{u}|_{\partial \mathcal{Q}} = \bold{g}$ admits an expansion 
\begin{equation}\label{coneexpand}\bold{u}(r,\theta,\phi,t) = \sum_\ell \sum_{k,j} \tilde{c}_\ell^{k,j}(\phi,t) \bold{u}_\ell^{k,j}(r,\theta) +\bold{u}_0(r,\theta,\phi,t),\end{equation}
where $\bold{u}_0 \in DH_\beta(\mathcal{Q}, \gamma)$, with $\bold{u}_\ell^{k,j}$ from \eqref{usym} and the variable coefficients $\tilde{c}_\ell^{k,j}$ as in Theorem \ref{conetheorem}. The sum in $\ell$ is over all $\alpha_\ell$ with $\mathrm{Re}\ \alpha_\ell =\frac{1}{2}-\beta_r$, while the sum over $k,j$ extends over all the generalized eigenfunctions $\bold{u}_\ell^{k,j}$ of the form \eqref{usym} corresponding to $\alpha_\ell$.
\end{theorem}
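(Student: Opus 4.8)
The plan is to obtain \eqref{coneexpand} by transplanting, via the inverse Fourier transform $\mathcal{F}^{-1}_{\tau\to t}$, the parameter-dependent expansion \eqref{symconebs} established above, and to recognize the outcome as an instance of the abstract expansion for second order hyperbolic problems in a cone recalled as Theorem \ref{conetheorem} in Appendix B (following Matyukevich and Plamenevski\v{\i} \cite{matyu}). The first step identifies the operator pencil: applying $\mathcal{F}_{t\to\tau}$ to \eqref{cone1}--\eqref{cone3} gives the parameter-dependent Lam\'e equation \eqref{eq55} in $\mathbb{K}$, whose Mellin symbol in the radial variable is the pencil $\mathcal{A}_D$ of \eqref{pencildef} on the spherical cross-section. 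I would verify that the spectrum of $\mathcal{A}_D$ is precisely the set of roots $\alpha_\ell$ of the transcendental equation \eqref{transcendentalcone}, and that the associated (generalized) eigenfunctions are the $\bold{u}_\ell^{k,j}$ built from the rotationally symmetric Papkovich--Neuber solutions \eqref{usym}, with Jordan chains (the index $j$) at multiple roots producing the admissible $\log r$ terms. This identification --- essentially the Beagles--S\"{a}ndig analysis \cite{bs} recalled above --- is the only point at which the specific cone geometry enters.

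The second step is the fixed-$\tau$ expansion, made uniform in $\tau$. Since by hypothesis no root of \eqref{transcendentalcone} lies on the lines $\mathrm{Re}\,\alpha=\tfrac12-\beta$ or $\mathrm{Re}\,\alpha=\tfrac12-\beta_r$, Theorem \ref{thm95} yields the expansion \eqref{symconebs} of $\hat{\bold{u}}(\cdot,\tau)$, with remainder $\hat{\bold{u}}_0(\cdot,\tau)$ in the weighted space $DH_\beta(\mathbb{K},\tau)$ and coefficients $\hat{c}_\ell^{(k,j)}(\phi,\tau)$ given, as in the wedge case, by explicit duality pairings of $(\hat{\bold{f}},\hat{\bold{g}})$ with the dual singular functions of the pencil (cf.\ \cite{bs} and the formulas for $c_{j,B}$ in Theorem \ref{maintheorem}). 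The orthogonality condition \eqref{orthorelation} forces the coefficients of all singular terms with $\mathrm{Re}\,\alpha_\ell\in[\tfrac12-\beta_1,\tfrac12-\beta_r)$ to vanish, so those terms are absorbed into the remainder and only the terms with $\mathrm{Re}\,\alpha_\ell=\tfrac12-\beta_r$ survive. The decisive structural observation, read off from \eqref{eq55} exactly as from \eqref{principalpartsystem} in the wedge, is that the lower-order term $\tau^2\hat{\bold{u}}$ is subordinate under the Mellin/homogeneity scaling: the pencil $\mathcal{A}_D$ and hence the exponents $\alpha_\ell$ are independent of $\tau$, while $\tau^2\hat{\bold{u}}$ is moved to the right-hand side in an iteration that generates the higher correction terms and the polynomial-in-$|\tau|$ weights built into the norms of $DH_\beta(\mathbb{K},\tau)$ and of the input spaces. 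This yields the estimate for $\hat{\bold{u}}_0(\cdot,\tau)$ uniformly in $\tau$ on the relevant horizontal line.

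The third step returns to the time domain. Applying $\mathcal{F}^{-1}_{\tau\to t}$ to \eqref{symconebs}: as the singular functions $\bold{u}_\ell^{k,j}(r,\theta)$ do not depend on $\tau$, the inverse transform acts only on the coefficients, producing $\tilde{c}_\ell^{k,j}(\phi,t)$ (up to the cut-off and smoothing operator, as in the wedge expansion and Appendix B), and on the remainder. Shifting $\tau$ to the line $\mathrm{Im}\,\tau=-\gamma$, i.e.\ inserting the weight $e^{-\gamma t}$, and invoking Plancherel in $\tau$, the uniform bound of the second step integrates to $\bold{u}_0\in DH_\beta(\mathcal{Q},\gamma)$ with the asserted estimate, while the regularity of the $\tilde{c}_\ell^{k,j}$ follows from $(\bold{f},\bold{g})\in\mathcal{R}V_\beta(\mathcal{Q},\gamma)$ in the same manner. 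This is precisely the passage packaged in Theorem \ref{conetheorem}, so once the previous two steps are in place the theorem follows.

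The step I expect to be the main obstacle is the uniformity in $\tau$ in the second step: one must track the exact polynomial weights in $|\tau|$ in the fixed-parameter estimates of Theorem \ref{thm95} so that the Plancherel argument of the third step genuinely produces membership in the space-time weighted space $DH_\beta(\mathcal{Q},\gamma)$, rather than a merely pointwise-in-$\tau$ statement, and one must confirm that the inverse-Mellin contour $\mathrm{Re}\,\alpha=\tfrac12-\beta$ can be fixed independently of $\tau$ --- which is automatic here, since $\mathcal{A}_D$ is $\tau$-independent. Everything else is a transcription of the Matyukevich--Plamenevski\v{\i} framework of Appendix B to the elastodynamic cone, parallel to the wedge analysis carried out in Subsection \ref{regularitywedge}.
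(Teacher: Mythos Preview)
Your proposal is correct and follows essentially the same approach as the paper: identify the pencil $\mathcal{A}_D$ and its spectrum via the Beagles--S\"{a}ndig analysis \eqref{transcendentalcone}, apply Theorem \ref{thm95} to obtain the parameter-dependent expansion \eqref{symconebs} with uniform-in-$\tau$ estimates, and then invoke the abstract Matyukevich--Plamenevski\v{\i} result (Theorem \ref{conetheorem}) to pass to the time domain via the inverse Fourier transform. Your write-up is in fact more explicit about the mechanics (the role of the $\tau$-independence of the pencil, the Plancherel passage) than the paper, which simply cites Theorem \ref{conetheorem} for the final step.
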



{Analogous to Corollary \ref{polygonlemma} for the wedge, Theorem \ref{conefinalthm} for the cone} says that the solution may be written as the
sum of a  remainder term $\bold{u}_0 \in DV_\beta(\mathcal{Q}, \gamma) \subset H^r_\sigma(\mathbb{R}^+, H^s(\mathbb{D}))^n$ and, depending on the order $s$, a finite number of  singular functions $\bold{u}_\ell^{k,j}$.

\section{BEM discretization}\label{sec 3} 

To solve the energetic weak formulations \eqref{energetic weak formulation} and \eqref{hypersingeq} in a discretized form,  we  consider a uniform decomposition of the time interval $[0,T]$ with time step $\Delta t=\nicefrac{T}{N_{\Delta t}}$, $N_{\Delta t}\in\mathbb{N}^{+}$, generated by the $N_{\Delta t}+1$ times $t_{n}=n\Delta t$, $n=0,\ldots,N_{\Delta t}$. We define the corresponding space ${V_{\Delta t,s}}$ of piecewise polynomial functions of degree $s$ in time
 (continuous and vanishing at $t=0$ if $s\geq 1$).

For the space discretization in 2d, we introduce a boundary mesh constituted by a set of straight line segments $\mathcal{T}=\left\lbrace e_1,... ,e_M \right\rbrace$ such that $h_i:=length(e_i)\leqslant h$, $e_i\cap e_j=\emptyset$ if $i\neq j$ and $\cup_{i=1}^M \overline{e}_i=\overline{\Gamma}$ if $\Gamma$ is polygonal, or a suitably fine approximation of $\Gamma$ otherwise. 
In 3d, we assume that $\Gamma$ is triangulated by  $\mathcal{T}=\{e_1,\cdots,e_{M}\}$, with $h_i:=diam(e_i)\leqslant h$, $e_i\cap e_j=\emptyset$ if $i\neq j$ and if $\overline{e_i}\cap \overline{e_j} \neq \emptyset$, the intersection either an edge or a vertex of both triangles. 

On $\mathcal{T}$ we define  $\mathcal{P}_p$ as the space of polynomials of degree $p$, and consider the spaces of piecewise polynomial functions 
$$X^{-1}_{h,p}=\left\lbrace w\in L^2(\Gamma)\: : \: w\vert_{e_i}\in \mathcal{P}_p,\: e_i\in \mathcal{T}  \right\rbrace\subset \widetilde{H}^{-1/2}(\Gamma)$$
and 
$$X^{0}_{h,p}=\left\lbrace w\in C^0(\Gamma)\: : \: w\vert_{e_i}\in \mathcal{P}_p,\: e_i\in \mathcal{T}  \right\rbrace \subset \widetilde{H}^{1/2}(\Gamma).$$

The Galerkin approximations of \eqref{energetic weak formulation}, \eqref{hypersingeq} corresponding to these discrete spaces read, with $B_{D/N,\Sigma}$ as in \eqref{bilinearBD}, \eqref{bilinearBN}: \\

\noindent \textit{Find $\pmb{\Phi}_{h,\Delta t} \in \left({V_{\Delta t,s_p}}\otimes X^{-1}_{h,p}\right)^n$ such that}
\begin{equation}\label{energetic weak formulationh}
B_{D,\Sigma}({\pmb{\Phi}_{h,\Delta t}},\pmb{ \tilde{\Phi}}_{h,\Delta t})=\langle\partial_t{\left(\mathcal{K'}+1/2\right) {\textbf{g}}},\pmb{ \tilde{\Phi}}_{h,\Delta t}\rangle_{{L^2(\Sigma)}},
\end{equation}
\textit{for all $\pmb{\tilde{\Phi}}_{h,\Delta t} \in \left({V_{\Delta t,s_p}}\otimes X^{-1}_{h,p}\right)^n$.}\\

\noindent \textit{Find $\pmb{\Psi}_{h,\Delta t}\in \left({V_{\Delta t,s_q}}\otimes X^{0}_{h,q}\right)^n$ such that}
\begin{equation}\label{hypersingeqh}
B_{N,\Sigma}({\pmb{\Psi}_{h,\Delta t}},\pmb{ \tilde{\Psi}}_{h,\Delta t})=\langle\partial_t{\left(\mathcal{K}-1/2\right){\textbf{h}}},\pmb{ \tilde{\Psi}}_{h,\Delta t}\rangle_{{L^2(\Sigma)}},
\end{equation}
\textit{for all $\pmb{\tilde{\Psi}}_{h,\Delta t} \in \left({V_{\Delta t,s_q}}\otimes X^{0}_{h,q}\right)^n$.}\\

\begin{remark}
Due to the continuity and coercivity of the bilinear forms \eqref{energetic weak formulation} {(Proposition \ref{DPbounds})}, respectively \eqref{hypersingeq} \cite{Becache1993}, the discretized equations \eqref{energetic weak formulationh}, respectively \eqref{hypersingeqh}, admit a unique solution. Stability and a priori error estimates for the numerical error follow as in \cite{bh}. {The intention of this article is to show that the use of graded meshes and of higher-order polynomials leads to improved approximation rates for the solution. This is the subject of Section \ref{approxsection}.}
\end{remark}

\begin{figure}[h!]
\centering
\subfloat[]{{\includegraphics[height=3.5cm, width=3.5cm]{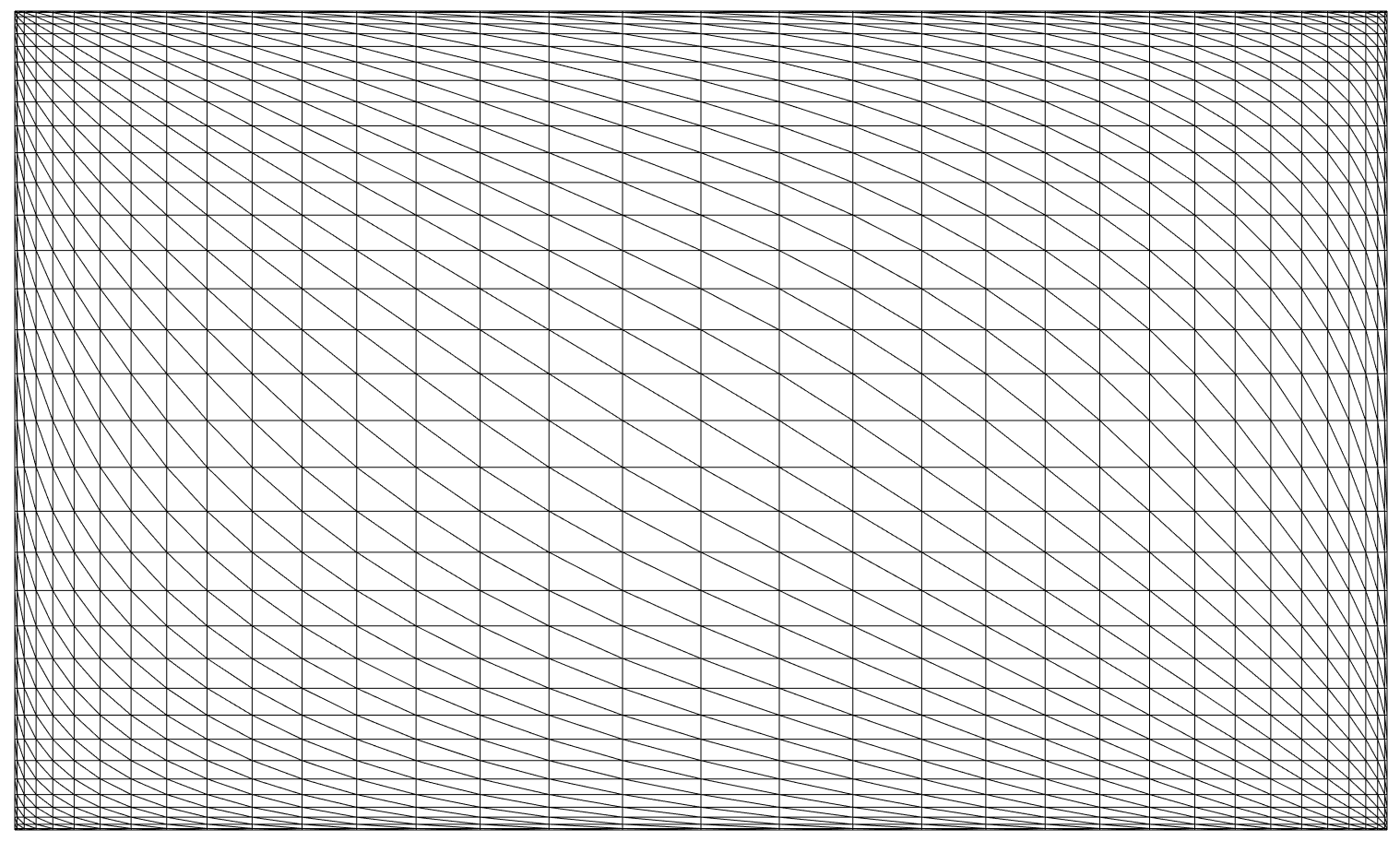}}
\qquad
{\includegraphics[height=3.5cm, width=3.5cm]{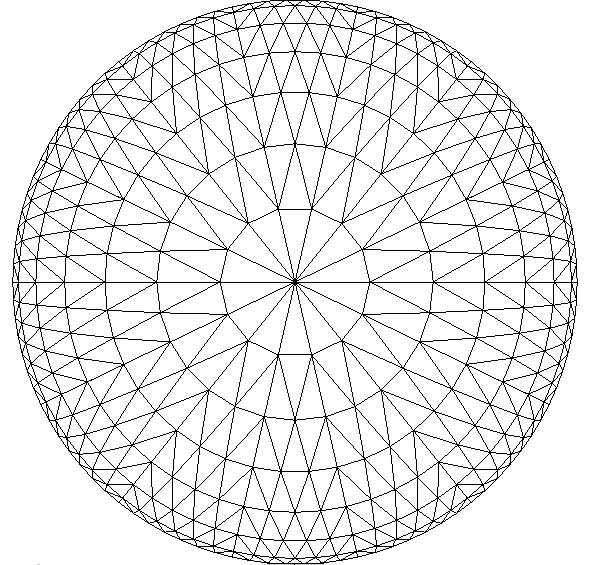}}}
\hskip 1.2cm
\subfloat[]{{\includegraphics[height=3.5cm, width=3.5cm]{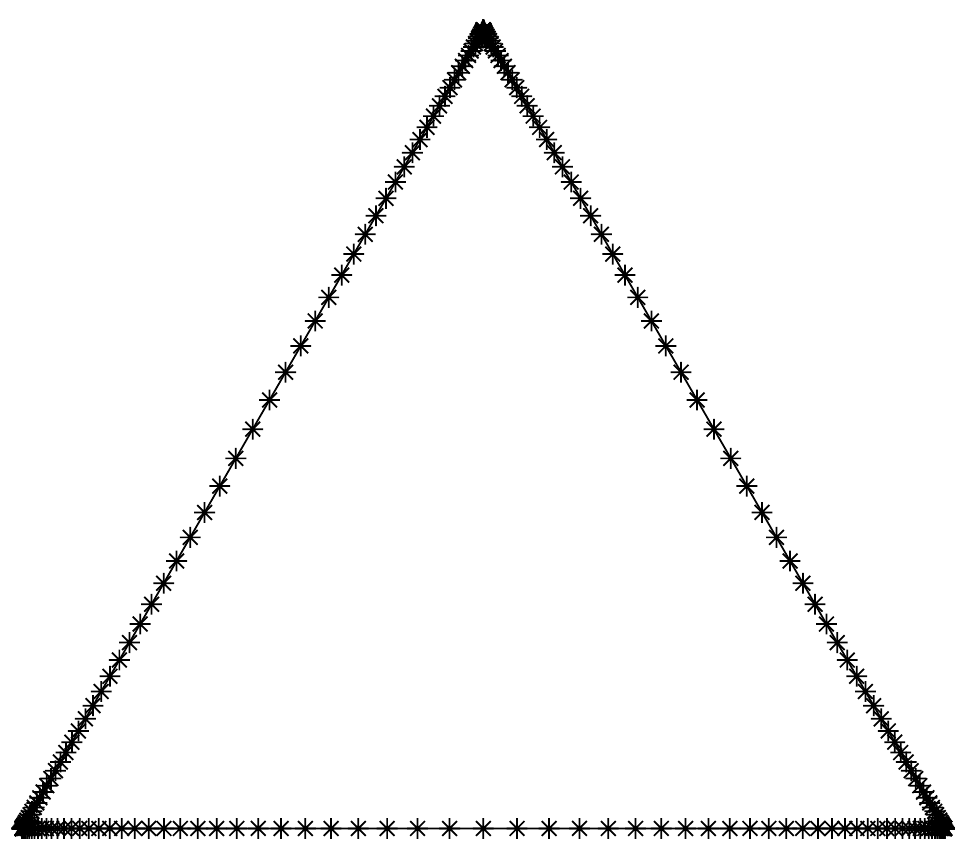}}
\qquad
{\includegraphics[width=3.5cm]{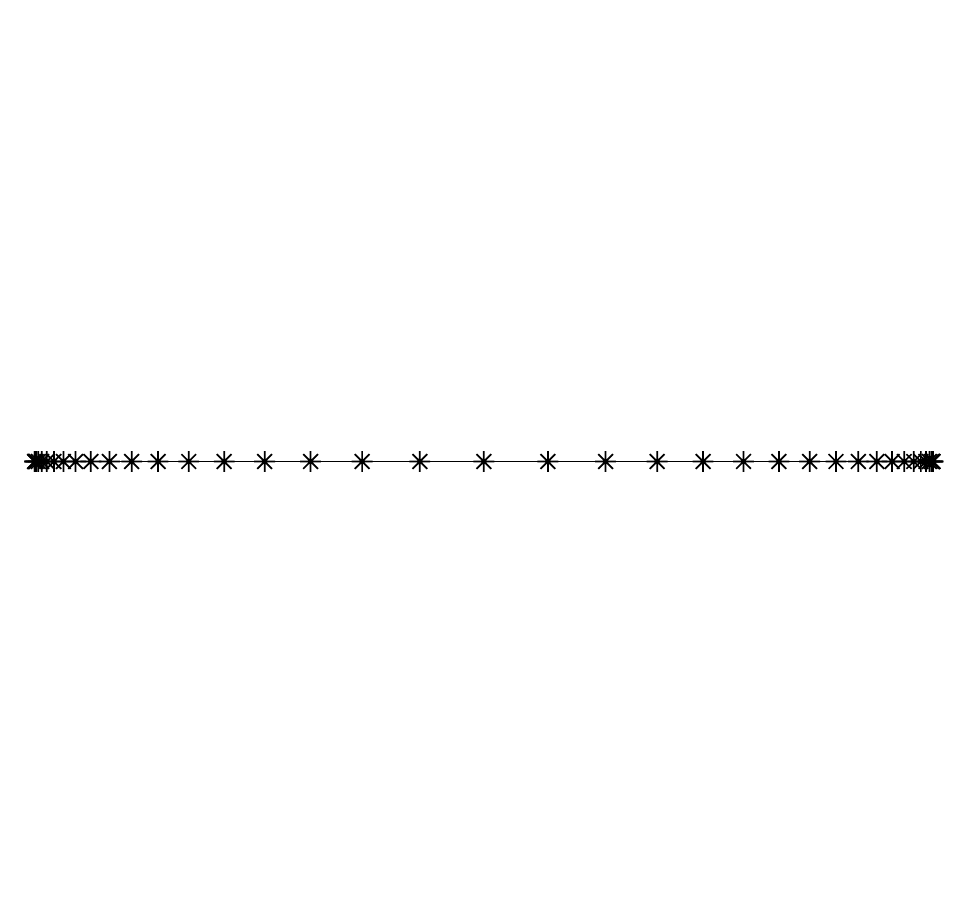}}}
\caption{$\tilde{\beta}$-graded meshes for the square and the circular screen with $\tilde{\beta}=2$ (a) and $\tilde{\beta}$-graded meshes for 1D obstacles with $\tilde{\beta}=3$ (b).}\label{gradmesh}
\end{figure}

In this article we consider the approximation on quasiuniform and $\tilde{\beta}$-graded meshes, for a constant $\tilde{\beta}\geq 0$. To define $\tilde{\beta}$-graded meshes on the interval $[-1,1]$, by symmetry it suffices to specify the nodes in $[-1,0]$. There we let 
\begin{equation}\label{gradedmesh}x_k=-1+\left(\frac{k}{N_l}\right)^{\tilde{\beta}}\end{equation} for $k=1,\ldots,N_l$. {We denote by $h$ the size of the longest interval and by $h_1 = x_1-x_0$ the size of the smallest interval.} For the square $[-1,1]^2$, the nodes of the $\tilde{\beta}$-graded mesh are tuples of such points, $(x_k,x_l),~k,l=1,\ldots, N_l$. For $\tilde{\beta}=1$ we recover a uniform mesh. \\

For general polyhedral geometries graded meshes can be locally modeled on these examples. In particular, on the circular screen of radius $1$, for $\beta =1$ we take a uniform mesh with nodes on concentric circles of radius $r_k=1-\frac{k}{N_l}$ for $k=0,\ldots,N_l-1$. For the $\tilde{\beta}$-graded mesh, the radii are moved to $r_k=1-(\frac{k}{N_l})^{\tilde{\beta}}$ for $k=0,\ldots,N_l-1$. While the triangles become increasingly flat near the boundary, their total number remains proportional to $N_l^2$.  \\

The global mesh size $h$ of a graded mesh is defined to be the diameter of the largest element. The diameter of the smallest element is of order $h^{\tilde{\beta}}$. \\

Examples of the resulting $2$-graded meshes on the square and the circular screens are depicted in Figure \ref{gradmesh}(a). \\

We also consider geometrically graded meshes on $\Gamma$. To define them on the reference interval $[-1,1]$ and with a refinement parameter $\sigma\in(0,1/2]$, in $[-1,0]$ we let $x_0=-1$, 
\begin{equation}\label{geometric points}
x_{k}=\sigma^{N_l+1-k}-1
\end{equation}
for $k=1,\ldots,N_l$, and we specify corresponding nodes in $[0,1]$ by symmetry. For the $hp$ version the polynomial degree $p$ increases linearly from $\partial \Gamma$: $p=\mu k$ in $[x_k, x_{k+1}]$ for a given $\mu >0$.

\section{Approximation results for Dirichlet and Neumann traces}\label{approxsection}

{This section splits into three subsections. In Subsection \ref{sec51} we} consider  the time-dependent elastodynamic problem in an exterior Lipschitz domain 
$\Omega \subset \mathbb{R}^n\setminus \overline{\Omega'}$, where $\Omega'$ has a piecewise 
smooth boundary with curved, non-intersecting edges, respectively cone points. 
Using the results from Section \ref{regularitysection}, we see that the solution admits an explicit singular expansion with the same singular behavior in the spatial variables as the time independent Lam\'{e} equation. This behavior is then used to analyze the error of piecewise polynomial approximations on a graded mesh {in Subsection \ref{sec52}, respectively $hp$ approximations on a quasi-uniform mesh in Subsection \ref{sec53}.}

\subsection{{Statement of regularity results}}\label{sec51}

We first consider a circular wedge {(Figure \ref{wedgemeshplot})}, leading to the regularity result in Proposition \ref{regpropositionwedge}. The case of a circular cone {(Figure \ref{conemeshplot})} is then discussed, leading to Proposition \ref{regpropositioncone}. 

\begin{figure}[b]
\centering
\includegraphics[scale=0.5]{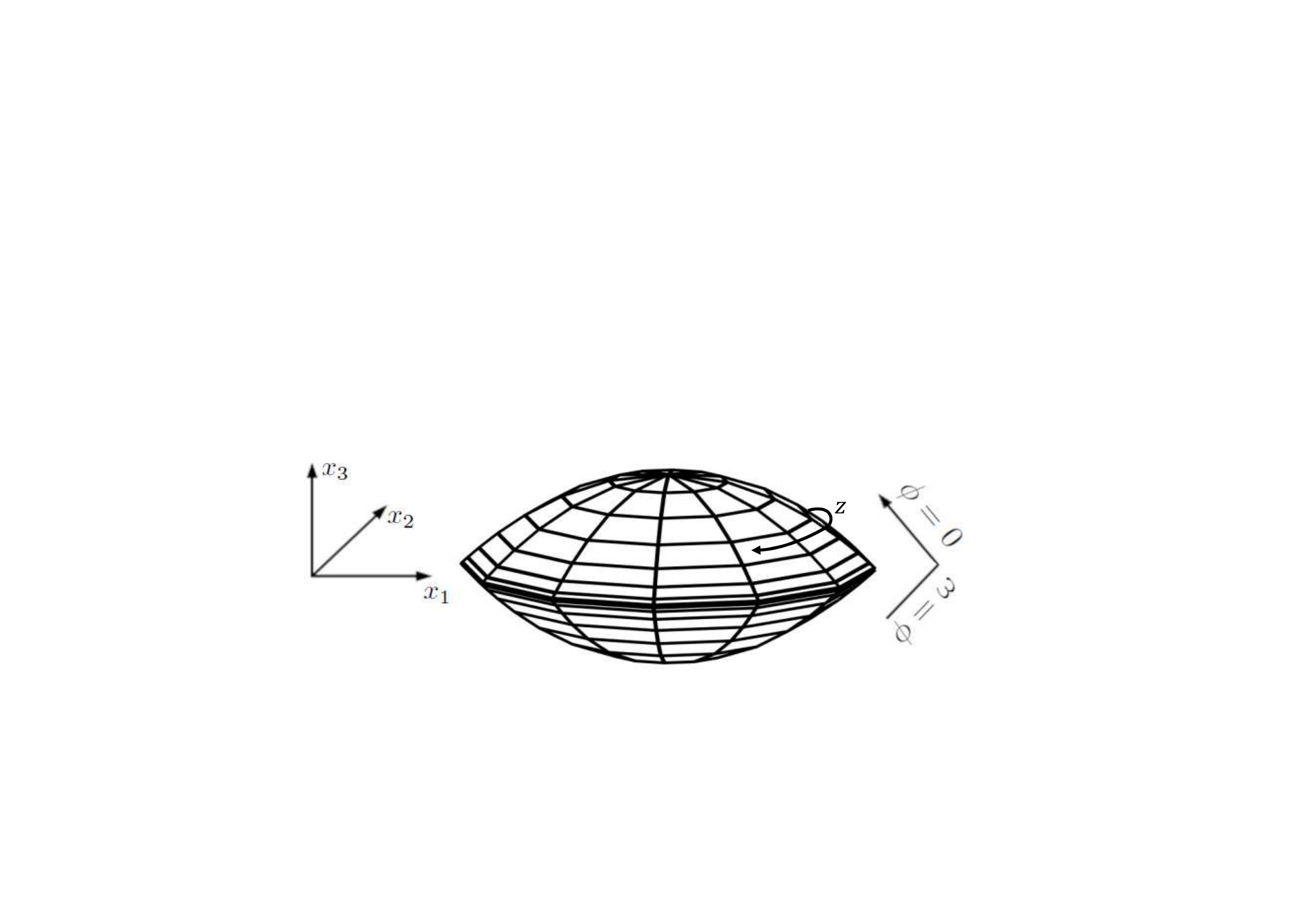}
\caption{Geometry and graded mesh on the wedge.}
\label{wedgemeshplot}
\end{figure}

For the \emph{exterior of a circular wedge} with opening angle $\omega$ and edge $\{(x_1,x_2,0) \in \mathbb{R}^3 :  {x_1^2+x_2^2}= 1\}$, in a neighborhood of the edge we use local cylindrical coordinates $(r,\phi,z)$ as in Subsection \ref{regularitywedge}: the distance to the edge is given by $r=|1-\sqrt{x_1^2+x_2^2}|$, $\phi$ is the polar angle, while the edge variable $z$ is the azimuthal angle in the $x_1 - x_2$-plane, {along the equator,} $\tan(z)= \frac{x_2}{x_1}$. For $\omega \to 2\pi^-$, the wedge degenerates into the circular screen $\{(x_1,x_2,0) \in \mathbb{R}^3 :  {x_1^2+x_2^2}\leq 1\}$. The geometry of the wedge and its discretization by a graded mesh are illustrated in Figure \ref{wedgemeshplot}. As in \cite{petersdorff3}, an analogous expansion to {Theorem} \ref{wedgelemma} for the {solution of the elastodynamic} equation \eqref{Navier equation} also holds for curved edges, with the same leading singular term $r^{\nu^\ast}$.

For the Dirichlet problem ($B=D$), respectively the Neumann problem ($B=N$), assume that the spectrum $\sigma(\mathcal{A}_B)$ of the pencil $\mathcal{A}_B$ {(from \eqref{pencildef} and its special case \eqref{pencilspec})} is constant on the edge and that there exists 
$\beta \in \mathbb{R}$ such that $\{\lambda \in \mathbb{C} : \mathrm{Im} \ \lambda = \beta-1\} \cap \sigma(\mathcal{A}_B) = \emptyset$. 

{Using Section 3 and Appendix B we can show} the following regularity result for the boundary traces of the solution: 

\begin{proposition}\label{regpropositionwedge} 
{a) 
Let $\gamma>0$, $q \in \mathbb{N}_0$ and $\nu^\ast$ the leading singular exponent, {which is} the minimum between $\frac{\pi}{\omega}$ and the minimal root of \eqref{eq1.11}. Let $(\bold{f},\bold{g}) \in \mathcal{R}V_{\beta,q}(\mathcal{Q},\gamma)$ and assume that the orthogonality condition 
\eqref{orthorelation} holds.
Then the Neumann trace of the solution $\textbf{u}$ of the Dirichlet problem \eqref{Navier equation}, \eqref{dirichlet condition} with right hand side $\bold{f}$, Dirichlet data $\bold{g}$ and   initial conditions \eqref{initial vanishing condition} satisfies
\begin{equation}
p_i(\textbf{u})(r,\phi,z,t)|_\Gamma = b_i(\phi,z,t) r^{\nu^\ast-1} + \phi_{i,0}(r,\phi,z,t)\ . \label{decompositionEdge}
\end{equation}
Here, $b_i$ is smooth for smooth data and $\phi_{i,0}$ {is} a less singular remainder.\\
b) Let $\gamma>0$, $q \in \mathbb{N}_0$ and $\nu^\ast$ the leading singular exponent, {which is} the minimum of $\frac{\pi}{\omega}$ and the minimal root of \eqref{eq1.12}. Assume that $i\lambda_1 = \nu^\ast$ is the only eigenvalue in the strip $\beta -1 \leq \mathrm{Im}\ \lambda_1 \leq0$. Let $(\bold{f},\bold{h}) \in \mathcal{R}V_{\beta,q-1}(\mathcal{Q},\gamma)$ and assume that the orthogonality condition 
\eqref{orthorelation} holds. 
Then the Dirichlet trace of the solution $\textbf{u}$ of the Neumann problem \eqref{components with hooke tensor}, \eqref{neumann condition} with right hand side $\bold{f}$, Neumann data $\bold{h}$ and initial conditions \eqref{initial vanishing condition} satisfies
\begin{equation}
u_i(r,\phi,z,t)|_\Gamma = a_i(\phi,z,t) r^{\nu^\ast} + u_{i,0}(r,\phi,z,t)\  . \label{decompostionEdget}
\end{equation}
Here, $a_i$ is smooth for smooth data and $u_{i,0}$ {is} a remainder which is less singular in the variable $r$.}
\end{proposition}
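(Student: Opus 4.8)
\textbf{Proof plan for Proposition \ref{regpropositionwedge}.}

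The plan is to derive both statements as direct consequences of the wedge expansion in Theorem \ref{wedgelemma} and Corollary \ref{polygonlemma}, by taking boundary traces. First I would treat part a). By Theorem \ref{wedgelemma}, the solution $\textbf{u}$ of the Dirichlet problem admits the expansion \eqref{wedgeexpand}, where the leading singular functions $\bold{S}^*_{k,\ell}(r,\phi) \sim r^{\nu^*_k+\ell}\pmb{\varphi}^*_{k,\ell}(\phi)$ are built from the time-independent Lam\'e singularities \eqref{eq1.10}, with edge coefficient functions $Xc^*_{k,\ell}$ whose regularity (in $z$ and $t$) is inherited from the right-hand side $(\bold{f},\bold{g})$ via the mechanism described in Theorem \ref{maintheorem}. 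The leading term corresponds to $k=1$, $\ell=0$, with $\nu^\ast = \min\{\pi/\omega,\ \text{min.\ root of \eqref{eq1.11}}\}$, since the $z$-component contributes the scalar wave singularity with exponent $\pi/\omega$ and the in-plane components contribute the elastic singularity from \eqref{singdirichlet}. Applying the traction operator $\textbf{p}(\cdot)$: each $C^{kl}_{ih}\partial_{\xi_l}$ differentiation lowers the radial exponent by one and maps the smooth remainder $\bold{u}_0 \in DV_{\beta,q}(\mathcal{Q},\gamma)$ into a slightly less regular remainder, while acting on $r^{\nu^\ast}\bold{a}(t,\phi)$ produces a term $r^{\nu^\ast-1}\bold{b}(t,\phi)$ with $b_i$ built from $a_i$ and its $\phi$-derivatives; the higher-order singular terms $\ell\geq1$ and $k\geq2$, after differentiation, have exponent $\geq \nu^\ast$ and are absorbed into $\phi_{i,0}$. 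This yields \eqref{decompositionEdge}, with $b_i$ smooth when $\bold{f},\bold{g}$ are smooth (so that the edge functions $c^*_{k,\ell}$ are smooth, using the embedding $DV_{\beta,q}(\mathcal{Q},\gamma)\subset H^r_\sigma(\mathbb{R}^+,H^s(\mathbb{D}))^n$).

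For part b), I would argue analogously, invoking the Neumann version of Theorem \ref{wedgelemma} (obtained, as indicated in the text, by the modifications after Theorem \ref{maintheorem} using an extension $\widetilde{g}$ with the prescribed Neumann trace, reducing to homogeneous boundary data). The relevant singular exponents are now the roots of \eqref{eq1.12} together with $\pi/\omega$ for the $z$-component, so $\nu^\ast$ is their minimum. Under the hypothesis that $i\lambda_1 = \nu^\ast$ is the only eigenvalue in $\beta-1\leq\mathrm{Im}\ \lambda_1\leq 0$, only the single leading term survives in the strip, and the expansion reads $\textbf{u} = \chi(r)r^{\nu^\ast}\bold{a}(t,\phi) + \bold{u}_0$ with $\bold{u}_0 \in DV_{\beta,q}(\mathcal{Q},\gamma)$. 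Taking the Dirichlet trace on $\Gamma$ (i.e.\ restricting to $\phi$ at the boundary value of the angular variable, or more precisely restricting to $r>0$ on the faces) directly gives \eqref{decompostionEdget}, with $a_i(\phi,z,t)$ the restriction of the angular profile, smooth whenever the data are smooth, and $u_{i,0}$ the trace of the remainder, which lies in a lower-order space and in particular has better behavior in $r$. The loss of one order in $q$ for the data $(\bold{f},\bold{h})\in\mathcal{R}V_{\beta,q-1}(\mathcal{Q},\gamma)$ compared with part a) accounts for the extra differentiation implicit in prescribing the Neumann rather than the Dirichlet trace.

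The main obstacle, and the step requiring the most care, is the bookkeeping of the remainder term: verifying that after applying the traction operator in part a) the remainder $\phi_{i,0}$ still lies in the appropriate weighted space and is genuinely less singular than $r^{\nu^\ast-1}$, i.e.\ that no hidden contribution of order $r^{\nu^\ast-1}$ or worse is generated by differentiating $\bold{u}_0$ or the subleading singular terms. This is controlled by the fact that $\bold{u}_0 \in DV_{\beta,q}(\mathcal{Q},\gamma)$ has $q$ weighted derivatives and that the weight $\beta$ is chosen in $(\beta_{r+1},\beta_r)$ so that no eigenvalue lies on the critical line; the differentiation shifts the Sobolev index and weight in a controlled way, as in the elliptic theory of \cite{dauge,disspetersdorff}. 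A secondary point is ensuring the curved-edge geometry of the circular wedge does not alter the leading exponent — this is handled by the remark following the statement, citing \cite{petersdorff3}, that the expansion persists for curved edges with the same leading term $r^{\nu^\ast}$.
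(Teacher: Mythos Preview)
Your proposal is correct and follows essentially the same route as the paper: derive the expansion of $\textbf{u}$ from the wedge asymptotics (Theorem \ref{wedgelemma} / Theorem \ref{conetheorem}) and then take the Neumann, respectively Dirichlet, trace. The only real difference is one of emphasis. The paper spends almost all of its proof on the curved-edge localization for the circular wedge --- introducing local charts, $\tau$-dependent cut-offs $\zeta^{(j)}_\tau$, and a partition of unity, and arguing that the extra commutator terms from differentiating the $z$-cut-off can be absorbed for $\gamma$ large enough, so that Proposition \ref{9.7} and hence the expansion \eqref{asymptoticexpansion} persist for the curved edge. You instead treat this as a secondary point, citing the remark before the proposition and \cite{petersdorff3}, and are more explicit about the mechanics of applying the traction operator and tracking the radial exponents of the singular and remainder terms. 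Both are legitimate; your version makes the trace step transparent, while the paper's version supplies the analytic justification that the straight-wedge expansion really transfers to the circular geometry, which is the nontrivial input you are taking on faith.
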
 
\begin{proof}
a) First we note that for the Dirichlet problem with $\textbf{u}|_{\Gamma} = 0$ we locally have the regularity estimate
in Proposition \ref{9.7} by use of a partition of unity (see Proposition 9.3, (160) in \cite{matyu}). The corresponding 
estimate for the solution of the inhomogeneous problem is estimate (159) in Proposition 9.3, \cite{matyu}. 
Here, for curved edges, one introduces local charts in a neighborhood of the edge, to obtain a problem 
with variable coefficients in a wedge $\mathbb{D} = \mathbb{D}_j$ in the $j$-th coordinate chart. First one uses a function $(y,z) \mapsto \zeta^{(j)}(y,z) \in C^\infty(\mathbb{D}_j)$
which is independent of $z$ and, for sufficiently small $\delta>0$,  $\zeta^{(j)}=1$ for $|y|<\delta$ and $\zeta^{(j)} = 0$ for $|y|>2\delta$.
Set $\zeta^{(j)}_\tau(y,z) = \zeta^{(j)}(|\tau|y,z)$. Then one glues the functions $\zeta^{(j)}_\tau$ together with a 
partition of unity. In the proof of \eqref{9.7estimate} one replaces $\chi_\tau$ by the map $(y,z) \mapsto \eta(z)\zeta^{(j)}_\tau(y,z)$ 
supported in a small neighborhood of $z=0$, and $\eta=1$ near $z=0$. Compared to Proposition \ref{9.7} some additional
terms arise from the differentiation of the cut-off functions in $z$. This differentiation does not increase the order of growth in $|\tau|$.
Therefore, with a sufficiently large constant $\gamma_0>0$ and $\gamma>\gamma_0$ in Proposition \ref{9.7}, we can 
remove these additional terms from the estimate. The expansion 
\eqref{asymptoticexpansion} in Theorem \ref{conetheorem} is thereby also obtained for curved edges, and expression \eqref{decompositionEdge} follows by taking traces. 

Smoother data $\bold{f}$, $\bold{g}$ lead to a smoother remainder term in the expansion \eqref{asymptoticexpansion}. 

\noindent b) The proof for the Neumann problem is analogous. The relevant regularity estimates may be found in Proposition 9.4 in \cite{matyu}. 
\end{proof}

%
%


\begin{figure}[ht]
\centering
   \subfloat[]{
\includegraphics[scale=0.33]{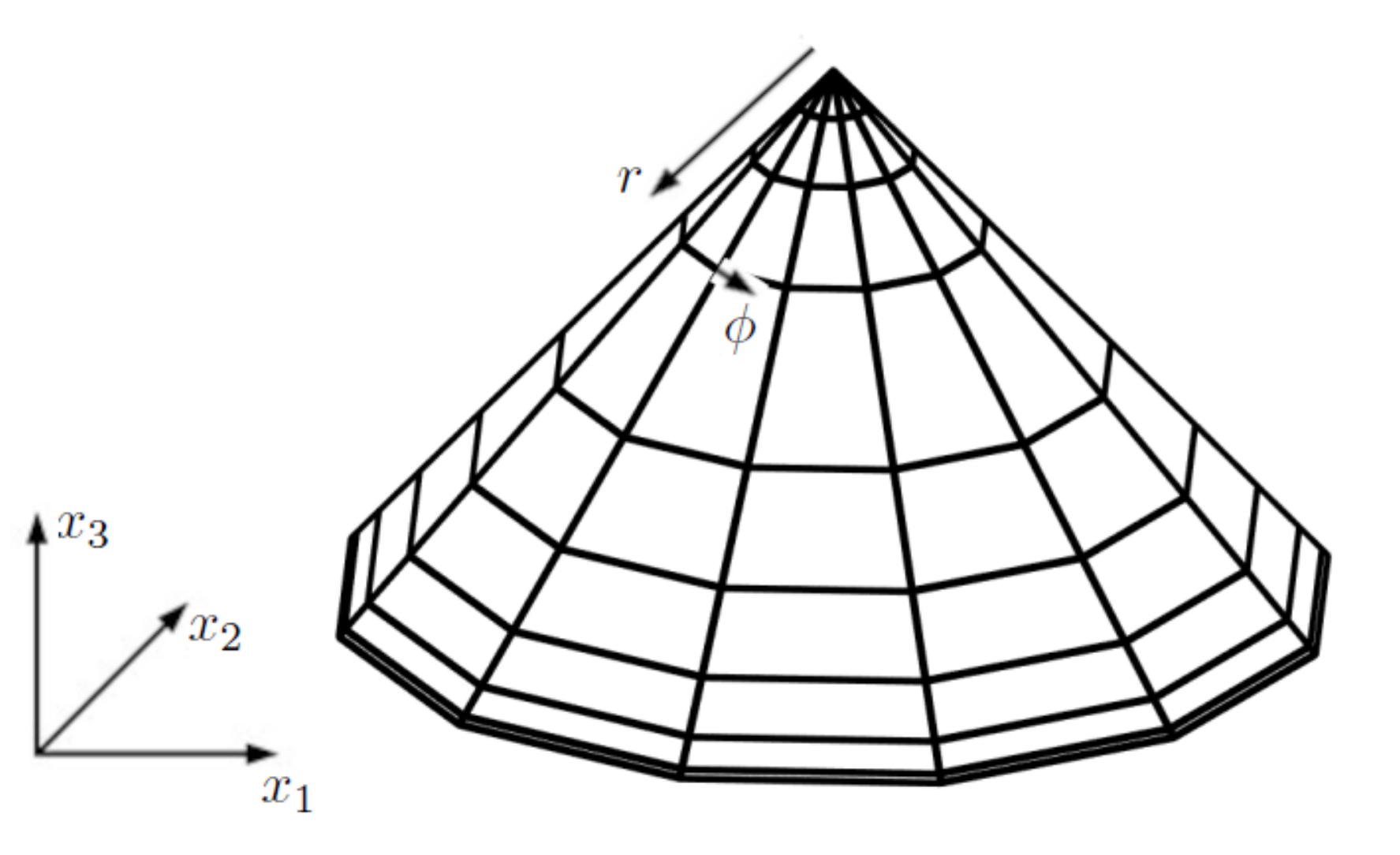}}
   \subfloat[]{ 
\includegraphics[scale=0.33]{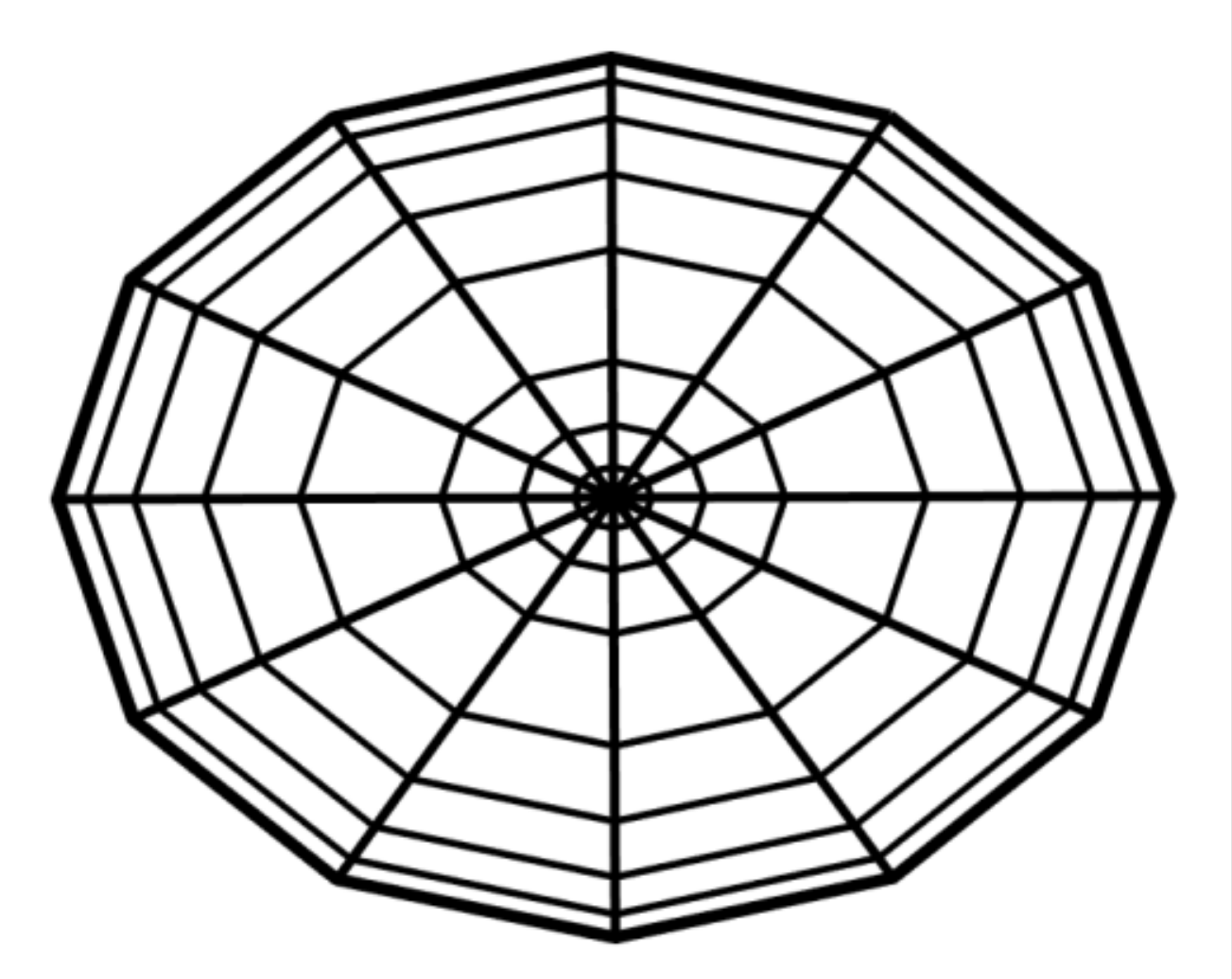}}
\caption{Geometry and graded mesh on a circular cone: viewed from the side (a) and from above (b).}
\label{conemeshplot}
\end{figure}

{We now consider the elastodynamic equations in the \emph{exterior of a cone} $\mathbb{K}$ with vertex at $r=0$, as illustrated in Figure \ref{conemeshplot}.} 

For the Dirichlet problem ($B=D$), respectively the Neumann problem ($B=N$), assume that the spectrum $\sigma(\mathcal{A}_B)$ of the pencil $\mathcal{A}_B$ {(from \eqref{pencildef} and its special case \eqref{pencilspec})} is constant on the edge and that there exists 
$\beta \in \mathbb{R}$ such that $\{\lambda \in \mathbb{C} : \mathrm{Im} \ \lambda = \beta-\frac{1}{2}\} \cap \sigma(\mathcal{A}_B) = \emptyset$. 

{Using Subsection \ref{regularitycone} and Appendix B we can show the following result} near the vertex of the cone for the boundary traces of the solution in spherical coordinates:


\begin{proposition}\label{regpropositioncone} 
{a) 
Let $\gamma>0$, $q \in \mathbb{N}_0$. Assume that $i\lambda_1 = \alpha$ is the only eigenvalue of the pencil $\mathcal{A}_D$ in the strip $\beta -\frac{1}{2} \leq \mathrm{Im}\ \lambda_1 \leq0$. 
Let $(\bold{f},\bold{g}) \in \mathcal{R}V_{\beta,q}(\mathcal{Q},\gamma)$ and assume that the orthogonality condition 
\eqref{orthorelation} holds.
Then the Neumann trace of the solution $\textbf{u}$ of the Dirichlet problem \eqref{Navier equation}, \eqref{dirichlet condition} with right hand side $\bold{f}$, Dirichlet data $\bold{g}$ and   initial conditions \eqref{initial vanishing condition} satisfies
\begin{equation}
p_i(\textbf{u})(r,\phi,\theta,t)|_\Gamma =\chi(r)r^{\alpha-1} b_i(\phi,\theta,t) +  \phi_{i,0}(r, \phi,\theta,t) \ . \label{decomposition}
\end{equation}
Here, 
$b_i$ is smooth for smooth data and $\phi_{i,0}$ a less singular remainder.\\
b) Let $\gamma>0$, $q \in \mathbb{N}_0$. Assume that $i\lambda_1 = \alpha$ is the only eigenvalue of the pencil $\mathcal{A}_N$ in the strip $\beta -\frac{1}{2} \leq \mathrm{Im}\ \lambda_1 \leq0$. Let $(\bold{f},\bold{h}) \in \mathcal{R}V_{\beta,q-1}(\mathcal{Q},\gamma)$ and assume that the orthogonality condition 
\eqref{orthorelation} holds. 
Then the Dirichlet trace of the solution $\bold{u}$ of the Neumann problem \eqref{components with hooke tensor}, \eqref{neumann condition} with right hand side $\bold{f}$, Neumann data $\bold{h}$ and initial conditions \eqref{initial vanishing condition} satisfies
\begin{equation}\label{decompositiont}
u_i(r,\phi,\theta,t)|_\Gamma  = \chi(r)r^{\alpha} a_i(\phi,\theta,t) + u_{i,0}(r, \phi,\theta,t) \ .\\
\end{equation}
Here, 
$a_i$ is smooth for smooth data and $u_{i,0}$ a less singular remainder.}
\end{proposition}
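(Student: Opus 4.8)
The plan is to obtain both decompositions by taking boundary traces in the interior asymptotic expansions established in Subsection~\ref{regularitycone}, much as Proposition~\ref{regpropositionwedge} was deduced from Theorem~\ref{wedgelemma}.

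For part a) I would start from Theorem~\ref{conefinalthm}, applied with the weight exponent $\beta$ of the hypothesis and the data $(\textbf{f},\textbf{g}) \in \mathcal{R}V_{\beta,q}(\mathcal{Q},\gamma)$ satisfying the orthogonality condition~\eqref{orthorelation}. Since $i\lambda_1 = \alpha$ is assumed to be the only eigenvalue of $\mathcal{A}_D$ in the strip $\beta-\tfrac{1}{2}\leq \mathrm{Im}\ \lambda_1 \leq 0$, the sum over $\ell$ in~\eqref{coneexpand} collapses to the single leading contribution built from the generalized eigenfunctions $\textbf{u}_1^{k,j}$ of the form~\eqref{usym}, which to leading order in $r$ behave like $r^\alpha$ times a function of the angular variables. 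Hence near the vertex $\textbf{u}(r,\phi,\theta,t) = \chi(r)\,r^{\alpha}\,\textbf{a}(\phi,\theta,t) + \textbf{u}_0(r,\phi,\theta,t)$ with $\textbf{u}_0 \in DV_\beta(\mathcal{Q},\gamma)$, and the angular coefficient $\textbf{a}$ inherits smoothness in $(\phi,\theta,t)$ from the variable coefficients $\tilde{c}_\ell^{k,j}$ of Theorem~\ref{conetheorem}, whose regularity is governed by that of the data. Applying the traction operator $\textbf{p}(\cdot)$, which involves only first-order spatial derivatives, sends the singular term $r^\alpha\textbf{a}$ to a term of the form $r^{\alpha-1}\textbf{b}$ with $\textbf{b}$ depending linearly on $\textbf{a}$ and its angular derivatives; restricting to $\Gamma$ and absorbing $\textbf{p}(\textbf{u}_0)|_\Gamma$ into $\phi_{i,0}$ gives~\eqref{decomposition}. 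Smoother $\textbf{f}$, $\textbf{g}$ enlarge the admissible range of $\beta$ and thus render $\phi_{i,0}$ correspondingly less singular, via the embeddings $DV_\beta(\mathcal{Q},\gamma) \subset H^r_\sigma(\mathbb{R}^+, H^s(\mathbb{D}))^n$.

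For part b) the argument is identical with the roles of the Dirichlet and Neumann data exchanged: one uses the Neumann counterpart of Theorem~\ref{conefinalthm} (equivalently, the Neumann version of Theorem~\ref{conetheorem}), whose underlying regularity estimates are those of Proposition~9.4 in~\cite{matyu} in place of Proposition~9.3, and the hypothesis that $i\lambda_1 = \alpha$ is the only eigenvalue of $\mathcal{A}_N$ in the strip again isolates a single leading term $\chi(r)\,r^{\alpha}\,\textbf{a}(\phi,\theta,t)$. The Dirichlet trace~\eqref{decompositiont} is then obtained directly by restriction to $\Gamma$, with $u_{i,0}$ the trace of the regular part $\textbf{u}_0 \in DV_\beta(\mathcal{Q},\gamma)$.

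The step I expect to be the main obstacle is verifying that the traction of the regular part is genuinely less singular than $r^{\alpha-1}$, i.e.\ establishing the mapping property of the traction operator on the weighted spaces $DV_\beta(\mathcal{Q},\gamma)$: one must check that first-order differentiation loses at most one power of $r$ in these weighted norms and that the resulting function still lies in an anisotropic Sobolev space $H^r_\sigma(\mathbb{R}^+, H^{\eta}(\Gamma))^n$ whose spatial index $\eta$ exceeds the order of the leading singular term. A secondary point is to ensure that the single-eigenvalue hypothesis rules out both additional singular exponents and the logarithmic terms $r^\alpha\log r$ that arise when exponents coincide or are integers, so that the clean form of the expansions with a single power of $r$ is valid.
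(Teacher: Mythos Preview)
Your proposal is correct and follows essentially the same route as the paper: obtain the interior expansion from Theorem~\ref{conefinalthm} (i.e.\ \eqref{coneexpand}), use the single-eigenvalue hypothesis to isolate the leading $r^\alpha$ term, and then take the traction for part~a) respectively the Dirichlet trace for part~b). One small correction: for the cone geometry the underlying regularity estimates in \cite{matyu} are Proposition~9.1 (Dirichlet) and Proposition~9.2 (Neumann), not 9.3 and 9.4, which pertain to the wedge.
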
 
\begin{proof}
{a)} First one notices that locally for the cone $\mathbb{K}$ the estimate \eqref{H2estimate} for the Dirichlet problem
holds, see also Proposition 9.1, (150) in \cite{matyu}. Taking traces of the resulting expansion \eqref{coneexpand} gives \eqref{decompositiont}. 
As in the case of a wedge {(Proposition \ref{regpropositionwedge} and Theorem \ref{conetheorem})}, {using the analogue of  \eqref{H2estimate} for smoother data $\bold{f}$, $\bold{g}$, we can derive expansion \eqref{decomposition} by taking the boundary traction $p_i(\textbf{u})$ of the decomposition \eqref{asymptoticexpansion} of the solution of the Dirichlet boundary value problem of the elastodynamic equations.}

{b)} For the Neumann problem, Proposition 9.2 in \cite{matyu} gives an estimate analogous to \eqref{H2estimate} for $\gamma>\gamma_0>0$ sufficiently large. {Again one derives an expansion for the solution like in Theorem \ref{conetheorem}, and takes the trace.}
\end{proof}

For both the wedge and the cone, we may assume, after possibly expanding $u_{i,0}$ and $\phi_{i,0}$ further in   \eqref{decompostionEdget}, \eqref{decompositionEdge}, respectively  \eqref{decompositiont}, \eqref{decomposition}, that the regular part $u_{i,0}$ belongs to $H^{3}$ in space and $\phi_{i,0}$ belongs to $H^{1}$ in space. Corresponding expansions then also hold for the solutions $\pmb{\Psi}$ and $\pmb{\Phi}$ to the integral equations \eqref{explicit BIE}, respectively \eqref{explicit BIE hypersingular}.

To simplify notation, for a domain in $\mathbb{R}^3$ with {both} wedge and cone singularities we define \begin{equation}\label{alphadef}\tilde{\alpha} = \min\left\{\mathrm{Re}\ \nu^\ast, \mathrm{Re}\ \alpha + \frac{1}{2}\right\},\end{equation} {where we recall that $\nu^\ast$ denotes the leading singular exponent at the edge (the minimum of $\frac{\pi}{\omega}$ and the minimal root of \eqref{eq1.12}), while $\alpha$ is the leading singular exponent at the cone (the leading eigenvalue of the pencil $\mathcal{A}_{D/N}$).} For a polygonal domain in $\mathbb{R}^2$, we set $\tilde{\alpha} = \mathrm{Re}\ \nu^\ast$. Note that $\nu^* = \frac{1}{2}$ for a screen in $\mathbb{R}^3$, respectively a crack in $\mathbb{R}^2$.


\subsection{Approximation on graded meshes} \label{sec52}

{We use the regularity results from the beginning of this section to deduce approximation properties on graded meshes:}

\begin{theorem}\label{approxtheorem2} Let $r\geq 0$ and $\varepsilon>0$. 
{a) Let $\bold{ u}$ be a strong solution to the homogeneous elastodynamic equation  \eqref{Navier equation} with inhomogeneous Dirichlet boundary conditions $\bold{ u}|_\Gamma = \bold{ g}$, with $\bold{ g}$ smooth. Further, let $\pmb{ \Phi}_{h,\Delta t}^{\tilde{\beta}}\in \left({V_{\Delta t,q}}\otimes X^{-1}_{h,0}\right)^n$ be the best approximation to $\bold{ p}(\bold{ u})$ in the norm of ${H}^{r}_\sigma(\R^+, \widetilde{H}^{-\frac{1}{2}}(\Gamma))^n$ on a ${\tilde{\beta}}$-graded spatial mesh  with $\Delta t \lesssim h_1$. Then $\|\bold{ p}(\bold{ u})|_\Gamma-\pmb{ \Phi}_{h, \Delta t}^{\tilde{\beta}}\|_{r,-\frac{1}{2}, \Gamma, \ast} \leq C_{{\tilde{\beta}},\varepsilon} h^{\min\{{\tilde{\beta}} \tilde{\alpha}-\varepsilon, \frac{3}{2}\}}$.  }

{b) Let $\bold{ u}$ be a strong solution to the homogeneous elastodynamic equation  \eqref{Navier equation} with inhomogeneous Neumann boundary conditions $\bold{ p}(\bold{u})|_\Gamma =  \bold{ h}$, with $\bold{ h}$ smooth. Further, let $\pmb{ \Psi}_{h,\Delta t}^{\tilde{\beta}} \in \left({V_{\Delta t,q}}\otimes X^{0}_{h,1}\right)^n$ be the best approximation to $\bold{ u}|_\Gamma$ in the norm of ${H}^{r}_\sigma(\R^+, \widetilde{H}^{\frac{1}{2}-s}(\Gamma))^n$ on a ${\tilde{\beta}}$-graded spatial mesh  with $\Delta t \lesssim h_1$. Then $\|\bold{ u}|_\Gamma-\pmb{ \Psi}_{h, \Delta t}^{\tilde{\beta}}\|_{r,\frac{1}{2}-s, \Gamma, \ast} \leq C_{{\tilde{\beta}},\varepsilon} h^{\min\{{\tilde{\beta}}(\tilde{\alpha}+s)-\varepsilon, \frac{3}{2}+s\}}$, where $s \in [0,\frac{1}{2}]$. }
\end{theorem}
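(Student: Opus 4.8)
The plan is to reduce both assertions to separate spatial approximation estimates for the singular and the regular parts in the decompositions of Propositions \ref{regpropositionwedge} and \ref{regpropositioncone}, resp.\ Corollary \ref{polygonlemma} in the 2d case. Since only a best approximation error is to be bounded, it suffices to exhibit one admissible approximant, which I would take of tensor product form so that the error splits into a purely temporal and a purely spatial contribution. Because $\bold{g}$, resp.\ $\bold{h}$, is smooth, the regularity results give smooth time dependence of the coefficient functions $\bold{b}$, $\bold{a}$ and of the remainders $\pmb{\phi}_0$, $\bold{u}_0$ (and smooth dependence on the tangential variables $\phi,z$, resp.\ $\phi,\theta$), so the temporal factor --- piecewise polynomials of degree $q$ on a uniform mesh of width $\Delta t$ --- contributes at most $\lesssim \Delta t^{\,q+1-r} \lesssim h_1^{\,q+1-r} \sim h^{\tilde{\beta}(q+1-r)}$; under the standing assumption $\Delta t \lesssim h_1 \sim h^{\tilde{\beta}}$ this is subdominant with respect to the rates claimed, once $q$ is large enough relative to $r$, so it remains to analyze the spatial error.

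For part a), \eqref{decompositionEdge} and \eqref{decomposition} give $\bold{p}(\bold{u})|_\Gamma = \chi(r)\,r^{\tilde{\alpha}-1}\bold{b}(\cdot,t) + \pmb{\phi}_0$ with $\pmb{\phi}_0$ arranged to lie in $H^{1}$ in space, uniformly in time. The regular part is handled by the $L^2$-projection $\Pi_h$ onto $X^{-1}_{h,0}$: the graded mesh is locally quasi-uniform with largest element of diameter $h$, whence $\|\pmb{\phi}_0-\Pi_h\pmb{\phi}_0\|_{\widetilde{H}^{-1/2}(\Gamma)} \lesssim h^{3/2}\|\pmb{\phi}_0\|_{H^1(\Gamma)}$, which after taking the $H^r_\sigma$-norm in time gives the $h^{3/2}$ contribution. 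For the singular part I would invoke the graded-mesh approximation estimates collected in Appendix C (the scalar model being \cite[Theorem 14]{graded}, \cite{hp}, and the time-independent case \cite{disspetersdorff}): piecewise constants on the $\tilde{\beta}$-graded mesh \eqref{gradedmesh} approximate $\chi(r)\,r^{\tilde{\alpha}-1}$ with error $\lesssim_{\tilde{\beta},\varepsilon} h^{\min\{\tilde{\beta}\tilde{\alpha}-\varepsilon,\,3/2\}}$ in $\widetilde{H}^{-1/2}(\Gamma)$, the exponent $\tilde{\beta}\tilde{\alpha}$ arising because near the edge or vertex the mesh has local width $\sim h\,\mathrm{dist}(\cdot,\text{singular set})^{1-1/\tilde{\beta}}$, the cap $3/2$ reflecting the order of piecewise constants, and $\varepsilon$ absorbing a logarithmic loss and the possible $r^{\tilde{\alpha}-1}\log r$ terms in \eqref{eq1.10}. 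Smoothness of $\bold{b}$ in the remaining variables lets the tangential and temporal directions be treated by a tensor argument; for a curved edge one first localizes with a partition of unity and passes to local charts, the edge variable entering only as a parameter. Adding the two contributions yields a).

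Part b) follows analogously from $\bold{u}|_\Gamma = \chi(r)\,r^{\tilde{\alpha}}\bold{a}(\cdot,t) + \bold{u}_0$ in \eqref{decompostionEdget}, \eqref{decompositiont}, with $\bold{u}_0 \in H^3$ in space, using continuous piecewise linears in $X^{0}_{h,1}$ measured in $\widetilde{H}^{1/2-s}(\Gamma)$, $s\in[0,\tfrac{1}{2}]$. The regular part is approximated to order $h^{\min\{2,\,3-(1/2-s)\}} = h^{3/2+s}$, the cap $3/2+s$ coming from the fact that piecewise linears cannot exceed order $2$ in $L^2$. The singular function $\chi(r)\,r^{\tilde{\alpha}}$, which lies locally in $H^{\tilde{\alpha}+1/2-\varepsilon}$, is approximated on the $\tilde{\beta}$-graded mesh in $\widetilde{H}^{1/2-s}(\Gamma)$ to order $h^{\min\{\tilde{\beta}(\tilde{\alpha}+s)-\varepsilon,\,3/2+s\}}$, the grading converting the quasi-uniform rate $h^{\tilde{\alpha}+s-\varepsilon}$ into $h^{\tilde{\beta}(\tilde{\alpha}+s)-\varepsilon}$. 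Summing and discarding the subdominant temporal error gives the claim.

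The hard part is the spatial graded-mesh estimate for the singular profiles $r^{\tilde{\alpha}-1}$, resp.\ $r^{\tilde{\alpha}}$, in the fractional spaces $\widetilde{H}^{-1/2}(\Gamma)$, resp.\ $\widetilde{H}^{1/2-s}(\Gamma)$: one must quantify how the algebraic grading \eqref{gradedmesh} trades element size against singularity strength, perform the tensor and partition-of-unity reduction to the model situations of \cite{graded}, \cite{hp}, \cite{disspetersdorff}, and verify that the anisotropic space--time norms $\|\cdot\|_{r,-1/2,\Gamma,\ast}$, resp.\ $\|\cdot\|_{r,1/2-s,\Gamma,\ast}$, commute with this decomposition so that the $\sigma$-weighted temporal norm of the smooth coefficient functions stays finite. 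A secondary, routine point is to confirm that $\Delta t \lesssim h_1$ indeed renders the temporal error negligible for all admissible $\tilde{\beta}$ and $q$.
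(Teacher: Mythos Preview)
Your plan is essentially the paper's: split into singular and regular parts via the decompositions, take a tensor-product interpolant so that time and space separate, and reduce to graded-mesh estimates for the singular profiles $r^{\tilde{\alpha}-1}$, $r^{\tilde{\alpha}}$. The paper makes this precise through the one-dimensional Lemmas \ref{keylemmagrad} and \ref{keylemmagrad2} combined with the tensor-product Lemmas \ref{lemma3.3}--\ref{TobiasTRACE} of Appendix~C, and for the cone vertex carries out an explicit element-by-element analysis on the square grid (distinguishing the cases $k,l\geq 2$; $k=1,\,l>1$; and $k=l=1$) rather than invoking a black-box graded-mesh estimate.
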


Recall that $\| \cdot \|_{r,\pm\frac{1}{2}, \Gamma, \ast}$ denotes the norm on ${H}^{r}_\sigma(\R^+, \widetilde{H}^{\pm\frac{1}{2}}(\Gamma))^n$, as in Appendix A, and that $h$ is the diameter of the largest element in the graded mesh. Theorem \ref{approxtheorem2} implies a corresponding result for the solutions of the single layer and hypersingular integral equations on the surface:
\begin{corollary}\label{approxcor2} Let $r\geq 0$ and $\varepsilon>0$. 
{a) Let $\pmb{ \Phi}$ be the solution to the single layer integral equation  \eqref{explicit BIE} and  $\pmb{ \Phi}_{h,\Delta t}^{\tilde{\beta}}  \in \left({V_{\Delta t,q}}\otimes X^{-1}_{h,0}\right)^n$ the best approximation to $\pmb{\Phi}$ in the norm of ${H}^{r}_\sigma(\R^+, \widetilde{H}^{-\frac{1}{2}}(\Gamma))^n$ on a ${\tilde{\beta}}$-graded spatial mesh  with $\Delta t \lesssim h_1$. Then $\|\pmb{ \Phi}-\pmb{ \Phi}_{h, \Delta t}^{\tilde{\beta}}\|_{r,-\frac{1}{2}, \Gamma, \ast} \leq C_{{\tilde{\beta}},\varepsilon} h^{\min\{{\tilde{\beta}} \tilde{\alpha}-\varepsilon, \frac{3}{2}\}}$.  }

{b) Let $\pmb{ \Psi}$ be the solution to the hypersingular integral equation \eqref{hypersingeq} and  $\pmb{ \Psi}_{h,\Delta t}^{\tilde{\beta}} \in \left({V_{\Delta t,q}}\otimes X^{0}_{h,1}\right)^n$ the best approximation to $\pmb{ \Psi}$ in the norm of ${H}^{r}_\sigma(\R^+, \widetilde{H}^{\frac{1}{2}-s}(\Gamma))^n$ on a ${\tilde{\beta}}$-graded spatial mesh  with $\Delta t \lesssim h_1$. Then $\|\pmb{ \Psi}-\pmb{ \Psi}_{h, \Delta t}^{\tilde{\beta}}\|_{r,\frac{1}{2}-s, \Gamma, \ast} \leq C_{{\tilde{\beta}},\varepsilon} h^{\min\{{\tilde{\beta}}(\tilde{\alpha}+s)-\varepsilon, \frac{3}{2}+s\}}$, where $s \in [0,\frac{1}{2}]$. }
\end{corollary}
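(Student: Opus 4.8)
The plan is to derive Corollary~\ref{approxcor2} directly from Theorem~\ref{approxtheorem2} by identifying the solutions of the integral equations with the boundary traces of solutions of the elastodynamic equation. First I would recall the direct representation: if $\pmb{\Phi}$ solves the single layer equation \eqref{explicit BIE}, then $\pmb{\Phi} = \bold{p}(\bold{u})|_\Gamma$ where $\bold{u}$ is the solution of the exterior Dirichlet problem \eqref{Navier equation}, \eqref{initial vanishing condition}, \eqref{dirichlet condition} with data $\bold{g}$; likewise, if $\pmb{\Psi}$ solves the hypersingular equation \eqref{hypersingeq}, then $\pmb{\Psi} = \bold{u}|_\Gamma$ where $\bold{u}$ solves the exterior Neumann problem with data $\bold{h}$. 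This identification is exactly the content of the boundary integral formulation in Section~\ref{sec 2}, and the well-posedness in Proposition~\ref{wellposedness} guarantees these solutions exist and depend continuously on the data in the relevant norms.

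Given that identification, part~a) of the corollary is an immediate restatement of part~a) of Theorem~\ref{approxtheorem2}: the best approximation error of $\pmb{\Phi} = \bold{p}(\bold{u})|_\Gamma$ in the $H^r_\sigma(\R^+,\widetilde H^{-1/2}(\Gamma))^n$-norm on a $\tilde\beta$-graded mesh is bounded by $C_{\tilde\beta,\varepsilon} h^{\min\{\tilde\beta\tilde\alpha - \varepsilon, 3/2\}}$. Similarly, part~b) follows from part~b) of the theorem, with $\pmb{\Psi} = \bold{u}|_\Gamma$ and the error measured in $H^r_\sigma(\R^+,\widetilde H^{1/2-s}(\Gamma))^n$. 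The role of the parameter $s \in [0,\tfrac12]$ is to interpolate between the energy norm ($s=0$) and stronger norms; the rates $\min\{\tilde\beta(\tilde\alpha+s)-\varepsilon, \tfrac32+s\}$ reflect both the graded-mesh resolution of the $r^{\nu^\ast}$ (respectively $r^{\nu^\ast-1}$) singularity and the cap imposed by the $H^3$ (respectively $H^1$) regularity of the regular remainder established at the end of Subsection~\ref{sec51}.

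I would then remark that the only substantive point is to check that the regularity hypotheses of Theorem~\ref{approxtheorem2} (smoothness of $\bold{g}$, respectively $\bold{h}$, and the orthogonality/spectral conditions underlying Propositions~\ref{regpropositionwedge} and~\ref{regpropositioncone}) are inherited by the corresponding integral-equation solutions. This is where the mapping properties of $\mathcal{V}$, $\mathcal{K}$, $\mathcal{K}'$, $\mathcal{W}$ from Appendix~A and the trace theorems enter: smooth Dirichlet data $\bold{g}$ produce a smooth right-hand side $(\mathcal{K}+\tfrac12)\bold{g}$ for \eqref{explicit BIE}, and the singular expansion of $\bold{u}$ (hence of its traction trace $\pmb{\Phi}$) is governed entirely by the geometry and the elastic parameters through $\tilde\alpha$, independently of the smoothness of the data. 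The same applies mutatis mutandis to the Neumann problem.

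The main obstacle I anticipate is not in the reduction itself — which is essentially bookkeeping once Theorem~\ref{approxtheorem2} is in hand — but in making the identification $\pmb{\Phi} = \bold{p}(\bold{u})|_\Gamma$ fully rigorous in the weighted space-time Sobolev scale $H^r_\sigma(\R^+,\widetilde H^{s}(\Gamma))^n$: one must ensure that the solution of the weak formulation \eqref{energetic weak formulation}, respectively \eqref{hypersingeq}, coincides with the traction, respectively Dirichlet, trace of the unique solution of the volume problem, and that the regularity expansions of Section~\ref{regularitysection} transfer under the trace map without loss beyond what is already accounted for in the $H^3$/$H^1$ cap. Once this is granted, the corollary follows verbatim from the theorem, so the proof is short: I would simply write that $\pmb{\Phi} = \bold{p}(\bold{u})|_\Gamma$ and $\pmb{\Psi} = \bold{u}|_\Gamma$ solve the respective volume problems with smooth data, and apply Theorem~\ref{approxtheorem2}a) and b) respectively.
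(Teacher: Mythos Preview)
Your proposal is correct and takes essentially the same approach as the paper: the paper's justification for Corollary~\ref{approxcor2} is the single observation that $\pmb{\Phi} = \textbf{p}(\textbf{u})|_\Gamma$ for the solution $\textbf{u}$ of the Dirichlet problem and $\pmb{\Psi} = \textbf{u}|_\Gamma$ for the solution of the Neumann problem, after which Theorem~\ref{approxtheorem2} applies directly. Your additional discussion of the regularity and trace-identification issues is more cautious than the paper, but the argument is the same.
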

{Indeed, the solutions to the integral equations are given by $\pmb{\Psi} = \textbf{u}|_\Gamma$ {in terms of the solution $\textbf{u}$ which satisfies traction conditions ${\bf p}({\textbf u})|_\Gamma = \textbf{g}$, respectively} $\pmb{\Phi} = \textbf{p}(\textbf{u})|_\Gamma$ {in terms of the solution $\textbf{u}$ which satisfies Dirichlet conditions $\textbf{u}|_\Gamma=\bold{f}$}.}\\

{The proof extends the arguments for the wave equation in \cite{graded}, where ${\nu^\ast} = \frac{1}{2}$. It uses the  decompositions from Section \ref{approxsection}. In the approximation a cone is locally mapped by affine transformations onto a square, as in Figure \ref{fig:A1}. Further, the following  approximation properties in $1d$ are crucial. They may be found in \cite[Satz 3.7, Satz 3.10]{disspetersdorff}.}
\begin{lemma}\label{keylemmagrad}{Let $\varepsilon>0$, $a \in \mathbb{C}$ with $\mathrm{Re}\ a>0$ and $s \in [-1, -\mathrm{Re}\ a+\frac{1}{2})$.} Then there holds with the piecewise constant interpolant $ \Pi_{r}^{0} r^{-a}$ of $ r^{-a}$ {on a $\tilde{\beta}$-graded mesh}
$$\|r^{-a} - \Pi_{r}^{0} r^{-a}\|_{\widetilde{H}^{s}([0,1])} \lesssim  h^{\min\{{\tilde{\beta}}(-\mathrm{Re}\ a-s+\frac{1}{2})-\varepsilon, 1-s\}} . $$\ 
\end{lemma}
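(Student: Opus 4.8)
This is a one-dimensional estimate of the type proved in \cite[Satz 3.7, Satz 3.10]{disspetersdorff}, and my plan is to follow that scheme. Write $N=N_l$, so that on the $\tilde\beta$-graded mesh the nodes are $x_k=(k/N)^{\tilde\beta}$, the element $I_k=[x_{k-1},x_k]$ has length $h_k\sim\tilde\beta N^{-1}x_k^{1-1/\tilde\beta}$ for $k\geq 2$, $h_1=x_1=N^{-\tilde\beta}$ is the smallest and $h\sim\tilde\beta N^{-1}$ the largest element, and in particular $x_1\sim_{\tilde\beta}h^{\tilde\beta}$. I would set $e=r^{-a}-\Pi_r^0 r^{-a}$ and split $e=e^{(1)}+e^{(2)}$, where $e^{(1)}$ is $e$ restricted to the first element $I_1$ extended by zero and $e^{(2)}=e-e^{(1)}$ is supported in $[x_1,1]$; since $\widetilde H^s([0,1])$ is normed, the triangle inequality reduces matters to estimating the two pieces separately.

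For the first element, $\Pi_r^0$ reduces to a single constant, so no cancellation is needed: I would rescale $r=x_1\rho$ to the reference interval $[0,1]$, under which the $\widetilde H^s$-norm scales like $x_1^{1/2-s}$ while $r^{-a}=x_1^{-a}\rho^{-a}$ contributes an extra factor $x_1^{-\mathrm{Re}\ a}$. Since $s<\tfrac12-\mathrm{Re}\ a$, the reference function $\rho^{-a}$ (understood as a distribution when $\mathrm{Re}\ a\geq 1$) lies in $\widetilde H^s([0,1])$, and its piecewise-constant interpolation error on a single element has finite norm depending only on $a,s$. This gives $\|e^{(1)}\|_{\widetilde H^s([0,1])}\lesssim x_1^{1/2-s-\mathrm{Re}\ a}\sim_{\tilde\beta}h^{\tilde\beta(-\mathrm{Re}\ a-s+\frac12)}$, the first of the two competing rates.

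For the remaining elements, $r^{-a}$ is smooth on $[x_1,1]$ with $|(r^{-a})^{(j)}|\lesssim r^{-\mathrm{Re}\ a-j}$, and for $k\geq 2$ all of $I_k$ is comparable to $x_k$, so that $\|(r^{-a})'\|_{L^2(I_k)}\sim x_k^{-\mathrm{Re}\ a-1}h_k^{1/2}$. Choosing $\Pi_r^0$ with the property that the local error is mean-zero on each element (e.g. the $L^2$-projection, the natural choice here), I would combine the elementwise bounds $\|e\|_{L^2(I_k)}\lesssim h_k\|(r^{-a})'\|_{L^2(I_k)}$, $\|e\|_{\widetilde H^{-1}(I_k)}\lesssim h_k\|e\|_{L^2(I_k)}$ (the second by duality and a Poincaré estimate on the test function, using the vanishing mean) and $\|e\|_{H^1(I_k)}\lesssim\|(r^{-a})'\|_{L^2(I_k)}$ by interpolation, obtaining $\|e\|_{\widetilde H^s(I_k)}\lesssim h_k^{3/2-s}x_k^{-\mathrm{Re}\ a-1}$ for $s\in[-1,\tfrac12)$. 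Since the supports $I_k$ are pairwise disjoint, a localization estimate for the negative-order norm — again exploiting the vanishing means — gives $\|e^{(2)}\|_{\widetilde H^s([0,1])}^2\lesssim\sum_{k\geq 2}\|e\|_{\widetilde H^s(I_k)}^2\lesssim\sum_{k\geq 2}h_k^{3-2s}x_k^{-2\mathrm{Re}\ a-2}$. Inserting $h_k\sim\tilde\beta N^{-1}x_k^{1-1/\tilde\beta}$ and comparing the sum with $h^{2-2s}\int_{x_1}^1 r^{(1-1/\tilde\beta)(2-2s)-2\mathrm{Re}\ a-2}\,dr$, I would split into the cases where the exponent of $r$ is larger than, smaller than, or equal to $-1$: the first yields $\|e^{(2)}\|_{\widetilde H^s}\lesssim h^{1-s}$; the second, using $x_1\sim_{\tilde\beta}h^{\tilde\beta}$, yields $\|e^{(2)}\|_{\widetilde H^s}\lesssim h^{\tilde\beta(-\mathrm{Re}\ a-s+\frac12)}$; and the third occurs exactly when the two exponents $\tilde\beta(-\mathrm{Re}\ a-s+\tfrac12)$ and $1-s$ coincide, and produces a factor $|\log h|^{1/2}$ absorbed into $h^{-\varepsilon}$. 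Adding $\|e^{(1)}\|$ and $\|e^{(2)}\|$ gives the claimed bound.

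The step I expect to be the main obstacle is the localization in the negative-order norm $\widetilde H^s$, $s<0$: being nonlocal, this norm does not a priori split over the elements. The way around it is to use that each local error is mean-zero and supported in a single short interval, whence its $\widetilde H^{-1}$-norm is controlled by $h_k$ times its $L^2$-norm; combined with the disjointness (more generally, bounded overlap) of the supports, the element contributions can then be summed with a constant that does not degrade as $N\to\infty$. The only other point requiring care is the borderline parameter configuration, where the two competing rates agree and a logarithm appears — which is precisely what the arbitrarily small loss $\varepsilon$ in the statement is there to absorb.
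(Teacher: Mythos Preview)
The paper does not prove this lemma at all: it simply states the result and refers to \cite[Satz 3.7, Satz 3.10]{disspetersdorff} for the proof. Your sketch is a faithful reconstruction of that classical graded-mesh argument --- splitting off the first element, handling it by scaling, and treating the remaining elements by elementwise $L^2$/$H^{-1}$ bounds plus interpolation and a Riemann-sum comparison --- and your case analysis for the integral $\int_{x_1}^1 r^{(1-1/\tilde\beta)(2-2s)-2\mathrm{Re}\,a-2}\,dr$ correctly reproduces the two competing rates and the logarithmic borderline absorbed by $\varepsilon$.

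One small caution: your interpolation endpoint $\|e\|_{H^1(I_k)}\lesssim\|(r^{-a})'\|_{L^2(I_k)}$ is only a broken norm, since piecewise constants are not globally $H^1$; for $s\in(0,\tfrac12)$ one instead estimates the fractional seminorm of the jumps directly (they lie in $H^s$ for $s<\tfrac12$), or restricts the interpolation to $[-1,0]$ and argues $s>0$ separately. This is a routine technicality, not a gap, and does not affect the final rate. Your identification of the negative-norm localization as the delicate step, and your resolution via the mean-zero property of the local error, is exactly the mechanism used in the cited reference.
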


\begin{lemma}\label{keylemmagrad2}{Let $\varepsilon>0$, $a \in \mathbb{C}$ with $\mathrm{Re}\ a>0$ and $s \in [0, \mathrm{Re}\ a+\frac{1}{2})$.} Then there holds with the linear interpolant $\Pi_{r}^{1} r^{a}$ of $ r^{a}$ {on a $\tilde{\beta}$-graded mesh}
$$\|r^{a} - \Pi_{r}^{1} r^a\|_{\widetilde{H}^{s}([0,1])} \lesssim  h^{\min\{{\tilde{\beta}}(\mathrm{Re}\ a-s+\frac{1}{2})-\varepsilon, 2-s\}} . $$
\end{lemma}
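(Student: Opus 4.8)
The plan is to reduce the estimate to a local analysis on each subinterval of the $\tilde{\beta}$-graded mesh and sum the contributions, distinguishing the first element abutting the singularity at $r=0$ from the remaining elements. On the reference mesh \eqref{gradedmesh}, the $k$-th node is $x_k = (k/N_l)^{\tilde{\beta}}$ (translating the interval $[-1,0]$ to $[0,1]$), so the $k$-th element has length $h_k = x_k - x_{k-1} \sim \tilde{\beta}\, N_l^{-\tilde{\beta}} k^{\tilde{\beta}-1}$ for $k\geq 2$, while the first element has length $h_1 = x_1 = N_l^{-\tilde{\beta}}$; the global mesh size is $h = h_{N_l} \sim \tilde{\beta} N_l^{-1}$. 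First I would recall that $\|\cdot\|_{\widetilde H^s([0,1])}$ for $s\in[0,1)$ can be localized: for a piecewise smooth function vanishing appropriately, $\|v\|_{\widetilde H^s}^2 \lesssim \sum_k \|v\|_{H^s(I_k)}^2$ plus interface contributions, which for $s<\tfrac12$ are controlled by the $L^2$-type pieces and for $s\in[\tfrac12,1)$ require the Gagliardo seminorm, handled by the standard argument that the interpolant $\Pi_r^1 r^a$ is globally continuous so no jump terms appear, and the cross-element seminorm contributions are dominated by the diagonal ones. This reduces matters to bounding $\|r^a - \Pi^1_r r^a\|_{H^s(I_k)}$ on each element.

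The second step is the elementwise estimate. On an element $I_k = [x_{k-1}, x_k]$ with $k\geq 2$, $r^a$ is smooth (indeed analytic) and one uses the standard polynomial approximation bound $\|r^a - \Pi^1 r^a\|_{H^s(I_k)} \lesssim h_k^{2-s}\,\|(r^a)''\|_{L^2(I_k)} \lesssim h_k^{2-s}\, x_{k-1}^{\mathrm{Re}\,a - 2}\, h_k^{1/2}$, where I estimate $|(r^a)''| \sim r^{\mathrm{Re}\,a-2}$ and use $r \sim x_{k-1} \sim x_k$ on $I_k$ (valid since $x_k/x_{k-1} = (k/(k-1))^{\tilde\beta}$ is bounded for $k\geq 2$). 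Substituting $h_k \sim N_l^{-\tilde\beta} k^{\tilde\beta - 1}$ and $x_{k-1}\sim N_l^{-\tilde\beta} k^{\tilde\beta}$, one finds the $k$-th contribution to $\|\cdot\|_{\widetilde H^s}^2$ behaves like $N_l^{-2\tilde\beta(\mathrm{Re}\,a - s + 1/2) - 2\tilde\beta(1/2)} k^{(\tilde\beta-1)(2-2s) + 2\tilde\beta(\mathrm{Re}\,a-2) + (\tilde\beta - 1)}$; I would then sum this geometric-type series in $k$ from $2$ to $N_l$. The exponent of $k$ determines whether the sum is dominated by $k\sim N_l$ (giving the $h^{2-s}$ term, the "smooth" regime that wins when $\tilde\beta$ is large) or by $k=O(1)$ (giving the $h^{\tilde\beta(\mathrm{Re}\,a - s + 1/2)}$ term, the "singular" regime); the threshold is exactly $\tilde\beta = (2-s)/(\mathrm{Re}\,a - s + 1/2)$, which reproduces the $\min$ in the statement. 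The $\varepsilon$ loss arises precisely at the borderline case where the exponent of $k$ equals $-1$, producing a logarithmic factor $\log N_l \lesssim h^{-\varepsilon}$.

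The third step is the first element $I_1 = [0, x_1]$, where $r^a$ is only Hölder-continuous (since $\mathrm{Re}\,a > 0$ but possibly $\mathrm{Re}\,a < 1$, so $r^a \in C^{0,\min(1,\mathrm{Re}\,a)}$ but not smooth). Here I would estimate directly $\|r^a - \Pi^1 r^a\|_{H^s(I_1)}$ by the triangle inequality $\leq \|r^a\|_{H^s(I_1)} + \|\Pi^1 r^a\|_{H^s(I_1)}$; for $s \in [0, \mathrm{Re}\,a + 1/2)$ one has $r^a \in \widetilde H^s$ locally with $\|r^a\|_{H^s([0,\delta])} \lesssim \delta^{\mathrm{Re}\,a - s + 1/2}$ by scaling, and the interpolant is controlled by nodal values, $|\Pi^1 r^a| \lesssim x_1^{\mathrm{Re}\,a}$, giving $\|\Pi^1 r^a\|_{H^s(I_1)} \lesssim x_1^{\mathrm{Re}\,a - s + 1/2}$. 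Since $x_1 = N_l^{-\tilde\beta} \sim h^{\tilde\beta}$, this contributes $h^{\tilde\beta(\mathrm{Re}\,a - s + 1/2)}$, consistent with (and dominated by, or equal to) the bound claimed. The condition $s < \mathrm{Re}\,a + \tfrac12$ is exactly what makes $r^a$ locally in $\widetilde H^s$ near the origin, so the first-element estimate is finite. I expect the main obstacle to be the careful bookkeeping in the summation of step two — tracking the exponents of $N_l$ and $k$ so that the crossover and the precise $\varepsilon$-loss come out correctly — together with justifying the localization of the $\widetilde H^s$ norm for $s\in[\tfrac12,1)$, where the nonlocal Gagliardo seminorm must be shown not to produce extra terms beyond those already counted; this is standard (see \cite{disspetersdorff}) but requires care with the interface contributions across the graded elements.
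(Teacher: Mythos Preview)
The paper does not give its own proof of this lemma; it simply cites \cite[Satz~3.7, Satz~3.10]{disspetersdorff}. Your proposal is therefore not a comparison against an argument in the paper but rather a reconstruction of the standard graded-mesh argument, which is indeed what underlies the cited reference: split off the first element, use the local $H^2\to H^s$ interpolation estimate on the remaining elements with $r\sim x_{k-1}$, and sum. The structure of your three steps is correct and leads to the stated bound with the crossover at $\tilde\beta=(2-s)/(\mathrm{Re}\,a-s+\tfrac12)$ and the $\varepsilon$-loss from the logarithm at the borderline, exactly as you describe.

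One concrete point to clean up: the explicit exponents you wrote down in step two are not quite right. Squaring the elementwise bound $h_k^{5/2-s}\,x_{k-1}^{\mathrm{Re}\,a-2}$ and inserting $h_k\sim N_l^{-\tilde\beta}k^{\tilde\beta-1}$, $x_{k-1}\sim N_l^{-\tilde\beta}k^{\tilde\beta}$ gives an $N_l$-exponent of $-\tilde\beta(2\,\mathrm{Re}\,a+1-2s)$ and a $k$-exponent of $(\tilde\beta-1)(5-2s)+\tilde\beta(2\,\mathrm{Re}\,a-4)$, not the expressions you wrote. With the correct exponents the summation goes through as you intend: when the $k$-exponent exceeds $-1$ the sum is dominated by $k\sim N_l$ and yields $h^{2(2-s)}$, otherwise the sum is $O(1)$ and the $N_l$-prefactor gives $h^{2\tilde\beta(\mathrm{Re}\,a-s+1/2)}$. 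So the slip is purely arithmetic and does not affect the argument, but you should redo that line before writing it out in full.
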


\begin{proof}[Proof of Theorem \ref{approxtheorem2}]
{(a)}, wedge singularity: Approximating ${\bf p}({\bf u})$ on a rectangular mesh $\overline{\Gamma}_h = \bigcup \overline{\Gamma}_j$, we obtain with the triangle inequality and the approximation properties in the time variable: 
 \begin{align*}&\|{\bf p}({\textbf u}) - \Pi_x \Pi_t {\bf p}({\textbf u})\|_{r, -\frac{1}{2}, \Gamma, \ast} \\
& \leq \sum_k \|{\bf p}({\textbf u}) - \Pi_t {\bf p}({\textbf u})\|_{r, -\frac{1}{2}, (t_k, t_{k+1}]\times \Gamma, \ast}+\sum_{k,j} \|\Pi_t {\bf p}({\textbf u}) - \Pi_x \Pi_t {\bf p}({\textbf u})\|_{r, -\frac{1}{2}, (t_k, t_{k+1}]\times \Gamma_j, \ast}\\
& \leq \sum_k  (\Delta t)^{a}\|{\bf p}({\textbf u}) \|_{r+a, -\frac{1}{2}, (t_k, t_{k+1}]\times \Gamma}+\sum_{k,j} \|\Pi_t {\bf p}({\textbf u}) - \Pi_x \Pi_t {\bf p}({\textbf u})\|_{r, -\frac{1}{2}, (t_k, t_{k+1}]\times \Gamma_j, \ast}\ .
\end{align*}
Now, we use the decomposition \eqref{decompositionEdge} for ${\bf p}({\textbf u})$ and consider the singular and regular parts separately.
For the second sum, we use the singular expansion of each component,
\begin{align*}
\|\Pi_t p_i({\textbf u}) - \Pi_x \Pi_t p_i({\textbf u})\|_{r, -\frac{1}{2}, (t_k, t_{k+1}]\times \Gamma_j, \ast} &\leq \| \Pi_t b_i(\phi,z,t) r^{{\nu^\ast}-1} - \Pi_t \Pi_x b_i(\phi,z,t) r^{{\nu^\ast}-1}\|_{r, -\frac{1}{2}, (t_k, t_{k+1}]\times \Gamma_j, \ast} \\ & \qquad + \|\Pi_t {\phi}_{i,0} - \Pi_x\Pi_t {\phi}_{i,0}\|_{r, -\frac{1}{2}, (t_k, t_{k+1}]\times \Gamma_j, \ast}\ .
\end{align*}
For the first term we deduce from Lemma \ref{lemma3.3}
\begin{align*}
&\| \Pi_t b_i(\phi,z,t) r^{{\nu^\ast}-1} - \Pi_t \Pi_x b_i(\phi,z,t) r^{{\nu^\ast}-1}\|_{r, -\frac{1}{2}, (t_k, t_{k+1}]\times \Gamma_j, \ast}\\
&\leq \|\Pi_t b_i(\phi,z,t) - \Pi_t \Pi_z b_i(\phi,z,t)\|_{r, \varepsilon - \frac{1}{2} } \|r^{{\nu^\ast}-1}\|_{-\varepsilon }  +\| \Pi_t \Pi_z b_i(\phi,z,t)\|_{r,0} \|r^{{\nu^\ast}-1} - \Pi_rr^{{\nu^\ast}-1}\|_{ -\frac{1}{2}} \ .
\end{align*}

From Lemma \ref{keylemmagrad} we have $\|r^{{\nu^\ast}-1} - \Pi_rr^{{\nu^\ast}-1}\|_{ -\frac{1}{2}} \lesssim h^{{\tilde{\beta}}\mathrm{Re}\ {\nu^\ast} -\varepsilon}$ and $\|\Pi_t b_i(\phi,z,t) - \Pi_t \Pi_z b_i(\phi,z,t)\|_{r, \varepsilon - \frac{1}{2} } \lesssim h^{3/2-\varepsilon} \|\Pi_t b_i\|_{r, H^1}$, by the approximation properties in $z$. 

Finally, with Lemma \ref{lemma3.4} and Lemma \ref{lemma3.2}, in the anisotropic rectangle $R$ with sidelengths $h_1$, $h_2$ in the $x_1$, respectively $x_2$ directions: 
\begin{align*}&\|\Pi_t {\phi}_{0,i} - \Pi_x\Pi_t {\phi}_{0,i}\|_{r, -\frac{1}{2}, (t_k, t_{k+1}]\times R,\ast} \\
&\lesssim   (\Delta t)^{\rho-r}\|\partial_t^\rho {\phi}_{0,i}\|_{L^2([t_k,t_{k+1}]\times R)}  +  \max\{h_1,h_2, \Delta t\}^{\frac{1}{2}}\left(h_1 \| {\phi}_{0,i,x_1}\|_{L^2([t_k,t_{k+1}]\times R)}  + h_2 \| {\phi}_{0,i,x_2}\|_{L^2([t_k,t_{k+1}]\times R)} \right)\ .
\end{align*}
Note that the approximation error for the smooth term is of higher order. By summing over all rectangles $\Gamma_j$ of the mesh of the screen and all components, we conclude that $\|{\bf p}({\textbf u}) - \Pi_x \Pi_t {\bf p}({\textbf u})\|_{r, -\frac{1}{2}, \Gamma, \ast} \lesssim  h^{{\tilde{\beta}} \mathrm{Re}\ {\nu^\ast}-\varepsilon}$ if $\Delta t \leq \min\{h_1,h_2\}$.\\

\noindent {(a)}, cone singularity: To discuss the approximation of ${\bf p}({\bf u})$ in the cone geometry, for simplicity, we let $\Gamma$ be the square $\tilde{R}=[0,1]^{2}$. Figure \ref{fig:A1} shows how to reduce the mesh on the cone to this case by an affine map, with the exception of a small number of triangular elements.

For the rectangular elements, the approximation of the singular function follows closely the proof in \cite{graded}, and we present it below for the convenience of the reader.

For the additional triangular elements in Figure \ref{fig:A1}(b) with linear basis functions, the crucial observation is that their angles are independent of $h$, leading to a shape-regular mesh. In particular, the quotient $\rho$ of the radii of the smallest circumscribed to the largest inscribed circle remains bounded and the expected interpolation inequalities hold: For the linear interpolant $p$ of a function $f$ determined by the vertices of a triangle $T$ of circumscribed radius $\leq h$, one has  
$$\|f-p\|_{H^s(T)} \leq C_0 h^{2-s} \|f\|_{H^2(T)}\ .$$
Here, $s\in [0,1]$ and the constant $C_0$ only depends on $\rho$ and $s$. The respective proofs for the regular part $\pmb{\phi}_{0}$ and the singular function $r^{\lambda-1} b_i$ in this way directly apply to the arising triangles.


\begin{figure}[ht]
\centering
   \subfloat[]{
\includegraphics[scale=0.5]{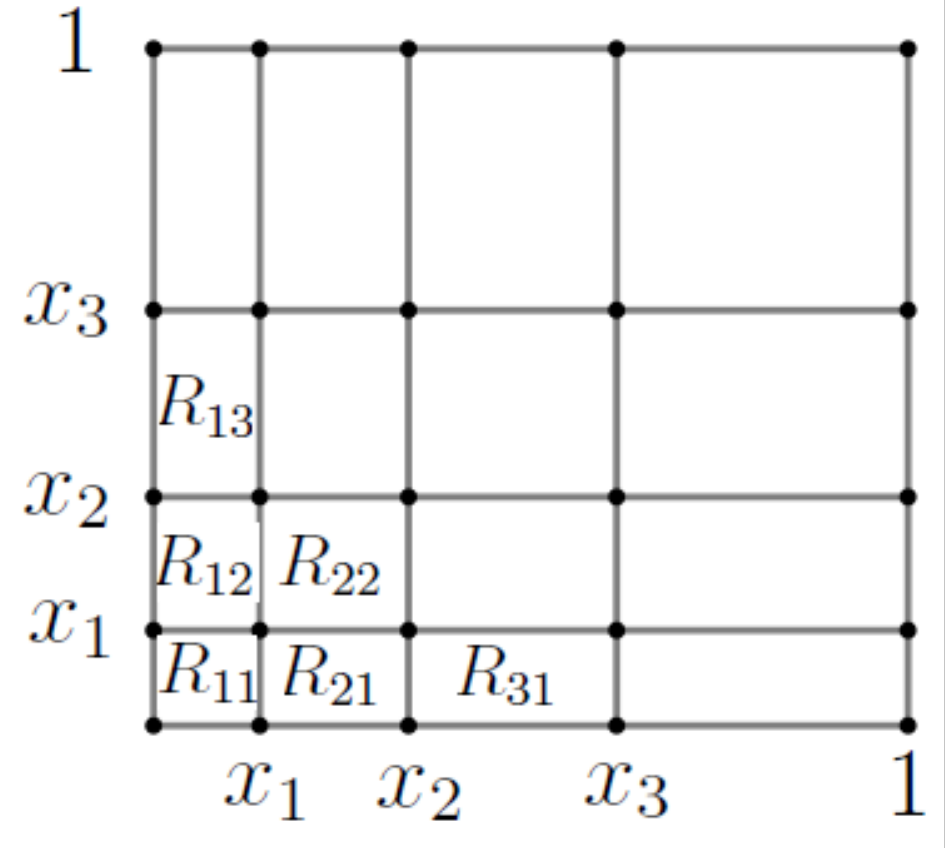}}
   \subfloat[]{ 
\includegraphics[scale=0.48]{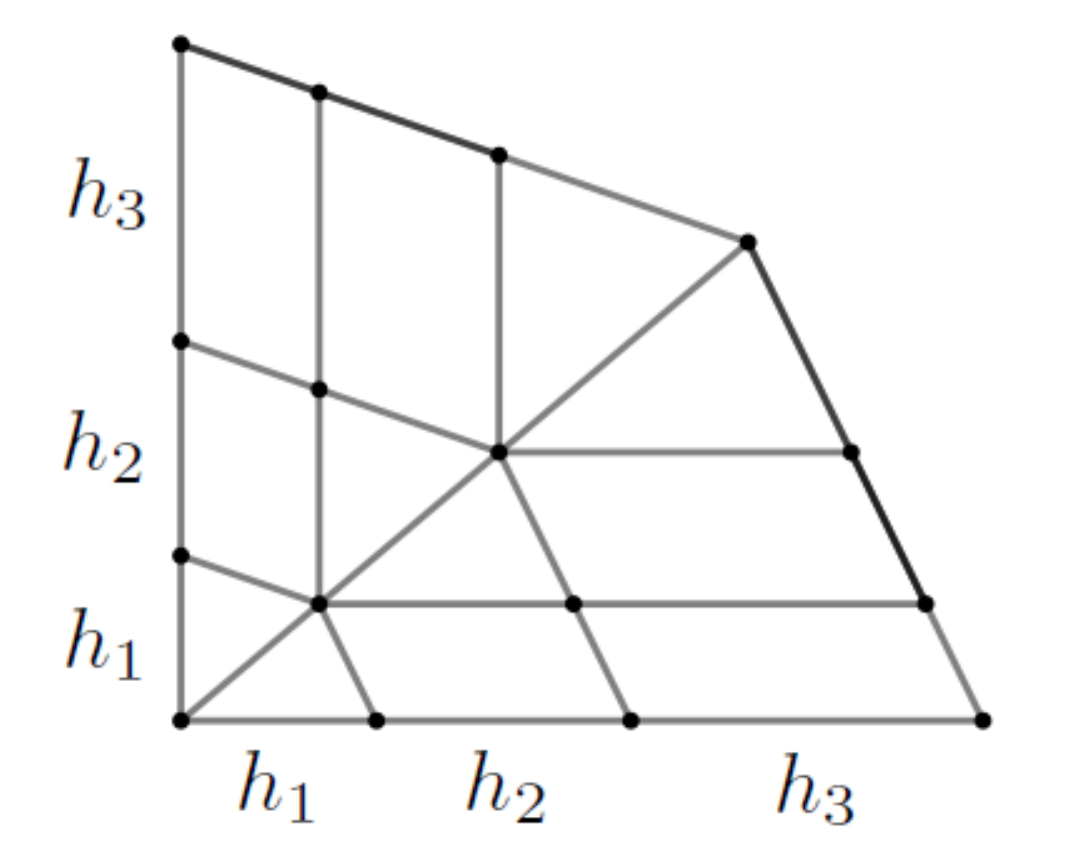}}
\caption{Affine map between meshes on (a) square and (b) cone. The parallelograms in (b) correspond to rectangles in (a), and two adjacent triangles in (b) are mapped to the diagonal squares $R_{ii}$ in (a).}
\label{fig:A1}
\end{figure}



As the approximation of the regular part $\pmb{\phi}_0$ {in the expansion \eqref{decomposition}} has already been considered in the proof for the circular wedge, it remains to analyze the approximation of the vertex singularity in \eqref{decomposition}. 

In the following we approximate the corner singularity:

In every space-time element we estimate
\begin{align*}
\|r^{\alpha-1} b_i(\phi,\theta,t) - \Pi_t \Pi_{r,\phi} r^{\alpha-1} b_i(\phi,\theta,t)\| &\leq \|r^{\alpha-1} b_i(\phi,\theta,t) - \Pi_t  r^{\alpha-1} b_i(\phi,\theta,t)\|\\&\qquad  + \|r^{\alpha-1} \Pi_t b_i(\phi,\theta,t) -  \Pi_{r,\phi} r^{\alpha-1} \Pi_tb_i(\phi,\theta,t)\|\ .
\end{align*}
As $b_i$ is smooth in time, the first term $\|r^{\alpha-1} b_i(\phi,\theta,t) - \Pi_t  r^{\alpha-1} b_i(\phi,\theta,t)\|$ can be estimated  using standard approximation properties in time and is neglible for small $\Delta t$. $\Pi_t b_i(\phi,\theta,t)$ is of the same form as the function $b_i$ in the elliptic case \cite{gwinsteph}. One may therefore adapt the elliptic approximation results to $\|(1-\Pi_{r,\phi}) r^{\alpha-1} \Pi_t b_i(\phi,\theta,t)\|$. This is then summed over all elements. We consider
\begin{equation*}
 \lVert r^{\alpha-1} \Pi_t b_i - \Pi_{\phi,r} r^{\alpha-1} \Pi_t b_i \rVert = \lVert (1-\Pi_{\phi,r}) r^{\alpha-1} \Pi_{t} b_i(\phi,\theta,t) \rVert
\end{equation*}
Let $ \Pi_t b_i(\phi,\theta,t) = \sum\limits_{j} t^{j} b_{i,j}(\phi,\theta)$ and $ f_{j}(r,\phi)=r^{\alpha-1} b_{i,j}(\phi,\theta)$ on $[t_{k},t_{k+1})$. 

With $ p_{j}|_{R_{kl}} = \sum\limits_{j} \frac{t^{j}}{h_{k}h_{l}} \int\limits_{R_{kl}} f_{j}(x,y) dy dx$ one obtains from \eqref{3.22} 
\begin{align*}
 \lVert r^{\alpha-1} \Pi_t b_i - \Pi_{r,\phi} r^{\alpha-1} \Pi_{t} b_i \rVert_{r,-\frac{1}{2},\tilde{R},*}^{2} 
 &\lesssim \sum \sum \max \lbrace \Delta t , h_{k} , h_{l} \rbrace \\ &( h_{k}^{2} \lVert \partial_{1} (r^{\alpha-1} \Pi_t b_i) \rVert_{r,0,[t_{j},t_{j+1}) \times R_{kl}}^{2} + h_{l}^{2} \lVert \partial_{2} (r^{\alpha-1} \Pi_t b_i) \rVert_{r,0,[t_{j},t_{j+1}) \times R_{kl}}^{2} ) \\
 & + \lVert r^{\alpha-1} \Pi_t b_i - \Pi_{r,\phi} r^{\alpha-1} \Pi_t b_i \rVert_{r,-\frac{1}{2}, R_{11}}^{2}
\end{align*}
The individual summands are estimated for different ranges of $l, k$:

Estimate for $k\geqslant 2,\; l\geqslant 2$: Note for $k\geqslant 2,\; x\in [x_{k-1},x_k]$ there holds $\vert h_k\vert \leq {\tilde{\beta}} 2^{{\tilde{\beta}}\tilde{\gamma}}h\, x^{\tilde{\gamma}}$ with $\tilde{\gamma} =1-\frac{1}{{\tilde{\beta}}}>0$. Therefore, if $ \Delta t  \leq h_{k} $ for all $k$
\begin{equation*}
	\max\{h_k, h_l, \Delta t\}h_k^2\| \partial_x (r^{\alpha-1} \Pi_t b_i) \|_{r,0,[t_{j},t_{j+1}) \times R_{kl}}^{2} \lesssim h^{3} \| \partial_x (r^{\alpha-1} \Pi_t b_i) \max \{ x^{\tilde{\gamma}} ,y^{\tilde{\gamma}} \}^{1/2} x^{\tilde{\gamma}} \|_{r,0,[t_{j},t_{j+1}) \times R_{kl}}^{2}
\end{equation*}
and
\begin{align}
 \lVert r^{\alpha-1} \Pi_t b_i - \Pi_{x,y} r^{\alpha-1} \Pi_{t} b_i\|_{r,-\frac{1}{2}, \bigcup_{k\geq2, l \geq 2} R_{kl},*}^{2}
 \lesssim h^{3} \lVert \partial_x (r^{\alpha-1} \Pi_t b_i) \max \{ x^{\tilde{\gamma}} , y^{\tilde{\gamma}} \} x^{2 \tilde{\gamma}} \rVert_{r,0,\tilde{R}} \label{3.J} \\
 + h^{3} \lVert \partial_y (r^{\alpha-1} \Pi_t b_i) \max \{ x^{\tilde{\gamma}} , y^{\tilde{\gamma}} \} y^{2 \tilde{\gamma}} \rVert_{r,0,\tilde{R}} \ .\nonumber
\end{align}
As $ |\partial_1(r^{\alpha-1} \Pi_t b_i ) | \lesssim r^{\alpha-2} \tilde{b}_i(\phi, \theta, t)$ for some 
$\tilde{b}_i$ square-integrable in space, and $ \max \{ x^{\tilde{\gamma}} , y^{\tilde{\gamma}} \} \leq r^{\tilde{\gamma}}$, the right hand side of \eqref{3.J} is finite if 
\begin{equation}\label{betaestimation}
{\tilde{\beta}}>\frac{3}{2({\alpha}+1/2)}.
\end{equation}
Therefore
\begin{align*}
 \lVert r^{\alpha-1} \Pi_t b_i - \Pi_{r,\phi} r^{\alpha-1} \Pi_{t} b_i \rVert_{r,-\frac{1}{2}, \bigcup_{k\geq2, l \geq 2} R_{kl},*}^{2}
 \lesssim h^{3},
\end{align*}
provided $ \Delta t \leq h_{k}$ for all $k$.\\

Estimate for $k=1,\, l>1$ (analogously $k>1,\, l=1$): With $ f(x,y) = r^{\alpha-1} b_i(\phi,\theta)$
\begin{align*}
 & \sum\limits_{j} \sum\limits_{l=2}^{N} \lVert (1-\Pi_{r,\phi}) \Pi_t f \rVert_{r,-\frac{1}{2},[t_{j},t_{j+1}) \times R_{kl},*}^{2}
 \\ &\leq \sum\limits_{j}\sum\limits_{l=2}^N\max\{\Delta t,h_k,h_l \}\left(h_1^2\Vert \partial_1 (r^{\alpha-1} \Pi_t b_i )\Vert^2_{r,0,[t_{j},t_{j+1}) \times R_{kl},*} + h_l^2 \Vert \partial_2 (r^{\alpha-1} \Pi_t b_i ) \Vert^2_{r,0,[t_{j},t_{j+1}) \times R_{kl},*} \right)
\end{align*}
Proceed as in \eqref{3.J} to see that also this term is bounded for ${\tilde{\beta}} > \frac{3}{2 (\alpha+\frac{1}{2})}$.\\

\noindent Estimate for $k=1,\, l=1$: $r^{\alpha-1}\in L^2(R_{11})$ because $\alpha >0$. Now $\lVert (1-\Pi_{r,\phi}) r^{\alpha-1} \rVert_{L^2(R_{11})} \lesssim \lVert r^{\alpha-1}\rVert_{L^2(R_{11})}$, by the $L^2$-stability of $\Pi_{r,\phi}$, and
\begin{align*}
 & \|r^{\alpha-1} \Pi_t b_i(\phi,\theta,t) - \Pi_{r,\phi} r^{\alpha-1} \Pi_t b_i(\phi,\theta,t)\|_{r,-\frac{1}{2},R_{11},*}^{2} \\
 & \lesssim \lVert (1-\Pi_{r,\phi}) r^{\alpha-1} \Pi_{t} b_i(\phi,\theta,t) \rVert_{r,-1,R_{11},*} \lVert (1-\Pi_{r,\phi}) r^{\alpha-1} \Pi_{t} b_i(\phi,\theta,t) \rVert_{r,0,R_{11},*}
\end{align*}
The second term is $ \leq h^{\alpha}$. For the first 
\begin{align*}
 \lVert (1-\Pi_{r,\phi}) r^{\alpha-1} \Pi_{t} b_i(\phi,\theta,t) \rVert_{r,-1,R_{11},*} \equiv \sup\limits_{g \in H^{-r}(\mathbb{R^{+}},\tilde{H}^{1}(R_{11}))}
   \frac{\langle (1-\Pi_{r,\phi}) r^{\alpha-1} \Pi_{t} b_i(\phi,\theta,t), g \rangle}{\lVert g \rVert_{-r,1,R_{11}}}
\end{align*}
Replacing $g$ by $g-p$, $p$ the $ H^{-r}(\mathbb{R}^{+},H^{0}(R_{11}))$-projection of $g$, 
we obtain for $\Delta t \leq h_{1} $:
\begin{align*}
 \lVert (1-\Pi_{r,\phi}) r^{\alpha-1} \Pi_{t} b_i(\phi,\theta,t) \rVert_{r,-1,R_{11},*} \leq \lVert (1-\Pi_{r,\phi}) r^{\alpha-1} \Pi_{t} b_i(\phi,\theta,t) \rVert_{r,0,R_{11}} \sup\limits_{g} \frac{\lVert g-p \rVert_{-r,0,R_{11}}}{\lVert g \rVert_{-r,1,R_{11}}}\ .\end{align*}
The first factor is bounded by  $h_{1}^{\alpha}$, while the second factor is bounded by $h_{1}$.
We conclude
\begin{equation*}
 \|r^{\alpha-1} \Pi_t b_i(\phi,\theta,t) - \Pi_t \Pi_{r,\phi} r^{\alpha-1} \Pi_t b_i(\phi,\theta,t)\|_{r,-\frac{1}{2},R_{11},*}^{2}
 \lesssim h_{1}^{2 \alpha +1}.
\end{equation*}
The assertion follows by noting that $h_1 = h^{\tilde{\beta}}$.\\

\noindent {(b)}, wedge singularity: For the approximation of ${\bf u}$ a key ingredient is Lemma \ref{TobiasTRACE}. Proceeding as above, {using the expansion \eqref{decompostionEdget}} one estimates the $i$-th component on every rectangle $R$ of the mesh:
\begin{align*}
\|\Pi_t  {u}_i - \Pi_x \Pi_t  {u}_i\|_{r, \frac{1}{2}, (t_k, t_{k+1}]\times R,\ast} &\leq \| \Pi_t a_i(\phi,z,t) r^{{\nu^\ast}} - \Pi_t \Pi_x a_i(\phi,z,t) r^{{\nu^\ast}}\|_{r, \frac{1}{2}, (t_k, t_{k+1}]\times R,\ast} \\ & \qquad + \|\Pi_t {{u}_{i,0}} - \Pi_x\Pi_t {u}_{i,0}\|_{r, \frac{1}{2}, (t_k, t_{k+1}]\times R,\ast}\ .
\end{align*}
For the first term we note with Lemma \ref{lemma3.3a}, with $\Pi_{x} = \Pi_{r,z}$,
\begin{align*}
&\| \Pi_t a_i(\phi,z,t) r^{{\nu^\ast}} - \Pi_t \Pi_x a_i(\phi,z,t) r^{{\nu^\ast}}\|_{r, \frac{1}{2}, (t_k, t_{k+1}]\times R,\ast}\\
&\leq \|\Pi_t a_i(\phi,z,t) - \Pi_t \Pi_z a_i(\phi,z,t)\|_{r,  \frac{1}{2},(t_k, t_{k+1}]\times R,\ast} \|r^{{\nu^\ast}}\|_{ \frac{1}{2} }  +\| \Pi_t \Pi_z a_i(\phi,z,t)\|_{r, \frac{1}{2}, (t_k, t_{k+1}]\times R,\ast} \|r^{{\nu^\ast}} - \Pi_rr^{{\nu^\ast}}\|_{\frac{1}{2}, R,\ast} \ .
\end{align*}
From the approximation properties in space note that 
$$\|\Pi_t a_i(\phi,z,t) - \Pi_t \Pi_z a_i(\phi,z,t)\|_{r,  \frac{1}{2} } \leq C \|\Pi_t a_i(\phi,z,t)\|_{r, H^2} h^{\frac{3}{2}}$$
and, from Lemma \ref{keylemmagrad2}, 
$$\|r^{{\nu^\ast}} - \Pi_rr^{{\nu^\ast}}\|_{\frac{1}{2}} \lesssim h^{\min\{{\tilde{\beta}} \mathrm{Re}\ {\nu^\ast}-\varepsilon, \frac{3}{2}\}}\ .$$

Each component of the regular part $\textbf{u}_0$ {in the expansion \eqref{decompositionEdge}} may be approximated as in \cite[Theorem 18]{graded}: We let ${\bf q}\in S_h^{\tilde{\beta}}$ denote the interpolant of $\textbf{u}_0$ in space and time.
On $\tilde{R}:=[0,1] \times [0,1]$, decomposed into rectangles $R_{jk}:=[x_{j-1},x_{j}] \times [y_{k-1},y_{k}]$ with side lengths $ h_{j},h_{k}$,
\begin{align*}
	\| {\bf u}_{0} -{\bf q}\|^2_{r,0,\tilde{R}} 
	\lesssim \max\{h, \Delta t\}^4\| \textbf{u}_0\|^2_{r, 3, \tilde{R}}
\end{align*}
and
\begin{align*}
	\| \textbf{u}_0 -{\bf q}\|^2_{r, 1, \tilde{R}}
	\lesssim\max\{h, \Delta t\}^2\| \textbf{u}_0\|^2_{r, 3,\tilde{R}} \ .
\end{align*}
Here we have used $h_{k} \leq {\tilde{\beta}} \ h$ {and recall that we do not indicate the time interval in the norm when it is $\mathbb{R}_+$}.
Interpolation yields $\| \textbf{u}_0 -{\bf q}\|_{r,\frac{1}{2},\tilde{R}}  \lesssim \max\{h, \Delta t\}^{3/2}\| \textbf{u}_0\|_{r, 3,\tilde{R}}$.\\

To approximate each component of the singular part, 
we set $ f_{1}(z,t):= a_i(\phi,z,t)$, $f_{2}(r) := r^{{\nu^\ast}}$ and $q(x,t) := q_1(z,t)q_2(r)$ with piecewise linear interpolants $q_j$ of $f_j$. Hence for $0\leq s<1$
\begin{align}\label{keyineq}
	\| f-q\|_{r,s,\tilde{R}}\leq &\| q_1\|_{r,0,I}\| f_2-q_2\|_{H^s(I)} + \| q_1\|_{r,s,I}\| f_2-q_2\|_{L^2(I)} \\
	&+\| f_1-q_1\|_{r,0,I}\| f_2\|_{H^s(I)} + \| f_1-q_1\|_{r,s,I}\| f_2\|_{L^2(I)}\ . \nonumber
\end{align}
Using the approximation results for $r^{\nu^\ast}$ in Lemma \ref{keylemmagrad2}, we conclude 
\begin{equation*}
	\| f-q\|_{r,\frac{1}{2},\tilde{R}}\leq c\ h^{{\tilde{\beta}} \mathrm{Re}\ {\nu^\ast}-\epsilon}.
\end{equation*} 

The approximation of the singular function on the cone {closely follows the proof for the traction $\textbf{p}(\textbf{u})$ in part a) above. For the wave equation the details are presented in \cite{graded}.}
\end{proof}
The approximation argument extends from rectangular to triangular elements as in \cite{disspetersdorff}.\\


The results for the approximation of the edge singularity in $n=3$ translate into corresponding results for a linear crack in $n=2$. In particular, in Figure \ref{fig:energy_segment} we observe the predicted rates for ${\tilde{\beta}}=1,2,3$, when ${\nu^\ast}=\frac{1}{2}$, and in Figure \ref{fig:energy_eq_triangle}  for ${\nu^\ast} = 0.5372$.


\subsection{Approximation by $hp$ methods}\label{sec53}

{We use the regularity results from the beginning of this section to deduce approximation properties of the $hp$ version on quasi-uniform meshes:}

To state the main result for the approximation error of $hp$-methods, recall from \eqref{alphadef} that $\tilde{\alpha} = \min\left\{\mathrm{Re}\ \nu^\ast, \mathrm{Re}\ \alpha + \frac{1}{2}\right\}$.

\begin{theorem}\label{approxtheorem1} Let $r\geq 0$ and $\varepsilon>0$. 
{a) Let $\bold{u}$ be a strong solution to the homogeneous elastodynamic equation \eqref{Navier equation} with inhomogeneous Dirichlet boundary conditions $\bold{u}|_\Gamma = \bold{g}$, with $\bold{g}$ smooth. Further, let $\pmb{\Phi}_{h,\Delta t} \in \left({V_{\Delta t,p}}\otimes X^{-1}_{h,p}\right)^n$ be the best approximation  in the norm of ${H}^{r}_\sigma(\R^+, \widetilde{H}^{-\frac{1}{2}}(\Gamma))^n$ to the traction $\bold{p}(\bold{u})|_\Gamma$ on a quasiuniform spatial mesh  with $\Delta t \lesssim h$. Then for $p=0,1,2,\dots$ $$\|\bold{p}(\bold{u})|_\Gamma - \pmb{\Phi}_{h,\Delta t}\|_{r, -\frac{1}{2}, \Gamma, \ast} \lesssim \left(\frac{h}{(p+1)^2}\right)^{\tilde{\alpha}{-\varepsilon}} +  \left(\frac{\Delta t}{p+1}\right)^{p+1-r}+ \left(\frac{h}{p+1}\right)^{\frac{1}{2}+\eta}\ ,$$
 where {$r \in [0,p+1)$}  and $\pmb{\phi}_0 \in {H}^{p+1}_\sigma(\R^+, \widetilde{H}^{\eta}(\Gamma))^n$ is the regular part of the singular expansion of $\bold{p}(\bold{u})$.}

{b) Let $\bold{u}$ be a strong solution to the homogeneous elastodynamic equation \eqref{Navier equation} with inhomogeneous Neumann boundary conditions $\bold{p}(\bold{u})|_\Gamma = \bold{h}$, with $\bold{h}$ smooth. 
Further, let $\pmb{\Psi}_{h,\Delta t}\in \left({V_{\Delta t,p}}\otimes X^{0}_{h,p}\right)^n$ be the best approximation in the norm of ${H}^{r}_\sigma(\R^+, \widetilde{H}^{\frac{1}{2}-s}(\Gamma))^n$ to the Dirichlet trace $\bold{u}|_\Gamma$  on a quasiuniform spatial mesh with $\Delta t \lesssim h$. Then for $p=1,2,3,\dots$ $$\|\bold{u}|_\Gamma-\pmb{\Psi}_{h, \Delta t}\|_{r,\frac{1}{2}-s, \Gamma, \ast} \lesssim \left(\frac{h}{p^2}\right)^{\tilde{\alpha}+s{-\varepsilon}} +  \left(\frac{\Delta t}{p}\right)^{p-r}+ \left(\frac{h}{p}\right)^{-\frac{1}{2}+s+\eta}\ ,$$
 where {$r \in [0,p)$}  and $\bold{u}_0 \in {H}^{p}_\sigma(\R^+, \widetilde{H}^{\eta}(\Gamma))$ is the regular part of the singular expansion of $\bold{u}$.}
\end{theorem}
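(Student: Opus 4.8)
The plan is to bound the best-approximation error from above by the error of an explicit tensor-product quasi-interpolant and then to exploit the singular decompositions of the boundary traces. For part a), Propositions \ref{regpropositionwedge} and \ref{regpropositioncone} (respectively Corollary \ref{polygonlemma} in $2$d) give, in a neighbourhood of an edge, $\bold{p}(\bold{u})|_\Gamma = r^{\nu^\ast-1}\bold{b}(\phi,z,t) + \pmb{\phi}_0$, and near a cone vertex $\bold{p}(\bold{u})|_\Gamma = \chi(r) r^{\alpha-1}\bold{b}(\phi,\theta,t) + \pmb{\phi}_0$, where $\bold{b}$ is smooth because $\bold{g}$ is smooth and, after expanding the remainder sufficiently far as in the paragraph following Proposition \ref{regpropositioncone}, $\pmb{\phi}_0 \in H^{p+1}_\sigma(\R^+,\widetilde{H}^{\eta}(\Gamma))^n$. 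I would choose $\pmb{\Phi}_{h,\Delta t} = \Pi_{\Delta t}\Pi_x \bold{p}(\bold{u})|_\Gamma$, a tensor product of an $hp$ projection $\Pi_{\Delta t}$ onto $V_{\Delta t,p}$ in time and an $hp$ quasi-interpolant $\Pi_x$ onto $X^{-1}_{h,p}$ in space on the quasi-uniform mesh, split $\bold{p}(\bold{u})|_\Gamma$ into its singular and regular parts, and estimate the resulting contributions separately in the anisotropic norm $\|\cdot\|_{r,-\frac12,\Gamma,\ast}$ of Appendix A.

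For the time discretization the standard $hp$ estimate for piecewise polynomials of degree $p$ on a uniform mesh, as used in \cite{bh}, gives $\|v - \Pi_{\Delta t}v\|_{r,-\frac12,\Gamma,\ast} \lesssim (\Delta t/(p+1))^{p+1-r}\|v\|_{p+1,-\frac12,\Gamma}$ for $r\in[0,p+1)$, producing the middle term; the hypothesis $\Delta t \lesssim h$ lets one absorb the mixed space-time errors. For the regular part $\pmb{\phi}_0$ the quasi-optimal $hp$ approximation property on a quasi-uniform mesh yields $\|\pmb{\phi}_0 - \Pi_x\pmb{\phi}_0\|_{\widetilde{H}^{-1/2}(\Gamma)} \lesssim (h/(p+1))^{\eta+\frac12}\|\pmb{\phi}_0\|_{\widetilde{H}^{\eta}(\Gamma)}$, the last term. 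For the singular part near an edge I would use the tensor structure: the smooth factor $\bold{b}(\phi,z,t)$ is approximated with $hp$ (spectral) accuracy in the edge variable $z$, the angle $\phi$ and time $t$, while the radial factor $r^{\nu^\ast-1}$ is handled by the sharp one-dimensional $hp$ estimate on a quasi-uniform mesh, $\|r^{\nu^\ast-1} - \Pi^{hp}_r r^{\nu^\ast-1}\|_{\widetilde{H}^{-1/2}([0,1])} \lesssim (h/(p+1)^2)^{\nu^\ast - \varepsilon}$, which is the $p$-explicit analogue of Lemma \ref{keylemmagrad} due to Bespalov and Heuer \cite{besp, heuer01, heuer2}. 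Near a cone vertex I would first reduce, by affine maps as in Figure \ref{fig:A1}, to the reference square $[0,1]^2$ (modulo finitely many shape-regular triangles, treated directly as in the proof of Theorem \ref{approxtheorem2}), and then invoke the corresponding $hp$ estimate for the $2$d corner singularity $r^{\alpha-1}$ in $\widetilde{H}^{-1/2}$, giving the contribution $(h/(p+1)^2)^{\alpha+\frac12-\varepsilon}$. Collecting the three bounds and recalling $\tilde\alpha = \min\{\mathrm{Re}\ \nu^\ast, \mathrm{Re}\ \alpha+\frac12\}$ from \eqref{alphadef} proves part a); since $\pmb{\Phi} = \bold{p}(\bold{u})|_\Gamma$ solves \eqref{explicit BIE}, the same bound holds for the integral-equation solution.

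Part b) is entirely analogous. By Propositions \ref{regpropositionwedge} and \ref{regpropositioncone} the Dirichlet trace decomposes with leading singular functions $r^{\nu^\ast}\bold{a}$, resp.~$\chi(r)r^{\alpha}\bold{a}$, plus a regular remainder $\bold{u}_0 \in H^{p}_\sigma(\R^+,\widetilde{H}^{\eta}(\Gamma))$. Now the discrete space $X^0_{h,p}$ consists of \emph{continuous} piecewise polynomials, so $p\geq 1$ is required and the $hp$ estimates carry the factors $p^2$ and $p$ rather than $(p+1)^2$ and $(p+1)$. Measuring the error in $\widetilde{H}^{\frac12-s}(\Gamma)$ with $s\in[0,\frac12]$, the $p$-explicit analogue of Lemma \ref{keylemmagrad2} gives $\|r^{\nu^\ast} - \Pi^{hp}_r r^{\nu^\ast}\|_{\widetilde{H}^{1/2-s}} \lesssim (h/p^2)^{\nu^\ast+s-\varepsilon}$ and likewise $(h/p^2)^{\alpha+\frac12+s-\varepsilon}$ for the cone singularity; the time term is $(\Delta t/p)^{p-r}$ and the regular-part term is $(h/p)^{\eta-\frac12+s}=(h/p)^{-\frac12+s+\eta}$. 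Combining gives the stated bound, which transfers to the solution $\pmb{\Psi} = \bold{u}|_\Gamma$ of \eqref{hypersingeq}.

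The hard part will be the singular-part estimate: establishing the \emph{explicit} $p$-dependence of the best approximation of $r^{\nu^\ast-1}$, $r^{\alpha-1}$, $r^{\nu^\ast}$ and $r^{\alpha}$ in fractional Sobolev norms on quasi-uniform meshes, and organizing the tensor-product argument so that the anisotropic space-time norm splits cleanly into radial, angular and temporal factors with exactly the claimed powers of $h$, $p$ and $\Delta t$. Near a curved edge or a cone vertex this additionally requires localization by a partition of unity and the affine reduction to the reference configurations, while keeping the shape-regularity constants of the auxiliary triangular elements — and hence all approximation constants — uniform in $h$ and $p$.
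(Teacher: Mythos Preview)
Your proposal is correct and follows essentially the same approach as the paper: choose the tensor-product interpolant $\Pi_x^p\Pi_t^p\bold{p}(\bold{u})$, split via the singular decompositions \eqref{decompositionEdge}, \eqref{decomposition}, and bound the radial singular factor with the Bespalov--Heuer $hp$ estimates (stated in the paper as Lemmas \ref{keylemmagradhp} and \ref{keylemmagrad2hp}), the smooth tangential and temporal factors by standard $hp$ rates, and the regular remainder by $(h/(p+1))^{1/2+\eta}$. The paper organizes the tensor splitting through Lemma \ref{lemma3.3} and treats the cone case by referring back to the argument in \cite{graded}, exactly as you outline.
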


Theorem \ref{approxtheorem1} implies a corresponding result for the solutions of the single layer and hypersingular integral equations on the surface:
\begin{corollary}\label{approxcor1} Let $r\geq 0$ and $\varepsilon>0$. 
{a) Let $\pmb{\Phi}$ be the solution to the single layer integral equation \eqref{explicit BIE} and  $\pmb{\Phi}_{h,\Delta t} \in \left({V_{\Delta t,p}}\otimes X^{-1}_{h,p}\right)^n$ the best approximation  in the norm of ${H}^{r}_\sigma(\R^+, \widetilde{H}^{-\frac{1}{2}}(\Gamma))^n$ to $\pmb{\Phi}$  on a quasiuniform spatial mesh  with $\Delta t \lesssim h$. Then for $p=0,1,2,\dots$ $$\|\pmb{\Phi} - \pmb{\Phi}_{h,\Delta t}\|_{r, -\frac{1}{2}, \Gamma, \ast} \lesssim \left(\frac{h}{(p+1)^2}\right)^{\tilde{\alpha}{-\varepsilon}} +  \left(\frac{\Delta t}{p+1}\right)^{p+1-r}+ \left(\frac{h}{p+1}\right)^{\frac{1}{2}+\eta}\ ,$$
 where {$r \in [0,p+1)$}  and $\pmb{\phi}_0 \in {H}^{p+1}_\sigma(\R^+, \widetilde{H}^{\eta}(\Gamma))^n$ is the regular part of the singular expansion of $\pmb{\Phi}$.}

{b) Let $\pmb{\Psi}$ be the solution to the hypersingular integral equation \eqref{hypersingeq} and  $\pmb{\Psi}_{h,\Delta t} \in \left({V_{\Delta t,p}}\otimes X^{0}_{h,p}\right)^n$ the best approximation in the norm of ${H}^{r}_\sigma(\R^+, \widetilde{H}^{\frac{1}{2}-s}(\Gamma))^n$ to $\pmb{\Psi}$  on a quasiuniform spatial mesh  with $\Delta t \lesssim h$. Then for $p=1,2,3,\dots$ $$\|\pmb{\Psi}-\pmb{\Psi}_{h, \Delta t}\|_{r,\frac{1}{2}-s, \Gamma, \ast} \lesssim \left(\frac{h}{p^2}\right)^{\tilde{\alpha}+s{-\varepsilon}} +  \left(\frac{\Delta t}{p}\right)^{p+1-r}+ \left(\frac{h}{p}\right)^{-\frac{1}{2}+s+\eta}\ ,$$
 where $r \in [0,p)$, $s\in [0,\frac{1}{2}]$  and $\bold{u}_0 \in {H}^{p+1}_\sigma(\R^+, \widetilde{H}^{\eta}(\Gamma))^n$ is the regular part of the singular expansion of $\pmb{\Psi} = \bf{u}|_\Gamma$.}
\end{corollary}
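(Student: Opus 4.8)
The plan is to obtain Corollary \ref{approxcor1} as an immediate consequence of Theorem \ref{approxtheorem1}, by identifying the solutions of the boundary integral equations with boundary traces of solutions to the elastodynamic equation, exactly as indicated after Corollary \ref{approxcor2}.

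First I would make this identification precise. If $\textbf{u}$ denotes the unique solution of the exterior Dirichlet problem \eqref{Navier equation}, \eqref{dirichlet condition} with smooth data $\textbf{g}$ and vanishing initial conditions \eqref{initial vanishing condition}, then the direct representation formula \eqref{representation formula} together with the jump relations for the layer potentials shows that $\pmb{\Phi} := \textbf{p}(\textbf{u})|_\Gamma$ solves the single layer equation \eqref{explicit BIE}; by the well-posedness of \eqref{energetic weak formulation} in the anisotropic Sobolev setting (Proposition \ref{wellposedness}a) this is its only solution. Hence the function $\pmb{\Phi}$ appearing in Corollary \ref{approxcor1}a) equals $\textbf{p}(\textbf{u})|_\Gamma$, and its singular expansion --- in particular its regular part $\pmb{\phi}_0$ together with the regularity statement $\pmb{\phi}_0 \in H^{p+1}_\sigma(\R^+, \widetilde{H}^{\eta}(\Gamma))^n$ --- is precisely the one furnished by Propositions \ref{regpropositionwedge}, \ref{regpropositioncone} and used in Theorem \ref{approxtheorem1}a). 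Since $\pmb{\Phi}_{h,\Delta t}$ is the best approximation to $\pmb{\Phi}$ in $\left(V_{\Delta t,p}\otimes X^{-1}_{h,p}\right)^n$ measured in the norm $\|\cdot\|_{r,-\frac{1}{2},\Gamma,\ast}$, it is literally the best approximation to $\textbf{p}(\textbf{u})|_\Gamma$, so the estimate of Theorem \ref{approxtheorem1}a) transfers without change. For part b) I would run the same argument with the roles of Dirichlet and Neumann data interchanged: starting from the solution $\textbf{u}$ of the Neumann problem \eqref{components with hooke tensor}, \eqref{neumann condition} with smooth data $\textbf{h}$, the representation formula \eqref{representation formula double layer} and Proposition \ref{wellposedness}b) identify the solution $\pmb{\Psi}$ of the hypersingular equation \eqref{hypersingeq} with $\textbf{u}|_\Gamma$, and Theorem \ref{approxtheorem1}b) applies verbatim.

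The remaining points to check are routine. One must verify that the geometric and spectral hypotheses implicit in Theorem \ref{approxtheorem1} --- inherited from Propositions \ref{regpropositionwedge} and \ref{regpropositioncone} (constancy of the spectrum of the pencil $\mathcal{A}_B$ along the edge, the orthogonality condition \eqref{orthorelation}, and the resulting values of $\nu^\ast$, $\alpha$ and $\tilde{\alpha}$) --- are unaffected by passing from the PDE formulation to the integral equation; this is automatic, since these conditions concern only the opening angle and the elastic parameters. One must also confirm that the discrete trial space and the norm used to define the best approximation coincide in the Corollary and in the Theorem, which holds by construction. I do not expect any genuinely new estimate to be required; the only mild obstacle is to state cleanly, in the time-dependent anisotropic Sobolev framework of Appendix A, the equivalence between the boundary integral equation and the exterior boundary value problem, so that the uniqueness part of Proposition \ref{wellposedness} legitimately pins down $\pmb{\Phi}$ and $\pmb{\Psi}$ as the traces $\textbf{p}(\textbf{u})|_\Gamma$ and $\textbf{u}|_\Gamma$, respectively.
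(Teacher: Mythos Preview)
Your proposal is correct and follows essentially the same approach as the paper: the paper states Corollary \ref{approxcor1} as an immediate consequence of Theorem \ref{approxtheorem1}, with the only justification being the identification $\pmb{\Phi} = \textbf{p}(\textbf{u})|_\Gamma$ and $\pmb{\Psi} = \textbf{u}|_\Gamma$ of the integral-equation solutions with the appropriate boundary traces of the PDE solution. Your write-up is in fact more detailed than the paper's one-line remark, spelling out the role of the representation formula, the jump relations, and the uniqueness from Proposition \ref{wellposedness}.
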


For the proof, we recall \cite[Theorem 3.1]{heuer01}: 
\begin{lemma}\label{keylemmagradhp}For $\varepsilon > 0$, ${\mathrm{Re}}\ a<1$ and $s \in [-1, \min\{-{\mathrm{Re}}\ a+\frac{1}{2},0\})$ there holds with the piecewise polynomial interpolant of degree $p$, $ \Pi_{r}^{p} r^{-a}$, of $ r^{-a}$ 
$$\|r^{-a} - \Pi_{r}^{p} r^{-a}\|_{s,[0,1],\ast} \lesssim  \left(\frac{h}{(p+1)^2}\right)^{-\mathrm{Re}\ a-s+\frac{1}{2}-\varepsilon} . $$\ 
\end{lemma}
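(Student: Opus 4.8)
We sketch the proof, which follows the spectral approximation technique developed by Bespalov and Heuer \cite{heuer01,heuer2} (the statement is \cite[Theorem 3.1]{heuer01}). The plan is to localize the singularity of $r^{-a}$ to the first mesh element, to reduce the estimate on that element to a pure $p$-version estimate on the reference interval $[0,1]$, and to treat the remaining quasi-uniform elements as an analytic contribution of higher order.

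To localize, I would split $[0,1]=[0,h]\cup[h,1]$, where $[0,h]$ is the element containing the origin and $[h,1]$ collects the remaining elements. Since for $s\in[-1,0)$ the norm $\|\cdot\|_{s,[0,1],\ast}$ is most conveniently handled by duality, one writes $\|r^{-a}-\Pi_{r}^{p}r^{-a}\|_{s,[0,1],\ast}=\sup_{g}\langle r^{-a}-\Pi_{r}^{p}r^{-a},g\rangle/\|g\|_{-s,[0,1]}$ and splits the pairing over the two subintervals, so that it suffices to bound the error locally. On $[h,1]$ the function $r^{-a}$ is analytic, with $\|\partial_{r}^{k}r^{-a}\|_{L^{\infty}([h,1])}\lesssim k!\,C^{k}h^{-k-\mathrm{Re}\,a}$, and the classical $p$-version estimates for analytic functions give a contribution which, after the scaling below, is of strictly higher order than the claimed bound; this part is harmless.

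The core is the element $[0,h]$. After rescaling $r=h\rho$, $\rho\in[0,1]$, and using that $\Pi_{r}^{p}$ commutes with this affine change of variables, one has $(r^{-a}-\Pi_{r}^{p}r^{-a})(h\rho)=h^{-a}(\rho^{-a}-\Pi^{p}\rho^{-a})$, so the dilation behaviour of the localized Sobolev norm gives
\begin{equation*}
\|r^{-a}-\Pi_{r}^{p}r^{-a}\|_{s,[0,h],\ast}\ \simeq\ h^{\frac12-s-\mathrm{Re}\,a}\,\|\rho^{-a}-\Pi^{p}\rho^{-a}\|_{s,[0,1],\ast}\ .
\end{equation*}
Everything thus reduces to the single-element $p$-estimate $\|\rho^{-a}-\Pi^{p}\rho^{-a}\|_{s,[0,1],\ast}\lesssim(p+1)^{-2(-\mathrm{Re}\,a-s+\frac12)+\varepsilon}$, which together with the display yields $\bigl(h/(p+1)^{2}\bigr)^{-\mathrm{Re}\,a-s+\frac12}$ up to a factor $(p+1)^{\varepsilon}$, absorbed into the stated $-\varepsilon$ since $h\le1$. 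To establish the single-element estimate I would expand $\rho^{-a}$ in the Jacobi polynomials associated with the endpoint $\rho=0$; since $\rho^{-a}$ lies in the Jacobi-weighted Besov scale with differentiability governed by $\mathrm{Re}\,a$, its Jacobi coefficients decay like $k^{-2\mathrm{Re}\,a-1}$ up to logarithmic factors. Invoking the characterization of the $\|\cdot\|_{s,[0,1],\ast}$-norm, $s\in[-1,0)$, by weighted sums of these coefficients, truncating the expansion at degree $p$ and summing the tail would produce the $(p+1)^{-2(\cdot)}$ rate, with the logarithmic losses absorbed into $\varepsilon$. This is precisely the computation in \cite[Theorem 3.1]{heuer01} (compare also \cite[Lemma 3.2]{heuer2} and the earlier $p$-version estimates of Gui and Babu\v{s}ka).

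The step I expect to be the main obstacle is obtaining the sharp power $(p+1)^{-2(-\mathrm{Re}\,a-s+\frac12)}$ rather than the naive $(p+1)^{-(-\mathrm{Re}\,a-s+\frac12)}$: a crude duality bound --- replacing the test function $g$ by $g$ minus a piecewise-polynomial quasi-interpolant, using the $L^{2}$-orthogonality of the projection error, and estimating $\|r^{-a}-\Pi_{r}^{p}r^{-a}\|_{L^{2}}\,\|g-q\|_{L^{2}}$ --- loses the $p$-doubling that is characteristic of $p$-approximation of algebraic singularities and hence yields a suboptimal exponent. Capturing the doubling genuinely requires the Jacobi-expansion analysis on the reference element. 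A secondary technical point is that for $\mathrm{Re}\,a\in[\frac12,1)$ one has $r^{-a}\notin L^{2}([0,1])$, which forces the whole argument to be carried out in the negative-order spaces from the outset, so that the reference-element estimate must be proved directly in the $\|\cdot\|_{s,[0,1],\ast}$-norm rather than reduced to an $L^{2}$ estimate.
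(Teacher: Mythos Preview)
The paper does not prove this lemma: it is stated with the preface ``For the proof, we recall \cite[Theorem 3.1]{heuer01}'' and used as a black box. Your proposal correctly identifies the source and gives a reasonable sketch of the Bespalov--Heuer argument (localization to the singular element, scaling, and the Jacobi-expansion analysis capturing the $p$-doubling), which is more than the paper itself supplies.
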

Similarly, for positive powers of $r$ we recall \cite[Theorem 3.1]{heuer2}:
\begin{lemma}\label{keylemmagrad2hp}For $\varepsilon > 0$, $0<{\mathrm{Re}}\ a$ and $s \in [0,{\mathrm{Re}}\ a+\frac{1}{2})$ there holds with the piecewise polynomial interpolant of degree $p+1$, $ \Pi_{r}^{p+1} r^{a}$, of $ r^{a}$
$$\|r^{a} - \Pi_{y}^{p+1} r^a\|_{s,[0,1],\ast} \lesssim \left(\frac{h}{p^2}\right)^{\min\{\mathrm{Re}\ a-s+\frac{1}{2}, 2-s\}{-\varepsilon}} . $$
\end{lemma}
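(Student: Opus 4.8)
The plan is to localize the approximation to the single mesh element $I_1=[0,x_1]$ that touches the singularity at $r=0$, and to treat the contributions of the remaining elements, on which $r^a$ is real-analytic, as a negligible perturbation. Write the quasi-uniform mesh as $0=x_0<x_1<\cdots<x_N=1$ with $x_j\sim jh$ and $N\sim h^{-1}$. Since the norm scales as $\|v(x_1\,\cdot)\|_{\widetilde{H}^s([0,x_1])}\sim x_1^{\frac12-s}\|v\|_{\widetilde{H}^s([0,1])}$ for $s\in[0,1]$, and $r^a=x_1^a\rho^a$ under $r=x_1\rho$, the contribution of $I_1$ equals $x_1^{\,\mathrm{Re}\,a+\frac12-s}$ times the pure $p$-version error $\|\rho^a-\Pi^{p+1}_\rho\rho^a\|_{\widetilde{H}^s([0,1])}$ on the reference element. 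On $I_j$ with $j\ge 2$ one has $r\ge x_{j-1}\sim jh$, so $r^a$ extends holomorphically to a disc of radius $\sim jh$ about the midpoint of $I_j$, whose Bernstein ellipse has parameter $\sim j$; the standard Bernstein estimate then gives an elementwise error $\lesssim h^{\frac12-s}j^{-p}(jh)^{\mathrm{Re}\,a}$, and a straightforward summation of squares over $2\le j\le N$ yields a quantity that decays in $p$ and is dominated, for every admissible $p\ge 1$, by the $I_1$-contribution. The second entry $2-s$ in the $\min$ is the generic smooth-approximation rate, recovered directly from the standard interpolation estimate on the quasi-uniform mesh; it is not active for the mild singularities $\mathrm{Re}\,a<\tfrac32$ relevant to the applications here. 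This reduces everything to the reference-element estimate, which is the source of the squaring $p\mapsto p^2$.

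For the reference-element estimate I would expand $\rho^a$, after the affine change of variables to $[-1,1]$, in Legendre polynomials — or, to respect the $\widetilde{H}^s$-norm, in suitably weighted Jacobi polynomials compatible with the extension-by-zero structure. The coefficients of $\rho^a$ against $P_n$ are explicit quotients of Gamma functions, and Stirling's formula gives $|c_n|\sim C_a\,n^{-2\mathrm{Re}\,a-1}$; the $L^2$-tail of the degree-$(p+1)$ truncation is therefore $\lesssim p^{-2\mathrm{Re}\,a-\frac12}$, and accounting for the $H^1$-weighting together with interpolation yields $\|\rho^a-\Pi^{p+1}_\rho\rho^a\|_{\widetilde{H}^s([0,1])}\lesssim_\varepsilon p^{-2(\mathrm{Re}\,a-s+\frac12)+\varepsilon}$ whenever $s<\mathrm{Re}\,a+\frac12$, the $\varepsilon$ absorbing the logarithmic loss at the exponent endpoint and the discrepancy between the orthogonal projection and the chosen interpolant $\Pi^{p+1}$. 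Combining with the scaling factor from $I_1$ gives $x_1^{\,\mathrm{Re}\,a+\frac12-s}\,p^{-2(\mathrm{Re}\,a+\frac12-s)+\varepsilon}\lesssim (h/p^2)^{\mathrm{Re}\,a+\frac12-s-\varepsilon}$, where $x_1\sim h\le 1$ is used to absorb the stray factor $p^\varepsilon$ into the stated exponent.

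The main obstacle is precisely this reference-element estimate in the fractional norm $\widetilde{H}^s([0,1])$: unlike the $L^2$ or $H^1$ cases, $\widetilde{H}^s$ for $0<s<1$ is sensitive to the behaviour at both endpoints, so the Jacobi basis must be chosen accordingly and the coefficient asymptotics tracked uniformly; keeping the $\varepsilon$-loss sharp, rather than surrendering a full power of $p$, and handling the exponent endpoint $s=\mathrm{Re}\,a+\frac12$ require care. This is exactly the content of \cite[Theorem~3.1]{heuer2} — and, for the negative-power companion, \cite[Theorem~3.1]{heuer01} used in Lemma \ref{keylemmagradhp} — on which we rely. An alternative route is the countably normed (``Babu\v{s}ka--Guo'') weighted-space framework: $r^a$ lies in such a space $\mathcal{B}^{\ell}_\beta([0,1])$ for a suitable weight $\beta$, and the general $hp$-approximation theorem for these spaces applies directly, again yielding the rate $(h/p^2)^{\min\{\mathrm{Re}\,a+\frac12-s,\,2-s\}-\varepsilon}$.
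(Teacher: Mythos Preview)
The paper does not prove this lemma at all: it is simply recalled as \cite[Theorem~3.1]{heuer2}, with no argument given. Your proposal is a reasonable reconstruction of how such results are established --- localization to the element at the singularity, scaling to a reference element, Jacobi/Legendre expansion with Gamma-function coefficient asymptotics, and Bernstein-type bounds on the remaining elements --- and you correctly identify that the delicate part is the $\widetilde{H}^s$ estimate on the reference element, for which you too ultimately invoke \cite[Theorem~3.1]{heuer2}. So your approach is consistent with the paper's, just far more explicit: the paper treats the lemma as a black box from the literature, whereas you sketch the machinery behind that black box before citing it.
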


\begin{proof}[Proof of Theorem \ref{approxtheorem1}]{
For the proof of part {a)}, we focus on the case of the wedge singularity, as the approximation of the singular function on the cone closely follows the proof in \cite{graded}.\\

We choose $\pmb{\Phi}_{h, \Delta t} = \Pi_x^{p} \Pi_t^{{p}}\textbf{p}(\textbf{u})$. Using the decomposition \eqref{decompositionEdge} for $\textbf{p}(\textbf{u})$, we can separate the singular and regular parts on the rectangular mesh:
\begin{align*}&\|p_i(\textbf{u})  - \Pi_x^{p} \Pi_t^{{p}}p_i(\textbf{u}) \|_{r, -\frac{1}{2}, \Gamma, \ast} \leq \|b_i(\phi,z,t) r^{{\nu^\ast}-1} - \Pi_t^{{p}} \Pi_x^{p} b_i(\phi,z,t) r^{{\nu^\ast}-1}\|_{r, -\frac{1}{2}, \Gamma,\ast}  + \|{\phi}_{i,0} - \Pi_t^{{p}} \Pi_x^{p}{{\phi}_{i,0}}\|_{r, -\frac{1}{2}, \Gamma,\ast}\\ & \leq \|b_i(\phi,z,t) r^{{\nu^\ast}-1} -\Pi_t^{{p}} b_i(\phi,z,t) r^{{\nu^\ast}-1}\|_{r, -\frac{1}{2}, \Gamma,\ast}+\|\Pi_t^{{p}} b_i(\phi,z,t) r^{{\nu^\ast}-1}- \Pi_t^{{p}} \Pi_x^{p} b_i(\phi,z,t) r^{{\nu^\ast}-1}\|_{r, -\frac{1}{2}, \Gamma,\ast} \\ & \qquad + \|{\phi}_{i,0} - \Pi_t^{{p}} \Pi_x^{p}{{\phi}_{i,0}}\|_{r, -\frac{1}{2}, \Gamma,\ast}\\ &
\leq \|b_i(\phi,z,t) -\Pi_t^{{p}} b_i(\phi,z,t)\|_{r, \epsilon-\frac{1}{2}} \|r^{{\nu^\ast}-1}\|_{-\varepsilon, I, \ast}  +  \|\Pi_t^{{p}} b_i(\phi,z,t) r^{{\nu^\ast}-1}- \Pi_t^{{p}} \Pi_z^{p} b_i(\phi,z,t) r^{{\nu^\ast}-1}\|_{r, -\frac{1}{2}, \Gamma,\ast} \\ & \qquad + \| \Pi_t^{{p}} \Pi_z^{p} b_i(\phi,z,t) r^{{\nu^\ast}-1}-\Pi_t^{{p}} \Pi_z^{p} b_i(\phi,z,t) \Pi_y^{p} r^{{\nu^\ast}-1}\|_{r, -\frac{1}{2}, \Gamma,\ast}+
\|{\phi}_{i,0} - \Pi_t^{{p}} \Pi_x^{p}{{\phi}_{i,0}}\|_{r, -\frac{1}{2}, \Gamma,\ast}\ .
\end{align*}
In the second term we used $\Pi_x^{p} =  \Pi_z^{p} \Pi_r^{p}$. The first term was estimated using Lemma \ref{lemma3.3}, and the result is now bounded by $$\|b_i(\phi,z,t) -\Pi_t^{{p}} b_i(\phi,z,t)\|_{r, \epsilon-\frac{1}{2}} \lesssim \left(\frac{ \Delta t}{p+1}\right)^{p+1-r}   \|b_i(\phi,z,t)\|_{p+1, \epsilon-\frac{1}{2}}\ .$$
Using Lemma \ref{lemma3.3}, we obtain for the second and third terms:
\begin{align*}&
 \|\Pi_t^{{p}} b_i(\phi,z,t) r^{{\nu^\ast}-1}- \Pi_t^{{p}} \Pi_z^{p} b_i(\phi,z,t) r^{{\nu^\ast}-1}\|_{r, -\frac{1}{2}, \Gamma,\ast} + \| \Pi_t^{{p}} \Pi_z^{p} b_i(\phi,z,t) r^{{\nu^\ast}-1}-\Pi_t^{{p}} \Pi_z^{p} b_i(\phi,z,t) \Pi_y^{p} r^{{\nu^\ast}-1}\|_{r, -\frac{1}{2}, \Gamma,\ast}\\ & \lesssim \|\Pi_t^{{p}} b_i(\phi,z,t) - \Pi_t^{{p}} \Pi_z^{p} b_i(\phi,z,t)\|_{r, \varepsilon - \frac{1}{2} } \|r^{{\nu^\ast}-1}\|_{-\varepsilon, I, \ast}  +\| \Pi_t^{{p}} \Pi_z^{p} b_i(\phi,z,t)\|_{r,0} \|r^{{\nu^\ast}-1} - \Pi_r^{p}r^{{\nu^\ast}-1}\|_{-\frac{1}{2}, I, \ast } \ .
\end{align*}}
We finally note that $$\|r^{{\nu^\ast}-1} - \Pi_{r}^{p} r^{{\nu^\ast}-1}\|_{\frac{1}{2}, I, \ast} \lesssim  \left(\frac{h}{(p+1)^2}\right)^{\mathrm{Re}\ {\nu^\ast} -\varepsilon}$$ from Lemma \ref{keylemmagradhp}, as well as $$\|\Pi_t^{{p}} b_i(\phi,z,t) - \Pi_t^{{p}} \Pi_z^{p} b_i(\phi,z,t)\|_{r, \varepsilon - \frac{1}{2} } \lesssim \left(\frac{h}{p+1}\right)^{\frac{1}{2}+k- \epsilon}\|b_i(\phi,z,t)\|_{r,k} \ . $$ 
When the regular part $\pmb{\phi}_{0}$ in \eqref{decompositionEdge} is $H^\eta$ in space, we obtain from the approximation properties \cite{hp}: 
\begin{align*}\|{\phi}_{i,0} - \Pi_t^{{p}} \Pi_x^{p}{{\phi}_{i,0}}\|_{r, -\frac{1}{2}, \Gamma,\ast} &\lesssim_\sigma   \Big(\left(\frac{\Delta t}{p+1}\right)^{p+1-r}+ \left(\frac{h}{p+1}\right)^{1/2+\eta}\Big)\|{\phi}_{i,0}\|_{p+1, \eta, \Gamma}\ .
\end{align*}
Combining these estimates, the asserted estimate follows for $\Delta t \lesssim h$ \\
$$\|\textbf{p}(\textbf{u}) - \Pi^p_x \Pi^p_t \textbf{p}(\textbf{u})\|_{r, -\frac{1}{2}, \Gamma, \ast} \lesssim \left(\frac{h}{(p+1)^2}\right)^{\mathrm{Re}\ {\nu^\ast}{-\varepsilon}} +  \Big(\left(\frac{\Delta t}{p+1}\right)^{p+1-r}+ \left(\frac{h}{p+1}\right)^{1/2+\eta}\Big)\|{\phi}_{i,0}\|_{p+1, \eta, \Gamma}\ .$$
The approximation of the Dirichlet trace $\bold{u}|_\Gamma$ to prove part {b)} follows the above arguments.
\end{proof}
The approximation argument extends from rectangular to triangular elements as in \cite{disspetersdorff}.\\

A similar estimate is obtained for $V_{\Delta t,1}$, with $(\Delta t)^p$ replaced by $\Delta t$.

\section{Algorithmic details}\label{sec:algo}

The numerical experiments in Section \ref{sec:numer} consider the two-dimensional case, therefore in the following we keep the dimension $n=2$ fixed. We introduce the set $\left\lbrace w^{(p)}_m(\textbf{x})\right\rbrace_{m=1}^{M^{(p)}_{h}}$, containing the basis functions of the space $X^{-1}_{h,p}$, which are piecewise polynomials depending on the Lagrangian polynomials on each element $e_i$. Similarly, the set $\left\lbrace w^{(q)}_m(\textbf{x})\right\rbrace_{m=1}^{M^{(q)}_{h}}$ corresponds to a basis of the functional space $X^{0}_{h,q}$. For the time discretization we choose piecewise constant basis functions for the approximation of $\pmb{\Phi}$ ($s_p=0$),
$$v^{(0)}_n(t)=H[t-t_n]-H[t-t_{n+1}], \quad n=0,...,N_{\Delta t}-1,$$
and linear basis functions for the approximation of $\pmb{\Psi}$ ($s_q=1$),
$$v^{(1)}_n(t)=\frac{t-t_n}{\Delta t} H[t-t_n]-\frac{t-t_{n+1}}{\Delta t}H[t-t_{n+1}], \quad n=0,...,N_{\Delta t}-1.$$
Hence, the components of the discrete functions $\pmb{\Phi}_{h,\Delta t}$ and $\pmb{\Psi}_{h,\Delta t}$ can be expressed in space and time as
$$\pmb{\Phi}_{i, h,\Delta t}(\textbf{x},t)=\sum_{n=0}^{N_{\Delta t}-1}\sum_{m=1}^{M^{(p)}_{h}}\alpha^{i}_{nm}w^{(p)}_m(\textbf{x})v^{(0)}_n(t),\quad i=1,2,$$
and 
$$\pmb{\Psi}_{i, h, \Delta t}(\textbf{x},t)=\sum_{n=0}^{N_{\Delta t}-1}\sum_{m=1}^{M^{(q)}_{h}}\beta^{i}_{nm}w^{(q)}_m(\textbf{x})v^{(1)}_n(t),\quad i=1,2,$$
The space-time Galerkin equation \eqref{energetic weak formulationh} leads to the linear system 
\begin{equation}\label{linear system}\left(
\begin{array}{ccccc}
{E}_{\mathcal{V}}^{(0)} & 0 & 0 & \cdots & 0\\
{E}_{\mathcal{V}}^{(1)} & {E}_{\mathcal{V}}^{(0)} & 0 & \cdots & 0\\
{E}_{\mathcal{V}}^{(2)} & {E}_{\mathcal{V}}^{(1)} & {E}_{\mathcal{V}}^{(0)} & \cdots & 0\\
\vdots & \vdots & \vdots & \ddots & \vdots\\
{E}_{\mathcal{V}}^{(N_{\Delta t}-1)} & {E}_{\mathcal{V}}^{(N_{\Delta t}-2)} & {E}_{\mathcal{V}}^{(N_{\Delta t}-3)} & \cdots & {E}_{\mathcal{V}}^{(0)}
\end{array}
\right)
\left(
\begin{array}{c}
\pmb{\alpha}_{(0)}\\
\pmb{\alpha}_{(1)}\\
\pmb{\alpha}_{(2)}\\
\vdots \\
\pmb{\alpha}_{(N_{\Delta t}-1)}
\end{array}
\right)
=
\left(
\begin{array}{c}
\textbf{g}_{(0)}\\
\textbf{g}_{(1)}\\
\textbf{g}_{(2)}\\
\vdots \\
\textbf{g}_{(N_{\Delta t}-1)}
\end{array}
\right),
\end{equation}
where for all $l=0,..., N_{\Delta t}-1$ the $l$-th block, the $l$-th entry of the solution vector and the $l$-th entry of the right hand side are organized as 
$${E}_{\mathcal{V}}^{(l)}=
\left(
\begin{array}{cc}
{E}_{\mathcal{V},11}^{(l)} & {E}_{\mathcal{V},12}^{(l)}\\
{E}_{\mathcal{V},21}^{(l)} & {E}_{\mathcal{V},22}^{(l)}
\end{array}
\right),\quad
\begin{array}{ll}
\pmb{\alpha}_{(l)}=
\left(
\begin{array}{cccccc}
\alpha_{l1}^{1} &
\cdots &
\alpha_{lM^{(p)}_{h}}^{1} &
\alpha_{l1}^{2} &
\cdots &
\alpha_{lM^{(p)}_{h}}^{2}
\end{array}
\right)^\top
\\[4pt]
\textbf{g}_{(l)}=
\left(
\begin{array}{cccccc}
\textbf{g}_{l1}^1 &
\cdots &
\textbf{g}_{lM^{(p)}_{h}}^1 &
\textbf{g}_{l1}^2 &
\cdots &
\textbf{g}_{lM^{(p)}_{h}}^2
\end{array}
\right)^\top
\end{array}
.$$
Solving \eqref{linear system} by backsubstitution leads to a marching-on-in-time time stepping scheme (MOT).
To obtain the generic matrix entry of the sub-block $E^{(l)}_{\mathcal{V}}$, where $l=n-\widetilde{n}$ is the nonnegative difference between two time indexes, we can perform an analytical integration in the time variables $t$, obtaining 
\begin{align}
\left(\mathbb{E}_{\mathcal{V},ij}^{(l)}\right)_{\widetilde{m},m}&=\langle V_{ij}w^{(p)}_m \partial_t v^{(0)}_n,w^{(p)}_{\widetilde{m}}v^{(0)}_{\widetilde{n}}\rangle_{L^2(\Sigma)}=-\langle V_{ij}w^{(p)}_m v^{(0)}_n,w^{(p)}_{\widetilde{m}}\partial_t v^{(0)}_{\widetilde{n}}\rangle_{L^2(\Sigma)}\nonumber\\
=&-\sum_{\xi,\varsigma=0}^1(-1)^{\xi+\varsigma}\int_{\Gamma}w^{(p)}_{\widetilde{m}}(\textbf{x})\int_0^{t_{\widetilde{n}+\xi}}\int_{\Gamma}G_{ij}(\textbf{x},\pmb{\xi};t_{\widetilde{n}+\xi},\tau)w^{(p)}_m(\pmb{\xi})H[\tau-t_{n+\varsigma}]d\Gamma_{\pmb{\xi}}d\tau d\Gamma_{\textbf{x}}.\label{integration in t}
\end{align}
Further, it is also possible to compute exactly the integration in $\tau$ of \eqref{integration in t}, leading to the matrix entry
\begin{equation}\label{general matrix elements}
\left({E}_{\mathcal{V},ij}^{(l)}\right)_{\widetilde{m},m}=- \sum_{\xi,\varsigma=0}^1\frac{(-1)^{\xi+\varsigma}}{2\pi \rho}\int_{\Gamma}\int_{\Gamma}w^{(p)}_{\widetilde{m}}(\textbf{x})w^{(p)}_m(\pmb{\xi})\nu^{\mathcal{V}}_{ij}(r;\Delta_{\widetilde{n}+\xi,n+\varsigma})d\Gamma_{\textbf{x}}d\Gamma_{\pmb{\xi}},
\end{equation}
for all $i,j=1,2$; $m,\widetilde{m}=1,...,M^{(p)}_{h}$ and $n,\widetilde{n}=0,...,N_{\Delta t}-1$.
Here, the positive time difference $t_{\widetilde{n}+\xi}-t_{n+\varsigma}=\Delta_{\widetilde{n}+\xi,n+\varsigma}$ and the integration kernel $\nu^{\mathcal{V}}_{ij}$
{\begin{align}
\nu^{\mathcal{V}}_{ij}(r;\Delta_{\widetilde{n}+\xi,n+\varsigma}):=\quad\quad & \nonumber\\
\Delta_{\widetilde{n}+\xi,n+\varsigma}\left(\frac{r_ir_j}{r^4}-\frac{\delta_{ij}}{2r^2}\right)& \left[ \frac{H[c_{\mathtt{P}}\Delta_{\widetilde{n}+\xi,n+\varsigma}-r]}{c_{\mathtt{P}}}\varphi_{\mathtt{P}}(r;\Delta_{\widetilde{n}+\xi,n+\varsigma})- \frac{H[c_{\mathtt{S}}\Delta_{\widetilde{n}+\xi,n+\varsigma}-r]}{c_{\mathtt{S}}}\varphi_{\mathtt{S}}(r;\Delta_{\widetilde{n}+\xi,n+\varsigma})\right]\nonumber\\
+\frac{\delta_{ij}}{2} & \left[ \frac{H[c_{\mathtt{P}}\Delta_{\widetilde{n}+\xi,n+\varsigma}-r]}{c_{\mathtt{P}}^2}\widehat{\varphi}_{\mathtt{P}}(r;\Delta_{\widetilde{n}+\xi,n+\varsigma})+ \frac{H[c_{\mathtt{S}}\Delta_{\widetilde{n}+\xi,n+\varsigma}-r]}{c_{\mathtt{S}}^2}\widehat{\varphi}_{\mathtt{S}}(r;\Delta_{\widetilde{n}+\xi,n+\varsigma})\right].\label{elastodynamics kernel}
\end{align}}
For each $\gamma={\mathtt{P}},{\mathtt{S}}$ the specific kernel functions are given by
\begin{align}
& {\varphi_{\gamma}(r;\Delta_{\widetilde{n}+\xi,n+\varsigma}):=\sqrt{c^2_{\gamma}\Delta_{\widetilde{n}+\xi,n+\varsigma}^2-r^2}},\label{varphi_gamma}\\
& \widehat{\varphi}_{\gamma}(r;\Delta_{\widetilde{n}+\xi,n+\varsigma}):=\log \left( \sqrt{c^2_{\gamma}\Delta_{\widetilde{n}+\xi,n+\varsigma}^2-r^2}+c_{\gamma}\Delta_{\widetilde{n}+\xi,n+\varsigma} \right)-\log (r).\label{widehat_varphi_gamma}
\end{align}
If $0\leqslant r\leqslant c_{\mathtt{S}}\Delta_{\widetilde{n}+\xi,n+\varsigma}<c_{\mathtt{P}}\Delta_{\widetilde{n}+\xi,n+\varsigma}$ the kernel $\nu_{ij}$ has a reduced form:
\begin{align}
&\nu^{\mathcal{V}}_{ij}(r;\Delta_{\widetilde{n}+\xi,n+\varsigma} )=\nonumber\\
& \frac{c_{\mathtt{P}}^2-c_{\mathtt{S}}^2}{c_{\mathtt{P}} c_{\mathtt{S}}}\left( \frac{r_i r_j}{r^2}-\frac{\delta_{ij}}{2}\right)\frac{\Delta_{\widetilde{n}+\xi,n+\varsigma}}{c_{\mathtt{P}}\sqrt{c_{\mathtt{S}}^2\Delta_{\widetilde{n}+\xi,n+\varsigma}^2-r^2}+c_{\mathtt{S}}\sqrt{c_{\mathtt{P}}^2\Delta_{\widetilde{n}+\xi,n+\varsigma}^2-r^2}} - \frac{c_{\mathtt{P}}^2+c_{\mathtt{S}}^2}{c_{\mathtt{P}}^2 c_{\mathtt{S}}^2}\frac{\delta_{ij}}{2}\log(r)\nonumber\\
& + \frac{\delta_{ij}}{2}\left[\frac{1}{c_{\mathtt{P}}^2}\log\left( c_{\mathtt{P}}\Delta_{\widetilde{n}+\xi,n+\varsigma}+\sqrt{c_{\mathtt{P}}^2\Delta_{\widetilde{n}+\xi,n+\varsigma}^2-r^2}\right)+\frac{1}{c_{\mathtt{S}}^2}\log\left( c_{\mathtt{S}}\Delta_{\widetilde{n}+\xi,n+\varsigma}+\sqrt{c_{\mathtt{S}}^2\Delta_{\widetilde{n}+\xi,n+\varsigma}^2-r^2}\right)\right],\label{kernel reduced form}
\end{align}
with space singularity of kind $\mathcal{O}\left(\log (r) \right)$ for $r\to 0$. This behavior is well-studied for boundary integral operators related to 2D elliptic problems.\\
The discrete function $\pmb{\psi}_{i, h, \Delta t}$ in the weak formulation \eqref{hypersingeqh} produces the linear system ${E}_{\mathcal{W}}\pmb{\beta}=\textbf{h}$,
similar to the one obtained by the discretization of the single layer operator $\mathcal{V}$. In particular, the same Toeplitz structure is obtained in time, and the matrix entries are computed with analytical integrations in time variables, similar to those adopted in \eqref{integration in t}, leading to
\begin{equation}\label{general matrix elements hypersingular}
\left({E}_{\mathcal{W},ij}^{(l)}\right)_{\widetilde{m},m}=- \sum_{\xi,\varsigma=0}^1\frac{(-1)^{\xi+\varsigma}}{2\pi \rho {\Delta t}^2}\int_{\Gamma}\int_{\Gamma}w^{(q)}_{\widetilde{m}}(\textbf{x})w^{(q)}_m(\pmb{\xi})\nu^{\mathcal{W}}_{ij}(r;\Delta_{\widetilde{n}+\xi,n+\varsigma})d\Gamma_{\textbf{x}}d\Gamma_{\pmb{\xi}},
\end{equation}
for all $i,j=1,2$; $m,\widetilde{m}=1,...,M^{(q)}_{h}$ and $n,\widetilde{n}=0,...,N_{\Delta t}-1$.
Here,  $t_{\widetilde{n}+\xi}-t_{n+\varsigma}=\Delta_{\widetilde{n}+\xi,n+\varsigma}$ and the integration kernel $\nu^{\mathcal{W}}_{ij}$ is
{\begin{align}
&\nu^{\mathcal{W}}_{ij}(r;\Delta_{\widetilde{n}+\xi,n+\varsigma} )=\nonumber\\
&\frac{H[c_{\mathtt{P}}\Delta_{\widetilde{n}+\xi,n+\varsigma}-r]}{c_{\mathtt{P}}^3}\left[ \left( D_{\varphi,c_{\mathtt{P}}}^{ij}+D_{c_{\mathtt{P}}}^{ij}\frac{\Delta_{\widetilde{n}+\xi,n+\varsigma}^2c_{\mathtt{P}}^2}{r^2}\right)\frac{\Delta_{\widetilde{n}+\xi,n+\varsigma}\:\varphi_{\mathtt{P}}(r;\Delta_{\widetilde{n}+\xi,n+\varsigma})}{r^2}+D_{\widehat{\varphi},c_{\mathtt{P}}}^{ij}\frac{\widehat{\varphi}_{\mathtt{P}}(r;\Delta_{\widetilde{n}+\xi,n+\varsigma})}{c_{\mathtt{P}}}\right]\nonumber\\
-&\frac{H[c_{\mathtt{S}}\Delta_{\widetilde{n}+\xi,n+\varsigma}-r]}{c_{\mathtt{S}}^3}\left[ \left( D_{\varphi,c_{\mathtt{S}}}^{ij}+D_{c_{\mathtt{S}}}^{ij}\frac{\Delta_{\widetilde{n}+\xi,n+\varsigma}^2c_{\mathtt{S}}^2}{r^2}\right)\frac{\Delta_{\widetilde{n}+\xi,n+\varsigma}\:\varphi_{\mathtt{S}}(r;\Delta_{\widetilde{n}+\xi,n+\varsigma})}{r^2}+D_{\widehat{\varphi},c_{\mathtt{S}}}^{ij}\frac{\widehat{\varphi}_{\mathtt{S}}(r;\Delta_{\widetilde{n}+\xi,n+\varsigma})}{c_{\mathtt{S}}}\right],\label{kernel reduced form hypersingular}
\end{align}}
where the coefficients {$D_{\varphi,c_{\gamma}}^{ij}$, $D_{c_{\gamma}}^{ij}$ and $D_{\widehat{\varphi},c_{\gamma}}^{ij}$} are defined in the {Appendix A.3 of \cite{Dicredico2022}}. If $0\leqslant r\leqslant c_{\mathtt{S}}\Delta_{\widetilde{n}+\xi,n+\varsigma}<c_{\mathtt{P}}\Delta_{\widetilde{n}+\xi,n+\varsigma}$ the kernel $\nu^{\mathcal{W}}_{ij}$ has a reduced form with singularity $\mathcal{O}(r^{-2})$ for $r\to 0$.\\
We also have to take into account that both kernels $\nu^{\mathcal{V}}_{ij}$ and $\nu^{\mathcal{W}}_{ij}$ depend on the difference $c_{\gamma}^2\Delta_{\widetilde{n}+\xi,n+\varsigma}^2-r^2$ through the Heaviside functions, which lead to a jump at the points where the argument vanishes. 
{To overcome this issue related to the possible presence of one or two wave fronts which can reduce the integration domain in local space variables, we apply to the latter a suitable decomposition. This splitting procedure drastically reduces the number of quadrature nodes required to achieve single precision accuracy \cite{Aimi20}.}\\
{Moreover, to numerically evaluate \eqref{general matrix elements} and \eqref{general matrix elements hypersingular}, we employ specific quadrature rules to treat the singularities of the kernels $\nu^{\mathcal{V}}_{ij}$ and $\nu^{\mathcal{W}}_{ij}$ defined in \eqref{kernel reduced form} and \eqref{kernel reduced form hypersingular}. The interested reader is refered to \cite{Aimi20} for a detailed description of the applied quadrature schemes in case of the integration of the weakly singular kernel. For the numerical evaluation of the hypersingular integrals we refer to \cite{Dicredico2022}.}

\section{Numerical results}\label{sec:numer}

The numerical experiments in this section consider $h$, $p$ and $hp$ discretizations for the soft scattering problem \eqref{energetic weak formulation} (Sections \ref{flat_ob_single_layer}-\ref{polygonal_ob_single_layer}) and the hard scattering problem \eqref{hypersingeq} (Section \ref{flat_ob_hypersingular}). They illustrate the singular behavior of the solution near the crack tip and the theoretically expected convergence rates.\\
Unless  stated otherwise, for the $h$ version on uniform or graded meshes piecewise constant basis functions in space and time are chosen to approximate the solution of the Dirichlet problem \eqref{energetic weak formulationh}. Piecewise linear functions are used for the Neumann problem \eqref{hypersingeqh}. The $p$ and $hp$ versions are implemented with higher polynomial degrees in space, up to $p=7$. The Lam\'e parameters and the mass density, where it is not otherwise specified, are set to be $\lambda=2$, $\mu=1$ and $\varrho=1$ for all the results presented in this section.\\
{All the numerical results for the Dirchlet problem are computed for a prescribed right hand side $\widetilde{\textbf{g}} = (\mathcal{K}+1/2) \textbf{g}$ in \eqref{energetic weak formulation}. While the analysis in Sections \ref{regularitysection} and \ref{approxsection} relies on this form of $\widetilde{\textbf{g}}$, as typical in the BEM literature, for numerical convenience we directly prescribe $\widetilde{\textbf{g}}$. Analogously, for the Neumann problem we prescribe $\widetilde{\textbf{h}} = (\mathcal{K}'-1/2) \textbf{h}$. Also, we set the weight $\sigma=0$.}\\ 

\subsection{Soft scattering problems on flat obstacle}\label{flat_ob_single_layer}
\noindent \textbf{Example 1.} Here we consider the discrete weakly singular integral equation \eqref{energetic weak formulationh} on a flat obstacle $\Gamma=\left\lbrace (x,0)\in\mathbb{R}\:\vert\: x\in[-0.5,0.5]\right\rbrace$ up to time $T=1$. The Dirichlet datum corresponds to $\widetilde{g}_i(x,t)=\widetilde{g}(x,t)=H[t]f(t)x^4$, $i=1,2$,
where the function 
\begin{equation}\label{function f}
f(t)=
\left\lbrace
\begin{array}{ll}
\sin^2(4\pi t), & t\in[0,1/8]\\
1, & t> 1/8
\end{array}
\right.
\end{equation}
is a temporal profile that leads to an exact solution $\pmb{\Phi}$ which becomes static in time. 
\begin{figure}[h!]
\centering
\includegraphics[height=5cm, width=5cm]{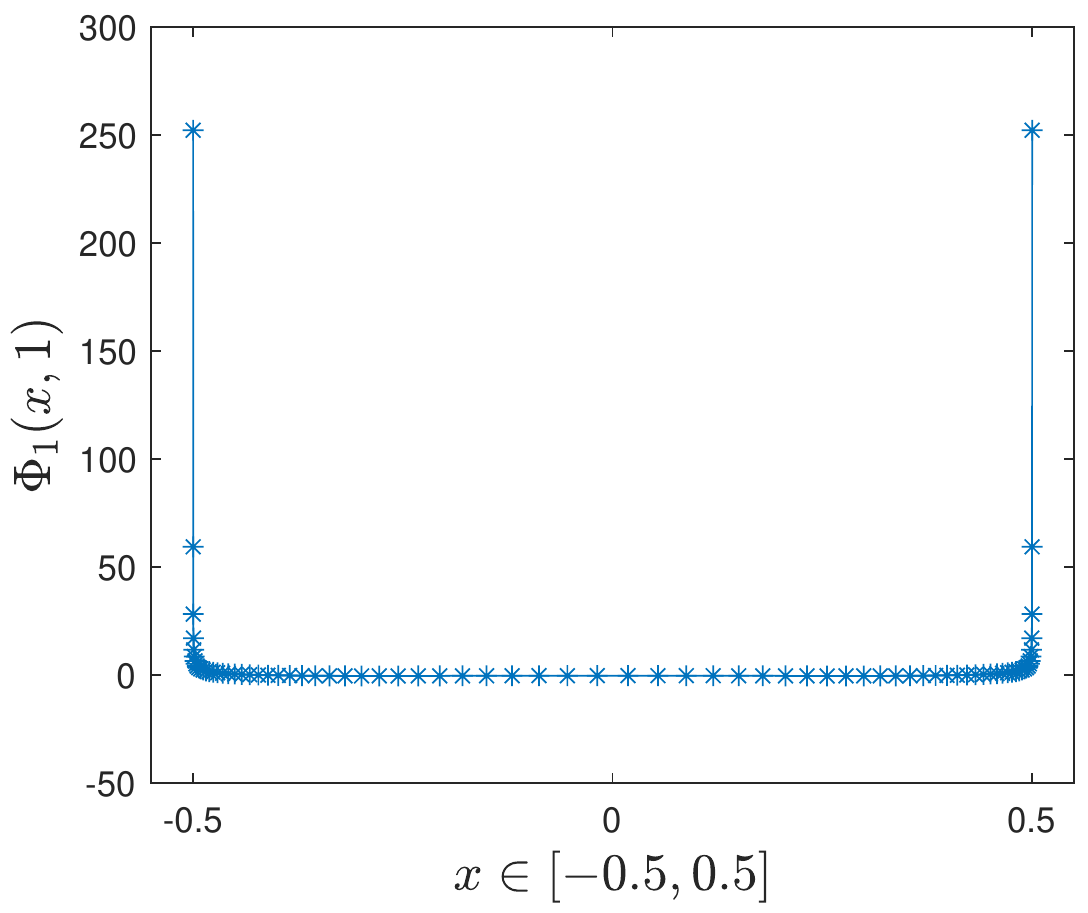}
\caption{Horizontal component of $\pmb{\Phi}$, calculated on the obstacle $\Gamma$ at the time instant $T=1$. This plot is obtained from the $h$ version on a $3$-graded mesh with $81$ nodes and $\Delta t=0.00625$.}
\label{fig:horizontal_component_end}
\end{figure}
In Figure \ref{fig:horizontal_component_end}, the horizontal component of the discrete solution $\pmb{\Phi}$ of \eqref{energetic weak formulationh} is represented on the obstacle $\Gamma$ at a fixed time instant: as we can observe from the plot, the behaviour of the solution is singular near the crack tips. Tables \ref{tab: 1}, \ref{tab: 2} and \ref{tab: 3} contain the values $\pmb{\alpha}^\top {E}_{\mathcal{V}} \pmb{\alpha}$,
{namely the squared energy norm of the Galerkin solution, as the {number of} spatial degrees of freedom (DOF) is increased (see Section \ref{sec:algo} for details about the construction of the vector $\pmb{\alpha}$ and the matrix ${E}_{\mathcal{V}}$). This number, in particular, corresponds to the $L^2(\Sigma)$ product at the left hand side of 
\eqref{energetic weak formulationh} with the discrete solution {$\pmb{\Phi}_{h,\Delta t}$} replacing the test function. {For simplicity}, in the following tables the number of DOF is indicated only for one component of the vector{-valued} solution}. The values reported in \ref{tab: 1} are obtained by applying a \textit{p} version in space: the boundary is uniformly discretized with segments of length $h=0.1$, while the degree $p$ of the space basis function is increased. For $p=1$ we set the time step $\Delta t=0.025$ and we halve it whenever $p$ increases.

\begin{table}[ht!]
\caption{Energy norm squared of the approximate solution for $T=1$ ($p$ version)}
\centering
\begin{tabular}{|c||ccccccc|}
\hline
\small{degree $p$} & \scriptsize{$1$} & \scriptsize{$2$} & \scriptsize{$3$} & \scriptsize{$4$} & \scriptsize{$5$} & \scriptsize{$6$} & \scriptsize{$7$} \\ \hline
\small{DOF} & \scriptsize{$11$} & \scriptsize{$21$} & \scriptsize{$31$} & \scriptsize{$41$} & \scriptsize{$51$} & \scriptsize{$61$} & \scriptsize{$71$}\\ \hline
$\Delta t$ & \scriptsize{$2.50000\cdot 10^{-2}$} & \scriptsize{$1.25000\cdot 10^{-2}$} & \scriptsize{$6.25000\cdot 10^{-3}$} & \scriptsize{$3.12500\cdot 10^{-3}$} & \scriptsize{$1.56250\cdot 10^{-3}$} & \scriptsize{$7.81250\cdot 10^{-4}$} & \scriptsize{$3.90625\cdot 10^{-4}$} \\ \hline
\small{$\pmb{\alpha}^\top {E}_{\mathcal{V}} \pmb{\alpha}$} & \scriptsize{$3.4108\cdot 10^{-2}$} & \scriptsize{$3.6257\cdot 10^{-2}$} & \scriptsize{$3.7012\cdot 10^{-2}$} & \scriptsize{$3.7338\cdot 10^{-2}$} & \scriptsize{$3.7511\cdot 10^{-2}$} & \scriptsize{$3.7615\cdot 10^{-2}$} & \scriptsize{$3.7684\cdot 10^{-2}$}\\ 
\hline
\end{tabular}
\label{tab: 1}
\end{table}
The energy values reported in Table \ref{tab: 2} refer to the solution of the problem with the \textit{h} version: we fix an algebraically graded mesh on the arc as in \eqref{gradedmesh}, for given grading parameter ${\tilde{\beta}} = 1,\ 2,\ 3$ and number of mesh points $2N+1$.
In Table \ref{tab: 3} the discretization method used is the \textit{hp} version. We set on $\Gamma$ the mesh points geometrically graded, as indicated in the rule
\begin{equation}\label{geometric points}
\left\lbrace
\begin{array}{lc}
x_{0,L}=-\frac{1}{2},\: x_{L,j}=\frac{1}{2}\left(\sigma^{N+1-j}-1\right)& j=1,\ldots,N+1\\[4pt]
x_{N+1}=\frac{1}{2},\:x_{R,j}=\frac{1}{2}\left(1-\sigma^{j}\right), & j=1,\ldots,N
\end{array}
\right. ,
\end{equation}
with $\sigma=0.2,\ 0.5$ and, for ease of programming, at each refinement of the mesh the degree $p$ increases uniformly on all the space elements. The parameter $L_{\sigma}$ in the table represents the length of the smallest segment of the mesh.

\begin{table}[ht!]
\caption{Energy norm squared of the approximate solution for $T=1$ (\textit{h} version with algebraically graded mesh)}
\centering
\begin{tabular}{|c|c|c|c|c|}
\hline
$\Delta t$ & DOF & $\pmb{\alpha}^\top {E}_{\mathcal{V}} \pmb{\alpha}$, ${\tilde{\beta}}=1$  & $\pmb{\alpha}^\top {E}_{\mathcal{V}} \pmb{\alpha}$, ${\tilde{\beta}}=2$ & $\pmb{\alpha}^\top {E}_{\mathcal{V}} \pmb{\alpha}$, ${\tilde{\beta}}=3$ \\ \hline\hline
$1.2500\cdot 10^{-2}$ & $10$ & $3.0143\cdot 10^{-2}$ & $3.6212\cdot 10^{-2}$ & $3.7315\cdot 10^{-2}$ \\
$6.2500\cdot 10^{-3}$ & $20$ & $3.3906\cdot 10^{-2}$ & $3.7501\cdot 10^{-2}$ & $3.7835\cdot 10^{-2}$ \\
$3.1250\cdot 10^{-3}$ & $40$ & $3.5933\cdot 10^{-2}$ & $3.7813\cdot 10^{-2}$ & $3.7903\cdot 10^{-2}$ \\
$1.5625\cdot 10^{-3}$ & $80$ & $3.6943\cdot 10^{-2}$ & $3.7890\cdot 10^{-2}$ & $3.7914\cdot 10^{-2}$ \\
\hline
\end{tabular}
\label{tab: 2}
\end{table}

\begin{table}[ht!]
\caption{Energy norm squared of the approximate solution for $T=1$ (\textit{hp} version geometrically graded)}
\centering
\begin{tabular}{|c|c|c|c|c|c|c|}
\hline
$p$  & $L_{0.5}$ & $L_{0.2}$  & DOF & $\Delta t$ & $\pmb{\alpha}^\top {E}_{\mathcal{V}} \pmb{\alpha}$, $\sigma=0.5$ & $\pmb{\alpha}^\top {E}_{\mathcal{V}} \pmb{\alpha}$, $\sigma=0.2$\\ \hline\hline
$0$ & $5.00000\cdot 10^{-1}$ & $5.0 \cdot 10^{-1}$ & $2$  & $2.500000\cdot 10^{-1}$ & $4.0730\cdot 10^{-3}$ & $4.0730\cdot 10^{-3}$\\
$1$ & $2.50000\cdot 10^{-1}$ & $1.0 \cdot 10^{-1}$ & $5$ & $1.250000\cdot10^{-1}$ & $2.7847\cdot 10^{-2}$ & $3.4521\cdot 10^{-2}$\\
$2$ & $1.25000\cdot10^{-1}$ & $2.0 \cdot 10^{-2}$ & $13$ & $6.250000\cdot10^{-2}$ & $3.2960\cdot 10^{-2}$ & $3.4581\cdot 10^{-2}$\\
$3$ & $6.25000\cdot10^{-2}$ & $4.0 \cdot 10^{-3}$ & $25$ & $3.125000\cdot10^{-2}$ & $3.6745\cdot 10^{-2}$ & $3.7256\cdot 10^{-2}$\\
$4$ & $3.12500\cdot10^{-2}$ & $8.0 \cdot 10^{-4}$ & $41$ & $1.562500\cdot10^{-2}$ & $3.7618\cdot 10^{-2}$ & $3.7783\cdot 10^{-2}$\\
$5$ & $1.56250\cdot10^{-2}$ & $1.6\cdot 10^{-4}$ & $61$ & $7.812500\cdot10^{-3}$ & $3.7828\cdot 10^{-2}$ & $3.7890\cdot 10^{-2}$\\
$6$ & $7.81250\cdot10^{-3}$ & $3.2\cdot 10^{-5}$ & $85$ & $3.906250\cdot10^{-3}$ & $3.7888\cdot 10^{-2}$ & $3.7911\cdot 10^{-2}$\\
$7$ & $3.90625\cdot10^{-3}$ & $6.4 \cdot 10^{-6}$ & $113$ & $1.953125\cdot10^{-3}$ & $3.7906\cdot 10^{-2}$ & $3.7915\cdot 10^{-2}$\\
\hline
\end{tabular}
\label{tab: 3}
\end{table}
The energy is increasing towards a common benchmark value for the tested discretization methods: to illustrate the related convergence rate, in Figure \ref{fig:energy_segment} the squared error in energy norm is plotted with respect to the spatial DOF. We observe that the decay of the error follows a straight line in the logarithmic plots for both the \textit{p} version and the \textit{h} version with ${\tilde{\beta}}=1$, corresponding to algebraic convergence with rate $2$ (\textit{p}), respectively $1$ (\textit{h}) in terms of DOF. This means that the error tends to $0$ like $p^{-1}$, respectively $h^{1/2}$. This convergence rate is expected from Corollary \ref{approxcor1}. Indeed, by Proposition \ref{DPbounds} the energy $\pmb{\alpha}^\top {E}_{\mathcal{V}} \pmb{\alpha}$ is bounded by the Sobolev norm considered in Corollary  \ref{approxcor1}.   {Analogous results are obtained for the $h$ version with {polynomial degrees $p=1,2$} in space.} 

On algebraically graded meshes with ${\tilde{\beta}}=2$ and $3$ the error similarly decays along a straight line, but of slope $-{\tilde{\beta}}$ with increasing DOF. In particular, the BEM on the graded mesh \eqref{gradedmesh} with ${\tilde{\beta}}=3$ recovers the optimal convergence order $h^{3/2}$ expected in the energy norm for smooth solutions, as in Corollary \ref{approxcor2}. 

The fastest convergence in Figure \ref{fig:energy_segment} is obtained by the \textit{hp} version, for which the error decays faster than a straight line for both $\sigma=0.2,0.5$. The graph of the squared error indicates exponential decay. Convergence is fastest for $\sigma = 0.2$, which is close to the theoretically optimal $\sigma\simeq 0.17$. The nodes in this case are more densely clustered near the endpoints of $\Gamma$ than for $\sigma=0.5$.

\begin{figure}[h!]
\centering
\includegraphics[width=0.45\textwidth]{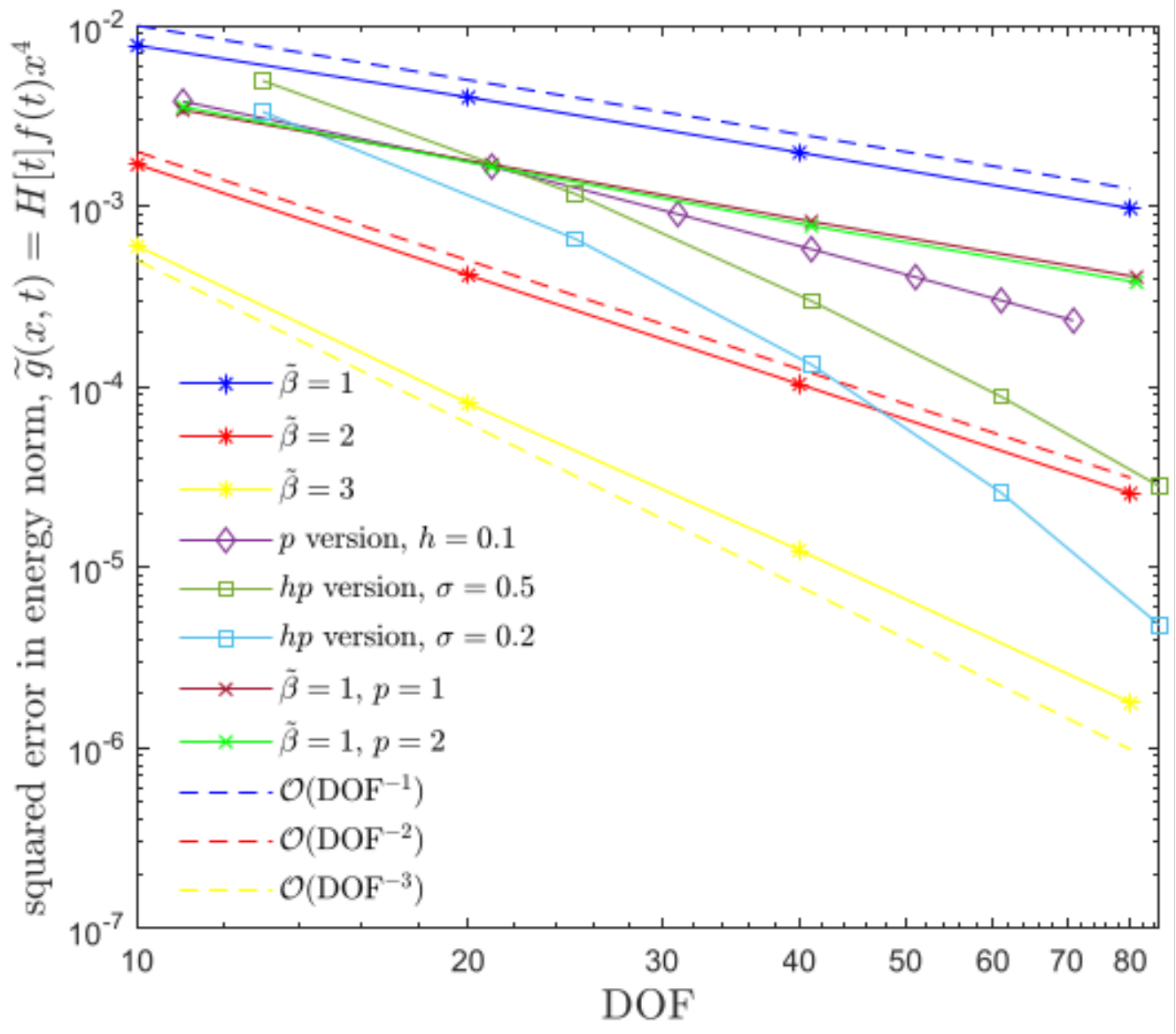}
\caption{Squared error of the energy norm 
for various discretization methods.}
\label{fig:energy_segment}
\end{figure}

To illustrate the singular behavior of the solution, Figure \ref{fig:h_v_components} plots the horizontal and the vertical components of the approximate $\pmb{\Phi}$ with respect to the distance $r$ towards the left end of the arc $(-0.5,0)^\top$  for various time instants: one observes that the singular behavior is independent of time, and the components increase as $\mathcal{O}(r^{-1/2})$ for $r\to 0$. This confirms the discussion in Section \ref{regularity2d}. The solution in this figure is obtained from the \textit{h} version on a $3$-graded mesh with $81$ nodes.
\begin{figure}[h!]
\centering
\subfloat{\includegraphics[width=0.35\textwidth]{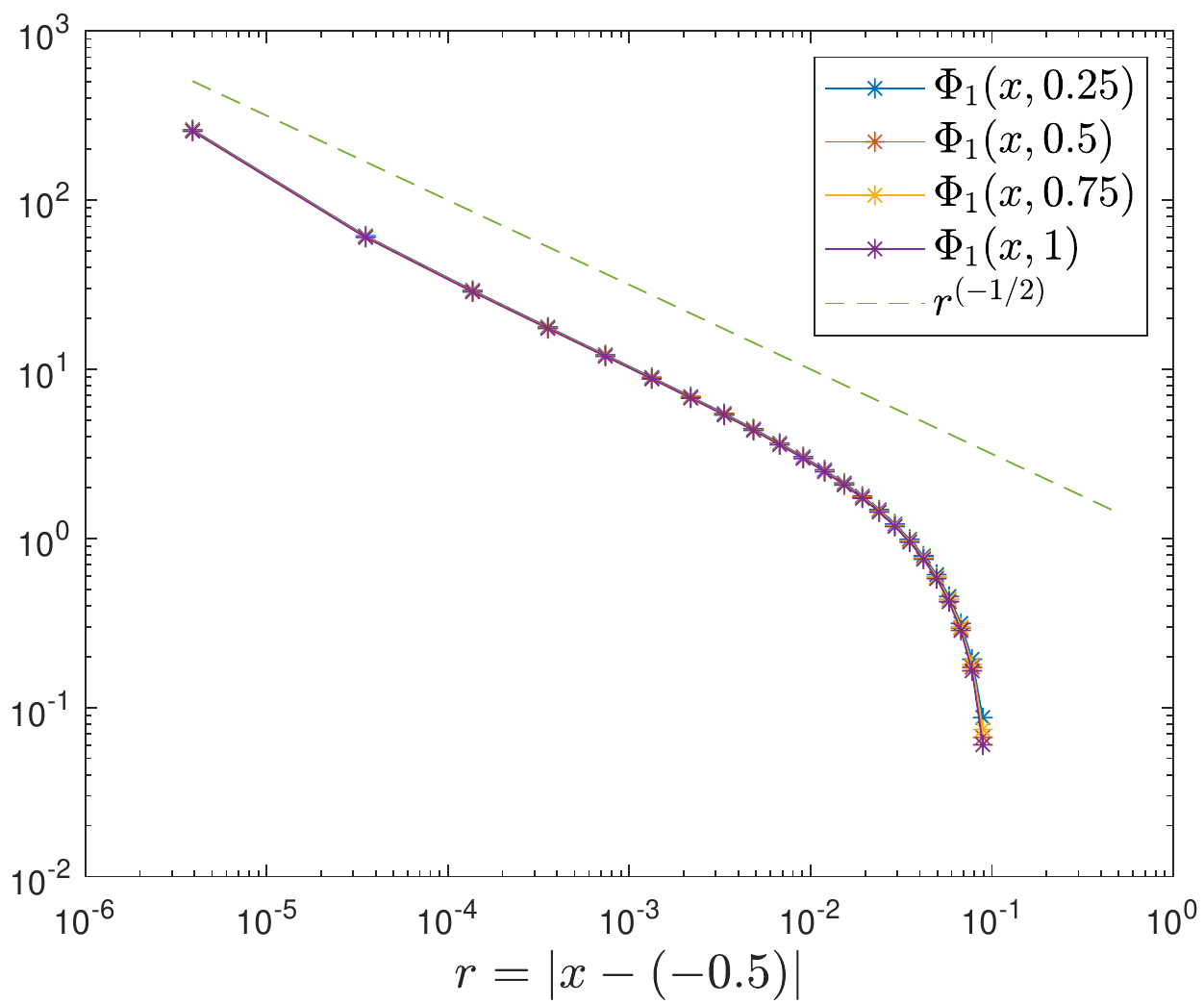}}
\qquad
\subfloat{\includegraphics[width=0.35\textwidth]{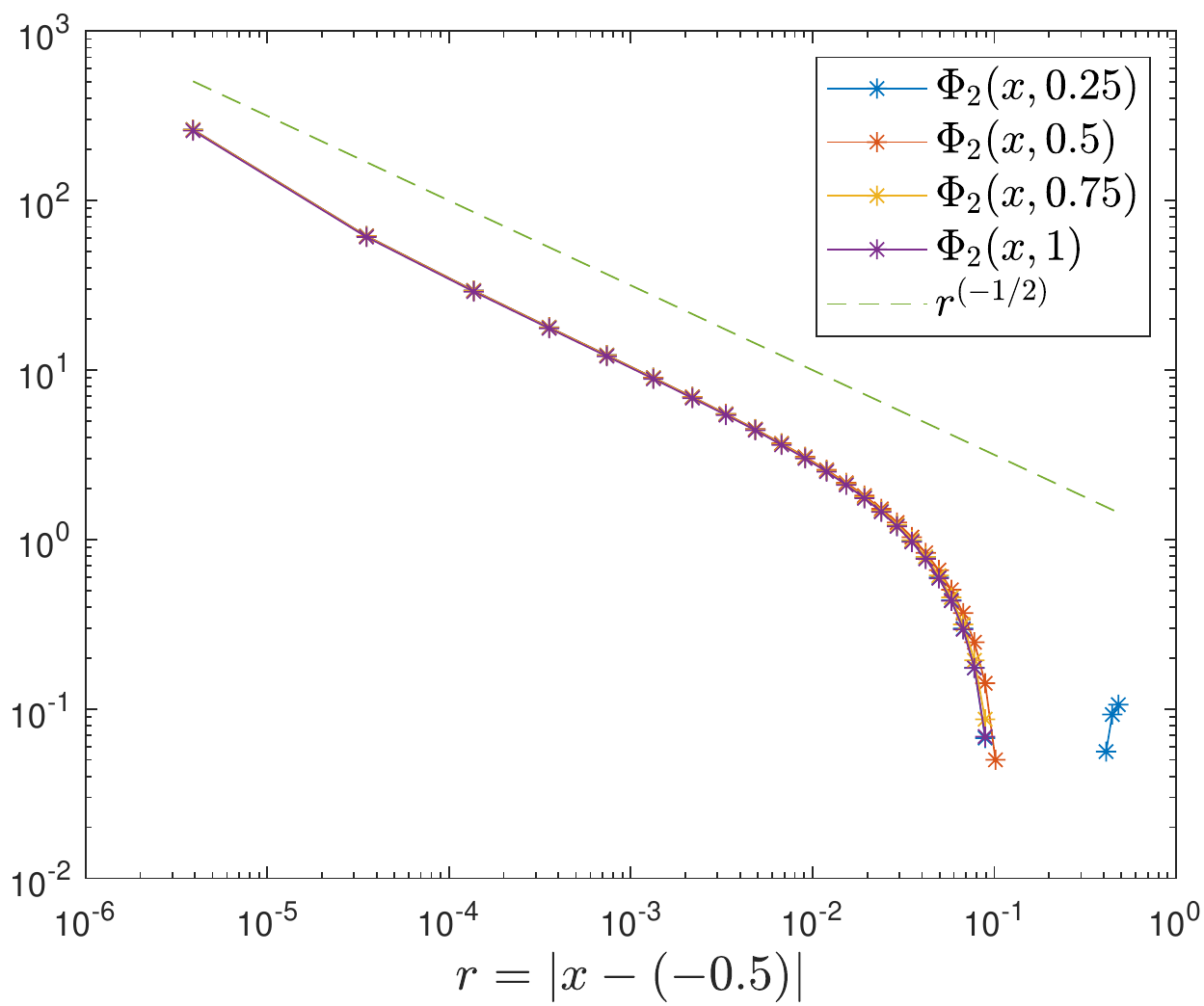}}
\caption{Asymptotic behaviour towards the left end of $\Gamma$}
\label{fig:h_v_components}
\end{figure}
\\

\noindent \textbf{Example 2.} Similar results as in Example 1 are obtained also for other boundary data on a flat obstacle $\Gamma=\left\lbrace (x,0)\in\mathbb{R}\:\vert\: x\in[-0.5,0.5]\right\rbrace$. We here set $\widetilde{g}_i(x,t)=\widetilde{g}(x,t)=H[t]f(t)x$, $i=1,2$, where the function $f(t)$ is the temporal profile defined in \eqref{function f}. The solution of the problem \eqref{energetic weak formulationh} is again singular at the end points of the arc and, as observed in the previous experiment, the components of $\pmb{\Phi}$ increase as $\mathcal{O}(r^{-1/2})$ when the distance $r$ tends to zero (see Figure \ref{fig:second_experiment}).
\begin{figure}[h!]
\centering
\includegraphics[width=0.7\textwidth]{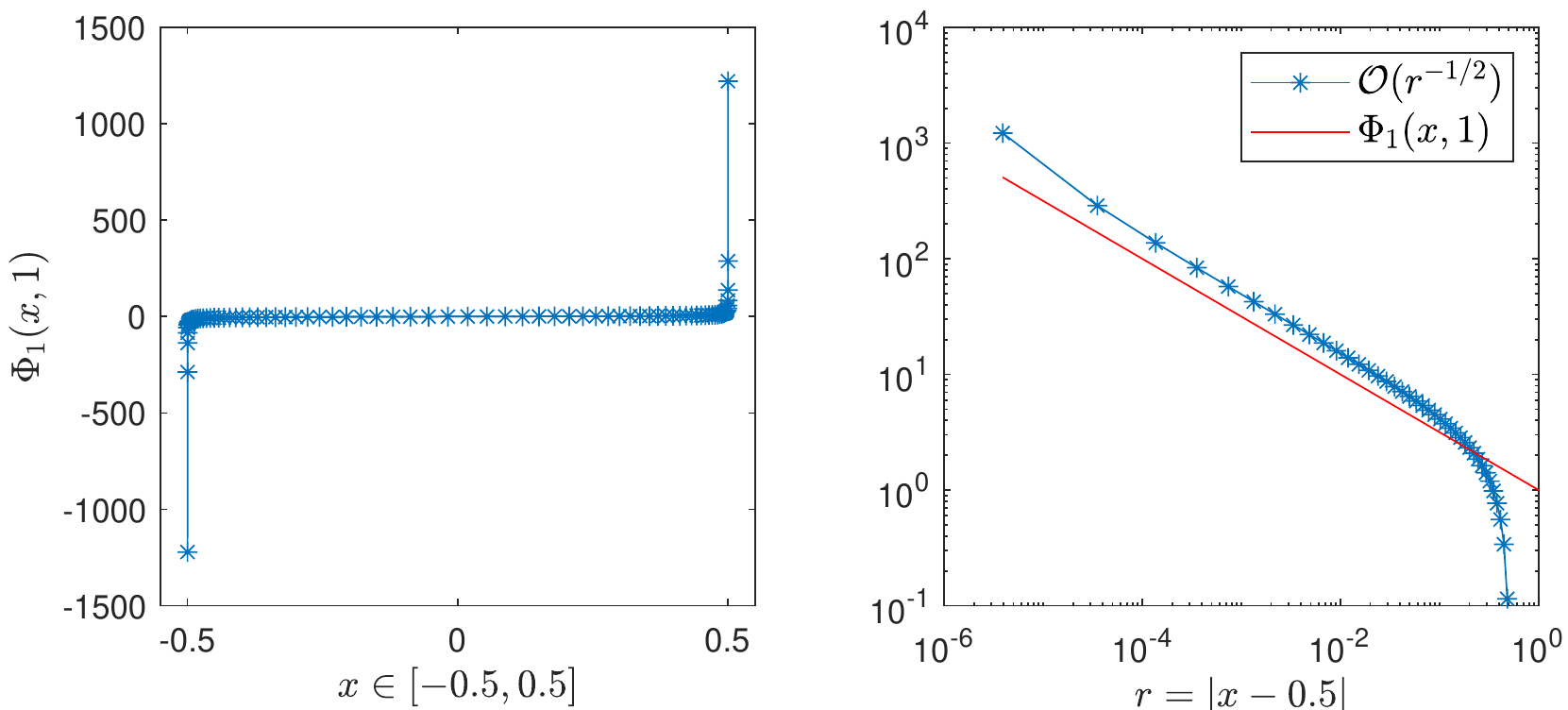}
\caption{Behaviour of the horizontal component $\Phi_1$ on $\Gamma$ and w.r.t the distance towards the right endpoint at $T=1$. Both plots are obtained imposing on $\Gamma$ an algebraically $3$-graded mesh of $80$ segments.}
\label{fig:second_experiment}
\end{figure}
\begin{figure}[h!]
\centering
\includegraphics[width=0.45\textwidth]{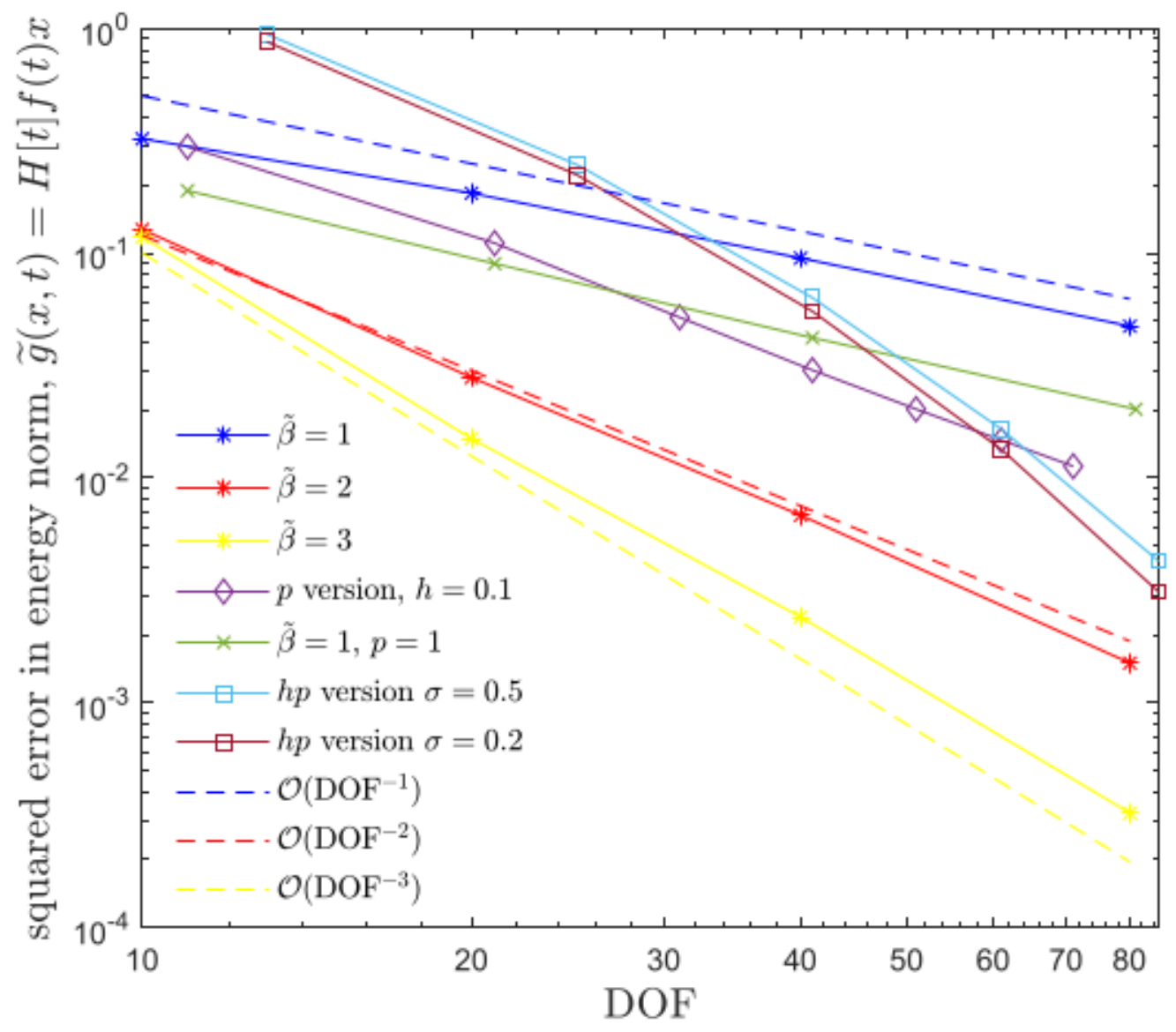}
\caption{Squared error of the energy norm for various discretization methods}
\label{fig:energy_segment_2nd_exp}
\end{figure}

We again study the decay of the error in energy norm for this new Dirichlet condition, leading to similar considerations for the rate of convergence of the different discretization methods. The spatial and temporal discretization parameters for the \textit{h}, \textit{p} and \textit{hp} version are chosen as in the previous experiment. The results are shown in Figure \ref{fig:energy_segment_2nd_exp}. The squared error for the \textit{h} version is $\mathcal{O}\left(h^{{\tilde{\beta}}}\right)$ on the algebraically ${\tilde{\beta}}$-graded mesh, as in Corollary \ref{approxcor2}. The corresponding result for the \textit{p} version is $\mathcal{O}\left(p^{-2}\right)$, in agreement with Corollary \ref{approxcor1}. Faster than algebraic convergence is achieved by the \textit{hp} version on a geometrically graded mesh.

\subsection{Soft scattering problems on polygonal obstacles}\label{polygonal_ob_single_layer}

In the following we consider the weakly singular integral equation \eqref{energetic weak formulation} on different {types of closed obstacles} $\Gamma$, {as shown in Figure \ref{fig:mesh_expected_exponent}(b), where the four considered convex polygonal geometries are collected.}

{Recalling the notation stated in Section \ref{sec 2}, a closed arc $\Gamma$ determines a partition of $\mathbb{R}^2$ made by the bounded interior domain $\Omega'$, with $\partial \Omega'=\Gamma$, and its {complement} $\Omega=\mathbb{R}^2\setminus \overline{\Omega'}$}. From Section \ref{regularity2d} we know that the solution $\textbf{u}$ {in the exterior set $\Omega$} and near a corner point of $\Gamma$ locally behaves like a power of the distance $r$ to the {vertex}: 
{$$u_i\approx C_{i,\omega_{ext}}(t)r^{\nu^*(\omega_{ext})}, \quad\: r\to 0,$$}
where $\omega_{ext}$ {is the considered exterior angle (with {complement} $\omega_{int}$)} and the exponent {$\nu^*(\omega_{ext})$} is the smallest solution of the equation \eqref{eq1.11}, namely
{\begin{equation}\label{equation for the exponents}
\sin^2(\omega_{ext}\: \nu^*)=\left( \frac{\nu^*}{k}\sin{\omega_{ext}}\right)^2,
\end{equation}}
with positive real part, where $k=3-2\lambda/(\lambda+\mu)$.
The prefactor {$C_{i,\omega_{ext}}(t)$} is a smooth function in $t$, independent of $r$, so the leading singular behaviour does not change with time. {The solution $\pmb{\Phi}=\textbf{p}(\textbf{u})\vert_{\Gamma}$} of the boundary integral equation \eqref{energetic weak formulation} represents the traction {at} the obstacle {and, from the discussion in Section \ref{regularitysection},} its asymptotic behaviour {a the vertex can be expressed as
$$\Phi_i\approx \widetilde{C}_{i,\omega_{ext}}(t)r^{\nu^*(\omega_{ext})-1},\: r\to 0.$$} 
For Lam\'e parameters $\lambda=2$, $\mu=1$ and mass density $\varrho=1$, Figure \ref{fig:mesh_expected_exponent}(a) shows the {exterior and interior} exponents, {$\nu^*(\omega_{ext}) = \nu^*(2\pi-\omega_{int})$ and $\nu^*(\omega_{int})$}, as a function of $\omega_{int}$. Red crosses indicate the exponents $\nu^*$ corresponding to the red corners of the polygons depicted on the right of Figure \ref{fig:mesh_expected_exponent}(b), for
interior angles $\frac{7\pi}{24}$ ($\nu^*=0.5372$), $\frac{\pi}{3}$ ($\nu^*=0.5451$), $\frac{3\pi}{8}$ ($\nu^*=0.542$) and $\frac{3\pi}{5}$ ($\nu^*=0.6306$).\\

\noindent \textbf{Example 3.} We consider the Galerkin solution of the weakly singular integral equation \eqref{energetic weak formulationh} on 
the polygons represented in Figure \ref{fig:mesh_expected_exponent}(b) up to time $T=1$. In all cases {the right hand side imposed} is $\widetilde{g}_1(\textbf{x},t)=0,\:\widetilde{g}_2(\textbf{x},t)=H[t]f(t)100\vert x\vert^{9.5}$. An example of the solution produced by the boundary condition is in Figure \ref{fig:mesh_expected_exponent}(c), where the vertical component of $\pmb{\Phi}$ is plotted at the base of the equilateral triangle $\Gamma_1$. The solution is characterized by a high gradient near the corners on the base. 
{The mesh on each side of polygons $\Gamma_i$, $i=1,\ldots,4$, is algebraically graded towards the corners following  \eqref{gradedmesh}, for given grading parameter ${\tilde{\beta}} = 1,\ 2,\ 3$. The polygons $\Gamma_1$ and $\Gamma_4$, which are both equilateral, are discretized with $80$ segments per side, while {for} $\Gamma_2$ and $\Gamma_3$ we use $80$ segments on the two  sides which are of equal length and $75$ and $87$ segments {on} the base, respectively. The time step is chosen as $\Delta t = 0.00625$ for all experiments.}

\begin{figure}[h!]
\centering
\subfloat[]{\includegraphics[width=0.32\textwidth]{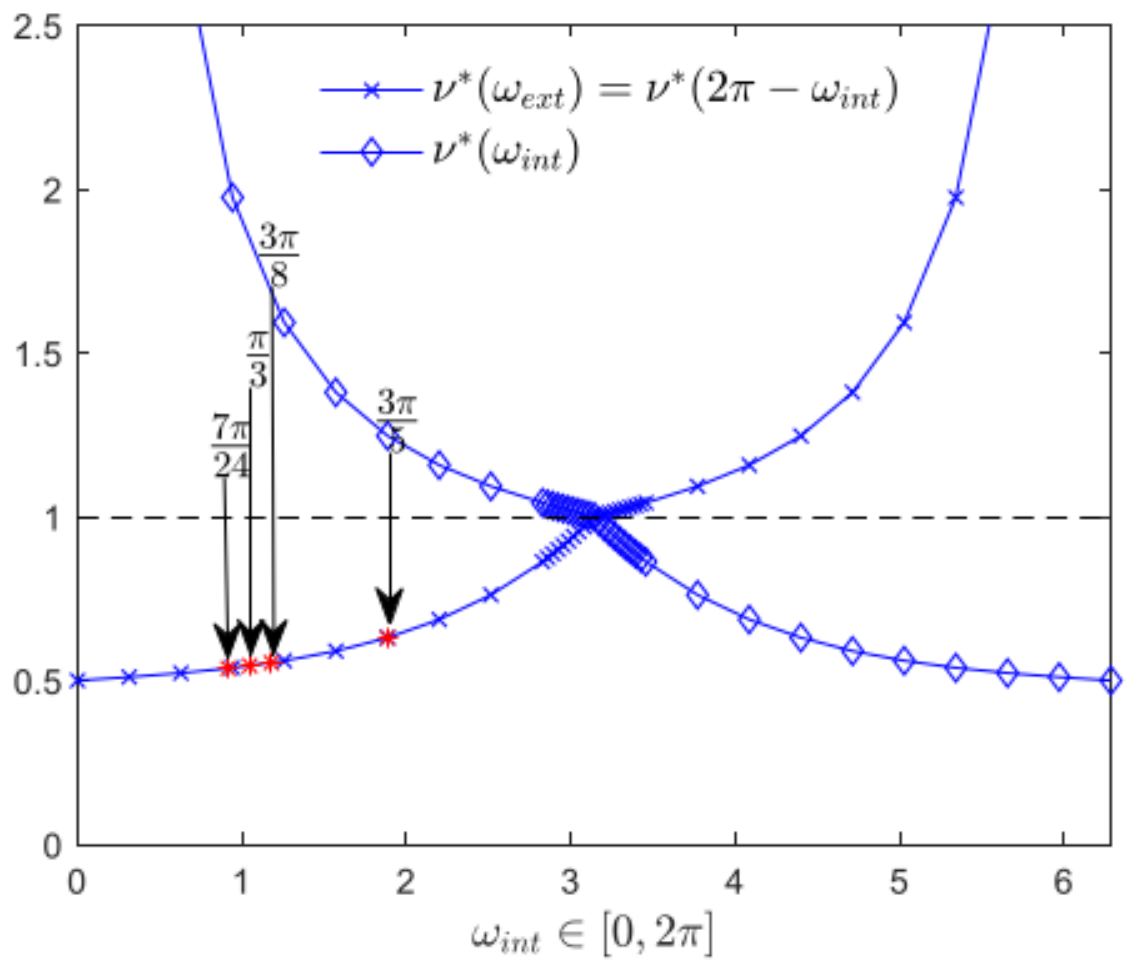}}
\:
\subfloat[]{\includegraphics[width=0.32\textwidth]{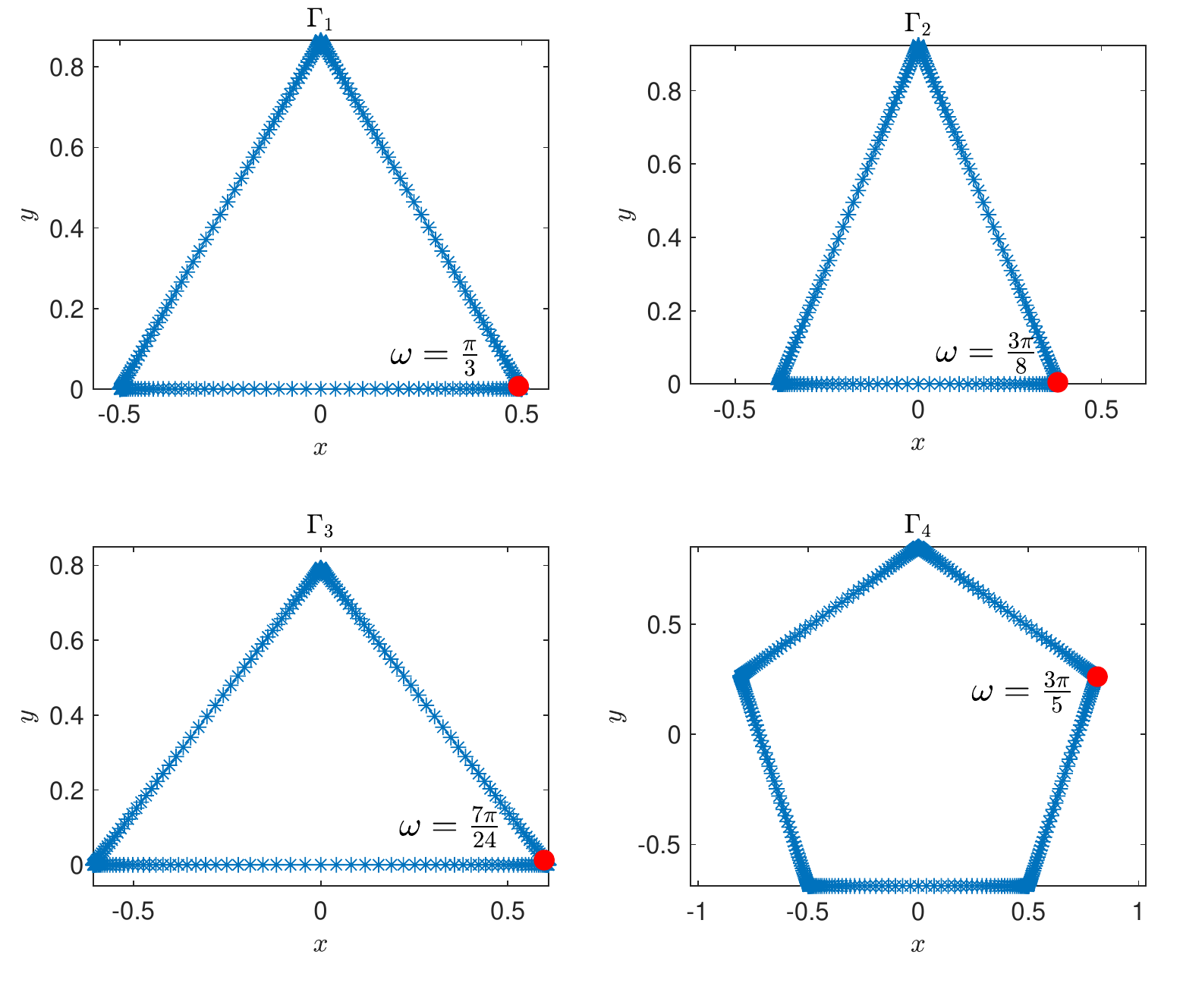}}
\:
\subfloat[]{\includegraphics[width=0.32\textwidth]{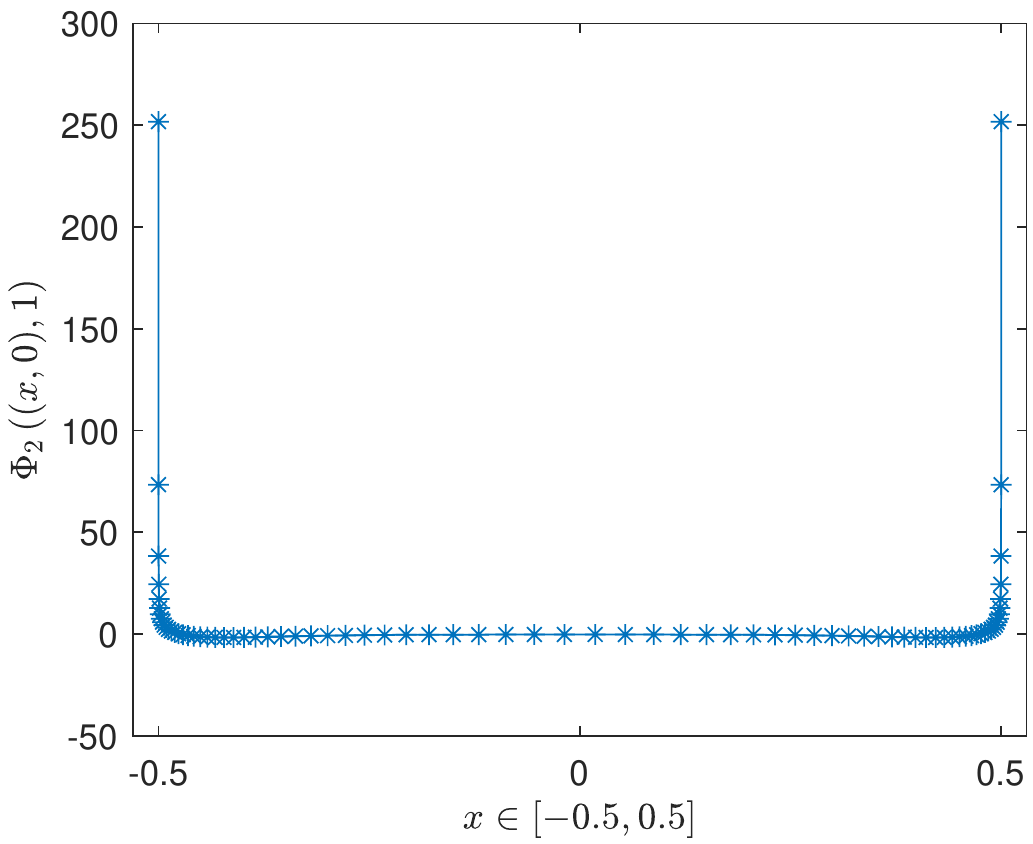}}
\caption{Expected exponent with dependence on $\omega_{int}$ and its complementary ($k=5/3$) and tested polygonal meshes (a and b); plot of the vertical component of $\pmb{\Phi}$ on the base of $\Gamma_1$ (c).}
\label{fig:mesh_expected_exponent}
\end{figure}

\begin{figure}[h!]
\centering
\subfloat{\includegraphics[width=0.35\textwidth]{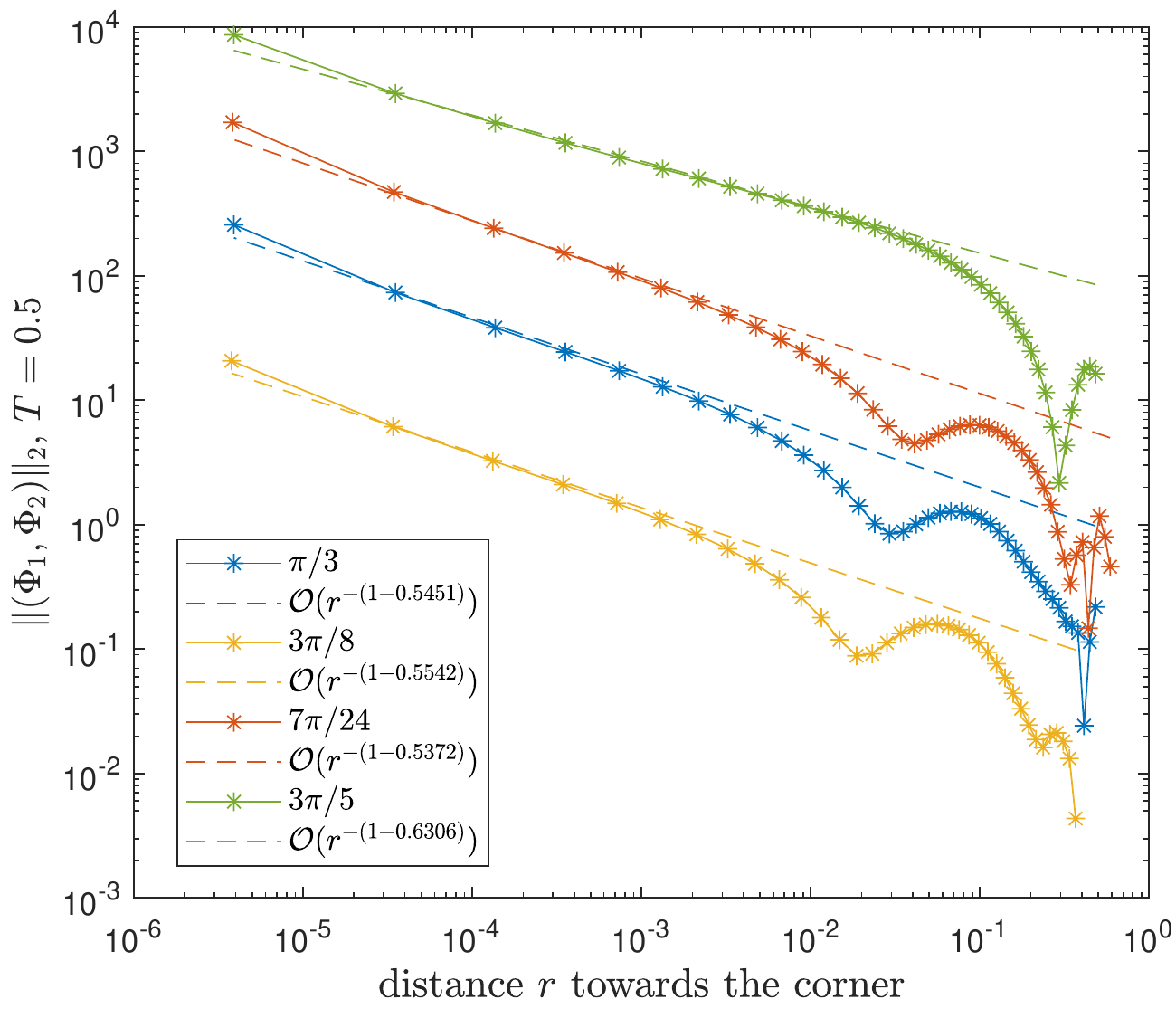}}
\qquad
\subfloat{\includegraphics[width=0.35\textwidth]{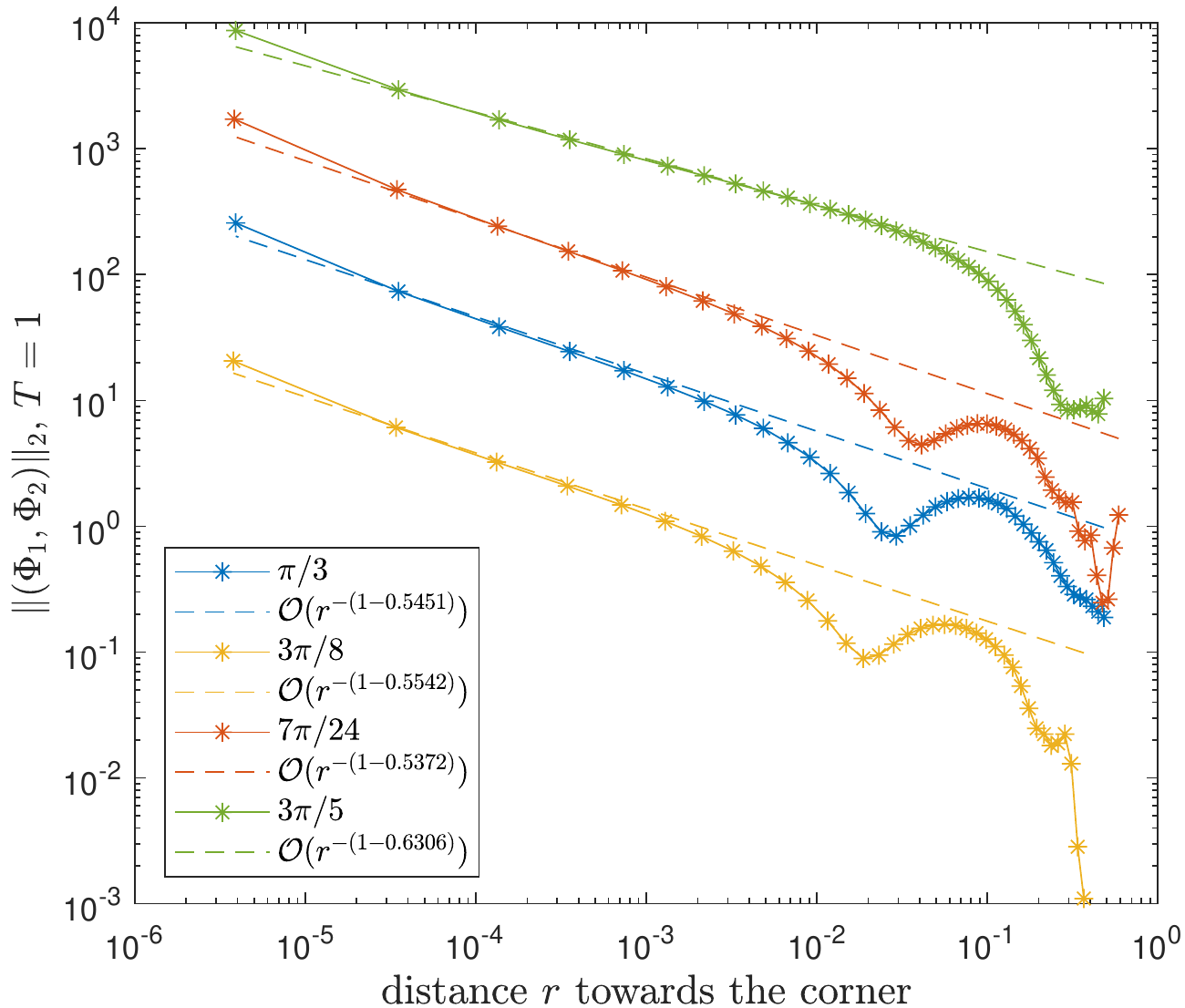}}
\caption{Asymptotic behavior towards the vertices}
\label{fig:T=05_and_T=1}
\end{figure}

In Figure \ref{fig:T=05_and_T=1}, for each geometry the Euclidean norm of $\pmb{\Phi}$ is plotted with respect to the distance $r$ towards the angle indicated in Figure \ref{fig:mesh_expected_exponent}. We observe that the solution follows the expected behavior $\mathcal{O}(r^{-(1-\nu^*)})$  for all the considered geometries. In particular, the asymptotic behavior for acute corners leads to stronger singularities ($1-\nu^*\approx 0.5$) than for the obtuse angle of the pentagon ($1-\nu^*\approx 0.37$). This confirms the theoretical discussion in Section \ref{regularity2d}. \\

We finally consider the convergence in energy on the polygonal obstacles. In particular, we examine the equilateral triangle $\Gamma_1$ and report in Table \ref{tab: 4} the value of the energy for each level of the space discretization. The energy tends to a benchmark value with increasing DOF (also in this case the number refers to one component of the vector solution), and the squared error in energy norm is shown in Figure \ref{fig:energy_eq_triangle}. The decay of the squared error in a log scale plot is linear, corresponding to $\mathcal{O}($DOF${}^{-2\nu^*{\tilde{\beta}}})$ in each experiment as in Corollary \ref{approxcor2}.\\

\begin{table}[ht!]
\caption{Energy norm squared of the approximate solution for $T=1$}
\centering
\begin{tabular}{|c|c|c|c|c|}
\hline
$\Delta t$ & DOF & $\pmb{\alpha}^\top {E}_{\mathcal{V}} \pmb{\alpha}$, ${\tilde{\beta}}=1$  & $\pmb{\alpha}^\top {E}_{\mathcal{V}} \pmb{\alpha}$, ${\tilde{\beta}}=2$ & $\pmb{\alpha}^\top {E}_{\mathcal{V}} \pmb{\alpha}$, ${\tilde{\beta}}=3$ \\ \hline\hline
$5.00\cdot 10^{-2}$ & $30$ & $5.7394\cdot 10^{-2}$ & $7.4875\cdot 10^{-2}$ & $7.6829\cdot 10^{-2}$ \\
$2.50\cdot 10^{-2}$ & $60$ & $6.8490\cdot 10^{-2}$ & $7.6828\cdot 10^{-2}$ & $7.7460\cdot 10^{-2}$ \\
$1.25\cdot 10^{-2}$ & $120$ & $7.3821\cdot 10^{-2}$ & $7.7448\cdot 10^{-2}$ & $7.7566\cdot 10^{-2}$ \\
$6.25\cdot 10^{-3}$ & $240$ & $7.5989\cdot 10^{-2}$ & $7.7558\cdot 10^{-2}$ & $7.7582\cdot 10^{-2}$ \\
\hline
\end{tabular}
\label{tab: 4}
\end{table}

\begin{figure}[h!]
\centering
\includegraphics[width=0.4\textwidth]{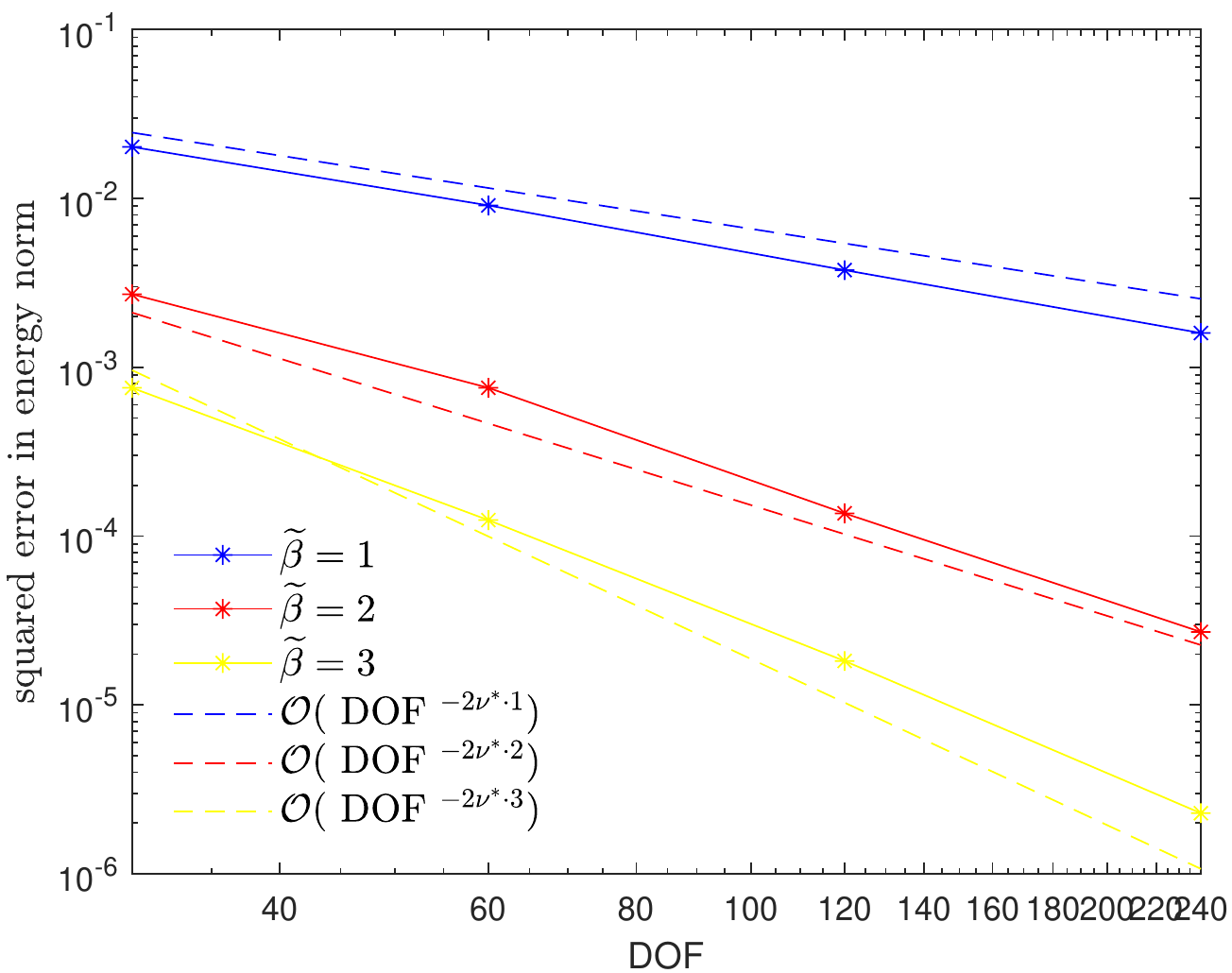}
\caption{Squared error of the energy norm with \textit{h} version on $\Gamma_1$, ${\tilde{\beta}}$-graded mesh}
\label{fig:energy_eq_triangle}
\end{figure}

\noindent \textbf{Example 4.} In this example we show numerically that the singular behavior at the corners and the decay of the energy error do not depend on the boundary data  imposed at the obstacle. We specifically consider the triangular obstacle $\Gamma_2$ in Figure \ref{fig:mesh_expected_exponent}(b). The solution $\pmb{\Phi}$ of \eqref{energetic weak formulationh} is calculated {for a right hand side} with trivial horizontal direction $\widetilde{g}_1(\textbf{x},t)=0$ and different vertical components $\widetilde{g}_2(\textbf{x},t)=H[t]f(t)$, $\widetilde{g}_2(\textbf{x},t)=H[t]f(t)x^4$ and $\widetilde{g}_2(\textbf{x},t)=100H[t]f(t)\vert x\vert^{9.5}$. In Figure \ref{fig:different_datum}(a), we consider the behavior of the Euclidean norm of $\pmb{\Phi}$ for these different boundary data, plotted as a function of the distance $r$ to the vertex which is highlighted in red (Figure \ref{fig:mesh_expected_exponent}(b), geometry $\Gamma_2$). The singular exponent is expected to be $\nu^*\simeq 0.542$ for a base angle of $3\pi/8$. Indeed, we find that, in log scale, the slope of the norm for $r \to 0$ is parallel to the dashed line corresponding to $r^{-(1-0.542)}$ for each of the tested boundary data. In Figure \ref{fig:different_datum}(b) the vertical component of $\pmb{\Phi}$ is shown on the base of $\Gamma_2$ at time $T=1$, highlighting the singular behavior at the corners. 

\begin{figure}[h!]
\centering
\subfloat[]{\includegraphics[width=0.35\textwidth]{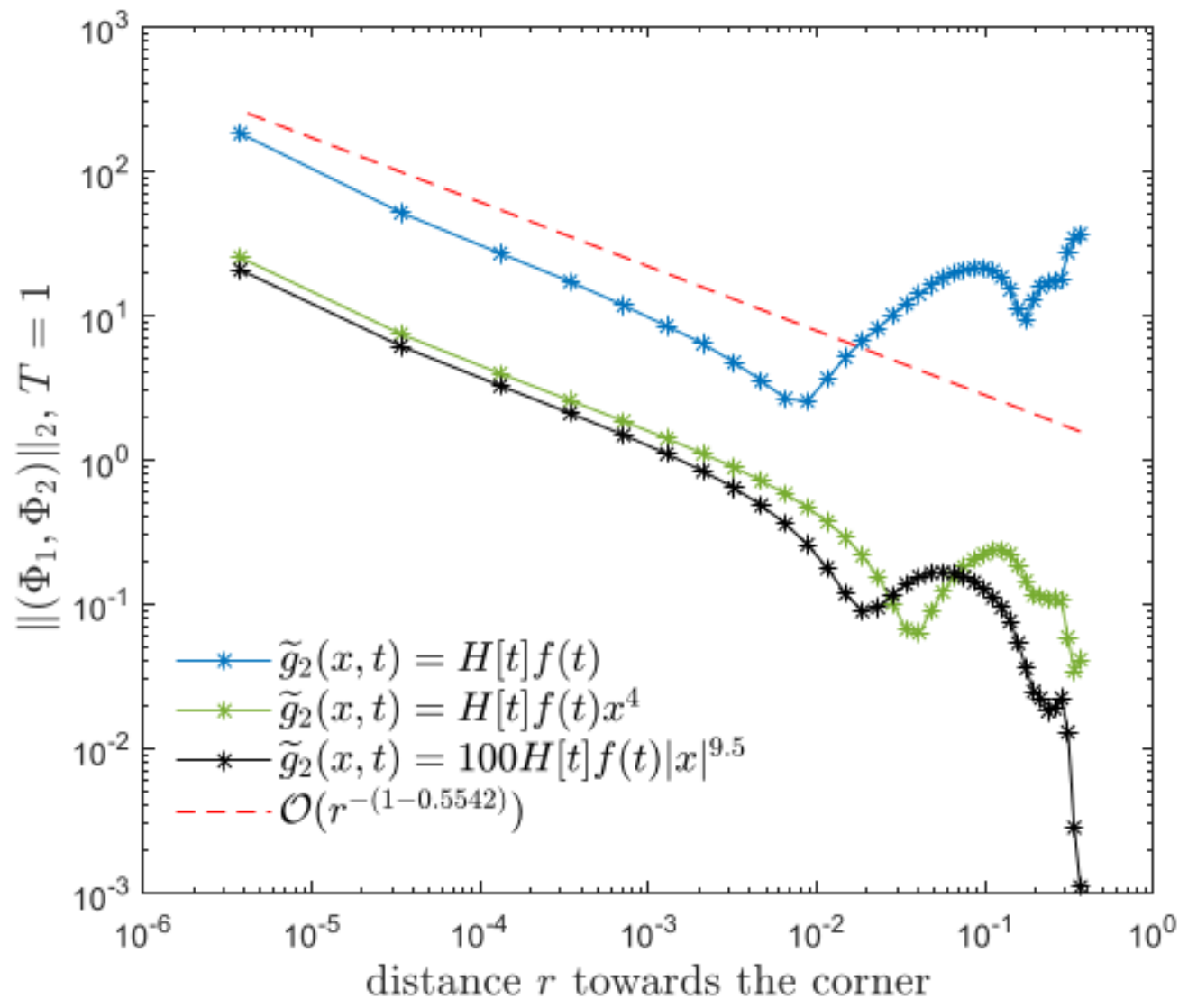}}
\quad
\subfloat[]{\includegraphics[width=0.35\textwidth]{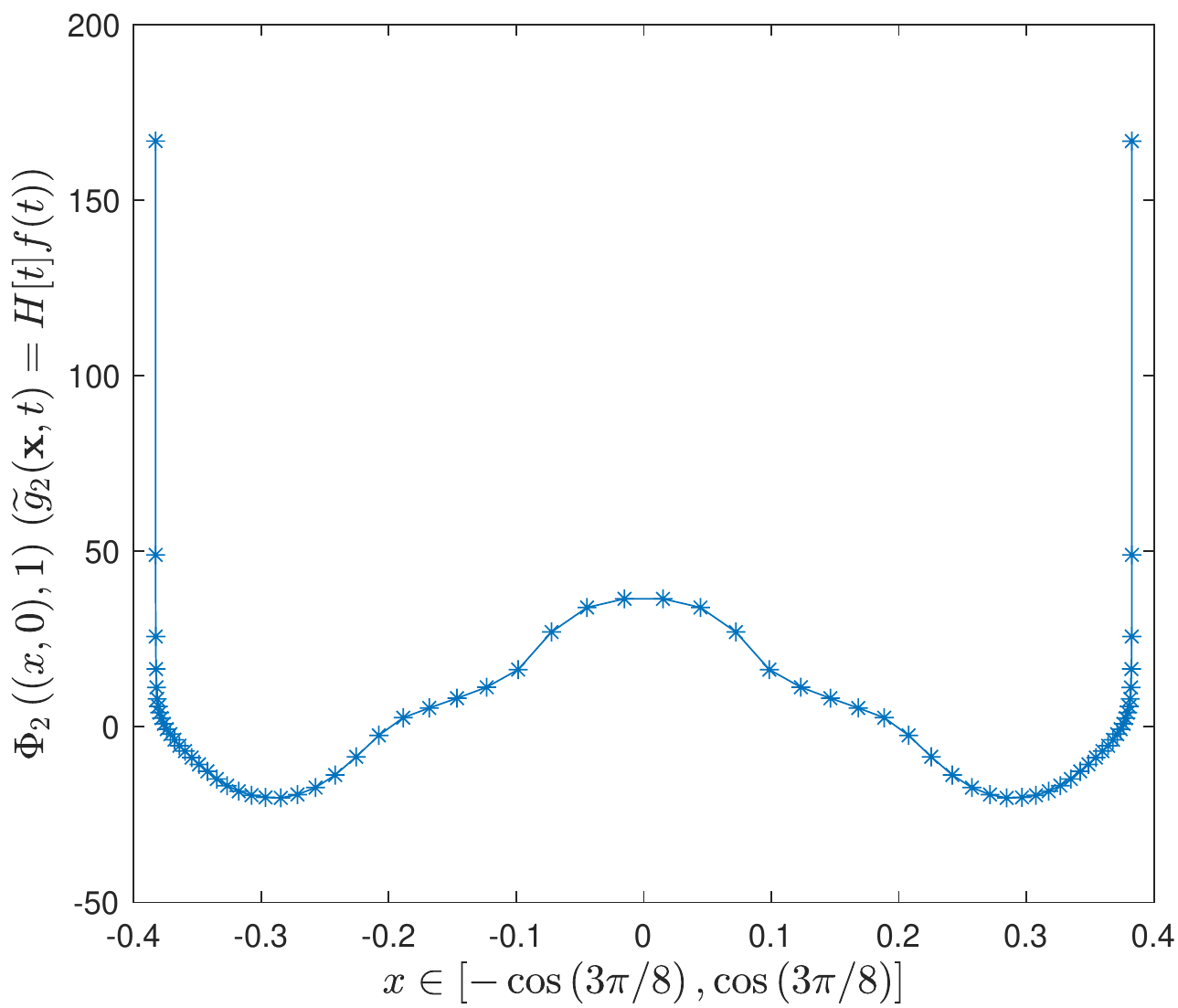}}
\caption{Asymptotic behavior towards the vertices in $\Gamma_2$ for different boundary conditions (a) and plot of the vertical component of $\pmb{\Phi}$ on the base of $\Gamma_2$ for the indicated boundary condition (b).}
\label{fig:different_datum}
\end{figure}

In Figure \ref{fig:energy_eq_triangle_2} we  consider the equilateral triangle $\Gamma_1$ of \ref{fig:mesh_expected_exponent}(b) and study the decay of the error for increasing degrees of freedom for the \textit{h} version. The number of segments and the time step are the same as in \ref{tab: 4}. {The right hand side is here given by} $\widetilde{g}_1(\textbf{x},t)=0$, $\widetilde{g}_2(\textbf{x},t)=H[t]f(t)x^4$. An algebraically ${\tilde{\beta}}$-graded mesh is used on each side, where ${\tilde{\beta}}=1,2$. The energy tends to a benchmark value as the number of degrees of freedom increases, and the squared error in energy norm in a log scale plot decays linearly as $\mathcal{O}($DOF${}^{-2\nu^*{\tilde{\beta}}})$, in agreement with Corollary \ref{approxcor2}.
\begin{figure}[h!]
\centering
\includegraphics[width=0.35\textwidth]{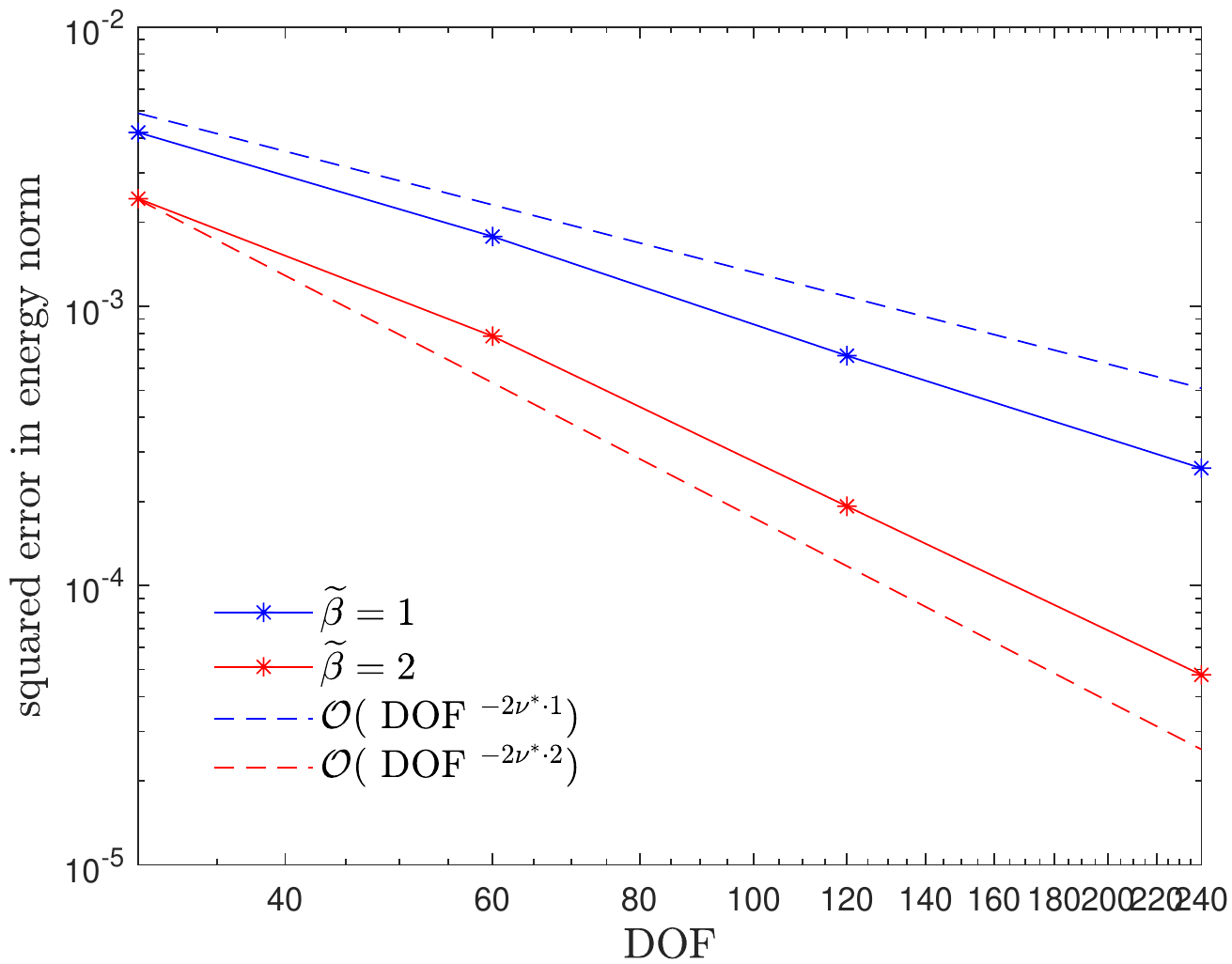}
\caption{Squared error of the energy norm with \textit{h} version on $\Gamma_1$, ${\tilde{\beta}}$-graded mesh, $\widetilde{g}_2(\textbf{x},t)=H[t]f(t)x^4$}
\label{fig:energy_eq_triangle_2}
\end{figure}

\subsection{Hard scattering problems on flat obstacle}\label{flat_ob_hypersingular}
In the following we consider the discrete hypersingular integral equation \eqref{hypersingeq} on the obstacle $\Gamma=\left\lbrace (x,0)\in\mathbb{R}\:\vert\: x\in[-0.5,0.5]\right\rbrace$ {for a time independent Neumann condition. We focus, in particular, on the solution of the discrete problem \eqref{hypersingeqh} using $h$, $p$ and $hp$ versions.}\\


\noindent \textbf{Example 5.} We consider Neumann data corresponding to $\widetilde{h}_i(\textbf{x},t)=\eta_i$, where $\eta_i\in\mathbb{R}$ is constant for $i=1,2$. The datum at the boundary is independent of time. Therefore, as time increases, the components $\Psi_i(\textbf{x},t)$ of the solution tend to the stationary functions
\begin{equation}\label{elastostatic}
\Psi_{i,\infty}(\textbf{x})=k_i\sqrt{1/4-x^2},\quad k_i=-\frac{ c_{\mathtt{P}}^2 }{\rho c_{\mathtt{S}}^2\left( c_{\mathtt{P}}^2-c_{\mathtt{S}}^2\right)}\eta_i,\quad i=1,2,
\end{equation}
representing the components of the solution for the reference \emph{elastostatic} Neumann problem with boundary datum $\textbf{h}_{\infty}(\textbf{x})=\eta_i$. We specifically set $\eta_i=1$ for $i=1,2$, so that both components of $\pmb{\Psi}$ converge to the same elastostatic function $\Psi_{1,\infty} = \Psi_{2,\infty}$. Two different sets of velocities are considered, $c_{\mathtt{S}}=1,\:c_{\mathtt{P}}=2$ and $c_{\mathtt{S}}=1,\:c_{\mathtt{P}}=3$. 

Figure \ref{fig:time_history_and_vertical_cp=2,3}(a) shows the time history of $\Psi_1$ and $\Psi_2$, calculated at the midpoint $(0,0)$ of $\Gamma$, for both sets of velocities on the time interval $[0, 7.5]$. We observe that after an initial transient phase the solution approaches the stationary value \eqref{elastostatic}. In Figure \ref{fig:time_history_and_vertical_cp=2,3}(b) the vertical component $\Psi_2$ is plotted on $\Gamma$ for speeds $c_{\mathtt{P}}=2,3$ at time $T=7.5$. This time is large enough so that for both problems the numerical solution closely matches the stationary reference solution in \eqref{elastostatic}. For the plots in Figure \ref{fig:time_history_and_vertical_cp=2,3} equation \eqref{hypersingeqh} is solved on a uniform space-time mesh with mesh size $h=0.025$ and $\Delta t=0.0125$, respectively.

\begin{figure}[h!]
\centering
\subfloat[]{\includegraphics[width=0.35\textwidth]{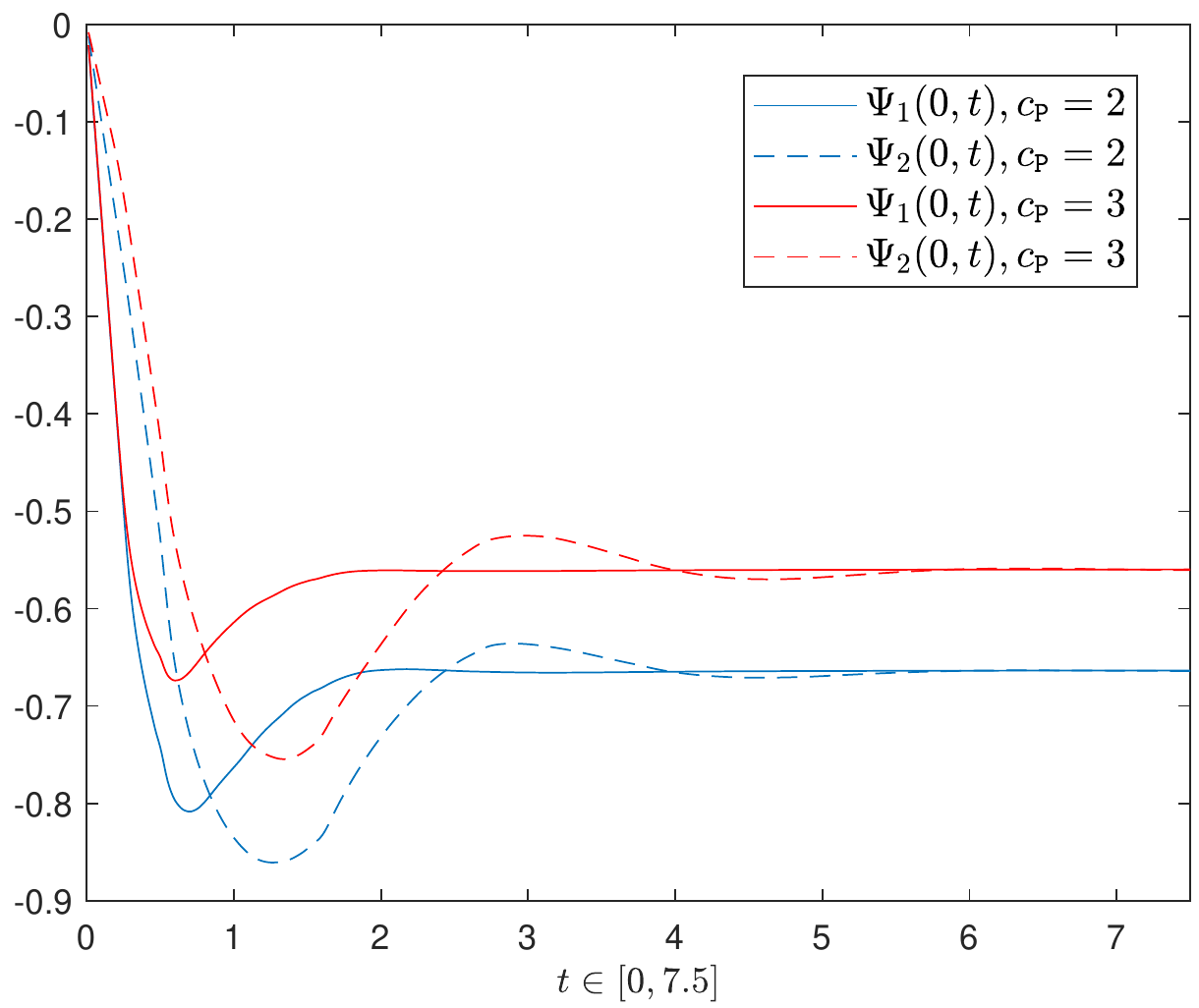}}
\quad
\subfloat[]{\includegraphics[width=0.35\textwidth]{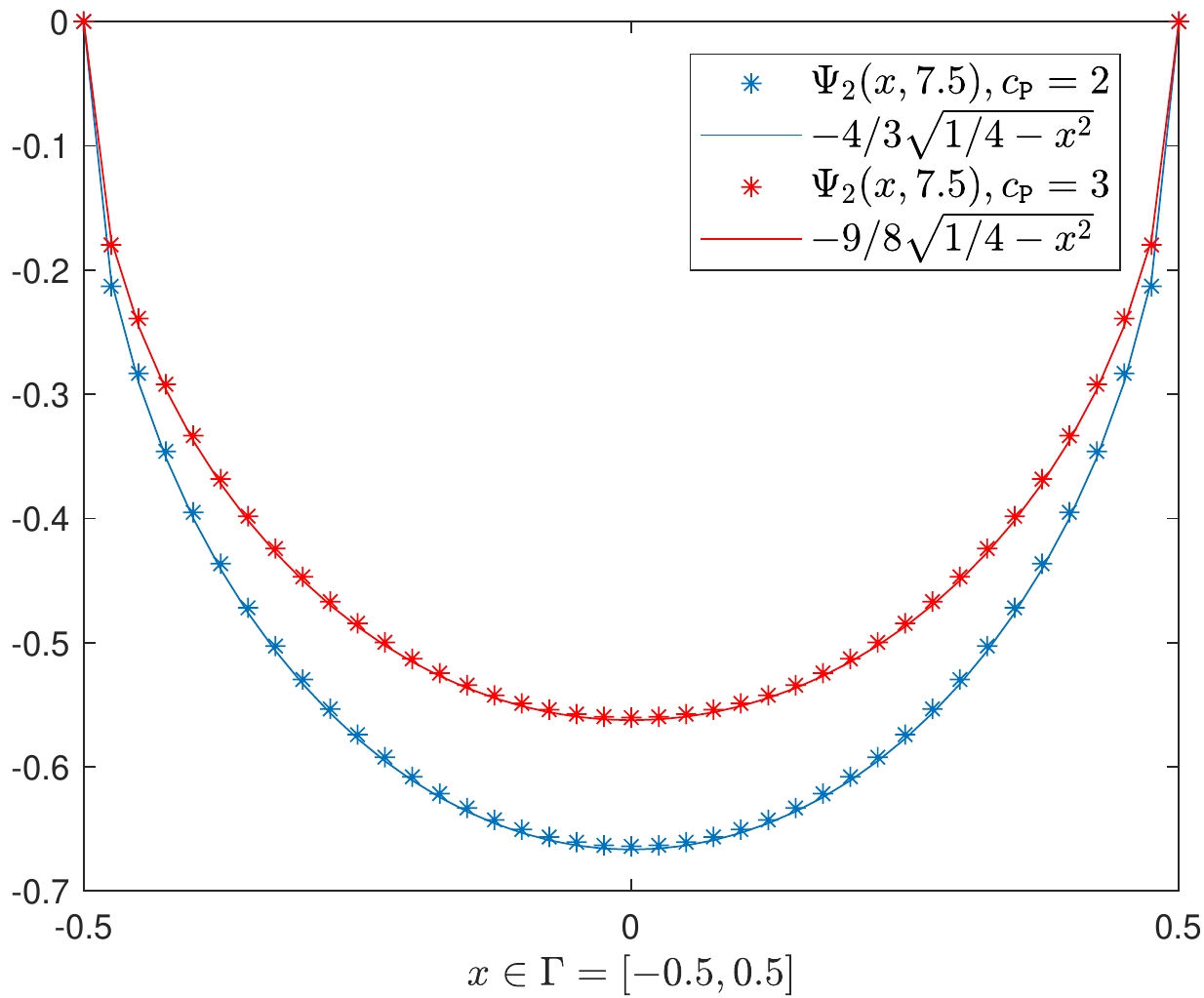}}
\caption{Time history of $\Psi_1$ and $\Psi_2$ calculated at the middle point of $\Gamma$ for the couples of velocities $c_{\mathtt{S}}=1,\:c_{\mathtt{P}}=2$ and $c_{\mathtt{S}}=1,\:c_{\mathtt{P}}=3$ (a). Vertical component $\Psi_2$ calculated at the final time instant $T=7.5$ and the related elastostatic solution $\Psi_{2,\infty}(\textbf{x},t)=k_2\sqrt{1/4-x^2}$ ($k_2=-4/3$ for $c_{\mathtt{P}}=2$ and $k_1=-9/8$ for $c_{\mathtt{P}}=3$) (b).}
\label{fig:time_history_and_vertical_cp=2,3}
\end{figure}

{To illustrate the behaviour of the solution {near} $\partial \Gamma$, Figure \ref{fig:vertical and horizontal component to the right extreme cp=2}
shows the components of $-\pmb{\Psi}$, for $c_{\mathtt{S}}=1,\:c_{\mathtt{P}}=2$, with respect to the distance $r$ towards the right {end point of the segment} $(0.5,0)^\top$ for various time instants: one observes that the {singular} behaviour is independent of time, and the {numerical solutions decrease like $r^{1/2}$} for $r$ tending to zero. The plots in Figure \ref{fig:vertical and horizontal component to the right extreme cp=2} are obtained {using} the $h$ version on a ${\tilde{\beta}}$-graded mesh with $81$ nodes, {with time step} $\Delta t=0.00625$.}
\begin{figure}[h!]
\centering
{\includegraphics[width=0.65\textwidth]{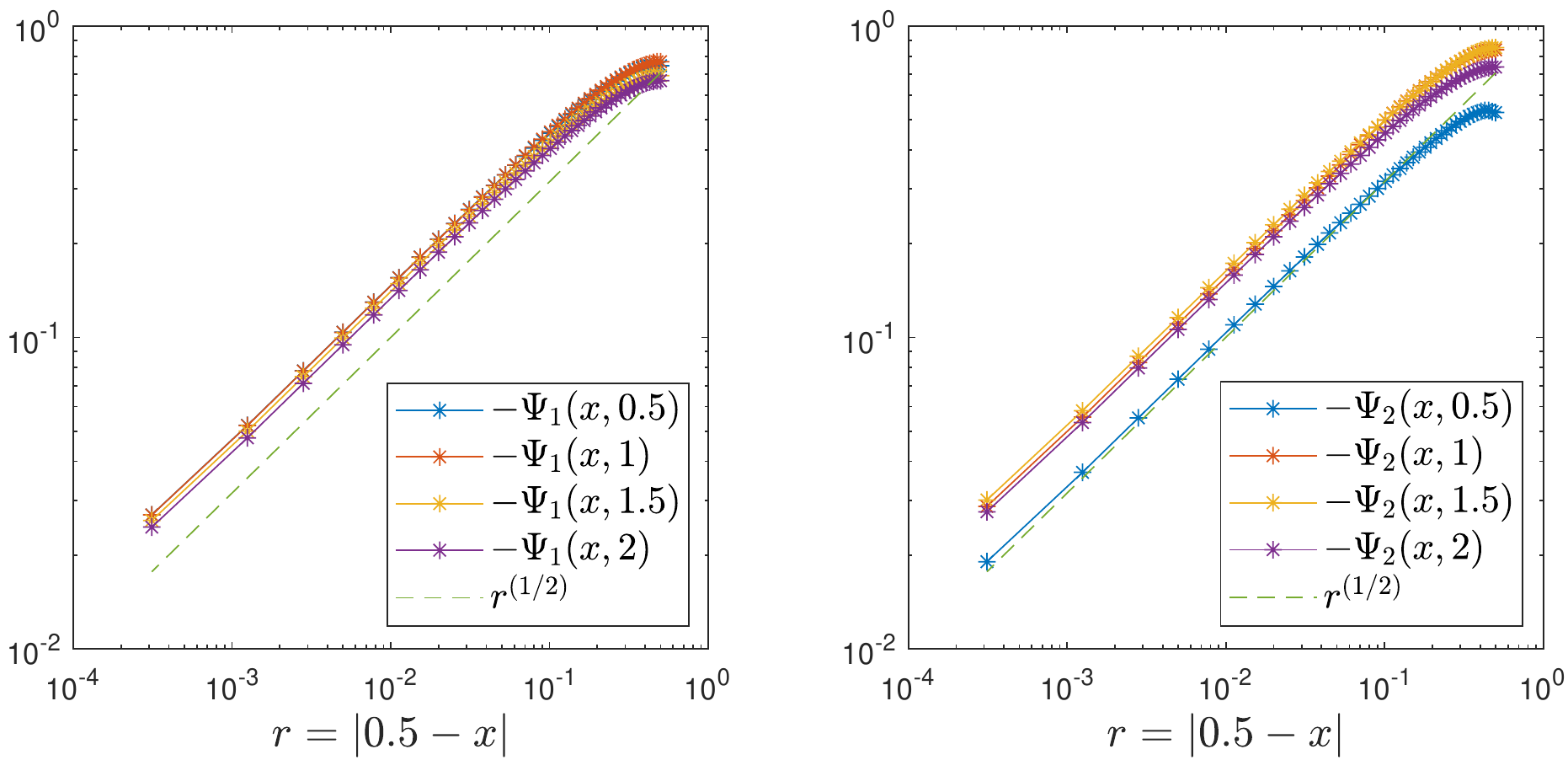}}
\caption{Asymptotic behaviour of $-\Psi_1$ and $-\Psi_2$ towards the right end of $\Gamma$ for various time instants ($c_{\mathtt{S}}=1,\:c_{\mathtt{P}}=2$). The arc is discretized by a ${\tilde{\beta}}$-graded mesh with ${\tilde{\beta}}=2$ and $81$ mesh points.}
\label{fig:vertical and horizontal component to the right extreme cp=2}
\end{figure}

For the case $c_{\mathtt{S}}=1,\:c_{\mathtt{P}}=2$, we study the decay of the error in energy norm for the approximate solution of \eqref{hypersingeqh} up to time $T=2$ analysing the value $\pmb{\beta}^\top {E}_{\mathcal{W}} \pmb{\beta}$, namely the squared energetic norm of the approximate solution, which increases towards a common benchmark value for all tested discretization methods. We refer the reader to section \ref{sec:algo} for construction details of $\pmb{\beta}$ and ${E}_{\mathcal{W}}$. The number of spatial DOF in the following, as previously, corresponds to one component of the vector solution. For the $h$ version we choose a ${\tilde{\beta}}$-graded mesh on $\Gamma$ with ${\tilde{\beta}}=1,2$ and  $10,20,40$, respectively $80$ segments. The time step $\Delta t=0.05$ in the case of $10$ segments is halved at each refinement of the spatial mesh. The log scale plot in Figure \ref{fig:energy_and_condition_number_hypersingular} shows a linear decay of the error for the $h$ version, parallel to the lines $\mathcal{O}($DOF${}^{-{\tilde{\beta}}})$. The results confirm the prediction in Corollary \ref{approxcor2}. For the $p$ version we consider a uniform discretization of the obstacle with $h=0.1$ and  a uniform time step $\Delta t = 1/(2\cdot$DOF$)$. The log scale plot shows a linear decay of the error parallel to the expected line $\mathcal{O}($DOF${}^{-2})$.  The $hp$ version with a geometrically graded mesh is considered for meshes  on $\Gamma$ with $4,6,8,10$ and $12$ segments. At each refinement of the mesh the degree $p$, starting from $2$, increases uniformly on all the space elements. The time step is chosen as $\Delta t=0.125$ for $4$ segments and halved at each iteration. Similarly to the soft scattering problems presented above the $hp$ method shows the fastest decay of the error with respect to increasing spatial DOF.
\begin{figure}[h!]
\centering
\includegraphics[width=0.45\textwidth]{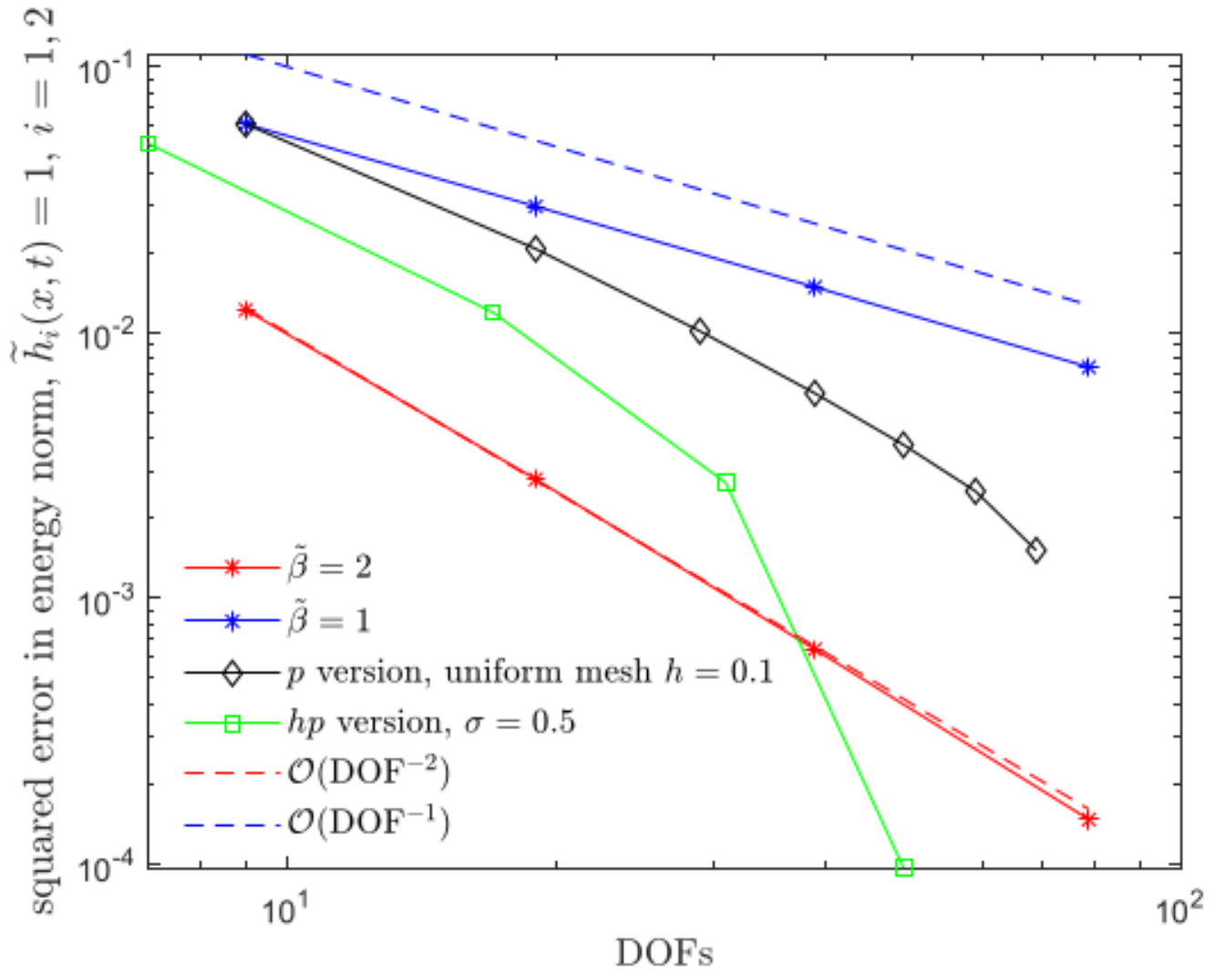}
\caption{Squared error of the energy norm calculated up to time instant $T=2$}
\label{fig:energy_and_condition_number_hypersingular}
\end{figure}

\section{Conclusions}

{In this work we initiate the study of higher-order versions of the boundary element method for linear elastodynamics, including $h$, $p$ and $hp$ versions. The asymptotic expansions for the solution obtained near geometric singularities of the domain give rise to efficient discretizations, with the same approximation rates as known for $h$, $p$ and $hp$ approximations of time independent
problems.\\
The quasi-optimal $hp$ explicit estimates in this article complement the recent analysis for the wave equation, for both finite and boundary element methods \cite{graded,hp,mueller}, and for linear elastodynamics in 2d \cite{mueller2}.
The convergence is determined by the singular behavior of the solution near the non-smooth boundary points of
the domain. Our analysis relies on the classical approximation results for time independent problems \cite{dauge}, in combination with the analysis of the leading singular terms in the time dependent problem \cite{matyu}. \\
Extensive numerical experiments for a slit and polygonal domains in 2d illustrate the quasi-optimal convergence rates and confirm the expected leading asymptotic behavior of the solution near a vertex.
On a slit the energy error $O(p^{-1})$ of the $p$ version converges with the same rate as an $h$ version on a $2$-graded mesh. For closed
polygonal domains the solution is less singular near the vertices, depending on the material parameters and the opening angle. Accordingly, higher convergence rates are obtained in both the analysis and in the numerical experiments. }

\begin{appendices}
\appendix

\section{}

In this appendix we introduce space--time anisotropic Sobolev spaces on the boundary $\Gamma$ as a convenient functional analytic setting for the analysis of the time dependent boundary integral operators. A detailed exposition may be found in \cite{hd, setup}. {Furthermore, we collect mapping properties of the integral operators $\mathcal{V}, \mathcal{W}$ in these space-time anisotropic spaces (Theorem \ref{mapthm}) and show continuity and coercivity of the associated bilinear forms (Proposition \ref{DPbounds}). The latter imply the stability of the Galerkin schemes in Section 4.} In the case of an open screen or line segment, $\partial\Gamma\neq \emptyset$, we first extend $\Gamma$ to a closed, orientable Lipschitz manifold $\widetilde{\Gamma}$. 
On $\Gamma$ we recall the usual Sobolev spaces of supported distributions:
$$\widetilde{H}^s(\Gamma) = \{u\in H^s(\widetilde{\Gamma}): \mathrm{supp}\ u \subset {\overline{\Gamma}}\}\ , \quad\ s \in \mathbb{R}\ .$$
The Sobolev space ${H}^s(\Gamma)$ is the quotient space $ H^s(\widetilde{\Gamma}) / \widetilde{H}^s({\widetilde{\Gamma}\setminus\overline{\Gamma}})$.
To define a family of Sobolev norms,  $\alpha_i$ be a partition of unity subordinate to a covering of $\widetilde{\Gamma}$ by open sets $B_i$. Given diffeomorphisms $\varphi_i$ from $B_i$ to the unit cube in $\mathbb{R}^n$, Sobolev norms are induced from $\mathbb{R}^n$, with parameter $\omega \in \mathbb{C}\setminus \{0\}$:
\begin{equation*}
 ||u||_{s,\omega,{\widetilde{\Gamma}}}=\left( \sum_{i=1}^p \int_{\mathbb{R}^n} (|\omega|^2+|\pmb{\xi}|^2)^s|\mathcal{F}\left\{(\alpha_i u)\circ \varphi_i^{-1}\right\}(\pmb{\xi})|^2 d\pmb{\xi} \right)^{\frac{1}{2}}\ .
\end{equation*}
Here, $\mathcal{F}=\mathcal{F}_{\bold{x} \mapsto \pmb{\xi}}$ denotes the Fourier transform $\mathcal{F}\varphi(\pmb{\xi}) = \int e^{-i\bold{x}\cdot\pmb{\xi}} \varphi(\bold{x})\ d\bold{x}$. Different $\omega \in \mathbb{C}\setminus \{0\}$ induce equivalent norms on $H^s(\Gamma)$, $\|u\|_{s,\omega,\Gamma} = \inf_{v \in \widetilde{H}^s(\widetilde{\Gamma}\setminus\overline{\Gamma})} \ \|u+v\|_{s,\omega,\widetilde{\Gamma}}$ and on $\widetilde{H}^s(\Gamma)$, $\|u\|_{s,\omega,\Gamma, \ast } = \|e_+ u\|_{s,\omega,\widetilde{\Gamma}}$. $e_+$ extends the distribution $u$ by $0$ from $\Gamma$ to $\widetilde{\Gamma}$.  When a specific $\omega$ is fixed, we write $H^s_\omega(\Gamma)$ for $H^s(\Gamma)$, respectively $\widetilde{H}^s_\omega(\Gamma)$ for $\widetilde{H}^s(\Gamma)$. 
The norm $\|u\|_{s,\omega,\Gamma, \ast }$ is stronger than $\|u\|_{s,\omega,\Gamma}$. 

We may now define a family of space-time anisotropic Sobolev spaces:
\begin{definition}\label{sobdef}
For {$\sigma>0$ and} $r,s \in\mathbb{R}$ define
\begin{align}
 H^r_\sigma(\mathbb{R}^+,{H}^s(\Gamma))&=\{ u \in \mathcal{D}^{'}_{+}(H^s(\Gamma)): e^{-\sigma t} u \in \mathcal{S}^{'}_{+}(H^s(\Gamma))  \textrm{ and }   ||u||_{r,s,\Gamma} < \infty \}\ , \nonumber \\
 H^r_\sigma(\mathbb{R}^+,\widetilde{H}^s({\Gamma}))&=\{ u \in \mathcal{D}^{'}_{+}(\widetilde{H}^s({\Gamma})): e^{-\sigma t} u \in \mathcal{S}^{'}_{+}(\widetilde{H}^s({\Gamma}))  \textrm{ and }   ||u||_{r,s,\Gamma, \ast} < \infty \}\ .\label{sobdef}
\end{align}
Here, $\mathcal{D}^{'}_{+}(E)$ denotes the space of all distributions on $\mathbb{R}$ with support in $[0,\infty)$, taking values in a Hilbert space $E= {H}^s({\Gamma})$, respectively  $E=\widetilde{H}^s({\Gamma})$. $\mathcal{S}^{'}_{+}(E)\subset \mathcal{D}^{'}_{+}(E)$ denotes the subspace of tempered distributions. The Sobolev spaces are endowed with the norms
\begin{align}
\|u\|_{r,s}:=\|u\|_{r,s,\Gamma}&=\left(\int_{-\infty+i\sigma}^{+\infty+i\sigma}|\omega|^{2r}\ \|\hat{u}(\omega)\|^2_{s,\omega,\Gamma}\ d\omega \right)^{\frac{1}{2}}\ ,\nonumber \\
\|u\|_{r,s,\ast} := \|u\|_{r,s,\Gamma,\ast}&=\left(\int_{-\infty+i\sigma}^{+\infty+i\sigma}|\omega|^{2r}\ \|\hat{u}(\omega)\|^2_{s,\omega,\Gamma,\ast}\ d\omega \right)^{\frac{1}{2}}\,. \label{sobnormdef}  
\end{align}
\end{definition}
They are Hilbert spaces. For $r=s=0$ they correspond to the weighted $L^2$-space with scalar product $\int_0^\infty e^{-2\sigma t} \int_\Gamma u \overline{v} d\Gamma_{\bold{x}}\ dt$. Because $\Gamma$ is Lipschitz, these spaces are independent of the choice of $\alpha_i$ and $\varphi_i$ when $|s|\leq 1$, as for standard Sobolev spaces.\\ We shall also use the norms $\|u\|_{r, s, (t_1, t_2]\times \Gamma}$ and $\|u\|_{r, s, (t_1, t_2]\times \Gamma,\ast}$ for restrictions on the time interval $(t_1,t_2]$.\\

Let now $\widetilde{\Gamma} = \partial \Omega'$ the boundary of a Lipschitz subset $\Omega \subset \R^n$ and  $\Gamma \subset \widetilde{\Gamma}$ open. Denote $\Omega = \R^n \setminus \overline{\Omega'}$.

We review the mapping properties for the weakly singular integral operator $\mathcal{V}$ and the hypersingular operator $\mathcal{W}$. 
\begin{theorem}\label{mapthm}
The single layer potential operator and the hypersingular operator are continuous for {$\sigma>0$ and $r \in \mathbb{R}$:}
\begin{align*}
\mathcal{V}&: H_\sigma^{r+1}(\mathbb{R}^+, \widetilde{H}^{-\frac{1}{2}}(\Gamma)) \to H_\sigma^{r}(\mathbb{R}^+, H^{\frac{1}{2}}(\Gamma)) \ , \ ,\\ \mathcal{K}' &: H_\sigma^{r+1}(\mathbb{R}^+, \widetilde{H}^{-\frac{1}{2}}(\Gamma)) \to H_\sigma^{r}(\mathbb{R}^+, H^{-\frac{1}{2}}(\Gamma))\ ,\\
\mathcal{K}&: H_\sigma^{r+1}(\mathbb{R}^+, \widetilde{H}^{\frac{1}{2}}(\Gamma)) \to H_\sigma^{r}(\mathbb{R}^+, H^{\frac{1}{2}}(\Gamma)) \ ,
\\ \mathcal{W} &: H_\sigma^{r+1}(\mathbb{R}^+, \widetilde{H}^{\frac{1}{2}}(\Gamma)) \to H_\sigma^{r}(\mathbb{R}^+, H^{-\frac{1}{2}}(\Gamma))\ .
\end{align*}
\end{theorem}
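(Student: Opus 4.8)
The plan is to establish the four mapping properties in Theorem \ref{mapthm} by passing to the Fourier-Laplace transformed (frequency domain) problem, where the time-dependent operators become families of elliptic boundary integral operators parametrized by the complex frequency $\omega$, and then integrating the resulting frequency-explicit bounds against the weight $|\omega|^{2r}$ in the definition \eqref{sobnormdef} of the space-time norms. This is the standard Bamberger--Ha Duong / Costabel approach (see \cite{hd, Becache1993, sayas}), so I would only sketch the architecture and indicate where the elastodynamic fundamental solution enters. The key point is that, under the Laplace transform $\mathcal{L}_{t\mapsto\omega}$ with $\mathrm{Im}\,\omega=\sigma>0$, the kernels \eqref{fundamental solution}, \eqref{fundamental solution3d} turn into the fundamental solution of the Helmholtz-type resolvent system $(\lambda+\mu)\nabla(\nabla\cdot)+\mu\Delta+\varrho\omega^2$, which is elliptic for $\mathrm{Im}\,\omega\neq 0$, and the associated single layer, double layer, adjoint double layer and hypersingular operators $\hat{\mathcal V}(\omega),\hat{\mathcal K}(\omega),\hat{\mathcal K}'(\omega),\hat{\mathcal W}(\omega)$ are bounded between the usual (frequency-weighted) trace spaces on $\Gamma$.

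\textbf{Main steps.} First I would recall that, by Parseval's identity in the weighted $L^2$-in-time sense, $\|u\|_{r,s,\Gamma}^2 = \int_{\mathrm{Im}\,\omega=\sigma}|\omega|^{2r}\,\|\hat u(\omega)\|_{s,\omega,\Gamma}^2\,d\omega$ and analogously for the $\ast$-norm, so that a continuity statement $\mathcal{A}:H^{r+1}_\sigma(\mathbb{R}^+,E_1)\to H^r_\sigma(\mathbb{R}^+,E_2)$ follows once one has a frequency-explicit bound $\|\hat{\mathcal A}(\omega)v\|_{E_2,\omega}\lesssim |\omega|\,\|v\|_{E_1,\omega}$ uniformly for $\mathrm{Im}\,\omega=\sigma$, with the constant independent of $\omega$ but allowed to depend on $\sigma$. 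Second, I would establish exactly this bound for each of $\hat{\mathcal V},\hat{\mathcal K},\hat{\mathcal K}',\hat{\mathcal W}$: the single layer potential $\hat{\mathcal S}(\omega)$ maps $\widetilde H^{-1/2}(\Gamma)\to H^1_{loc}(\mathbb{R}^n)$ boundedly with a bound polynomial in $|\omega|$, its interior and exterior Dirichlet traces give $\hat{\mathcal V}(\omega):\widetilde H^{-1/2}(\Gamma)\to H^{1/2}(\Gamma)$, the Neumann traces give $\hat{\mathcal K}'(\omega)\pm\frac12$; likewise the double layer potential and its Neumann trace yield $\hat{\mathcal K}(\omega)$ and $\hat{\mathcal W}(\omega)$. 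Tracking the $\omega$-dependence through the scaled Sobolev norms $\|\cdot\|_{s,\omega,\Gamma}$ (which are built precisely so that the elliptic regularity and trace estimates have the right homogeneity in $|\omega|$), one gets the linear growth $|\omega|$ that accounts for the loss of one time derivative. Third, I would invoke the mapping properties of $\mathcal K,\mathcal K'$ in these anisotropic spaces already cited from \cite{chud} (as used in the proof of Proposition \ref{wellposedness}) to shortcut the double layer estimates, and note that the weakly singular case is the one made explicit in Appendix A via Proposition \ref{DPbounds} below. Finally, assembling the frequency bounds and integrating in $\omega$ yields the four claimed continuities.

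\textbf{Main obstacle.} The technical heart is the frequency-explicit elliptic estimate: one must show that the constants in the mapping properties of the Lam\'e resolvent layer potentials grow only polynomially (in fact, controlled by a fixed power of $|\omega|$) as $|\mathrm{Re}\,\omega|\to\infty$ along the line $\mathrm{Im}\,\omega=\sigma$, rather than exponentially. This is where the causal structure of the fundamental solution — the Heaviside factors $H[c_{\mathtt P/\mathtt S}(t-\tau)-r]$ in \eqref{fundamental solution}, \eqref{fundamental solution3d}, which under Laplace transform become $e^{-r\omega/c}$ with $\mathrm{Re}(r\omega/c)=r\sigma/c>0$ — is essential: it guarantees the transformed kernels decay and the operator norms stay polynomially bounded, at the cost of the $\sigma$-dependent constants reflected in $\lesssim_\sigma$. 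For the screen case $\partial\Gamma\neq\emptyset$ one additionally uses the extension to $\widetilde\Gamma$ and the mapping properties of $e_+$ between $\widetilde H^s$ and $H^s$ for $|s|\le 1$, exactly as set up at the start of Appendix A. Once these frequency bounds are in hand, the remaining steps are routine bookkeeping with the norms \eqref{sobnormdef}.
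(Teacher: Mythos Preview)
Your proposal is correct and follows exactly the standard Bamberger--Ha Duong frequency-domain approach. Note, however, that the paper does not actually give a proof of this theorem: it simply cites Theorem~3.1 in \cite{chud} (and \cite{Becache1994} for $\mathcal{W}$ in 2d, plus \cite{jr}), so your sketch is in fact a spelled-out version of the argument underlying those references rather than a comparison point against an in-paper proof.
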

{This may be found in Theorem 3.1 in \cite{chud}, see also \cite{Becache1994}  for $\mathcal{W}$ in 2d, with an analogous proof. See also \cite{jr} for a recent discussion of mapping properties for the wave equation.} 

For  convenience of the reader we recall basic properties of the bilinear form 
for the Dirichlet problem {in the infinite space-time cylinder $\Gamma \times \mathbb{R}^+$}, 
\begin{align} \label{eq:bilinearform}
B_{D,\Gamma \times \mathbb{R}^+}(\pmb{ \Phi},\pmb{\tilde{\Phi}}) := \int_{\mathbb{R}^+}\int_\Gamma \mathcal{V} \partial_t \pmb{\Phi}(t,\bold{x})\ \pmb{\tilde{\Phi}}(t,\bold{x})\ d\Gamma_{\bold{x}} \, d_\sigma t \ ,
\end{align}
{where $d_\sigma t = e^{-2\sigma t} dt$, as well as the corresponding bilinear form for the Neumann problem,
\begin{align} \label{eq:bilinearformN}
B_{N,\Gamma \times \mathbb{R}^+}(\pmb{ \Psi},\pmb{\tilde{\Psi}}) := \int_{\mathbb{R}^+}\int_\Gamma \mathcal{W} \partial_t \pmb{\Psi}(t,\bold{x})\ \pmb{\tilde{\Psi}}(t,\bold{x})\ d\Gamma_{\bold{x}} \, d_\sigma t \ ,
\end{align}
}
\begin{proposition}\label{DPbounds} {Let $\sigma>0$.} \\
a) For every $\pmb{ \Phi},\pmb{\tilde{\Phi}} \in H^1_\sigma( \mathbb{R}^+, H^{-\frac{1}{2}}(\Gamma))^n$ there holds:
\begin{equation}\label{BDcontA}|B_{D,\Gamma \times \mathbb{R}^+}(\pmb{ \Phi},\pmb{\tilde{\Phi}})| \lesssim \|\pmb{ \Phi}\|_{1,-\frac{1}{2},\Gamma, \ast} \|\pmb{\tilde{\Phi}}\|_{1,-\frac{1}{2}, \Gamma,\ast}\end{equation}
and
\begin{equation}\label{BDcoerA}\|\pmb{ \Phi}\|_{0,-\frac{1}{2},\Gamma,\ast}^2 \lesssim B_{D,\Gamma \times \mathbb{R}^+}(\pmb{ \Phi},\pmb{ \Phi}) . \end{equation}
{b) For every $\pmb{ \Psi},\pmb{\tilde{\Psi}} \in H^1_\sigma( \mathbb{R}^+, H^{\frac{1}{2}}(\Gamma))^n$ there holds:
\begin{equation}\label{BNcontA}|B_{N,\Gamma \times \mathbb{R}^+}(\pmb{ \Psi},\pmb{\tilde{\Psi}})| \lesssim \|\pmb{ \Psi}\|_{1,-\frac{1}{2},\Gamma, \ast} \|\pmb{\tilde{\Psi}}\|_{1,\frac{1}{2}, \Gamma,\ast}\end{equation}
and
\begin{equation}\label{BNcoerA}\|\pmb{ \Psi}\|_{0,\frac{1}{2},\Gamma,\ast}^2 \lesssim B_{N,\Gamma \times \mathbb{R}^+}(\pmb{ \Psi},\pmb{ \Psi}) . \end{equation}}
\end{proposition}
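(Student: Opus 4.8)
The strategy is to pass to the Fourier--Laplace transform in time and reduce the space--time estimates to frequency-uniform bounds for the boundary integral operators of the complex-frequency Lam\'e system, following Bamberger--Ha Duong for the scalar wave equation and its extension to elastodynamics by B\'ecache--Ha Duong \cite{Becache1993,Becache1994}. Writing $\omega = \eta + i\sigma$ and denoting by $\widehat{\pmb{\Phi}}(\omega)$ the transform along the line $\{\mathrm{Im}\,\omega = \sigma\}$, the temporal derivative in \eqref{eq:bilinearform} becomes multiplication by $-i\omega$, and Parseval's identity (adapted to the weight $d_\sigma t = e^{-2\sigma t}\,dt$) gives
\begin{equation*}
B_{D,\Gamma\times\mathbb{R}^+}(\pmb{\Phi},\pmb{\tilde{\Phi}}) \ \sim\ \int_{-\infty+i\sigma}^{+\infty+i\sigma} \big\langle (-i\omega)\,\widehat{\mathcal{V}}(\omega)\,\widehat{\pmb{\Phi}}(\omega),\ \overline{\widehat{\pmb{\tilde{\Phi}}}(\omega)}\big\rangle_{L^2(\Gamma)^n}\, d\omega\ ,
\end{equation*}
where $\widehat{\mathcal{V}}(\omega)$ is the single layer operator for $(\lambda+\mu)\nabla(\nabla\cdot\,) + \mu\Delta + \varrho\,\omega^2$, with the analogous identity for $B_{N,\Gamma\times\mathbb{R}^+}$ and $\widehat{\mathcal{W}}(\omega)$. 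In view of the norms \eqref{sobnormdef} it then suffices to prove, uniformly in $\eta\in\mathbb{R}$ and with constants depending only on $\sigma>0$, the frequency-domain estimates
\begin{gather*}
\big|\big\langle \widehat{\mathcal{V}}(\omega)\varphi,\, \psi\big\rangle\big| \lesssim_\sigma |\omega|^{-1}\,\|\varphi\|_{-\frac12,\omega,\Gamma,\ast}\,\|\psi\|_{-\frac12,\omega,\Gamma,\ast}\ , \\
\mathrm{Re}\big( -i\omega\,\big\langle \widehat{\mathcal{V}}(\omega)\varphi,\, \overline{\varphi}\big\rangle\big) \gtrsim_\sigma \|\varphi\|_{-\frac12,\omega,\Gamma,\ast}^2\ ,
\end{gather*}
together with the corresponding pair for $\widehat{\mathcal{W}}(\omega)$, with $\widetilde{H}^{\frac12}$ in place of $\widetilde{H}^{-\frac12}$ and one power of $|\omega|$ less in the continuity bound. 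Integrating these over the line $\mathrm{Im}\,\omega=\sigma$ and applying Parseval backwards yields \eqref{BDcontA}--\eqref{BNcoerA}.

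The continuity estimates \eqref{BDcontA}, \eqref{BNcontA} are in fact an immediate consequence of the mapping properties of Theorem \ref{mapthm} together with the $L^2(\Sigma)$-duality between $H^{1/2}(\Gamma)$ and $\widetilde{H}^{-1/2}(\Gamma)$: since $\partial_t$ maps $H^1_\sigma(\mathbb{R}^+,\widetilde{H}^{-\frac12}(\Gamma))$ boundedly into $H^0_\sigma(\mathbb{R}^+,\widetilde{H}^{-\frac12}(\Gamma))$, Theorem \ref{mapthm} (with $r=-1$) gives $\mathcal{V}\partial_t\pmb{\Phi}\in H^{-1}_\sigma(\mathbb{R}^+,H^{\frac12}(\Gamma))^n$, whose pairing with $\pmb{\tilde{\Phi}}\in H^1_\sigma(\mathbb{R}^+,\widetilde{H}^{-\frac12}(\Gamma))^n$ is controlled by the product of the stated norms, and similarly for $\mathcal{W}$. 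Thus the real work is the coercivity.

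For \eqref{BDcoerA} one uses the energy identity for the complex-frequency single layer potential $\mathbf{v} = \widehat{\mathcal{V}}(\omega)\varphi$, which solves the homogeneous equation in $\mathbb{R}^n\setminus\overline{\Gamma}$, decays exponentially at infinity because $\mathrm{Re}\,\omega=\sigma>0$, and satisfies the jump relations $[\mathbf{v}]=0$, $[\mathbf{p}_\omega(\mathbf{v})]=\mp\varphi$ across $\Gamma$; integration by parts turns $\langle\widehat{\mathcal{V}}(\omega)\varphi,\overline{\varphi}\rangle$ into $\int_{\mathbb{R}^n\setminus\overline{\Gamma}}\big(\sum C_{ih}^{kl}\,\partial_l v_k\,\overline{\partial_h v_i}-\varrho\,\omega^2|\mathbf{v}|^2\big)$. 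Multiplying by $-i\omega$ and taking the real part extracts a factor $\sigma$ times the (nonnegative, coercive modulo rigid motions) Lam\'e energy and a factor $\sigma|\omega|^2\|\mathbf{v}\|_{L^2}^2$ from the inertial term; a trace/lifting inequality relating $\|\mathbf{v}\|_{H^1(\mathbb{R}^n\setminus\overline{\Gamma})}$ to $\|\varphi\|_{-1/2,\omega,\Gamma,\ast}$, with the $\omega$-dependence tracked as in \cite{Becache1994,chud}, then gives the lower bound. For \eqref{BNcoerA} one repeats this with the double layer potential $\mathbf{w}$, where $[\mathbf{w}]=\psi$, $[\mathbf{p}_\omega(\mathbf{w})]=0$, so that $\langle\widehat{\mathcal{W}}(\omega)\psi,\overline{\psi}\rangle$ equals minus the same quadratic energy of $\mathbf{w}$; alternatively one integrates by parts on $\Gamma$ to rewrite the hypersingular form in terms of a weakly-singular-type form applied to surface derivatives of $\psi$ plus lower-order contributions, as in \cite{Becache1993} for the $2$d case, and invokes the already established coercivity of the former.

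The main obstacle is the uniform-in-$\omega$ bookkeeping in the coercivity step: one must track the exact powers of $|\omega|$ and $\sigma$ through the trace inequalities and the Lam\'e G\r{a}rding estimate --- in particular while handling the splitting of the elastodynamic fundamental solution into the pressure and shear components travelling at speeds $c_{\mathtt{P}}$ and $c_{\mathtt{S}}$ --- so that, after integration in $\eta$, the weight in \eqref{sobnormdef} with $r=0$ is exactly recovered and the implicit constant degrades at most polynomially as $\sigma\to 0^+$. For $\mathcal{W}$ there is the additional difficulty that the surface integration by parts and the treatment of the resulting lower-order terms are dimension dependent; the $2$d analysis is \cite{Becache1993,Becache1994}, and the $3$d case follows along the same lines.
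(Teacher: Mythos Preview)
Your plan is correct and matches the paper's own argument essentially step for step: continuity from the mapping properties of Theorem~\ref{mapthm}, coercivity of $B_D$ via the frequency-domain energy identity for the single layer potential (Green's formula plus a trace estimate, then Parseval back to time), and coercivity of $B_N$ by reference to B\'ecache--Ha~Duong. One small slip to fix: with your convention $\omega=\eta+i\sigma$ it is $\mathrm{Im}\,\omega=\sigma>0$ (not $\mathrm{Re}\,\omega$) that yields the exponential decay of the potential at infinity and the positive factor in front of the energy when you take $\mathrm{Re}(-i\omega\,\cdot)$.
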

\begin{proof}
{The inequalities \eqref{BDcontA} and \eqref{BNcontA} follow from the mapping properties in Theorem \ref{mapthm}.  The coercivity \eqref{BNcoerA} was shown in \cite{Becache1993, Becache1994} in 2d, and the proof holds verbatim in any dimension.}\\

{To show \eqref{BDcoerA}, we consider the elastic problem in the frequency domain:
\begin{equation}
\left\lbrace
\begin{array}{l}
(\lambda+\mu)\nabla(\nabla \cdot \textbf{u})+\mu\Delta\textbf{u}+\rho\omega^2\textbf{u}=\textrm{div}\:\sigma(\textbf{u})+\rho\omega^2\textbf{u}= 0, \quad \textbf{x}\in\Omega'\cup\Omega \\
\textbf{u}=\textbf{g}, \quad \textbf{x}\in \widetilde{\Gamma}.
\end{array}
\right..
\end{equation}
We assume $\mathrm{Im}\ (\omega) \geq \sigma>0$. The energetic weak formulation for the single layer equation for the traction $[\textbf{p}]=[\sigma(\textbf{u})\textbf{n}]$ in frequency domain is given by (using Parseval's identity):\\
\textit{
Find} $[\textbf{p}]\in H^{-\frac{1}{2}}_\omega(\widetilde{\Gamma})^n$ \textit{such that}
\begin{equation}\label{bilinear form trasform}
B_{D,\omega}([\textbf{p}],\overline{\pmb{q}}) = \langle-i\omega \mathcal{V}_{\omega} [\textbf{p}],\overline{\pmb{q}} \rangle_{\widetilde{\Gamma}}= \langle-i\omega \textbf{g},\overline{\pmb{q}}\rangle_{\widetilde{\Gamma}}
\end{equation}
\textit{
for all} $\pmb{q}\in H^{-\frac{1}{2}}_\omega(\widetilde{\Gamma})^n$.}\\

{It involves the single layer operator $\mathcal{V}_{\omega}$ obtained from $\mathcal{V}$ by Fourier transformation. Using Green's formula as in \cite{Becache1994}, Thm 3.1, we have
$$
\int_{\Omega'\cup\Omega} \left(\overline{\sigma (\textbf{u})} : \varepsilon(\textbf{u})-\rho \omega^2\vert\textbf{u}\vert^2\right)d \textbf{x}=\int_{\widetilde{\Gamma}} \textbf{u} \cdot \overline{[\sigma(\textbf{u})\textbf{n}]} d\widetilde{\Gamma}\equiv \langle\mathcal{V}_{\omega} [\textbf{p}],\overline{[\textbf{p}]} \rangle_{\widetilde{\Gamma}}.
$$
Now note that $|\langle-i\omega \mathcal{V}_{\omega} [\textbf{p}],\overline{[\textbf{p}]} \rangle_{\widetilde{\Gamma}}\vert\geqslant  \mathrm{Re} \:  i\overline{\omega} \langle\mathcal{V}_{\omega} [\textbf{p}],\overline{[\textbf{p}]} \rangle_{\widetilde{\Gamma}}
$
and
\begin{align}
\mathrm{Re}\:  i\overline{\omega} \langle\mathcal{V}_{\omega} [\textbf{p}],\overline{[\textbf{p}]} \rangle_{\widetilde{\Gamma}}  &= \mathrm{Re} \left( i\overline{\omega} \int_{\Omega'\cup\Omega} \overline{\sigma (\textbf{u})}: \varepsilon(\textbf{u})d \textbf{x}\right)+ \mathrm{Re} \left(-i\omega \int_{\Omega'\cup\Omega} \rho\vert\omega\vert^2\vert\textbf{u}\vert^2 d \textbf{x}\right)\nonumber\\
&=2 \mathrm{Im}  (\omega) E_{\omega}\geqslant 0,\label{passage}
\end{align}
with
$$E_{\omega}=\frac{1}{2}\int_{\Omega'\cup\Omega} \left(\overline{\sigma (\textbf{u})}: \varepsilon(\textbf{u})+ \rho\vert\omega\vert^2\vert\textbf{u}\vert^2 \right)d \textbf{x}\ .$$
Physically, $E_{\omega}$ is the energy of the displacement $\textbf{u}$, and it satisfies
\begin{equation}\label{energy 2}
E_{\omega}\geqslant C_{\sigma}\Vert \textbf{u} \Vert^2_{1,\omega,\Omega'\cup\Omega}
\end{equation}
for a positive constant $C_{\sigma}$. From \eqref{passage} and \eqref{energy 2} we deduce that
$$\vert  \langle -i\omega \mathcal{V}_{\omega} [\textbf{p}],\overline{[\textbf{p}]} \rangle_{\widetilde{\Gamma}}\vert\geqslant\widetilde{C}_{\sigma}\Vert \textbf{u} \Vert^2_{1,\omega,\Omega'\cup\Omega}\ .$$
From the trace theorem there exists a positive constant $C_{trace}$ such that
$$2C_{trace}\Vert \textbf{u} \Vert^2_{1,\omega,\Omega'\cup\Omega}\geqslant 2 \Vert \textbf{p}|_{\widetilde{\Gamma}_+}\Vert_{-1/2,\omega,\widetilde{\Gamma}}^2 + 2\Vert \textbf{p}|_{\widetilde{\Gamma}_-}\Vert_{-1/2,\omega,\widetilde{\Gamma}}^2 \geqslant  \Vert[\textbf{p}] \Vert_{-1/2,\omega,\widetilde{\Gamma}}^2.
$$
Coercivity in the frequency domain follows:
\begin{equation}\label{coerciveness frequency domain}
\vert  \langle -i\omega \mathcal{V}_{\omega} [\textbf{p}],\overline{[\textbf{p}]} \rangle_{\widetilde{\Gamma}}\vert\geqslant \frac{\widetilde{C}_{\sigma}}{2C_{trace}} \Vert[\textbf{p}] \Vert_{-1/2,\omega,\widetilde{\Gamma}}^2.
\end{equation}}
{To show \eqref{BDcoerA}, it remains to translate the coercivity \eqref{coerciveness frequency domain} from the frequency  domain into the time domain.
Integrating \eqref{bilinear form trasform} in  $\omega$ and using the Parseval identity, noting $\mathcal{F}_{\omega\rightarrow t}^{-1}\left(\widehat{\varphi}(\omega+i\sigma)\right)=\varphi(t)e^{-\sigma t}$, we get the identity
$$\int_{\mathbb{R}+i\omega_I^0}\int_{\widetilde{\Gamma}}-i\omega \mathcal{V}_{\omega} \widehat{\pmb{\Phi}}\cdot\overline{\widehat{\pmb{\Phi}}}d\widetilde{\Gamma}d\omega=\int_0^{+\infty}\int_{\widetilde{\Gamma}}e^{-2\sigma t}\frac{\partial}{\partial t}\left( \mathcal{V}\pmb{\Phi}\right)\cdot\pmb{\Phi} d\widetilde{\Gamma}dt=B_D\left(\pmb{\Phi},\pmb{\Phi} \right).$$
We now use \eqref{coerciveness frequency domain}:
$$\mathrm{Re}\: B_D\left(\pmb{\Phi},\pmb{\Phi} \right)  =    \int_{\mathbb{R}+i\sigma}\mathrm{Re}\: i\overline{\omega} \langle\mathcal{V}_{\omega} \pmb{\widehat{\Phi}},\overline{\pmb{\widehat{\Phi}}} \rangle_{\widetilde{\Gamma}} \geqslant \frac{\widetilde{C}_{\sigma}}{2C_{trace}} \int_{\mathbb{R}+i \sigma}  \Vert\pmb{\widehat{\Phi}} \Vert_{-1/2,\omega,\widetilde{\Gamma}}^2 d\omega\ .$$
Therefore $$|B_D\left(\pmb{\Phi},\pmb{\Phi} \right)| \geq \frac{\widetilde{C}_{\omega_I}}{2C_{trace}} \Vert \pmb{\Phi}\Vert_{0,-1/2,\widetilde{\Gamma}}^2\ .$$
Proposition \ref{DPbounds} follows by restricting to distributions supported in $\Gamma \subset \widetilde{\Gamma}$.}
\end{proof}

\section{}\label{sec:polygonallame}


In the following, let us describe the approach by Matyukevich and Plamenevski\v{\i}  from \cite{matyu} to prove the asymptotic expansion of the solution to the elastodynamic {Dirichlet problem  \eqref{components with hooke tensor} - \eqref{dirichlet condition} in a neighborhood of} a {non-smooth boundary} point of the domain. 
For ease of reference to the work of Plamenevski\v{\i} and coauthors, as well as \cite{hp}, this section adopts some of the notation from the analysis community, rather than the notation commonly found in numerical works. In particular, the  $\sigma>0$ from other sections in the article is here called $\gamma$, singular exponents {$\lambda_\ell$} are denoted by $i{\lambda_\ell}$, and the definition of the Fourier transform and its inverse are interchanged. {However, note that the dimensions $n$ and $m$ are interchanged compared to the specific reference \cite{matyu}, but they agree with the main body of this paper.}

%

In the following we consider two model geometries, wedge and corner, to describe the local behavior of solutions to this and more general systems near {non-smooth boundary} points {of the domain}. They are of the form  $\mathbb{D} = \mathbb{K} \times \mathbb{R}^{n-m} \subset \mathbb{R}^n$, with $m \geq 2$ and $\mathbb{K} \subset \mathbb{R}^m$ an open cone. 
We use local coordinates $\bold{x} = (\bold{y},z)$ in the wedge $\mathbb{D}$.

For $n \geq 2$ we consider the elastodynamic problem  \eqref{components with hooke tensor} - \eqref{dirichlet condition} in the space-time cylinder $\mathbb{D}\times \mathbb{R}$, written abstractly in the form:
\begin{align}\label{wedgetimedomain}
\mathcal{L}(D_{\bold{x}},D_{t})\bold{u}(\bold{x},t) &= \bold{f}(\bold{x},t),  &(\bold{x},t) \in \mathbb{D} \times \mathbb{R} \ , \\
 \bold{u}(\bold{x},t) &= \bold{g}{(\bold{x},t)}, & (\bold{x},t) \in \partial \mathbb{D} \times \mathbb{R}\ . \label{wedgetimedomain2}
\end{align}
with the matrix differential operator   $(\mathcal{L}(D_{\bold{x}},D_{t})\bold{u}(\bold{x},t))_p=\partial_t^2\bold{u}(\bold{x},t) - \sum_{k,l,q=0}^{{n}} \partial_{k} a^{kl}_{pq}(\bold{x})\partial_l u_q(\bold{x},t)$, $p=1, \dots, {n}$.

Applying the Fourier transform $\mathcal{F}_{{t} \mapsto \tau}$  leads to a parameter-dependent elliptic problem, with $\tau = \sigma - i\gamma$, $\gamma > 0$, $\sigma \in \mathbb{R}$:
\begin{equation}\label{one}
\mathcal{L}(D_{\bold{x}},\tau)\bold{v}(\bold{x},\tau) = \hat{\bold{f}}(\bold{x},\tau),  \ \ \bold{x} \in \mathbb{D}, \quad \bold{v}(\bold{x},\tau) = 0, \quad \bold{x} \in \partial  \mathbb{D} \ .
\end{equation}
We denote by $\mathcal{A}_D(\tau) = \mathcal{L}(D_{\bold{x}},\tau)$ {the closure of this operator} in $L^{2}(\mathbb{D})$. We first note a well-posedness  {result, Theorem 4.1.2 in \cite{matyu}.} 

\begin{proposition}\label{wellposedness1} For every $\hat{\bold{f}} \in L^{2}(\mathbb{D})$ and $\tau=\sigma-i \gamma$, $\sigma \in \mathbb{R}$, $\gamma>0$, there exists a unique solution $\bold{v}$ of \eqref{one}. 
Further, there exists a constant $c>0$ independent of $\tau$ and $\hat{\bold{f}}$ such that
\begin{equation}\label{apriori1}
\gamma^{2} \int_{\mathbb{D} } (|\tau|^{2}|\bold{v}(\bold{x},\tau)|^{2} + |D_{\bold{x}}\bold{v}(\bold{x},\tau)|^{2})d\bold{x}  \leq c \int_{\mathbb{D} } |\hat{\bold{f}}(\bold{x},\tau)|^{2}d\bold{x}\ .
\end{equation}
\end{proposition}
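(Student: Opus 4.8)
The plan is to prove Proposition \ref{wellposedness1} by the standard energy method for parameter-elliptic boundary value problems arising from hyperbolic equations via the Fourier--Laplace transform. First I would test the equation \eqref{one} against $\bold{v}$ itself in $L^2(\mathbb{D})^n$: multiplying the $p$-th component by $\overline{v_p}$, summing over $p$, and integrating over $\mathbb{D}$. Because $\bold{v}$ vanishes on $\partial \mathbb{D}$, integration by parts in the elliptic part moves a derivative onto $\overline{\bold{v}}$ without boundary contributions, producing the sesquilinear form $\sum_{p,q,k,l}\int_{\mathbb{D}} a^{kl}_{pq}(\bold{x})\,\partial_l v_q\,\overline{\partial_k v_p}\,d\bold{x}$, while the zeroth-order term $\partial_t^2 \mapsto -\tau^2$ contributes $-\tau^2 \int_{\mathbb{D}} |\bold{v}|^2 d\bold{x}$. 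Thus
\begin{equation*}
\sum_{p,q,k,l}\int_{\mathbb{D}} a^{kl}_{pq}\,\partial_l v_q\,\overline{\partial_k v_p}\,d\bold{x} - \tau^2 \int_{\mathbb{D}} |\bold{v}|^2\,d\bold{x} = \int_{\mathbb{D}} \hat{\bold{f}}\cdot\overline{\bold{v}}\,d\bold{x}\ .
\end{equation*}

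The key algebraic observation is that the elastodynamic coefficients $a^{kl}_{pq}$ come from the Hooke tensor, which is symmetric and positive definite on symmetric matrices (Korn-type coercivity); in particular the strain energy form $\int_{\mathbb{D}} \overline{\sigma(\bold{v})}:\varepsilon(\bold{v})\,d\bold{x}$ is real and nonnegative, and on $\widetilde H^1(\mathbb{D})$ it controls $\|D_{\bold{x}}\bold{v}\|_{L^2}^2$ up to lower-order terms via Korn's inequality. Write $\tau = \sigma - i\gamma$, so $\tau^2 = \sigma^2 - \gamma^2 - 2i\sigma\gamma$ and $|\tau|^2 = \sigma^2+\gamma^2$. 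Taking the imaginary part of the identity isolates $2\sigma\gamma \int_{\mathbb{D}}|\bold{v}|^2 d\bold{x}$ against $\mathrm{Im}\int \hat{\bold{f}}\cdot\overline{\bold{v}}$, and taking a suitable linear combination of real and imaginary parts — concretely, multiplying the identity by $i\overline{\tau}$ and taking the real part, exactly as in the computation \eqref{passage} in Appendix A — yields
\begin{equation*}
\mathrm{Re}\big(i\overline{\tau}\,\text{(LHS)}\big) = \gamma\left(\int_{\mathbb{D}} \overline{\sigma(\bold{v})}:\varepsilon(\bold{v})\,d\bold{x} + |\tau|^2\int_{\mathbb{D}}|\bold{v}|^2\,d\bold{x}\right) = \mathrm{Re}\big(i\overline{\tau}\textstyle\int_{\mathbb{D}}\hat{\bold{f}}\cdot\overline{\bold{v}}\big)\ .
\end{equation*}
The left side is $\gtrsim \gamma\big(\|D_{\bold{x}}\bold{v}\|_{L^2}^2 + |\tau|^2\|\bold{v}\|_{L^2}^2\big)$ by Korn's inequality (absorbing the lower-order Korn remainder into the $|\tau|^2\|\bold{v}\|^2$ term, which is legitimate since $\gamma>0$ forces $|\tau|\geq\gamma$ to dominate — or one simply keeps the strain-energy form itself, which is what appears in \eqref{apriori1} after noting $\int|D\bold v|^2 \lesssim \int \overline{\sigma(\bold v)}:\varepsilon(\bold v) + \int|\bold v|^2$). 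The right side is bounded by $|\tau|\,\|\hat{\bold{f}}\|_{L^2}\|\bold{v}\|_{L^2} \leq \frac{1}{2}|\tau|^2\|\bold{v}\|_{L^2}^2\cdot(\text{small}) + C\|\hat{\bold{f}}\|_{L^2}^2$ by Cauchy--Schwarz and Young's inequality. Absorbing, dividing by $\gamma$ (keeping track that one loses a power of $\gamma$ in the Young step, which accounts for the $\gamma^2$ prefactor in \eqref{apriori1}), gives the a priori estimate \eqref{apriori1}.

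Existence and uniqueness then follow in the standard way: uniqueness is immediate from \eqref{apriori1} applied to $\hat{\bold{f}}=0$; existence follows from the Lax--Milgram theorem applied to the sesquilinear form $b_\tau(\bold{u},\bold{w}) = \sum a^{kl}_{pq}\int \partial_l u_q\,\overline{\partial_k w_p} - \tau^2\int \bold{u}\cdot\overline{\bold{w}}$ on $\widetilde H^1(\mathbb{D})^n$ — boundedness is clear, and coercivity of $\mathrm{Re}(i\overline\tau\, b_\tau(\cdot,\cdot))$ on $\widetilde H^1$ is exactly the computation above — so there is a weak solution, and elliptic regularity (the operator is parameter-elliptic, the domain is a wedge/cone so local regularity holds away from the edge and the weighted estimate is the content of the cited Theorem 4.1.2) upgrades it to the strong solution in the domain of $\mathcal{A}_D(\tau)$. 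The main obstacle I anticipate is bookkeeping the powers of $\gamma$ and $|\tau|$ carefully so that the final estimate has precisely the $\gamma^2$ weight on the left and a $\tau$-independent constant $c$ on the right; this is a matter of being disciplined with the Young's inequality thresholds rather than a conceptual difficulty, and the structure is already laid out in the coercivity proof of Proposition \ref{DPbounds} in Appendix A, which I would mirror almost verbatim. (Indeed, since the statement attributes this to Theorem 4.1.2 of \cite{matyu}, one could alternatively just cite it; but the self-contained argument above via Korn's inequality and the $i\overline\tau$-trick is short enough to include.)
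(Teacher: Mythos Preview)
Your approach is essentially the same as the paper's: define the sesquilinear form $B^\tau_D(\bold{u},\bold{v}) = \sum_{i,j,k,l}\int_{\mathbb{D}} C^{ij}_{kl}\,\partial_k u_i\,\overline{\partial_l v_j}\,d\bold{x} - \tau^2\int_{\mathbb{D}}\bold{u}\cdot\overline{\bold{v}}\,d\bold{x}$ on $H^1_0(\mathbb{D})$, invoke Korn's inequality to obtain coercivity when $\tau^2\in\mathbb{C}\setminus\overline{\mathbb{R}_+}$, and apply Lax--Milgram. The paper's proof is terser --- it cites Proposition 4.1.3 of \cite{matyu} for the coercivity $|B^\tau_D(\bold{u},\bold{u})|\geq \delta(\tau)\|\bold{u};H^1(\mathbb{D})\|^2$ and then simply says ``the assertion follows by Lax--Milgram'' --- whereas you spell out the $i\overline{\tau}$-multiplication trick that produces the sharp $\gamma^2$ weight and $\tau$-independent constant in \eqref{apriori1}, mirroring the computation \eqref{passage} in Appendix A; this extra detail is useful and correct.
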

\begin{proof} {On the standard Sobolev space $H^{1}_{0}(\mathbb{D})$ we define the sesquilinear form 
\begin{equation*}
 B^\tau_D(\bold{u},\bold{v}) = \int_{\mathbb{D} } \sum_{i,j,k,l} C_{kl}^{ij}(\bold{x}) \partial_k u_{i}(\bold{x}) \partial_l \overline{v_{j}(\bold{x})} d\bold{x} - \tau^{2} \int_{\mathbb{D}} \bold{u}(\bold{x})\cdot \overline{\bold{v}(\bold{x})} d\bold{x}\ ,
\end{equation*}
where $C_{kl}^{ij}$ denotes the Hooke tensor from Section \ref{sec 2}. A key property of $B^\tau_D$ is the Korn inequality, which estimates $B^\tau_D$ in terms of the norm of $H^1(\mathbb{D})$; see Proposition 4.1.3 in \cite{matyu}: If $\tau^2 \in \C \setminus \overline{\R_+}$, then there exists $\delta = \delta(\tau)>0$ such that $|B^\tau_D(\bold{u},\bold{u})|\geq \delta \|\bold{u}; H^1(\mathbb{D})\|^2$.\\
The assertion then follows  by applying the Lax-Milgram theorem.}\end{proof}

\subsection{Solution of parameter-dependent Dirichlet problem in a cone}




For a finer analysis one performs a Fourier transform  $\mathcal{F}_{z \mapsto \xi}$ in the variable $z$ in \eqref{wedgetimedomain}, \eqref{wedgetimedomain2} and introduces polar coordinates in $\mathbb{K}$: $r = |\bold{y}|$, $\pmb{\omega} = \frac{\bold{y}}{|\bold{y}|}$. We first assume that $\bold{v}$
solves the non-homogeneous Dirichlet problem with parameters $\tau \in \mathbb{R}- i \gamma$ and $\xi \in \mathbb{R}$, 
\begin{align}\label{119}
\mathcal{L}(D_{\bold{y}}, \xi, \tau)\bold{v}(\bold{y}, \xi, \tau) &= \hat{\bold{f}}(\bold{y},\xi,\tau),\quad \bold{y} \in \mathbb{K}\\
\bold{v}(\bold{y},\xi, \tau)&=\hat{\bold{g}}(\bold{y},\xi,\tau), \quad \bold{y} \in \partial\mathbb{K}\ .
\label{119b}
\end{align}

For simplicity, we first consider the homogeneous Dirichlet problem, corresponding to $\bold{g}=0$. The corresponding statements for nonzero Dirichlet data $\bold{g}$ can be deduced from the general results for a wedge in Subsection \ref{subsectionwedge}.

\begin{proposition}[{Theorem 6.2.5,} \cite{matyu}]\label{625}
Let $\tau \in \R-i\gamma$ with $\gamma>0$. For all $\hat{\bold{f}} \in L^2(\mathbb{K})$, There exists a unique, strong solution $\bold{v}$ of \eqref{119}, \eqref{119b}, and $$\gamma^2(p^2 \|\bold{v};L^2(\mathbb{K})\|^2 + \|D_{\bold{x}} \bold{v};L^2(\mathbb{K})\|^2) \leq c \|\hat{\bold{f}};L^2(\mathbb{K})\|^2 \ .$$ Here $p = \sqrt{|\xi|^2+|\tau|^2}$, and $c$ is independent of $\xi$, $\tau$.  
\end{proposition}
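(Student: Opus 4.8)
The statement is Theorem 6.2.5 of \cite{matyu}, and the natural strategy is to \emph{deduce it from the wedge result, Proposition \ref{wellposedness1}}, by viewing the cone $\mathbb{K}\subset \mathbb{R}^m$ together with the two Fourier parameters $\xi\in\mathbb{R}^{n-m}$ and $\tau=\sigma-i\gamma$ as a partially Fourier-transformed version of the wedge problem in $\mathbb{D}=\mathbb{K}\times\mathbb{R}^{n-m}$. Concretely, the symbol $\mathcal{L}(D_{\bold y},\xi,\tau)$ appearing in \eqref{119} is exactly $\mathcal{F}_{z\mapsto\xi}\mathcal{F}_{t\mapsto\tau}\,\mathcal{L}(D_{\bold x},D_t)\mathcal{F}^{-1}$ restricted to the fibre $(\xi,\tau)$; so an existence/uniqueness and a priori estimate for \eqref{one} that is \emph{uniform in $\xi$} would, after using Plancherel in $\xi$, give the claim on each fibre. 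I would therefore first carry out a Korn-type inequality directly for the fibre sesquilinear form
\begin{equation*}
B^{\xi,\tau}_D(\bold u,\bold v)=\int_{\mathbb{K}}\Big(\sum_{i,j,k,l}C^{ij}_{kl}\,\partial_k u_i\,\overline{\partial_l v_j} + |\xi|^2\,\bold u\cdot\overline{\bold v}\Big)\,d\bold y-\tau^2\int_{\mathbb{K}}\bold u\cdot\overline{\bold v}\,d\bold y,
\end{equation*}
where the $|\xi|^2$ term arises from the derivatives along $z$ after Fourier transform. The point is that this form is coercive on $H^1_0(\mathbb{K})$, and one must track the dependence of the coercivity constant on $p=\sqrt{|\xi|^2+|\tau|^2}$ and $\gamma=-\mathrm{Im}\,\tau$.

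\textbf{Key steps.} (1) Establish the Korn/G\r{a}rding inequality on $\mathbb{K}$: using the standard Korn inequality for the elasticity form on a cone (valid with constant depending only on $\mathbb{K}$, not on $\xi,\tau$ — this is the cone analogue of Proposition 4.1.3 in \cite{matyu}), bound $\mathrm{Re}\,B^{\xi,\tau}_D(\bold u,\bold u)$ and $\mathrm{Im}\,B^{\xi,\tau}_D(\bold u,\bold u)$ from below. Since $\tau^2=\sigma^2-\gamma^2-2i\sigma\gamma$, one has $\mathrm{Im}(-\tau^2)=2\sigma\gamma$ and $\mathrm{Re}(-\tau^2)=\gamma^2-\sigma^2$; combining the real part (which controls $\|D_{\bold y}\bold u\|^2+|\xi|^2\|\bold u\|^2$ plus $(\gamma^2-\sigma^2)\|\bold u\|^2$) with $\tfrac{\sigma}{\gamma}$ times the imaginary part, one obtains $|B^{\xi,\tau}_D(\bold u,\bold u)|\gtrsim \gamma\,p^{-1}\big(\|D_{\bold y}\bold u\|^2+p^2\|\bold u\|^2\big)$ after renormalising, the $p^{-1}$ being the price of a parameter-uniform estimate. (2) Apply Lax--Milgram with this $(\xi,\tau)$-uniform lower bound to get, for $\bold g=0$, a unique weak solution $\bold v\in H^1_0(\mathbb{K})$ of the fibre problem with
\begin{equation*}
\gamma^2\big(p^2\|\bold v;L^2(\mathbb{K})\|^2+\|D_{\bold y}\bold v;L^2(\mathbb{K})\|^2\big)\le c\,\|\hat{\bold f};L^2(\mathbb{K})\|^2,
\end{equation*}
with $c$ independent of $\xi,\tau$; the weak solution is a strong solution by the elliptic regularity for the parameter-dependent elasticity operator on $\mathbb{K}$ (interior regularity away from the vertex, combined with the conical-point regularity theory recalled in Appendix \ref{sec:polygonallame}). (3) Finally, I would remark that the inhomogeneous Dirichlet case is reduced to $\bold g=0$ exactly as in the main text (choose an extension $\widetilde{\bold g}$ with the prescribed trace, solve for $\bold u-\widetilde{\bold g}$), so the homogeneous case is the essential one — which is why the statement is phrased for it.

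\textbf{Main obstacle.} The delicate point is \emph{getting the coercivity constant to scale correctly in $p$ and $\gamma$} so that the a priori estimate is genuinely uniform in $\xi$ and $\tau$. The bare Lax--Milgram constant from $B^{\xi,\tau}_D$ degenerates as $\gamma\to 0$ and also as $|\tau|$ becomes large relative to $\gamma$ (i.e.\ $\sigma/\gamma\to\infty$); the trick of adding a multiple of the imaginary part is what restores control, but one must check that the factor is exactly $\sigma/\gamma$ and that the resulting estimate, after the rescaling $\bold y\mapsto p\bold y$ used elsewhere in the paper to pass to the variable $\widetilde r=pr$, yields the clean form with $\gamma^2$ in front and $p^2$ multiplying the $L^2$ norm. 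A secondary technical issue is that $\mathbb{K}$ is unbounded and has a conical vertex, so Korn's inequality and elliptic regularity must be invoked in the form valid on cones (no Poincaré inequality is available, hence the necessity of the $p^2\|\bold v\|^2$ term on the left); these are precisely the ingredients packaged in \cite{matyu} and in Appendix \ref{sec:polygonallame}, so I would cite them rather than reprove them.
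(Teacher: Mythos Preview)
The paper does not prove this proposition at all: it is simply quoted from \cite{matyu} (Theorem 6.2.5 there) and used as a black box. So there is no ``paper's own proof'' to compare against beyond the citation.

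That said, your strategy is the right one and is exactly the argument sketched in the paper for the closely related Proposition \ref{wellposedness1} (the wedge version without the explicit $\xi$-parameter): set up the sesquilinear form, invoke a parameter-uniform Korn inequality on the cone, and apply Lax--Milgram. One small correction: your fibre form $B^{\xi,\tau}_D$ is written with the $\xi$-dependence only as an additive $|\xi|^2\,\bold u\cdot\overline{\bold v}$ term, but the elasticity symbol $A(D_{\bold y},-i\xi)$ also contains first-order cross terms in $\xi$ (cf.\ the off-diagonal entries in \eqref{Fourierwedgesystem}). The correct form is obtained by replacing $\partial_k$ with the full symbol $(\partial_{\bold y},-i\xi)$ inside $\sum C^{ij}_{kl}$, i.e.\ $\int_{\mathbb K}\sum C^{ij}_{kl}\,(\widetilde D_k u_i)\,\overline{(\widetilde D_l v_j)}-\tau^2\int \bold u\cdot\overline{\bold v}$ with $\widetilde D=(D_{\bold y},-i\xi)$. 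This does not affect the argument---Korn's inequality applied to the full symbol still gives control of $\|D_{\bold y}\bold v\|^2+|\xi|^2\|\bold v\|^2$---but the form you wrote is not quite the one associated to $\mathcal L(D_{\bold y},\xi,\tau)$. Your identification of the main obstacle (tracking the $p$- and $\gamma$-scaling of the coercivity constant via the real/imaginary-part combination) is accurate and is indeed the heart of the matter in \cite{matyu}.
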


Define the weighted Sobolev norms
\begin{align}
\| v; H^{s}_\beta(\mathbb{K}) \| &= \left( \sum_{|\alpha| \leq s} \int_{\mathbb{K}} r^{2(\beta + |\alpha| - s)} |D_{\bold{x}}^\alpha v|^2  \right)^{\frac{1}{2}} d\bold{x}\ , \label{96}\\
\| v; H^{s}_\beta(\mathbb{K},p) \| &= \left( \sum_{k=0}^{s} p^{2k}  \| v;H^{s-k}_\beta(\mathbb{K})  \|^{2}  \right)^{\frac{1}{2}} \ .
\end{align}
Let $\chi \in C^\infty_0(\mathbb{K})$ be a cut-off function which is $=1$ in a neighborhood of the vertex of the cone $\mathbb{K}$, and $\chi_\tau(\bold{x}) = \chi(|\tau| \bold{y})$. From Proposition \ref{625} one obtains with $p =\sqrt{|\xi|^2+|\tau|^2}$, and $c$ independent of $\xi$, $\tau$, 
\begin{equation}\label{H2estimate}\gamma^2\|\bold{v}; H^1_\beta(\mathbb{K}, p)\|^2 + \|\chi_\tau \bold{v}; H^2_\beta(\mathbb{K},p)\|^2 \leq c \left\{\|\mathcal{L}(D_{\bold{y}},\xi,\tau)\bold{v}; H^0_\beta(\mathbb{K})\|^2+ \frac{p^{2(1-\beta)}}{\gamma^2}\|\mathcal{L}(D_{\bold{y}},\xi,\tau)\bold{v}; L^2(\mathbb{K})\|\right\}.\end{equation}

Set $\Xi = \mathbb{K} \cap S^{{m}-1}$. For every $\lambda \in \mathbb{C}$ the pencil 
\begin{equation}\mathcal{A}_D(\lambda)\pmb{\varphi} = \left\{r^{2-i\lambda} \mathcal{L}(D_{\bold{y}},0,0) r^{i\lambda}\pmb{\varphi},\pmb{\varphi}|_{\partial \Xi}\right\}\label{pencildef}\end{equation} defines a map
$$\mathcal{A}_D(\lambda) : H^2(\Xi) \to L^2(\Xi) \times H^{3/2}(\partial \Xi)\ , $$ which is an isomorphism except for a discrete set of eigenvalues $\{\lambda_\ell\}$.

For the elastodynamic equation $\mathcal{L}$ has constant coefficients and is of the form $\mathcal{L}(D_{\bold{x}},D_{t})\bold{v} = \partial_t ^{2}\bold{v} + A(D_{\bold{x}})\bold{v}$ with $$A(D_{\bold{x}}) = A(D_{\bold{y}}, D_z) =  D_{k} A^{kl} D_{l}\ ,$$
where each of the $A^{kl}$ is a constant matrix $A^{kl}=(a^{kl}_{ij})_{i,j}$. The operator pencil is then given by
 \begin{equation}
 \left\{r^{2-i\lambda} A(D_{\bold{y}},0) r^{i\lambda}\pmb{\varphi},\pmb{\varphi}|_{\partial \Xi}\right\}\ . \label{pencilspec}
\end{equation}
We assume that the strip $\{ \lambda \in \mathbb{C}: m-3  \leq 2 \mathrm{Im}\ \lambda \leq m - 2 \}$ does not intersect the spectrum of $\mathcal{A}_{D}$.
For an eigenvalue $\lambda_\ell$ of $\mathcal{A}_D$ we take a power-like solution
\begin{equation}\label{powerlike} \bold{u}_\ell(\bold{y}) = r^{i\lambda_\ell} \sum_{q=0}^k \frac{1}{q!} (i \ln(r))^q \pmb{\varphi}_\ell^{(k-q)}(\pmb{\omega})\end{equation}
of the homogeneous Dirichlet problem with $\tau=0$, $\xi=0$:
\begin{align}\label{111}
\mathcal{L}(D_{\bold{y}}, 0, 0)\bold{u}(\bold{y}) &= 0,\quad \bold{y} \in \mathbb{K}\ ,\\
\bold{u}(\bold{y})&=0, \quad \bold{y} \in \partial\mathbb{K}\ .
\label{111b}
\end{align}
Here, $\{\pmb{\varphi}_\ell^{(0)}, \dots, \pmb{\varphi}_\ell^{(k)}\}$ is a Jordan chain to $\lambda_\ell$, consisting of an eigenvector $\pmb{\varphi}_\ell^{(k)}$ and generalized eigenvectors $\pmb{\varphi}_\ell^{(0)}, \dots, \pmb{\varphi}_\ell^{(k-1)}$. Let $\kappa_1 \geq \kappa_2 \geq \dots \geq \kappa_J$ denote the partial multiplicities of the $\lambda_\ell$ , and let $\{\pmb{\varphi}_\ell^{(0,j)}, \dots, \pmb{\varphi}_\ell^{(\kappa_j-1,j)} : j=1, \dots, J\}$ be a canonical system of Jordan chains. The functions
\begin{equation}\label{Jordanchain}
\bold{u}^{(k,j)}_\ell(\bold{y}) = r^{i \lambda_\ell} \sum_{q=0}^k \frac{1}{q!} (i \ln(r))^q \pmb{\varphi}_\ell^{(k-q,j)}(\pmb{\omega}),
\end{equation}
 where $k=0, \dots, \kappa_j - 1$ and $j=1,\dots, J$, constitute a basis in the space of power-like solutions corresponding to $\lambda_\ell$.
 

\begin{remark}
In special geometries the spectral problem for $\mathcal{A}_D$ admits an explicit solution. See {Section \ref{regularity2d}} for a discussion of the eigenvalues and eigenfunctions in the case of a polygon, Section \ref{regularitywedge} for an edge, and Section \ref{regularitycone} for a circular cone. 
\end{remark}

Let  $\bold{V}_{\ell}^{(k,j)}$ be the infinite series of dual functions satisfying the homogeneous equations \eqref{111}, \eqref{111b}, and let $\bold{V}_{\ell, {M}}^{(k,j)}$ be its truncation after ${M}$ terms.




The dual vector functions
\begin{equation}\label{dualfunctions}
\bold{v}^{(k,j)}_\ell(\bold{y}) = r^{i \overline{\lambda_\ell} - (m-2)} \sum_{q=0}^k \frac{1}{q!} (i \ln(r))^q \pmb{\psi}^{(k-q,j)}_\ell(\pmb{\omega}),
\end{equation}
form a basis in the space of power-like solutions to \eqref{111}, \eqref{111b} that correspond to the eigenvalue $\overline{\lambda_\ell} + i(m-2)$. The bases match under specific orthogonality and normalization conditions (see, for example, (114) in \cite{matyu}), respectively \cite{nazarov}. 

Denote by $\{\bold{u}_\ell^{k,j}\}$, $\{\bold{v}_\ell^{k,j}\}$ the matched bases of power-like solutions of \eqref{111}, \eqref{111b}. Next we consider the homogeneous problem with parameters $\tau \in \mathbb{R}- i \gamma$ and $\xi \in \mathbb{R}^{{n-m}}$, corresponding to \eqref{119}, \eqref{119b}, 
\begin{align}\label{116}
\mathcal{L}(D_{\bold{y}}, \xi, \tau)\bold{v}(\bold{y}, \xi, \tau) &= 0,\quad \bold{y} \in \mathbb{K}\\
\bold{v}(\bold{y},\xi, \tau)&=0, \quad \bold{y} \in \partial\mathbb{K}\ .
\label{116b}
\end{align}
Substituting $\bold{u}_\ell^{(k,j)}$ in \eqref{116}, \eqref{116b}, we construct the formal series 
\begin{equation}\label{117}
\bold{U}_\ell^{(k,j)}(\bold{y},\xi,\tau) = \sum_{q=0}^\infty r^{i\lambda_\ell + q} \bold{P}{(k,j)}_q(\pmb{\omega},\xi,\tau,\ln(r))
\end{equation}
satisfying \eqref{116}, \eqref{116b}. Here $\bold{P}{(k,j)}_q$ are polynomials in $\xi,\tau,\ln(r)$, with coefficients smoothly depending on $\pmb{\omega} \in \Xi$. Replacing $\{\bold{u}_\ell^{k,j}\}$ by $\{\bold{v}_\ell^{k,j}\}$, we obtain the formal series
\begin{equation}\label{118}
\bold{V}_\ell^{(k,j)}(\bold{y},\xi,\tau) = \sum_{q=0}^\infty r^{i(\overline{\lambda_\ell}+i{(m-n-2)}) + q} \bold{Q}{(k,j)}_q(\pmb{\omega},\xi,\tau,\ln(r)),
\end{equation}
satisfying \eqref{116}, \eqref{116b}. The functions $\bold{Q}{(k,j)}_q$ again obey analogous properties to $\bold{P}{(k,j)}_q$.

In reference \cite{matyu} the formal series $\bold{U}_\ell^{(k,j)}$, $\bold{V}_\ell^{(k,j)}$ 
are constructed for these bases.

 Consider now \eqref{119}, \eqref{119b} with $\chi \bold{v}\in H^2_\beta(\mathbb{K})$, $\hat{\bold{f}} \in H^0_\beta(\mathbb{K}) \cap H^0_\gamma(\mathbb{K})$, for $\gamma<\beta$. As above, $\chi \in C^\infty_0(\mathbb{K})$ denotes a cut-off function which is $=1$ in a neighborhood of the vertex of the cone $\mathbb{K}$. If the line $\{\lambda \in \mathbb{C} : \mathrm{Im} \ \lambda = \gamma + \frac{m}{2}-2\}$ does not intersect the spectrum of the pencil $\mathcal{A}_D$, then we have
$$ \bold{v} = \chi \sum c_\ell^{(k,j)} \bold{U}^{(k,j)}_{\ell,{M}} + \bold{h}\ ,$$
where the remainder $\bold{h}$ is such that $\chi \bold{h} \in H^2_\gamma(\mathbb{K})$. Here $\bold{U}^{(k,j)}_{\ell, {M}}$ is the partial sum of the series $\bold{U}^{(k,j)}_{\ell}$ containing {M} terms such that $\chi r^{i\lambda_\ell + ({M}+1)} \bold{P}_{{M}+1}^{(k,j)} \in H^2_\gamma(\mathbb{K})$. The asymptotic formula for $\bold{v}$ involves the summands corresponding to the eigenvalues of the pencil in the strip $\{\lambda \in \mathbb{C} : \mathrm{Im} \ \lambda \in (\gamma + \frac{m}{2}-2,\beta + \frac{m}{2}-2)\}$, so that $\chi \bold{U}^{(k,j)}_{\ell,{M}} \in H^2_\beta(\mathbb{K})$ and $\chi \bold{U}^{(k,j)}_{\ell,{M}} \not \in H^2_\gamma(\mathbb{K})$

To state the main result for the  expansion of the parameter-dependent problem near the vertex of the cone $\mathbb{K}$, we introduce the following function spaces:
\begin{align*}
\| \bold{v}; DH_\beta(\mathbb{K}, \xi, \tau) \| = \left( \gamma^2 \|\bold{v}; H^1_\beta(\mathbb{K},p)\|^2 + \|\chi_p \bold{v}; H^2_\beta(\mathbb{K},p)\|^2 \right)^{\frac{1}{2}}\ ,\\
\| \hat{\bold{f}}; RH_\beta(\mathbb{K}, \xi, \tau) \| = \left( \|\hat{\bold{f}}; H^0_\beta(\mathbb{K})\|^2 + p^{2(1-\beta)} \gamma^{-2}\|\hat{\bold{f}}; L^2(\mathbb{K})\|^2 \right)^{\frac{1}{2}} \ ,
\end{align*}
where $p = \sqrt{|\xi|^2 + |\tau|^2}$ and $\tau = \sigma - i \gamma$ ($\sigma \in \mathbb{R}$, $\gamma>0$).
By Proposition \ref{625} and \eqref{H2estimate}, the operator $\mathcal{L}(D_{\bold{y}},\xi,\tau)$ from Problem \eqref{119}, \eqref{119b}, defines a continuous map $\mathcal{L}(D_{\bold{y}},\xi,\tau): DH_\beta(\mathbb{K}, \xi, \tau) \to  RH_\beta(\mathbb{K}, \xi, \tau)$.

In \cite{matyu}, Matyukevich and Plamenevski\v{\i} investigate the dependence of properties of $\mathcal{L}(D_{\bold{y}},\xi,\tau)$ on $\beta$. Let $1>\beta_1>\beta_2> \dots$ be numbers in $(-\infty,1]$ such that every line $\{\lambda \in \mathbb{C} : \mathrm{Im}\ \lambda = \beta_r+\frac{m}{2}-2\}$ contains at least one eigenvalue of the pencil $\mathcal{A}_D$.

Matyukevich and Plamenevski\v{\i} obtain the following results:

\begin{theorem}[{Theorem 6.3.5, } \cite{matyu}]
Suppose that $\beta \in (\beta_1,1]$, $\gamma>0$ and $\hat{\bold{f}} \in RH_\beta(\mathbb{K}, \xi, \tau)$. Then \eqref{119}, \eqref{119b} with right hand side $\hat{\bold{f}}$ admits a unique solution $\bold{v}$ satisfying 
$$\| \bold{v}; DH_\beta(\mathbb{K}, \xi, \tau) \| \leq c \| \hat{\bold{f}}; RH_\beta(\mathbb{K}, \xi, \tau) \|,$$
where $c$ is independent of $(\xi,\tau)$.
\end{theorem}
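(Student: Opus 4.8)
\textbf{Proof proposal for Theorem 6.3.5 (Matyukevich--Plamenevski\v{\i}).}

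The plan is to reduce this well-posedness statement in the weighted spaces $DH_\beta(\mathbb{K},\xi,\tau) \to RH_\beta(\mathbb{K},\xi,\tau)$ to the unweighted result of Proposition \ref{625} together with the \emph{a priori} estimate \eqref{H2estimate}, by exploiting that for $\beta \in (\beta_1,1]$ the relevant half-strip $\{\lambda \in \mathbb{C} : \mathrm{Im}\ \lambda \in (\beta_1 + \tfrac{m}{2}-2, \beta + \tfrac{m}{2}-2]\}$ contains \emph{no} eigenvalue of the pencil $\mathcal{A}_D$. First I would establish uniqueness: if $\bold{v} \in DH_\beta(\mathbb{K},\xi,\tau)$ solves the homogeneous problem \eqref{116}, \eqref{116b}, then a fortiori $\bold{v} \in DH_{\beta'}(\mathbb{K},\xi,\tau)$ for $\beta' \leq 0 < 1$ close to $0$ (using that there are no eigenvalues in the intermediate strip, so the weight can be lowered without picking up singular terms), hence $\bold{v} \in H^1(\mathbb{K})$, and Proposition \ref{625} forces $\bold{v}=0$. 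For existence, given $\hat{\bold{f}} \in RH_\beta(\mathbb{K},\xi,\tau) \subset L^2(\mathbb{K})$ (the inclusion holds because $r^{-\beta}$ is locally integrable near the apex for $\beta<1$ in the $H^0_\beta$ norm, up to the $p$-weighted correction term), Proposition \ref{625} yields a unique strong solution $\bold{v} \in H^1(\mathbb{K})$ with the unweighted estimate. It remains to upgrade the regularity and norm bound to the weighted space.

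The key step is the weighted \emph{a priori} estimate. I would apply \eqref{H2estimate} with the given $\beta$: since $\mathcal{L}(D_{\bold{y}},\xi,\tau)\bold{v} = \hat{\bold{f}}$, the right-hand side of \eqref{H2estimate} is exactly $\|\hat{\bold{f}};H^0_\beta(\mathbb{K})\|^2 + \gamma^{-2}p^{2(1-\beta)}\|\hat{\bold{f}};L^2(\mathbb{K})\|^2 = \|\hat{\bold{f}};RH_\beta(\mathbb{K},\xi,\tau)\|^2$, with constant independent of $(\xi,\tau)$ — this is the whole point of having rescaled the estimate by $\chi_\tau(\bold{x}) = \chi(|\tau|\bold{y})$ and $p=\sqrt{|\xi|^2+|\tau|^2}$. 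The left-hand side of \eqref{H2estimate} dominates $\|\bold{v};DH_\beta(\mathbb{K},\xi,\tau)\|^2$ once one knows $\bold{v} \in DH_\beta$; so the argument is really that the solution produced by Proposition \ref{625} \emph{belongs to} $DH_\beta$. This is where the eigenvalue-free strip is used: the local asymptotics near the apex, in the form recalled just before the theorem (the decomposition $\bold{v} = \chi \sum c_\ell^{(k,j)} \bold{U}^{(k,j)}_{\ell,M} + \bold{h}$ with $\chi \bold{h} \in H^2_\gamma$), shows that when there is no eigenvalue of $\mathcal{A}_D$ with $\mathrm{Im}\ \lambda$ in the range between the unweighted exponent and $\beta+\tfrac{m}{2}-2$, the sum over $\ell$ is empty and $\bold{v}$ itself lies in $H^2_\beta$ near the apex, uniformly in the parameters. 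Away from the apex the cut-off removes the weight and standard interior/boundary elliptic regularity (uniform in $(\xi,\tau)$ after the scaling $\bold{y} \mapsto p\bold{y}$) gives the $H^2$ control.

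The main obstacle I expect is making the parameter-uniformity genuinely uniform in $(\xi,\tau)$ throughout: the local asymptotic decomposition near the apex has to be applied with the rescaled cut-off $\chi_p$ and the constants tracked carefully so that no hidden dependence on $|\tau|$ or $|\xi|$ creeps in — in particular the matching of the $DH_\beta$ norm on the ``inner'' region $\{|\tau||\bold{y}| \lesssim 1\}$ with the unweighted $H^1$ estimate of Proposition \ref{625} on the ``outer'' region, and the interpolation/gluing of these via the partition $\chi_p + (1-\chi_p)$. One has to verify that the commutator terms $[\mathcal{L},\chi_p]\bold{v}$ are controlled by lower-order norms with the correct powers of $p$, exactly as in the proof of \eqref{H2estimate}. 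Once the weighted estimate is in hand for the solution from Proposition \ref{625}, combining it with uniqueness gives the bijectivity of $\mathcal{L}(D_{\bold{y}},\xi,\tau) : DH_\beta(\mathbb{K},\xi,\tau) \to RH_\beta(\mathbb{K},\xi,\tau)$ with parameter-independent constant, which is the assertion.
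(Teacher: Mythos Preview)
The paper does not provide a proof of this theorem: it is quoted verbatim as Theorem 6.3.5 of \cite{matyu} and used as a black box. So there is nothing in the paper to compare your proposal against directly.

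That said, your sketch is a reasonable reconstruction of the standard argument and uses exactly the ingredients the paper's appendix assembles for this purpose (Proposition \ref{625}, estimate \eqref{H2estimate}, and the absence of eigenvalues in the strip for $\beta\in(\beta_1,1]$). Two small remarks. First, your justification for $RH_\beta(\mathbb{K},\xi,\tau)\subset L^2(\mathbb{K})$ is overcomplicated: the $L^2$ norm is literally one of the two summands in the definition of $\|\hat{\bold{f}};RH_\beta\|$, so the inclusion is immediate and has nothing to do with local integrability of $r^{-\beta}$. Second, in the uniqueness step your phrasing ``$\beta'\le 0<1$ close to $0$'' is a bit garbled; what you need is simply that a homogeneous solution in $DH_\beta$ with $\beta\le 1$ already lies in $H^1(\mathbb{K})$ (the weight $r^{\beta-1}$ in the $H^1_\beta$ part is harmless away from the apex and the $\chi_p$-localised $H^2_\beta$ part controls the apex), so Proposition \ref{625} applies directly. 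Otherwise your identification of the main technical point---parameter-uniform gluing of the near-apex weighted estimate with the far-apex unweighted one via $\chi_p$, with commutators absorbed---is exactly right and is how \eqref{H2estimate} is obtained in \cite{matyu}.
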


\begin{theorem}[{Proposition 6.4.1,} \cite{matyu}]\label{thm95}
Suppose $\gamma>0$, $\beta \in(\beta_{r+1}, \beta_r)$, $0<\beta_r-\beta<1$, $\hat{\bold{f}} \in RH_\beta(\mathbb{K}, \xi, \tau)$ and $$(\hat{\bold{f}}, \bold{w}_{\ell}^{(k,j)}(\cdot, \xi,\overline{\tau}))_{L^2(\mathbb{K})} = 0$$
for all $\bold{w}_{\ell}^{(k,j)}$ corresponding to eigenvalues of $\mathcal{A}_D$ in the strip $\{\mathrm{Im} \ \lambda \in (\beta_{r+1}+\frac{m}{2}-2, \beta_1+\frac{m}{2}-2)\}$.
Then the solution $\bold{v}$ of \eqref{119}, \eqref{119b}, admits the representation
$$\bold{v}(\bold{y},\xi,\tau) = \chi(p\bold{y}) \sum_{\ell} \sum_{k,j}c^{(k,j)}_\ell(\xi,\tau) \bold{u}_\ell^{(k,j)}(\bold{y}) + \bold{v}_0(\bold{y},\xi,\tau)\ .$$
Here the outer summmation over $\ell$ sums over all eigenvalues $\lambda_\ell$ of the pencil with $\mathrm{Im}\ \lambda = \beta_r+\frac{m}{2}-2$, while the inner summation sums over a basis $\{\bold{u}_\ell^{(k,j)}\}$ of power-like solutions as in \eqref{powerlike} corresponding to $\lambda_\ell$. The remainder $\bold{v}_0$ belongs to $DH_\beta(\mathbb{K}, \xi, \tau)$. 

There holds $$c_\ell^{(k,j)}(\xi,\tau) = p^{i\lambda_\ell} \sum_q \frac{1}{q!} (i \ln(p))^q d_\ell^{(k+q,j)}(\xi,\tau)\ ,$$
with $$d_\ell^{(k,j)}(\xi,\tau) = p^{-2}\left(\hat{\bold{f}}(p^{-1} \cdot,\xi,\tau), \bold{w}_\ell^{(k,j)}(\cdot,\xi /p, \bar{\tau}/p)\right)_{L^2(\mathbb{K})} \ .$$ Moreover there holds $$\|\bold{v}_0; DH_\beta(\mathbb{K}, \xi, \tau) \| \leq c \|\hat{\bold{f}}; RH_\beta(\mathbb{K}, \xi, \tau) \|, \ \ \ |d_\ell^{(k,j)}(\xi,\tau)|\leq c p^{\beta+\frac{m}{2}-2}\|\hat{\bold{f}}; RH_\beta(\mathbb{K}, \xi, \tau) \|,$$
with a constant $c$ independent of $\xi$, $\tau$ and $\hat{\bold{f}}$.
\end{theorem}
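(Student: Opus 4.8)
\noindent\emph{Proof proposal.} The statement is the parameter-dependent analogue on a cone of the classical Kondratiev/Maz'ya--Plamenevski\v{\i} asymptotic theory, and the plan is to carry out that scheme while keeping every constant uniform in $(\xi,\tau)$. The organising device will be a scaling reduction: with $p=\sqrt{|\xi|^2+|\tau|^2}$ and the substitution $\bold{y}\mapsto p^{-1}\bold{y}$, the boundary value problem \eqref{119}, \eqref{119b} is carried into a problem with the same differential operator but with parameters $(\xi/p,\tau/p)$ now lying on the unit sphere, and with the cut-off $\chi(p\,\cdot)$ replaced by $\chi$. Since the rescaled parameters range over a compact set, an asymptotic expansion established there will yield estimates that are automatically uniform in $(\xi,\tau)$; undoing the substitution reproduces precisely the factors $\chi(p\bold{y})$ and $p^{i\lambda_\ell}\sum_q\tfrac{1}{q!}(i\ln p)^q$ in the conclusion.

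\noindent\emph{Step 1 (a solution in a coarser weight).} First I would note that $\beta<\beta_1$ gives the inclusion $RH_\beta(\mathbb{K},\xi,\tau)\subset RH_{\beta_1}(\mathbb{K},\xi,\tau)$, so the unconditional solvability result for weights in $(\beta_1,1]$ stated above (Theorem 6.3.5 of \cite{matyu}) furnishes the unique solution $\bold{v}$ of \eqref{119}, \eqref{119b} together with $\|\bold{v};DH_{\beta_1}(\mathbb{K},\xi,\tau)\|\lesssim\|\hat{\bold{f}};RH_\beta(\mathbb{K},\xi,\tau)\|$, with constant independent of $(\xi,\tau)$. This is the solution whose finer behaviour near the vertex must be exhibited.

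\noindent\emph{Step 2 (Mellin transform and residue calculus).} Next, after the rescaling, I would introduce polar coordinates $(r,\pmb{\omega})$ in $\mathbb{K}$ and take the Mellin transform in $r$. The transform $\widetilde{\bold{v}}(\lambda,\pmb{\omega})$ is holomorphic on the weight line attached to $\beta_1$, continues meromorphically down to the weight line attached to $\beta$, and in the intervening strip has poles only at the eigenvalues $\lambda_\ell$ of the pencil $\mathcal{A}_D$ from \eqref{pencildef}--\eqref{pencilspec}, with principal parts controlled by the canonical Jordan chains. Recovering $\bold{v}$ by the inverse Mellin integral on the $\beta_1$-line and shifting the contour onto the $\beta$-line will pick up residues equal to the power-like solutions $\bold{u}_\ell^{(k,j)}$ of \eqref{powerlike}, \eqref{Jordanchain} (and, after un-scaling, their parameter-dependent extensions $\bold{U}_\ell^{(k,j)}$ of \eqref{117}), while the shifted integral is a remainder $\bold{v}_0\in DH_\beta(\mathbb{K},\xi,\tau)$; the bound $\|\bold{v}_0;DH_\beta\|\lesssim\|\hat{\bold{f}};RH_\beta\|$ then follows from \eqref{H2estimate} applied to $\bold{v}_0$.

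\noindent\emph{Step 3 (coefficients and orthogonality).} Then, using the biorthogonality and normalisation relations between $\{\bold{u}_\ell^{(k,j)}\}$ and the dual functions $\{\bold{v}_\ell^{(k,j)}\}$ of \eqref{dualfunctions} (as in \cite{matyu, nazarov}), each residue functional can be rewritten as an $L^2(\mathbb{K})$-pairing of $\hat{\bold{f}}$ with the dual series $\bold{w}_\ell^{(k,j)}$; after un-scaling this is the stated formula for $d_\ell^{(k,j)}(\xi,\tau)$, and the homogeneity degree of $\bold{w}_\ell^{(k,j)}$ visible in \eqref{dualfunctions}, \eqref{118} yields $|d_\ell^{(k,j)}|\lesssim p^{\beta+m/2-2}\|\hat{\bold{f}};RH_\beta\|$. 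The imposed orthogonality $(\hat{\bold{f}},\bold{w}_\ell^{(k,j)}(\cdot,\xi,\overline{\tau}))_{L^2(\mathbb{K})}=0$ for the eigenvalues in the strip $\{\mathrm{Im}\,\lambda\in(\beta_{r+1}+\tfrac{m}{2}-2,\beta_1+\tfrac{m}{2}-2)\}$ then suppresses the non-leading singular contributions, so that the sum collapses to the eigenvalues $\lambda_\ell$ with $\mathrm{Im}\,\lambda_\ell=\beta_r+\tfrac{m}{2}-2$, giving the representation as stated.

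\noindent\emph{Expected main obstacle.} The residue calculus itself is routine; the hard part will be twofold. First, the uniformity in $(\xi,\tau)$: this is exactly what forces the scaling to the compact parameter sphere and the use of the $p$-weighted norms in \eqref{H2estimate}, and one must check that the Mellin-analysis estimates survive this scaling with constants depending only on the operator and on $\beta$. Second, the precise identification of the coefficients $c_\ell^{(k,j)}$, $d_\ell^{(k,j)}$, and of which eigenvalue lines survive, rests on the full convergence theory of the formal series $\bold{U}_\ell^{(k,j)}$, $\bold{V}_\ell^{(k,j)}$ of \eqref{117}, \eqref{118} and on the matched biorthogonal normalisation of power-like and dual solutions; these constructions are the technical core and are carried out in detail in \cite{matyu}.
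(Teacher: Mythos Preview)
The paper does not prove this theorem; it is quoted verbatim from \cite{matyu} (Proposition 6.4.1 there) as part of the background material in Appendix~B, with no accompanying argument beyond the citation. So there is no ``paper's own proof'' to compare against.

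That said, your outline is an accurate sketch of the standard Kondratiev/Maz'ya--Plamenevski\v{\i} machinery that \cite{matyu} employs: the scaling $\bold{y}\mapsto p^{-1}\bold{y}$ to reduce to compact parameter range, the Mellin transform in~$r$ followed by contour shifting across the eigenvalue strip, the identification of residues with the power-like solutions~\eqref{Jordanchain} via the biorthogonal dual system~\eqref{dualfunctions}, and the role of the orthogonality conditions in killing the intermediate singular terms. You have also correctly pinpointed the two genuine technical burdens --- uniformity of all Mellin estimates in $(\xi,\tau)$ after rescaling, and the convergence/matching theory for the formal series $\bold{U}_\ell^{(k,j)}$, $\bold{V}_\ell^{(k,j)}$ --- which are indeed where the work in \cite{matyu} is concentrated. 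Your proposal is therefore a faithful high-level summary of the proof strategy in the cited source.
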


\subsection{Solution of a parameter-dependent Dirichlet problem in a wedge}\label{subsectionwedge}

By means of an inverse  Fourier transform {$\mathcal{F}_{\xi \mapsto z}^{-1}$} in the dual edge variable $\xi$, we obtain  results for the general Dirichlet problem in the wedge $\mathbb{D}$,
 \begin{align}\label{75}
\mathcal{L}(\bold{x},D_{\bold{x}},\tau)\bold{v}(\bold{x},\tau) = \hat{\bold{f}}(\bold{x},\tau), \quad \bold{x} \in \mathbb{D},\\
\bold{v}(\bold{x},\tau) = \hat{\bold{g}}(\bold{x},\tau), \quad \bold{x} \in \partial \mathbb{D}\ ,\label{76}
 \end{align}
{the problem in the frequency domain} corresponding to \eqref{wedgetimedomain}, \eqref{wedgetimedomain2}.

The regularity of the solutions is described in the following weighted Sobolev spaces on $\D = \mathbb{K}\times \R^{n-m}$. In $\D$, one uses the coordinates $\bold{x}=(\bold{y},z)$ and introduces polar coordinates in $\mathbb{K}$: $r = |\bold{y}|$, $\pmb{\omega} = \frac{\bold{y}}{|\bold{y}|}$. Define
\begin{align}\label{Hsb}
\| u; H^{s}_\beta(\mathbb{D}) \| = \left( \sum_{|\alpha| \leq s} \int_{\mathbb{D}} r^{{2(}\beta + |\alpha| - s{)}} |D_{\bold{x}}^\alpha u|^2  \right)^{\frac{1}{2}}\ ,\\
\| u; H^{s}_\beta(\mathbb{D},p) \| = \left( \sum_{k=0}^{s} p^{2k}  \| u;H^{s-k}_\beta(\mathbb{D})  \|^{2}  \right)^{\frac{1}{2}} \ .
\end{align}
Corresponding spaces $H^{s}_\beta(\partial\mathbb{D})$ and $H^{s}_\beta(\partial \mathbb{D},p)$ on $\partial \mathbb{D}$ are obtained as trace spaces for $H^{s}_\beta(\mathbb{D})$, respectively $H^{s}_\beta(\mathbb{D},p)$. 


The basic existence result is given by:
\begin{proposition}[{Theorem 4.2.2}, \cite{matyu}]\label{9.6}
Suppose that the wedge $\mathbb{D}$  is admissible in the sense of \cite{matyu}, $\{ \hat{\bold{f}},\hat{\bold{g}} \} \in L^{2}(\mathbb{D}) \times H^{1}(\partial \mathbb{D})$ and $\tau = \sigma - i \gamma$, $\sigma \in \mathbb{R}$, $\gamma > 0$. Then there exists a unique strong solution $\bold{v}$ of \eqref{75} and \eqref{76}. Furthermore, there exists a constant $c>0$ independent of $\tau$ such that 
 \begin{equation*}
\gamma^{2}  \| \bold{v}, H^{1} (\mathbb{D}, |\tau|)  \|^{2} + \gamma \|\bold{p}(\bold{v}), L^{2}(\partial\mathbb{D}) \|^2 \leq c\left(\| \hat{\bold{f}}\|^2_{L^{2}(\mathbb{D})} + \gamma \| \hat{\bold{g}}, H^{1}(\partial \mathbb{D}, |\tau|)  \|^2\right)\ .
 \end{equation*}
\end{proposition}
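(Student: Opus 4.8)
\textbf{Proof strategy for Proposition \ref{9.6}.}

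The plan is to reduce the problem in the wedge $\mathbb{D}=\mathbb{K}\times\R^{n-m}$ to the already-settled cone case (Proposition \ref{625} and estimate \eqref{H2estimate}) by a partial Fourier transform in the edge variable $z$, together with a reduction of the inhomogeneous Dirichlet datum to a homogeneous one. First I would handle the boundary data: choose an extension $\widetilde{\bold{g}}$ of $\hat{\bold{g}}$ into $\mathbb{D}$ with the prescribed Dirichlet trace and with $\|\widetilde{\bold{g}};H^1(\mathbb{D},|\tau|)\|\lesssim \|\hat{\bold{g}};H^1(\partial\mathbb{D},|\tau|)\|$ (a standard trace lifting, uniform in $\tau$ because the $|\tau|$-weighted norms scale correctly). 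Setting $\bold{w}=\bold{v}-\widetilde{\bold{g}}$, the pair $\bold{w}$ solves a homogeneous-Dirichlet problem with new right hand side $\hat{\bold{f}} - \mathcal{L}(\bold{x},D_{\bold{x}},\tau)\widetilde{\bold{g}} \in L^2(\mathbb{D})$, whose $L^2$-norm is controlled by $\|\hat{\bold{f}}\|_{L^2(\mathbb{D})} + |\tau|^2\|\widetilde{\bold{g}};H^1(\mathbb{D},|\tau|)\|$ — up to the expected powers of $|\tau|$ and $\gamma$ this matches the right hand side of the claimed estimate.

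Next I would carry out the Fourier transform $\mathcal{F}_{z\mapsto\xi}$. This turns the homogeneous-Dirichlet wedge problem for $\bold{w}$ into the parameter-dependent cone problem \eqref{116}, \eqref{116b} (with an $L^2(\mathbb{K})$ right hand side) for each fixed $\xi\in\R^{n-m}$, with combined parameter $p=\sqrt{|\xi|^2+|\tau|^2}$. Proposition \ref{625} gives, for each $\xi$, a unique strong solution and the a priori bound $\gamma^2(p^2\|\cdot;L^2(\mathbb{K})\|^2 + \|D_{\bold{y}}\cdot;L^2(\mathbb{K})\|^2)\leq c\|\cdot;L^2(\mathbb{K})\|^2$ with $c$ independent of $\xi,\tau$. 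Integrating this inequality in $\xi$ and applying Parseval's identity in the $z$-variable reassembles the $H^1(\mathbb{D},|\tau|)$ norm of $\bold{w}$ on the left and the $L^2(\mathbb{D})$ norm of the right hand side on the right, giving the bulk estimate $\gamma^2\|\bold{w};H^1(\mathbb{D},|\tau|)\|^2 \lesssim \|\hat{\bold{f}}-\mathcal{L}\widetilde{\bold{g}}\|_{L^2(\mathbb{D})}^2$. The traction term $\gamma\|\bold{p}(\bold{w});L^2(\partial\mathbb{D})\|^2$ is obtained by the same Fourier/Parseval device from the corresponding boundary-flux bound in the cone — this is exactly the extra boundary term recorded in Proposition \ref{9.6} and is the elastodynamic analogue of the Rellich-type identity used in Appendix A (cf.\ the proof of Proposition \ref{DPbounds}); uniqueness transfers from the cone case fiberwise. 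Undoing the substitution $\bold{v}=\bold{w}+\widetilde{\bold{g}}$ and collecting the contributions of $\widetilde{\bold{g}}$ into the right hand side yields the stated inequality.

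The main obstacle is uniformity in $\tau$ throughout — in particular producing the trace lifting $\widetilde{\bold{g}}$ with the correct $|\tau|$-weights and verifying that the constant in the $\xi$-integrated cone estimate genuinely does not degenerate as $p\to\infty$ or as $\gamma\downarrow 0$; this is where the admissibility hypothesis on $\mathbb{D}$ and the precise form of the weighted norms \eqref{Hsb} enter. The boundary-traction estimate is the other delicate point, since it requires keeping track of the normal derivative on $\partial\mathbb{K}$ under the Fourier transform and using the structure of the Hooke tensor; but all the needed fiberwise ingredients are contained in Proposition \ref{625} and the surrounding results of \cite{matyu}, so the argument is essentially a bookkeeping of norms rather than a new estimate.
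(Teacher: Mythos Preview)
The paper does not give its own proof of this proposition; it simply cites Theorem 4.2.2 of \cite{matyu}. So there is no ``paper's proof'' to compare against, and the only question is whether your outlined argument actually works.

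It does not, and the failure is at the very first step. You lift $\hat{\bold g}\in H^{1}(\partial\mathbb{D},|\tau|)$ to $\widetilde{\bold g}\in H^{1}(\mathbb{D},|\tau|)$ and then write the new right-hand side as $\hat{\bold f}-\mathcal{L}(\bold x,D_{\bold x},\tau)\widetilde{\bold g}$, claiming this lies in $L^{2}(\mathbb{D})$. But $\mathcal{L}$ is a second-order spatial operator, so $\mathcal{L}\widetilde{\bold g}$ is only in $H^{-1}$ for an $H^{1}$ extension (and only in $H^{-1/2}$ even for the sharper $H^{3/2}$ lifting that $H^{1}$ boundary data permits). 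You therefore cannot feed this into Proposition~\ref{625}, which requires an $L^{2}(\mathbb{K})$ right-hand side. The phrase ``up to the expected powers of $|\tau|$ and $\gamma$'' hides a second, independent problem: even if one ignored regularity, your bound $\|\mathcal{L}\widetilde{\bold g}\|_{L^{2}}\lesssim |\tau|^{2}\|\widetilde{\bold g};H^{1}(\mathbb{D},|\tau|)\|$ (from the $-\tau^{2}$ term alone) yields, after squaring and dividing by the $\gamma^{2}$ in the cone estimate, a factor $|\tau|^{4}\gamma^{-2}$ in front of $\|\hat{\bold g}\|^{2}$---not the $\gamma$ that the statement requires. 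The powers genuinely do not close.

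The route that does work, and which the paper sketches for the homogeneous case in the proof of Proposition~\ref{wellposedness1}, is variational: one tests the equation against $-i\bar\tau\,\bold v$ (or works directly with the sesquilinear form $B^{\tau}_{D}$), uses Korn's inequality to get coercivity, and the boundary term arising from integration by parts produces both the traction control $\gamma\|\bold p(\bold v)\|_{L^{2}(\partial\mathbb{D})}^{2}$ and the pairing with $\hat{\bold g}$---which is exactly why the boundary datum enters the right-hand side with a single power of $\gamma$ rather than the catastrophic $|\tau|^{4}\gamma^{-2}$ your lifting produces. The inhomogeneous data must be handled at the level of the weak formulation, not by lifting into the strong $L^{2}$ framework.
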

 
Higher regularity has been obtained by Matyukevich and Plamenevski\v{\i} in the spaces $H^{s}_\beta(\mathbb{D})$. Following \cite{matyu} we only state the result for homogeneous boundary conditions. 
\begin{proposition}[{Proposition 5.1.1,} \cite{matyu}] \label{9.7} Let $\beta  \leq 1$. Assume $\mathrm{Im}\ \lambda = \beta + \frac{m}{2}-2$ does not intersect the spectrum of $\mathcal{A}_{D}$. Then for $\bold{v} \in H^{2}_{\beta}(\mathbb{D},1) \cap H^{1}_{\beta=0}(\mathbb{D})$ with $\mathcal{L}(D_{\bold{x}},0)\bold{v} \in L^{2}(\mathbb{D})$ 
there holds
\begin{align}\label{9.7estimate}
&\|\chi_\tau \bold{v}, H^2_\beta(\D, |\tau|)\|^2 + \gamma^2 \|\bold{v}, H^1_{\beta}(\D, |\tau|)\|^2 \\ &\qquad \leq c \left\{\|\mathcal{L}(D_{\bold{x}},\tau)\bold{v}, H^0_\beta(\D)\|^2 + |\tau|^{2(1-\beta)} \gamma^{-2} \|\mathcal{L}(D_{\bold{x}},\tau)\bold{v}, L^2(\D)\|^2 \right\} \ ,
\nonumber \end{align}
where $\chi_\tau(\bold{x}) = \chi(|\tau| \bold{y})$ for some {$\chi \in C^\infty_0(\overline{\mathbb{K}})$} which is $=1$ in a neighborhood of the vertex of the cone $\mathbb{K}$. The constant $c$ is independent of $\bold{v}$, $\tau = \sigma-i\gamma$, $\sigma \in \R$, $\gamma>0$.
\end{proposition}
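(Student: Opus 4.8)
The plan is to reproduce, at the level of the wedge, the reduction used by Matyukevich and Plamenevski\v{\i}: estimate \eqref{9.7estimate} is obtained from the already established cone estimate \eqref{H2estimate} by a partial Fourier transform along the edge, followed by a reassembly via Parseval's identity. First I would apply $\mathcal{F}_{z\mapsto\xi}$ in the edge variable of $\D = \mathbb{K}\times\R^{n-m}$. Since the wedge is admissible, the coefficients of $\mathcal{L}$ do not depend on $z$, so for each fixed $\xi\in\R^{n-m}$ the equation $\mathcal{L}(D_{\bold{x}},\tau)\bold{v}=\mathcal{L}(D_{\bold{x}},\tau)\bold{v}$ (with $\mathcal{L}(D_{\bold{x}},\tau)\bold{v}\in L^2(\D)$ and $\bold{v}$ vanishing on $\partial\D$, so that $\bold{v}$ is the finite-energy solution furnished by Proposition \ref{9.6}) turns into a cone problem of exactly the form \eqref{119}, \eqref{119b} for $\widehat{\bold{v}}(\cdot,\xi)$ in $\mathbb{K}$, with homogeneous Dirichlet data and the combined parameter $p = p(\xi) = \sqrt{|\xi|^2+|\tau|^2}$, $\tau=\sigma-i\gamma$.

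Next, for each fixed $\xi$ I would invoke the cone estimate \eqref{H2estimate} for $\widehat{\bold{v}}(\cdot,\xi)$. The only standing hypothesis this needs — that the line $\mathrm{Im}\,\lambda = \beta+\tfrac m2-2$ avoids the spectrum of $\mathcal{A}_D$ — is precisely the assumption of the proposition; the deep ingredient, namely the Mellin/operator-pencil analysis that produces \eqref{H2estimate}, is supplied by Theorem \ref{625}, so nothing new is needed here. Crucially, the constant in \eqref{H2estimate} is independent of $(\xi,\tau)$, so the resulting inequality may be integrated in $\xi$.

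The heart of the argument is the reassembly. Because $D_z$-derivatives become multiplication by $\xi$ under $\mathcal{F}_{z\mapsto\xi}$ and $p^2=|\xi|^2+|\tau|^2$, a binomial expansion together with Parseval's identity yields, for $s\in\{0,1,2\}$ (and $|s-k|\leq 1$, so the mixed $H^{s-k}_\beta$ norms are the natural ones), the equivalence
\begin{equation*}
\|w;H^s_\beta(\D,|\tau|)\|^2 \;\sim\; \int_{\R^{n-m}} \|\widehat{w}(\cdot,\xi); H^s_\beta(\mathbb{K},p(\xi))\|^2\, d\xi ,
\end{equation*}
with constants depending only on $s$. Applying this with $w=\bold{v}$ reconstructs $\gamma^2\|\bold{v};H^1_\beta(\D,|\tau|)\|^2$ from the first term on the left of \eqref{H2estimate}, and with $w=\mathcal{L}(D_{\bold{x}},\tau)\bold{v}$ it reconstructs $\|\mathcal{L}(D_{\bold{x}},\tau)\bold{v};H^0_\beta(\D)\|^2$ from the first right-hand term. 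For the second right-hand term one splits $\{|\xi|\le|\tau|\}$, where $p\sim|\tau|$ and the contribution is directly bounded by $|\tau|^{2(1-\beta)}\gamma^{-2}\|\mathcal{L}(D_{\bold{x}},\tau)\bold{v};L^2(\D)\|^2$, from $\{|\xi|>|\tau|\}$, where $p\sim|\xi|$ and, using $\beta\le1$, the factor $p^{2(1-\beta)}$ can be traded against powers of $r$ and absorbed into the already-controlled $H^0_\beta$ term. Finally, \eqref{H2estimate} only yields $H^2_\beta$-control of $\chi_p\widehat{\bold{v}}$, whose support $\{|\bold{y}|\lesssim p^{-1}\}$ is contained in that of $\chi_\tau$; on the intermediate zone $\{p^{-1}\lesssim|\bold{y}|\lesssim|\tau|^{-1}\}$, corresponding to $|\xi|\gtrsim|\tau|$, the weights $r^{2(\beta+|\alpha|-2)}$ are comparable to powers of $p$ and the missing control comes from interior elliptic estimates with parameter for $\mathcal{L}(D_{\bold{y}},\xi,\tau)$ on dyadic annuli rescaled to unit size — the same localization philosophy already used in the proof of Proposition \ref{regpropositionwedge}. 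Assembling these pieces yields \eqref{9.7estimate} with a constant independent of $\bold{v}$ and of $\tau=\sigma-i\gamma$, $\gamma>0$.

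The main obstacle I anticipate is not any single deep theorem — those are quoted — but the uniform-in-$\tau$ bookkeeping in the reassembly: matching the $p$-weighted cone norms with the wedge norms $H^s_\beta(\D,|\tau|)$ under Parseval, and splitting the frequency regions $\{|\xi|\lessgtr|\tau|\}$ carefully enough that the cut-off mismatch between $\chi_p$ and $\chi_\tau$ and the factor $p^{2(1-\beta)}$ are absorbed into exactly the two terms appearing on the right of \eqref{9.7estimate}, with no loss in the powers of $\gamma$.
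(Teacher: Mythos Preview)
Your overall reduction is the right one and matches the paper's (very terse) sketch: the paper only records that the proof in \cite{matyu} proceeds by ``(i) estimates far from the edge, (ii) estimates near the edge, (iii) the global a priori estimate \eqref{apriori1}'', and your Fourier transform $\mathcal{F}_{z\mapsto\xi}$ plus the cone estimate \eqref{H2estimate} is precisely how step (ii) is implemented. Your Parseval identity for the $H^1_\beta$-term on the left is exact, and the cut-off mismatch between $\chi_p$ and $\chi_\tau$ is indeed handled by interior parameter-elliptic estimates on the annuli $\{p^{-1}\lesssim |\bold{y}|\lesssim |\tau|^{-1}\}$, as you say.

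There is, however, a genuine gap in your reassembly of the \emph{right-hand side}. Integrating \eqref{H2estimate} over $\xi$ produces $\gamma^{-2}\int p(\xi)^{2(1-\beta)}\|\widehat{\bold f}(\cdot,\xi);L^2(\mathbb{K})\|^2\,d\xi$, and you claim that the portion over $\{|\xi|>|\tau|\}$, where $p\sim|\xi|$, can be ``traded against powers of $r$ and absorbed into the $H^0_\beta$ term''. This is false as stated: absorption would require $|\xi|^{2(1-\beta)}\int_{\mathbb{K}}|\widehat{\bold f}|^2\,d\bold{y}\lesssim \int_{\mathbb{K}} r^{2\beta}|\widehat{\bold f}|^2\,d\bold{y}$ uniformly in $\xi$, i.e.\ the pointwise bound $|\xi|^{2(1-\beta)}\lesssim r^{2\beta}$ on $\mathbb{K}$, which is impossible since $\widehat{\bold f}=\widehat{\mathcal{L}\bold v}$ has no support restriction in $r$. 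Since $p\geq|\tau|$ and $\beta\leq 1$, the integrated cone bound is genuinely \emph{weaker} than the right-hand side of \eqref{9.7estimate}, so the sharp wedge estimate cannot be obtained by integrating \eqref{H2estimate} alone.

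The remedy is exactly the three-step organisation the paper quotes: one does \emph{not} apply the full cone estimate uniformly in $\xi$, but rather localises in $r$ first. Far from the edge ($r\gtrsim|\tau|^{-1}$, where $\chi_\tau=0$) only the $H^1_\beta$-term survives on the left, and the unweighted global estimate \eqref{apriori1} yields control by $\|\mathcal{L}\bold v;L^2(\mathbb{D})\|^2$ alone; converting the unweighted $H^1$-norm to $H^1_\beta$ on this region costs exactly the factor $|\tau|^{2(1-\beta)}\gamma^{-2}$. Near the edge ($r\lesssim|\tau|^{-1}$) the Mellin/pencil analysis gives the $H^0_\beta$ term on the right. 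The commutators arising from the localisation are then absorbed via step (iii). So your scheme is salvageable, but the high-frequency piece must be handled through steps (i) and (iii), not by the pointwise trading you describe.
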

A corresponding result for the wave equation with inhomogeneous boundary conditions has been considered in \cite{plamenevskii}, Formula (7), but we omit the more involved statement. 

The proof in \cite{matyu} is based on three steps: (i) estimates far from the edge, (ii) estimates near the edge, (iii) the global a priori estimate \eqref{apriori1}.

\subsection{Solution of a time-dependent problem in a wedge; non-homogeneous boundary conditions}

We now present results for the time-dependent system \eqref{wedgetimedomain}, \eqref{wedgetimedomain2}, with constant coefficients, obtained from the frequency-domain results via the inverse Fourier transform. They are stated in terms of the following weighted function spaces in the space-time cylinder $\mathcal{Q} = \D \times \R$, with coordinates $\bold{x}=(\bold{y},z) \in \D$ and parameter $q>0$:
$$\|w; H^s_\beta(\mathcal{Q})\| = \left(\sum_{|\alpha| \leq s}  \int_{\R} \int_{\D}r^{2(\beta-s+|\alpha|)} |D_{\bold{x},t}^\alpha w(\bold{x},t)|^2 \ d\bold{x} \ dt\right)^{1/2} \ ,$$
$$\|w; H^s_\beta(\mathcal{Q},q)\| = \left(\sum_{k=0}^{s} q^{2k} \|w; H^{s-k}_\beta(Q)\|^2\right)^{1/2}\ .$$
If $\gamma>0$, we set $w^\gamma(\bold{x},t):= \exp(-\gamma t) w(\bold{x},t)$ and define $$\|w; V^s_\beta(\mathcal{Q},\gamma)\| = \|w^\gamma; H^s_\beta(\mathcal{Q},\gamma)\|\ .$$
The corresponding spaces on the boundary $\partial \mathcal{Q}$ are defined as the trace spaces of $H^s_\beta(Q)$, respectively $V^s_\beta(\mathcal{Q},\gamma)$.

\begin{definition}
Assume $(\bold{f},\bold{g}) \in V^0_0(\mathcal{Q},\gamma) \times V^{3/2}_0(\partial \mathcal{Q},\gamma)$, and let $\bold{v}$ be a strong solution to \eqref{75}, \eqref{76} in $\mathbb{D}$ with right hand side $(\hat{\bold{f}}, \hat{\bold{g}})$. Then $\bold{u}(\bold{y},z,t) = \mathcal{F}^{-1}_{{(\xi,\tau) \to (z,t)}} \bold{v}(\bold{y},\xi,\tau)$ is called a strong solution of \eqref{wedgetimedomain}, \eqref{wedgetimedomain2}.
\end{definition}

Proposition \ref{9.6} implies that for any $(\bold{f},\bold{g}) \in V^0_0(\mathcal{Q},\gamma) \times V^{3/2}_0(\partial \mathcal{Q},\gamma)$ with $\gamma>0$  the problem {\eqref{wedgetimedomain}, \eqref{wedgetimedomain2}} admits a unique strong solution and
$$\gamma^2 \|\bold{{u}}; V^1_0(\mathcal{Q},\gamma)\|^2 +\gamma \|\bold{p}(\bold{{u}}), V^0_\gamma(\partial\mathbb{D},\gamma) \|^2 \leq c\left(\|\bold{f}; V^0_0(\mathcal{Q},\gamma)\|^2+\gamma \|\bold{g}; V^{3/2}_0(\partial \mathcal{Q},\gamma)\|^2\right)\ ,$$
for a constant $c>0$ independent of $\gamma$.

Let $\chi \in C^\infty(\overline{\mathbb{K}})$ be a cut-off function which is identically $1$ in a neighborhood of the conical point $0$. Define \begin{equation}\label{Xdef}X \bold{u}(\bold{y},z,t) = \mathcal{F}^{-1}_{(\xi,\tau) \to (z,t)} \chi(p\bold{y}) \mathcal{F}_{(z',t') \to (\xi,\tau)} \bold{u}(\bold{y},z',t')\end{equation}
and
\begin{equation}\label{Lambdadef}\Lambda^\mu \bold{u}(\bold{y},z,t) = \mathcal{F}^{-1}_{\tau \to t} |\tau|^\mu \mathcal{F}_{t'\to \tau} \bold{u}(\bold{y},z,t')\ .\end{equation}
Higher regularity theorems involve the following norms in $Q$: For $\beta \in \mathbb{R}$ and $\gamma>0$ 
\begin{align}\|\bold{v}; DV_{\beta}(\mathcal{Q},\gamma)\| &= \left(\gamma^2 \|\bold{v}; V^1_\beta(\mathcal{Q},\gamma)\|^2 + \|X\bold{v}; V_\beta^2(\mathcal{Q},\gamma)\|^2+ \gamma \|\partial_\nu \bold{v}; V^0_\beta(\partial \mathcal{Q},\gamma) \|^2\right)^{1/2}\ , \label{DVQ}\\
\|\bold{f}; RV_{\beta}(\mathcal{Q},\gamma)\| &= \left( \|\bold{f}; V^0_\beta(\mathcal{Q},\gamma)\|^2 + \gamma^{-2}\|\Lambda^{1-\beta}\bold{f}; V^0_0(\mathcal{Q},\gamma)\|^2\right)^{1/2}\ . \label{RVQ}\\
\|(\bold{f},\bold{g}); \mathcal{R}V_{\beta}(\mathcal{Q},\gamma)\| &= \left( \|\bold{f}; RV_\beta(\mathcal{Q},\gamma)\|^2 + \|X\bold{g}; V_\beta^{3/2}(\partial \mathcal{Q},\gamma)\|^2+ \gamma \|\bold{g}; V^1_0(\partial \mathcal{Q},\gamma) \|^2\right.\nonumber \\ &\qquad \left. + \gamma^{-1} \|\Lambda^{1-\beta}\bold{g}; V^1_0(\partial \mathcal{Q},\gamma) \|^2\right)^{1/2}\ . \label{RRVQ}\end{align}
More generally, one may introduce for $q \in \mathbb{N}_0$
$$\|\bold{f}; RV_{\beta,q}(\mathcal{Q},\gamma)\| = \left( \sum_{j=0}^q \gamma^{-2j}\|\Lambda^{j}\bold{f}; V^{q-j}_{\beta+q-j}(\mathcal{Q},\gamma)\|^2 + \gamma^{-2(1+q)}\|\Lambda^{1-\beta+q}\bold{f}; V^0_0(\mathcal{Q},\gamma)\|^2\right)^{1/2}\ ,$$
and similarly $\mathcal{R}V_{\beta,q}(\mathcal{Q},\gamma)$ and $DV_{\beta,q}(\mathcal{Q},\gamma)$.

The following result may then be found in Theorem 7.4, \cite{matyu}, for  {${\bf{g}}=0$} and $q=0$. It may be extended to inhomogeneous boundary conditions and $q>0$ using the arguments in \cite{kprussian}.
\begin{theorem}\label{thm9.9} Suppose $q \in \mathbb{N}_0$, $\gamma>0$ and $(\bold{f},\bold{g}) \in \mathcal{R}V_{\beta,q}(\mathcal{Q},\gamma)$.
a) If $\beta \in (\beta_1,1)$, the strong solution $\bold{u}$ to \eqref{wedgetimedomain},  \eqref{wedgetimedomain2} belongs to $DV_{\beta,q}(\mathcal{Q},\gamma)$ and there exists $c>0$ independent of $\gamma$ such that $$\|\bold{u}; DV_{\beta,q}(\mathcal{Q},\gamma)\|
\leq c \|(\bold{f},\bold{g}); \mathcal{R}V_{\beta,q}(\mathcal{Q},\gamma)\|\ .$$
b) If $\beta \in (\beta_{r+1},\beta_r)$, then there exists a solution $\bold{u}$ to \eqref{wedgetimedomain},  \eqref{wedgetimedomain2} if and only if for all $\xi \in \R^{n-m}$, for all $\tau \in \R - i\gamma$ and for all $\bold{w}_\ell^{k,j}$ corresponding to eigenvalues $\lambda_\ell$ of $\mathcal{A}_D$ with $\mathrm{Im}\ \lambda \in[\beta_r + \frac{m}{2}-2, \beta_1 + \frac{m}{2}-2]$, \begin{equation}\label{orthorelation}(\hat{\bold{f}}(\cdot, \xi,\tau), \bold{w}_\ell^{k,j}(\cdot, \xi, \overline{\tau}))_{L^2(\mathbb{K})} + (\hat{\bold{g}}(\cdot, \xi,\tau), {\pmb p}(\bold{w}_\ell^{k,j})(\cdot, \xi, \overline{\tau}))_{L^2(\partial \mathbb{K})}= 0\ .\end{equation}
\end{theorem}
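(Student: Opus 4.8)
The statement to prove is \textbf{Theorem \ref{thm9.9}}, the existence and regularity of the strong solution to the time-dependent elastodynamic wedge problem \eqref{wedgetimedomain}, \eqref{wedgetimedomain2} in the weighted spaces $DV_{\beta,q}(\mathcal{Q},\gamma)$, together with the necessary and sufficient orthogonality conditions \eqref{orthorelation} in the resonant case $\beta \in (\beta_{r+1},\beta_r)$. Since this is attributed to Theorem 7.4 of \cite{matyu} for $\bold{g}=0$, $q=0$, the work is to transfer that result to inhomogeneous boundary data and to higher $q$, and then to package everything in the time-domain norms via Fourier transform.

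The plan is to proceed in the frequency domain first and then invert. \textbf{Step 1:} For $q=0$ and $\bold{g}=0$, invoke the frequency-domain a priori estimate and expansion of Proposition \ref{9.7} (Proposition 5.1.1 of \cite{matyu}) together with the cone analysis of Theorem \ref{thm95}: for $\beta \in (\beta_1,1)$ the map $\mathcal{L}(D_{\bold{x}},\tau): DH_\beta(\mathbb{K},\xi,\tau) \to RH_\beta(\mathbb{K},\xi,\tau)$ is an isomorphism uniformly in $(\xi,\tau)$, while for $\beta \in (\beta_{r+1},\beta_r)$ solvability requires the orthogonality of $\hat{\bold{f}}$ against the dual functions $\bold{w}_\ell^{k,j}$ for the eigenvalues in the intervening strip. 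Gluing the cone estimates near the edge with the flat a priori estimate \eqref{apriori1} far from the edge (the three-step scheme (i)–(iii) mentioned after Proposition \ref{9.7}) gives the wedge result with the $(\xi,\tau)$-uniform constant. \textbf{Step 2:} Reduce inhomogeneous boundary data $\bold{g}$ to the homogeneous case by choosing a parameter-dependent extension: given $\hat{\bold{g}} \in H^{3/2}_\beta(\partial \mathbb{K},p)$, pick $\widetilde{\bold{g}}$ with the prescribed Dirichlet trace and $\|\widetilde{\bold{g}}; H^2_\beta(\mathbb{D},p)\| \lesssim \|\hat{\bold{g}}; H^{3/2}_\beta(\partial\mathbb{D},p)\|$ (a standard trace lifting, done uniformly in $p$ by rescaling $\bold{y} \mapsto p\bold{y}$); then $\bold{V} = \bold{v} - \widetilde{\bold{g}}$ solves a homogeneous-boundary problem with modified right-hand side $\hat{\bold{f}} - \mathcal{L}(D_{\bold{x}},\tau)\widetilde{\bold{g}}$, whose $RH_\beta$-norm is controlled by $\|\hat{\bold{f}}\|+\|\hat{\bold{g}}\|$. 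This is exactly the elliptic argument of \cite[Section 5]{petersdorff2} referenced in the excerpt, and it converts the orthogonality condition on $\hat{\bold{f}}$ into the combined condition \eqref{orthorelation} involving both $\hat{\bold{f}}$ and $\hat{\bold{g}}$ once one integrates by parts, producing the boundary pairing with ${\pmb p}(\bold{w}_\ell^{k,j})$.

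\textbf{Step 3:} Handle $q>0$. Here one differentiates the equation $q$ times in $t$ (equivalently multiplies by powers of $\tau$ in the frequency domain): each $\Lambda^j$ applied to $\bold{f}, \bold{g}$ lands in a space with one less spatial weight-derivative, and the definition of $RV_{\beta,q}$, $DV_{\beta,q}$ is tailored so that the $q=0$ estimate applied to $\partial_t^j \bold{u}$ assembles into the $q$-version. One must check that the singular exponents $\lambda_\ell$ and the orthogonality set are unchanged by this differentiation — they are, since $\mathcal{L}$ has constant coefficients and the pencil $\mathcal{A}_D$ does not see $\tau$. This is the step where one appeals to the arguments of \cite{kprussian}. \textbf{Step 4:} Invert the Fourier transform. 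Using Plancherel in $(\xi,\tau)$ with the exponential weight $e^{-\gamma t}$, the uniform-in-$(\xi,\tau)$ frequency-domain estimates integrate to the claimed $DV_{\beta,q}(\mathcal{Q},\gamma)$ bound, and the expansion $\bold{v} = \chi(p\bold{y})\sum c_\ell^{(k,j)}(\xi,\tau)\bold{u}_\ell^{(k,j)} + \bold{v}_0$ becomes, after inverting and recognizing the cutoff operator $X$ from \eqref{Xdef}, the space-time expansion underlying \eqref{asymptoticexpansion}; the coefficients $c_\ell^{(k,j)}(\xi,\tau)$ are the Fourier transforms of the edge functions.

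The main obstacle will be \textbf{Step 2}, the uniform-in-parameter treatment of inhomogeneous boundary data: one needs a Dirichlet lifting on the cone (or wedge) that is bounded $H^{3/2}_\beta(\partial\mathbb{K},p) \to H^2_\beta(\mathbb{K},p)$ with constant independent of $p=\sqrt{|\xi|^2+|\tau|^2}$, and one must verify that subtracting the lift does not disturb the orthogonality bookkeeping — i.e. that the Green's-identity manipulation converting $(\mathcal{L}\widetilde{\bold{g}}, \bold{w}_\ell^{k,j})_{L^2(\mathbb{K})}$ into the boundary pairing $(\hat{\bold{g}}, {\pmb p}(\bold{w}_\ell^{k,j}))_{L^2(\partial\mathbb{K})}$ is legitimate for the dual functions, which are only power-like (not decaying) at the vertex. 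This requires care with the truncated dual series $\bold{V}_{\ell,M}^{(k,j)}$ and the matching/normalization conditions, exactly as in \cite{matyu}, \cite{nazarov}. The remaining steps are essentially bookkeeping once the right spaces are in place.
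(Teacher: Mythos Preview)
Your proposal is correct and matches the paper's approach: the paper does not give a self-contained proof but simply cites Theorem 7.4 of \cite{matyu} for the case $\bold{g}=0$, $q=0$ and refers to \cite{kprussian} for the extension to inhomogeneous boundary data and $q>0$. Your four-step outline (frequency-domain cone/wedge estimates, trace lifting to reduce to homogeneous data, differentiation in $t$ for higher $q$, Plancherel to return to the time domain) is precisely the content of those references, and your identification of the parameter-uniform Dirichlet lifting and the Green's-identity conversion of the orthogonality condition as the delicate point is accurate.
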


We can now state the main result of this section, which gives the asymptotics of the time-dependent problem in a neighborhood of the edge. It may be found in Theorem 7.5, \cite{matyu}, for  {${\bf{g}}=0$} and $q=0$. {The extension to inhomogeneous boundary data ${\bf{g}}$ follows as in Section \ref{regularitysection}: choose an extension $\widetilde{{\bf g}}$ in the domain with Dirichlet trace ${\bf{g}}$. Theorem 7.5, \cite{matyu} then assures an asymptotic expansion of the function ${\bf U} = {\bf u}-\widetilde{{\bf g}}$, which satisfies homogeneous boundary conditions. The expansion of ${\bf u} = {\bf U} + \widetilde{{\bf g}}$ then follows.}

\begin{theorem}\label{conetheorem}
Suppose $\gamma>0$ and $(\bold{f},\bold{g}) \in \mathcal{R}V_{\beta,q}(\mathcal{Q},\gamma)$ for $\beta \in (\beta_{r+1},\beta_r)$ with $0<\beta_r-\beta<1$. Assume that for all $\xi \in \R^{n-m}$, for all $\tau \in \R - i\gamma$ and for all $\bold{w}_\ell^{k,j}$ corresponding to eigenvalues $\lambda_\ell$ of $\mathcal{A}_D$ with $\mathrm{Im}\ \lambda \in[\beta_r + \frac{m}{2}-2, \beta_1 + \frac{m}{2}-2]$ the relation \eqref{orthorelation} holds.
Then the solution $\bold{u}$ to \eqref{wedgetimedomain}, \eqref{wedgetimedomain2} admits an asymptotic expansion 
\begin{equation}\label{asymptoticexpansion} \bold{u}(\bold{y},z,t) = \sum_\ell \sum_{k,j} (X \tilde{c}_\ell^{k,j})(\bold{y},z,t) \bold{u}_\ell^{k,j}(\bold{y}) + \bold{u}_0(\bold{y},z,t)\ ,\end{equation} 
with $\bold{u}_0 \in DV_{\beta,q}(\mathcal{Q},\gamma)$.
Here the first sum is over all eigenvalues $\lambda_\ell$ with $\mathrm{Im}\ \lambda =\beta_r + \frac{m}{2}-2$, while the second sum is  over all generalized eigenfunctions $\bold{u}_\ell^{k,j}$ corresponding to $\lambda_\ell$.
The coefficients $\tilde{c}_\ell^{k,j}(z,t)$ are defined by
$$\tilde{c}_\ell^{k,j} = \mathcal{F}^{-1}_{(\xi,\tau)\to (z,t)} c_\ell^{k,j}, $$
where
\begin{equation}\label{cformula}c_\ell^{k,j} = p^{i\lambda_\ell} \sum_q \frac{1}{q!} (i \ln p)^q d_\ell^{(k+q,j)}(\xi,\tau), \end{equation}
 and, with $p = \sqrt{|\xi|^2+|\tau|^2}$ and $\bold{w}_\ell^{k,j}$ as in Theorem \ref{thm9.9},
$$d_\ell^{(k+q,j)}(\xi,\tau) = p^{-2}(\hat{\bold{f}}(p^{-1}\cdot, \xi,\tau), \bold{w}_\ell^{k,j}(\cdot, \xi/p, \overline{\tau}/p))_{L^2(\mathbb{K})} + p^{-1} (\hat{\bold{g}}(p^{-1}\cdot, \xi,\tau), {\pmb p}(\bold{w}_\ell^{k,j})(p^{-1}\cdot, \xi/p, \overline{\tau}/p))_{L^2(\partial \mathbb{K})}.$$
Moreover, the following estimates hold: $\|e^{-\gamma t} \tilde{d}_\ell; H^{2-\frac{m}{2}-\beta}(\R^{n-m+1})\| \leq c \|(\bold{f},\bold{g}); \mathcal{R}V_{\beta,q}(\mathcal{Q},\gamma)\|$\\ and $\|\bold{u}_0; DV_{\beta,q}(\mathcal{Q},\gamma)\| \leq c \|(\bold{f},\bold{g}); \mathcal{R}V_{\beta,q}(\mathcal{Q},\gamma)\|$.
\end{theorem}
The explicit formulas show that for $f$ smooth in time also the coefficients $d_\ell$ will be smooth in time. 

Analogous results for the Neumann problem may be obtained in a similar way, see \cite{kokotov2,matyu}. The boundary condition affects the {corresponding} stencil $\mathcal{A}_N$ and consequently its eigenvalues $i \lambda_\ell$ and singular functions $\bold{w}_\ell^{k,j}$. 

\section{}

We 
recall certain auxiliary results from \cite{graded}, which are used in the proofs of Theorem \ref{approxtheorem2} and Theorem \ref{approxtheorem1}. 
\begin{lemma}[{\cite{graded}, Lemma 3}]\label{lemma3.2}
Let $\Gamma,\, \Gamma_j\; (j=1,\dots ,N)$ be Lipschitz domains with $\overline{\Gamma} = \bigcup\limits_{j=1}^N \overline{\Gamma}_j$, {$s \in [-1,1]$ and $r \in \mathbb{R}$. Then for all
$\tilde{u}\in H^r_\sigma(\mathbb{R}^+,\widetilde{H}^s(\Gamma))$, $u\in H^r_\sigma(\mathbb{R}^+,H^s(\Gamma))$,} \begin{align}
	\sum\limits_{j=1}^N \| u\|^2_{r,s,\Gamma_j} \leq \| u\|^2_{r,s,\Gamma} \label{3.21a}\ ,\qquad 
	\| \tilde{u}\|^2_{r,s,\Gamma, \ast} \leq \sum\limits_{j=1}^N \| \tilde{u}\|^2_{r,s,\Gamma_j, \ast}\ .
\end{align}
\end{lemma}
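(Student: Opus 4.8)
\textbf{Proof plan for Lemma~\ref{lemma3.2}.}
The plan is to reduce both inequalities to the corresponding statements for the spatial Sobolev norms $\|\cdot\|_{s,\omega,\Gamma}$ and $\|\cdot\|_{s,\omega,\Gamma,\ast}$ at a fixed frequency $\omega$, and then integrate against the weight $|\omega|^{2r}\,d\omega$ along the line $\mathrm{Im}\,\omega = \sigma$. Indeed, by Definition~\ref{sobdef} we have $\|u\|_{r,s,\Gamma}^2 = \int_{-\infty+i\sigma}^{+\infty+i\sigma} |\omega|^{2r}\,\|\hat u(\omega)\|_{s,\omega,\Gamma}^2\,d\omega$, and likewise for the $\ast$-norm, so it suffices to prove, for each fixed $\omega$ with $\mathrm{Im}\,\omega=\sigma$,
\begin{equation*}
\sum_{j=1}^N \|\hat u(\omega)\|_{s,\omega,\Gamma_j}^2 \leq \|\hat u(\omega)\|_{s,\omega,\Gamma}^2\ ,\qquad \|\hat{\tilde u}(\omega)\|_{s,\omega,\Gamma,\ast}^2 \leq \sum_{j=1}^N \|\hat{\tilde u}(\omega)\|_{s,\omega,\Gamma_j,\ast}^2\ .
\end{equation*}
Summing (really, integrating) these pointwise-in-$\omega$ estimates against the nonnegative measure $|\omega|^{2r}\,d\omega$ immediately yields \eqref{3.21a}. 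So the whole lemma is a ``frequency-by-frequency'' consequence of a purely spatial fact about supported versus quotient Sobolev norms on a Lipschitz partition.

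For the spatial estimates I would argue as follows. Recall $H^s(\Gamma)$ is the quotient $H^s(\widetilde\Gamma)/\widetilde H^s(\widetilde\Gamma\setminus\overline\Gamma)$ with $\|u\|_{s,\omega,\Gamma} = \inf_{v}\|u+v\|_{s,\omega,\widetilde\Gamma}$, while $\widetilde H^s(\Gamma)$ carries $\|\tilde u\|_{s,\omega,\Gamma,\ast}=\|e_+\tilde u\|_{s,\omega,\widetilde\Gamma}$ where $e_+$ is extension by zero. For the first inequality: restriction $H^s(\Gamma)\to H^s(\Gamma_j)$ is bounded with norm $\le 1$ once one chooses, for each $j$, an ``almost optimal'' representative — given $u\in H^s(\Gamma)$ and any extension $U\in H^s(\widetilde\Gamma)$ of it, $U|_{\Gamma_j}$ represents $u|_{\Gamma_j}$, hence $\|u|_{\Gamma_j}\|_{s,\omega,\Gamma_j}\le \|U\|_{s,\omega,\Gamma_j}$, and summing over $j$ and using that the covering $\{\Gamma_j\}$ of $\Gamma$ is essentially disjoint gives $\sum_j \|U\|_{s,\omega,\Gamma_j}^2 \le \|U\|_{s,\omega,\widetilde\Gamma}^2$ (this uses $|s|\le 1$, so the norm is localizable and the cross terms from the partition behave); taking the infimum over $U$ gives the claim. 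For the $\ast$-inequality: if $e_+\tilde u$ is the zero-extension of $\tilde u\in\widetilde H^s(\Gamma)$ to $\widetilde\Gamma$, then its restriction to any subdomain $\Gamma_j\subset\Gamma$ is precisely $e_+(\tilde u|_{\Gamma_j})$ viewed inside $\Gamma_j$, so $\|\tilde u\|_{s,\omega,\Gamma,\ast}^2 = \|e_+\tilde u\|_{s,\omega,\widetilde\Gamma}^2 \le \sum_j \|e_+\tilde u\|_{s,\omega,\Gamma_j}^2 = \sum_j \|\tilde u|_{\Gamma_j}\|_{s,\omega,\Gamma_j,\ast}^2$, where again the (sub)additivity over the essentially disjoint cover and $|s|\le 1$ are what is used. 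In fact both statements are the standard duality pair: $\widetilde H^s$ and $H^{-s}$ are dual, restriction is dual to zero-extension, and a bound with constant $1$ in one direction corresponds to a bound with constant $1$ in the other; one can alternatively deduce the $\ast$-inequality from the first by this duality rather than arguing directly.

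The main obstacle — and the only point that needs genuine care rather than bookkeeping — is the localization/subadditivity step for the spatial norms when $\Gamma$ is merely Lipschitz: one must check that the fractional Sobolev norm, defined via a partition of unity $\alpha_i$ and charts $\varphi_i$, is (quasi-)additive over the Lipschitz pieces $\Gamma_j$ with the right constant, i.e.\ with constant exactly $1$ rather than merely a bounded constant. The remark after Definition~\ref{sobdef} that these spaces are chart-independent for $|s|\le 1$ is exactly what licenses this; the cleanest route is to avoid the partition-of-unity definition entirely at this step and instead use the intrinsic Sobolev--Slobodeckij description together with the fact that the double integral defining the $H^s$-seminorm, and the $L^2$-part, are both manifestly additive over a disjoint decomposition of the domain of integration, with the extra-diagonal contribution only helping the quotient-norm inequality. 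Once that is in place, everything else is the elementary extension/restriction duality sketched above, and the passage back to the space--time norms is the trivial Fubini-type integration against $|\omega|^{2r}\,d\omega$ described in the first paragraph. I would therefore structure the written proof as: (1) reduce to fixed $\omega$; (2) prove the two spatial inequalities via zero-extension/restriction and disjoint additivity of the Slobodeckij norm; (3) integrate.
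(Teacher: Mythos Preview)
The paper does not supply a proof of this lemma; it is stated in Appendix~C as one of several ``auxiliary results'' that are simply \emph{recalled} from \cite{graded} (Lemma~3 there), so there is no argument in the present paper to compare against. Your plan to freeze the frequency $\omega$, prove the two purely spatial inequalities for $\|\cdot\|_{s,\omega,\Gamma}$ and $\|\cdot\|_{s,\omega,\Gamma,\ast}$, and then integrate against $|\omega|^{2r}\,d\omega$ is the natural route and matches the way such space--time estimates are typically obtained.

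One caution about the spatial step: your Slobodeckij argument handles the first inequality cleanly for $s\in[0,1]$ with constant~$1$, and the duality pairing $\widetilde H^s(\Gamma)=(H^{-s}(\Gamma))'$ then gives the second inequality for $s\in[-1,0]$ with constant~$1$ via Cauchy--Schwarz exactly as you sketch. But the remaining two cases (first inequality for $s<0$, second for $s>0$) are not covered by your direct Slobodeckij argument, and the na\"ive triangle-inequality approach $\|\sum_j \tilde u_j\|\le \sum_j\|\tilde u_j\|$ only yields the $\ell^1$--$\ell^2$ bound with a factor $N$; the cross terms in the Slobodeckij double integral also pick up a factor~$2$ rather than~$1$. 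To close the loop with constant~$1$ you should run duality \emph{once more}: the second inequality for $s\in[0,1]$ is exactly the dual of the first inequality for $-s\in[-1,0]$, so you really need the first inequality directly for the full range $s\in[-1,1]$. For $s\le 0$ this is cleanest via the quotient description $\|u\|_{s,\omega,\Gamma_j}=\inf\{\|U\|_{s,\omega,\widetilde\Gamma}:U|_{\Gamma_j}=u\}$ together with an $\omega$-dependent equivalent norm that is manifestly superadditive over the partition (e.g.\ interpolating between the $L^2$ and $H^{-1}$ cases, where additivity with constant~$1$ is immediate). This is essentially what \cite{graded} does; your write-up would benefit from making that step explicit rather than leaving it inside the phrase ``the cross terms from the partition behave''.
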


\begin{lemma}[{\cite{graded}, Lemma 8}]\label{lemma3.3}
	Let $I_j = [0, h_j]$, {$r\in \mathbb{R}$}, $0\leq s_j\leq 1$, $f_2\in\widetilde{H}^{-s_2}(I_2)$, $f_1 \in \widetilde{H}^r(\mathbb{R}^+, H^{-s_1}(I_1)$. Then there holds
	\begin{equation*}	
		\| f_1(t,x) f_2(y) \|_{r, -s_1 - s_2, I_1\times I_2, \ast} \leq \| f_1\|_{r, -s_1, I_1, \ast} \| f_2\|_{\tilde{H}^{-s_2}(I_2)} \ .
	\end{equation*}
\end{lemma}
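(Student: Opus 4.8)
The plan is to reduce the asserted estimate to a single pointwise inequality between the frequency weights defining the three norms involved. Since $I_1\times I_2$ is a flat rectangle and the argument is extended by zero, in local Cartesian coordinates the norm $\|\cdot\|_{r,-s_1-s_2,I_1\times I_2,\ast}$ of Appendix~A is, up to equivalence, given by
\begin{equation*}
\|f_1(t,x)f_2(y)\|_{r,-s_1-s_2,I_1\times I_2,\ast}^2=\int_{\mathrm{Im}\,\omega=\sigma}|\omega|^{2r}\!\int_{\mathbb{R}^2}\bigl(|\omega|^2+|\xi|^2+|\eta|^2\bigr)^{-s_1-s_2}\bigl|F_1(\omega,\xi)\bigr|^2\bigl|F_2(\eta)\bigr|^2\,d\xi\,d\eta\,d\omega\ ,
\end{equation*}
where $\xi$ is dual to $x\in I_1$, $\eta$ is dual to $y\in I_2$, $F_1$ is the Laplace transform in $t$ (along $\mathrm{Im}\,\omega=\sigma$) combined with the Fourier transform in $x$ of $e_+f_1$, and $F_2$ the Fourier transform in $y$ of $e_+f_2$. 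The decisive structural fact is that extension by zero and the transform factor over the tensor product, so that the two factors are coupled only through the joint weight $(|\omega|^2+|\xi|^2+|\eta|^2)^{-s_1-s_2}$; the corresponding identities for the right-hand side are $\|f_1\|_{r,-s_1,I_1,\ast}^2=\int|\omega|^{2r}\int(|\omega|^2+|\xi|^2)^{-s_1}|F_1|^2\,d\xi\,d\omega$ and $\|f_2\|_{\widetilde{H}^{-s_2}(I_2)}^2=\int(1+|\eta|^2)^{-s_2}|F_2|^2\,d\eta$. (The exponent $r$ plays no role beyond riding along with the $\omega$–integration on the $f_1$ side.)

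Next I would record the required pointwise bound. Writing $A=|\omega|^2+|\xi|^2$ and $B=|\eta|^2$, the line $\mathrm{Im}\,\omega=\sigma>0$ forces $A\ge\sigma^2$, and since $0\le s_1,s_2\le1$ one has $(A+B)^{s_1}\ge A^{s_1}$ together with $A+B\ge\sigma^2+B\ge\sigma^2(1+B)$, hence
\begin{equation*}
(A+B)^{-s_1-s_2}\ \le\ C_\sigma\,A^{-s_1}\,(1+B)^{-s_2}\ ,\qquad C_\sigma:=\max\{1,\sigma^{-2}\}\ ,
\end{equation*}
with $C_\sigma=1$ when $\sigma\ge1$. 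Inserting this into the integral representation above and using the factorisation, the triple integral splits into the product of the two frequency integrals just displayed, giving
\begin{equation*}
\|f_1(t,x)f_2(y)\|_{r,-s_1-s_2,I_1\times I_2,\ast}^2\ \le\ C_\sigma\,\|f_1\|_{r,-s_1,I_1,\ast}^2\,\|f_2\|_{\widetilde{H}^{-s_2}(I_2)}^2\ ,
\end{equation*}
and taking square roots proves the lemma (in the usual normalisation $\sigma\ge1$ with constant $1$, otherwise up to $C_\sigma$).

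The only genuinely delicate point, and the one I expect to be the main obstacle, is the appearance on the right of the \emph{inhomogeneous} norm $\|f_2\|_{\widetilde{H}^{-s_2}(I_2)}$ rather than a homogeneous one. The seemingly more natural pointwise estimate $(A+B)^{-s_1-s_2}\le A^{-s_1}B^{-s_2}$ would instead produce the factor $\int|\eta|^{-2s_2}|F_2|^2\,d\eta$, which is not controlled by $\|f_2\|_{\widetilde{H}^{-s_2}(I_2)}$ once $s_2\ge\tfrac12$ and $\int f_2\neq0$, since then $F_2(0)\neq0$ and the integrand is non-integrable near $\eta=0$. Circumventing this is precisely where the strict positivity $A\ge\sigma^2>0$ enters: it lets one pass from $B^{-s_2}$ to $(1+B)^{-s_2}$ at the cost of the harmless constant $C_\sigma$. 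A secondary, purely technical point is to justify that for the flat product domain $I_1\times I_2$ the partition-of-unity norm of Appendix~A is equivalent to the ambient flat Sobolev norm used above; this is the standard chart-independence argument for compactly supported distributions, and I would only remark on it rather than carry it out.
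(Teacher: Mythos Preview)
Your argument is correct and is precisely the standard route: Plancherel in the flat chart, factorisation of the Fourier--Laplace transform over the tensor product, and the elementary pointwise weight inequality $(A+B)^{-s_1-s_2}\le C_\sigma\,A^{-s_1}(1+B)^{-s_2}$, which you justify carefully (including the subtle point that $A\ge\sigma^2$ is what allows the inhomogeneous weight $(1+B)^{-s_2}$ rather than the problematic homogeneous one). The paper itself does not prove this lemma at all --- it is simply quoted from \cite{graded}, Lemma~8, as an auxiliary result in Appendix~C --- so there is no in-paper proof to compare against; your derivation is exactly what one would expect that reference to contain, and the $\sigma$-dependent constant you obtain is harmless in the paper's setting, where constants depending on $\sigma$ are absorbed throughout (cf.\ the notation $\lesssim_\sigma$).
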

A similar result holds in the positive Sobolev norms:
\begin{lemma}[{\cite{graded}, Lemma 9}]\label{lemma3.3a}
	Let $I_j = [0, h_j],\; 0\leq s\leq 1,\; f_2\in\widetilde{H}^{s}(I_2)$, $f_1 \in \widetilde{H}^s(\mathbb{R}^+, H^{s}(I_1)$. Then there holds
	\begin{equation*}	
		\| f_1(t,x) f_2(y) \|_{r, s, I_1\times I_2, \ast} \leq \| f_1\|_{r, s, I_1, \ast} \| f_2\|_{\tilde{H}^{s}(I_2)}\ .
	\end{equation*}
\end{lemma}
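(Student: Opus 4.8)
The plan is to exploit that the space--time norm $\|\cdot\|_{r,s,\cdot,\ast}$ is built from a Fourier transform in time followed, frequency by frequency, by the $\omega$-dependent spatial norm $\|\cdot\|_{s,\omega,\cdot,\ast}$, and that multiplication by $f_2(y)$ acts only on the spatial variable $y$. First I would write, using Definition \ref{sobdef} and Parseval in time,
\begin{equation*}
\|f_1(t,x)f_2(y)\|_{r,s,I_1\times I_2,\ast}^2 = \int_{-\infty+i\sigma}^{+\infty+i\sigma} |\omega|^{2r}\,\|\hat f_1(\omega,\cdot)\,f_2\|_{s,\omega,I_1\times I_2,\ast}^2\,d\omega\ ,
\end{equation*}
since the time-Fourier transform of the product is $\widehat{f_1 f_2}(\omega,x,y)=\hat f_1(\omega,x)\,f_2(y)$. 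Thus it suffices to prove, for each fixed admissible $\omega$ on the line $\mathrm{Im}\,\omega=\sigma$ and every $g\in\widetilde H^s(I_1)$, the purely spatial tensor-product estimate
\begin{equation*}
\|g(x)\,f_2(y)\|_{s,\omega,I_1\times I_2,\ast}\ \leq\ \|g\|_{s,\omega,I_1,\ast}\,\|f_2\|_{\widetilde H^s(I_2)}\ ;
\end{equation*}
applying this with $g=\hat f_1(\omega,\cdot)$ and integrating in $\omega$ with weight $|\omega|^{2r}$ then yields the claim.

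For the spatial estimate I would work on $\mathbb{R}^2$ through zero extension. Since $s\in[0,1]$ the spaces $\widetilde H^s(I_1\times I_2)$, $\widetilde H^s(I_1)$, $\widetilde H^s(I_2)$ are independent of the choice of charts and the square $I_1\times I_2$ admits trivial Cartesian charts, so no partition-of-unity gluing is needed; moreover the extension by zero satisfies $e_+(g\otimes f_2)=(e_+g)\otimes(e_+ f_2)$, whence the spatial Fourier transform factorizes, $\widehat{e_+(g\otimes f_2)}(\xi_1,\xi_2)=\widehat{e_+g}(\xi_1)\,\widehat{e_+f_2}(\xi_2)$. The heart of the matter is the pointwise multiplier inequality
\begin{equation*}
\big(|\omega|^2+|\xi_1|^2+|\xi_2|^2\big)^s\ \leq\ \big(|\omega|^2+|\xi_1|^2\big)^s\,\big(1+|\xi_2|^2\big)^s\ ,
\end{equation*}
valid for $s\in[0,1]$ because $|\omega|^2+|\xi_1|^2\geq|\omega|^2\geq 1$ under the normalization of the dual time variable used in \cite{graded} (otherwise one picks up an innocuous $\sigma$-dependent constant). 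Inserting this bound into
\begin{equation*}
\|g\otimes f_2\|_{s,\omega,\mathbb{R}^2}^2=\iint \big(|\omega|^2+|\xi_1|^2+|\xi_2|^2\big)^s\,|\widehat{e_+g}(\xi_1)|^2\,|\widehat{e_+f_2}(\xi_2)|^2\,d\xi_1\,d\xi_2
\end{equation*}
and separating the $\xi_1$- and $\xi_2$-integrals by Tonelli's theorem gives exactly $\|g\|_{s,\omega,I_1,\ast}^2\,\|f_2\|_{\widetilde H^s(I_2)}^2$, proving the frequency-wise estimate.

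The main obstacle is the correct bookkeeping of the frequency-parameter--dependent Sobolev norm on the product domain: one must verify that the multiplier inequality holds with the intended constant (ideally $1$, as in \cite{graded}), which is what forces the mild normalization of the time-dual variable, and that the zero extension genuinely commutes with forming tensor products on $I_1\times I_2$ --- a point that is clean here precisely because $s\leq 1$ and the domain is a Cartesian product of intervals. Once these are in place, the remaining ingredients (Parseval in time, Tonelli in space, reassembly of the $\omega$-integral with the weight $|\omega|^{2r}$) are routine, and the negative-order companion Lemma \ref{lemma3.3} then follows by the dual argument.
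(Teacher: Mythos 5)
Your argument is correct and is essentially the standard proof of this tensor-product estimate: the paper itself states Lemma \ref{lemma3.3a} without proof, citing \cite{graded} (Lemma 9), and the proof there proceeds exactly as you do — Parseval in $t$, factorization of the zero-extension and of the spatial Fourier transform over the Cartesian product $I_1\times I_2$, the elementary weight inequality, and Tonelli. The one point to watch is the constant: your multiplier bound $(|\omega|^2+|\xi_1|^2+|\xi_2|^2)^s\le(|\omega|^2+|\xi_1|^2)^s(1+|\xi_2|^2)^s$ genuinely requires $|\omega|^2+|\xi_1|^2\ge 1$, hence $\sigma\ge 1$ on the contour $\mathrm{Im}\,\omega=\sigma$; for $\sigma<1$ you only get the estimate with a factor $\max\{1,\sigma^{-s}\}$, which you correctly flag and which is harmless for every use of the lemma in Section \ref{approxsection}, where only $\lesssim$ is needed.
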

\begin{lemma}[{\cite{graded}, Lemma 10}]\label{lemma3.4}
	Let {$0 \leq r \leq \rho \leq q+1$}, $-1\leq s\leq 0$, $R=[0,h_1]\times[0,h_2],\; u\in H^{\rho}([0,\Delta t), H^1(R))$, $\Pi_t^q u$ the orthogonal projection onto piecewise polynomials in $t$ of order $q$, $\Pi_{x,y}^0 u=\frac{1}{h_1 h_2}\int\limits_R u(t,x,y)  dy\, dx$. Then for $p =  \Pi_t^q \Pi_{x,y} u$ we have 
	\begin{align}
		\| u-p\|_{r,s,R,\ast}&\lesssim \nonumber  (\Delta t)^{\rho-r}{\max\{h_1,h_2,\Delta t \}^{-s}}\|\partial_t^\rho u\|_{L^2([0,\Delta t)\times R)} \nonumber\\ & \qquad +  \max\{h_1,h_2,\Delta t \}^{-s}\left(h_1 \| u_x\|_{L^2([0,\Delta t) \times R)}  + h_2 \| u_y\|_{L^2({[0,\Delta t) \times}R)} \right).\label{3.22}
	\end{align}
	If $u(t,x,y) =u_1(t,x)u_2(y),\; u_1\in H^{{\rho}}( [0,\Delta t), H^1([0,h_1])), \; u_2\in H^1( [0,h_2])$ then 
	\begin{equation*}
		\| u-p \|_{r,s,R,\ast}\lesssim (\Delta t)^{\rho-r}{\max\{h_1,h_2,\Delta t \}^{-s}}\|\partial_t^\rho u\|_{L^2([0,\Delta t)\times R)} +\left(h_1^{1-s}\| u_x\|_{{L^2}([0,\Delta t) \times R)} + h_2^{1-s}\| u_y\|_{{L^2}([0,\Delta t) \times R)} \right).
	\end{equation*}
\end{lemma}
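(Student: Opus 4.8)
\textbf{Proof plan for Lemma \ref{lemma3.4}.}

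The plan is to reduce the anisotropic rectangle estimate to one-dimensional approximation results, treated separately in the time and space directions, and then to interpolate. First I would recall that $p = \Pi_t^q \Pi_{x,y}^0 u$ is a tensor-product projection, so it commutes with the two splittings $u - p = (u - \Pi_t^q u) + \Pi_t^q(u - \Pi_{x,y}^0 u)$. For the first summand, standard one-dimensional approximation theory in time gives $\|u - \Pi_t^q u\|_{H^r([0,\Delta t))} \lesssim (\Delta t)^{\rho - r}\|\partial_t^\rho u\|_{L^2([0,\Delta t))}$ for $0 \le r \le \rho \le q+1$, applied with values in $H^1(R)$, and this accounts for the $(\Delta t)^{\rho-r}$ term. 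For the second summand, since $\Pi_t^q$ is $L^2$-stable in time, it suffices to estimate $\|u - \Pi_{x,y}^0 u\|$ on the spatial rectangle $R$, where $\Pi_{x,y}^0$ is the $L^2$-orthogonal projection onto constants; the classical Poincaré–Wirtinger inequality on the anisotropic rectangle yields $\|u - \Pi_{x,y}^0 u\|_{L^2(R)} \lesssim h_1 \|u_x\|_{L^2(R)} + h_2\|u_y\|_{L^2(R)}$.

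The second — and technically more delicate — step is to track how the negative-order spatial-and-time norm $\|\cdot\|_{r,s,R,\ast}$, $-1 \le s \le 0$, relates to the $L^2$ bounds just obtained. Here I would use that on an interval (or rectangle) of size $h$ the extension-by-zero map gives $\|v\|_{\widetilde{H}^s} \lesssim h^{-s}\|v\|_{L^2}$ for $s \in [-1,0]$, so that lowering the Sobolev index by $|s|$ costs a factor $\max\{h_1,h_2,\Delta t\}^{-s}$; combining this scaling with Lemma \ref{lemma3.2} to glue the local rectangle estimates and with Lemma \ref{lemma3.3} to handle the tensor structure in the anisotropic norm produces exactly the stated right-hand side \eqref{3.22}. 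For the second assertion, when $u = u_1(t,x)u_2(y)$ is itself a tensor product, I would apply the one-dimensional $H^1$-approximation estimates $\|u_1 - \Pi_{x}^0 u_1\|_{L^2([0,h_1])}\lesssim h_1\|\partial_x u_1\|_{L^2}$ and $\|u_2 - \Pi_y^0 u_2\|_{L^2([0,h_2])}\lesssim h_2\|\partial_y u_2\|_{L^2}$ together with Lemma \ref{lemma3.3} in the $(r,s)$-norm, which converts the $h_i$ factors into $h_i^{1-s}$ via the same $h^{-s}$ scaling of the negative-norm; the time term is unchanged.

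The main obstacle I expect is bookkeeping the interplay between the $\ast$-norms (supported-distribution norms involving extension by zero) and the ordinary restriction norms when passing between the global rectangle $R$ and its decomposition, since the scaling factor $\max\{h_1,h_2,\Delta t\}^{-s}$ must come out uniformly and not degenerate as the aspect ratio $h_1/h_2$ becomes large or small; this is precisely where Lemma \ref{lemma3.3} and the $\ast$-norm inequality in Lemma \ref{lemma3.2} are needed rather than naive Sobolev interpolation. Once the $L^2$-type estimates and the $h^{-s}$ scaling are in place, assembling \eqref{3.22} is routine, and the tensor-product refinement follows the same template with the one-dimensional estimates substituted in.
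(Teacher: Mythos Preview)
The paper does not contain its own proof of this lemma: it is stated in Appendix C as an auxiliary result and cited verbatim as Lemma 10 of \cite{graded}, with no argument given. There is therefore nothing in the present paper to compare your proposal against.

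That said, your outline is essentially the standard route and matches what one finds in \cite{graded}: the tensor splitting $u-p=(u-\Pi_t^q u)+\Pi_t^q(u-\Pi_{x,y}^0 u)$, one-dimensional polynomial approximation in $t$, Poincar\'e--Wirtinger on the anisotropic rectangle for the spatial mean, and then the duality/scaling step $\|v\|_{\widetilde H^s}\lesssim h^{-s}\|v\|_{L^2}$ for $s\in[-1,0]$ to pass to the negative-order norm. One point to be careful about in your write-up is why the scaling factor is $\max\{h_1,h_2,\Delta t\}^{-s}$ rather than just $\max\{h_1,h_2\}^{-s}$: the $\ast$-norm here is the supported-distribution norm on the full space--time cell $[0,\Delta t)\times R$, so the duality argument (testing against $g\in H^{-r}(\mathbb R^+,H^{-s})$ and subtracting a piecewise constant approximation of $g$) picks up the largest of the three side lengths when you approximate the dual function by constants on the cell. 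Your appeal to Lemmas \ref{lemma3.2} and \ref{lemma3.3} is the right mechanism for this, but you should make explicit that the $\Delta t$ enters through the time component of the anisotropic dual norm, not through the spatial scaling alone.
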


\begin{lemma}[{\cite{graded}, Lemma 11}]\label{TobiasTRACE}
Let $Q = [0,h_1]\times [0,h_2],u\in H^3([0,\Delta t)\times Q),p$ the bilinear interpolant of $u$ at the vertices of $Q$. Then there holds {for $r\in \mathbb{R}$}
\begin{align}
	\| u-p\|_{r,0,[0,\Delta t)\times Q} &\lesssim\max\{h_1, \Delta t\}^2\| u_{xx}\|_{r,0,[0,\Delta t)\times Q} + \max\{h_2, \Delta t\}^2\| u_{yy}\|_{r,0,[0,\Delta t)\times Q}\nonumber \\  & \qquad +  ( \max\{h_1, \Delta t\}^2+ \max\{h_2, \Delta t\}^2 )\| u_{tt}\|_{r,0,[0,\Delta t)\times Q} \nonumber \\ & \qquad + \max\{h_1, \Delta t\}^2 \max\{h_2, \Delta t\}\| u_{xxy}\|_{r,0,[0,\Delta t)\times Q}  \label{bilinearinterpolant2D1} \\
	\|(u-p)_x\|_{r,0,[0,\Delta t)\times Q} &\lesssim\max\{h_1, \Delta t\}\| u_{xx}\|_{r,0,[0,\Delta t)\times Q} + \max\{h_1, \Delta t\}\| u_{xt}\|_{r,0,[0,\Delta t)\times Q} \nonumber \\ &\qquad+ \max\{h_2, \Delta t\}^2\| u_{xyy}\|_{r,0,[0,\Delta t)\times Q}  \label{bilinearinterpolant2D2}
\end{align}

\end{lemma}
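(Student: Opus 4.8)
The plan is to reduce both \eqref{bilinearinterpolant2D1} and \eqref{bilinearinterpolant2D2} to the classical Bramble--Hilbert / tensor-product interpolation estimates on the reference cube, tracked carefully in all three variables $x,y,t$ and with the weighted time norm $\|\cdot\|_{r,0,\cdot}$ treated by Fourier transform in $t$. First I would recall that $\|\cdot\|_{r,0,[0,\Delta t)\times Q}$ is, up to the $\sigma$-weight, the norm $\left(\int_{\R+i\sigma}|\omega|^{2r}\|\widehat{u}(\omega)\|_{L^2(Q)}^2\,d\omega\right)^{1/2}$, so that applying $|\omega|^r\mathcal{F}_{t\to\omega}$ commutes with the spatial bilinear interpolation operator $p = \Pi_{x,y}^{1,1}u$ at the four vertices of $Q$. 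Hence it suffices to prove the pointwise-in-$\omega$ estimate
$$\||\omega|^r\widehat{u-p}(\omega)\|_{L^2(Q)} \lesssim h_1^2\||\omega|^r\widehat{u_{xx}}\|_{L^2(Q)} + \cdots,$$
and integrate in $\omega$; but since the time derivatives $u_{tt}$ correspond to factors $\omega^2$, the same reduction also lets us capture the $\max\{h_i,\Delta t\}$ terms if I instead work on the space-time slab and use the standard trick of treating $t$ as a third tensor factor with mesh width $\Delta t$ (this is exactly the device used in \cite{graded}, Lemma 10 and its proof, which I would cite rather than reproduce).

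The core step is a three-fold tensor-product decomposition of the interpolation error. Writing $\Pi_1,\Pi_2$ for the one-dimensional linear nodal interpolants in $x$ and $y$ on $[0,h_1],[0,h_2]$, one has $p = \Pi_1\Pi_2 u$ on each time level, and the telescoping identity
$$u - \Pi_1\Pi_2 u = (I-\Pi_1)u + \Pi_1(I-\Pi_2)u.$$
On $[0,\Delta t)\times Q$ I would further insert a time interpolant/projection to produce the $\max\{h_i,\Delta t\}$ combinations; the bound $\|(I-\Pi_1)v\|_{L^2}\lesssim h_1^2\|v_{xx}\|_{L^2}$ (Bramble--Hilbert in one variable, $L^2$-stability of $\Pi_1$ in the other) applied termwise gives the $h_1^2\|u_{xx}\|$ and $h_2^2\|u_{yy}\|$ contributions; the mixed term $\Pi_1(I-\Pi_2)u$ requires estimating $\|(I-\Pi_2)u\|$ in an $H^1_x$-type norm, which is where the mixed third derivative $u_{xxy}$ enters via $\|(I-\Pi_2)w\|_{H^1_x}\lesssim h_2^{?}$ with the highest-order term $h_1^2 h_2\|u_{xxy}\|$; the $u_{tt}$ terms come from the analogous insertion of a time projection and the stability estimates across the slab. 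For \eqref{bilinearinterpolant2D2} the same decomposition is differentiated once in $x$: $\partial_x(u-\Pi_1\Pi_2 u) = \partial_x(I-\Pi_1)u + \Pi_1'(I-\Pi_2)u$, and the one-dimensional estimates $\|\partial_x(I-\Pi_1)v\|_{L^2}\lesssim h_1\|v_{xx}\|_{L^2}$ together with $L^2$-stability of $\Pi_1'$ applied to $(I-\Pi_2)u$, estimated in turn by $h_2^2\|u_{xyy}\|$, yield the three stated terms, with the $h_1\|u_{xt}\|$ term again produced by the time-projection step.

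The main obstacle I anticipate is purely bookkeeping rather than conceptual: getting the \emph{sharp} powers of $h_1,h_2,\Delta t$ on the mixed derivative terms (for instance distinguishing $h_1^2 h_2\|u_{xxy}\|$ from a cruder $h_1^2\|u_{xxy}\|$) requires being careful about which variable each one-dimensional operator acts in and in which norm the remaining factor is measured, and in particular using the anisotropic scaling of the reference-element estimates rather than an isotropic diameter $h$. Since this lemma is quoted verbatim from \cite{graded}, the cleanest route is to state that the estimates \eqref{bilinearinterpolant2D1}, \eqref{bilinearinterpolant2D2} are established there (Lemma 11) by exactly the tensor-product argument sketched above, combined with the Fourier characterization of $\|\cdot\|_{r,0,\cdot}$, and to include here only the reduction to the one-dimensional Bramble--Hilbert estimates and the telescoping identity, deferring the routine scaling computations to the reference.
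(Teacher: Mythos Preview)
The paper does not prove this lemma: it is stated in Appendix~C as an auxiliary result quoted verbatim from \cite{graded}, Lemma~11, with no argument given beyond the citation. Your proposal is therefore already aligned with the paper's treatment, and indeed you arrive at the same conclusion yourself---that the cleanest route is to defer to \cite{graded}. The tensor-product/Bramble--Hilbert sketch you give is a reasonable outline of how such a result is established, but it goes beyond what the paper does here; for the purposes of matching the paper, a bare citation suffices.
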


\end{appendices}

\section*{Acknowledgements}

This research was supported through the ``Oberwolfach Research Fellows'' program in 2020.





\begin{thebibliography}{99}

\vspace{-.17cm}


\bibitem{Aimi1997} A. Aimi , M. Diligenti, G. Monegato, \emph{New numerical integration schemes for applications of Galerkin BEM to 2‐D problems}, International Journal for Numerical Methods in Engineering  40 (1997), 1977-1999.

\vspace{-.17cm}


\bibitem{energetic} A.~Aimi, M.~Diligenti, C.~Guardasoni, I.~Mazzieri, S.~Panizzi, \emph{An energy approach to space-time Galerkin BEM for wave propagation problems}, International Journal for Numerical Methods in Engineering  80 (2009), 1196-1240.

\vspace{-.17cm}



\bibitem{Aimi20}

{A.~Aimi, G.~Di Credico, M.~Diligenti, C.~Guardasoni, \emph{Highly accurate quadrature schemes for singular integrals in
energetic BEM applied to elastodynamics}, Journal of
Computational and Applied Mathematics, Journal of Computational and Applied Mathematics 410 (2022), 114186.}

\vspace{-.17cm}


\bibitem{antes} H.~Antes, \emph{A boundary element procedure for transient wave propagations in two-dimensional isotropic elastic media}, Finite Elements in Analysis and Design 1 (1985), 313-322.

\vspace{-.17cm}


\bibitem{bh} A. Bamberger, T. Ha Duong,  \emph{Formulation variationnelle espace-temps pour le calcul par potentiel retarde d'une onde acoustique}, Math. Meth. Appl. Sci. 8 (1986), 405-435 and 598-608.

\vspace{-.17cm}


\bibitem{bs} A.~E.~Beagles, A.-M.~S\"{a}ndig, \emph{Singularities of Rotationally Symmetric Solutions of Boundary Value Problems
for the Lam\'{e} Equations}, Z. angew. Math. Mech. 71 (1991), 423-431.

\vspace{-.17cm}


\bibitem{Becache1993} E. B\'{e}cache, \emph{A variational boundary integral equation method for an elastodynamic
antiplane crack}, International Journal for Numerical Methods in Engineering 
36 (1993), 969-984.
\vspace{-.17cm}


\bibitem{Becache1994} E. B\'{e}cache, T. Ha Duong, \emph{ A space-time variational formulation for the
boundary integral equation in a 2d elastic crack problem}, ESAIM: Mathematical
Modelling and Numerical Analysis 28 (1994), 141-176.
\vspace{-.17cm}


\bibitem{besp} A. Bespalov, \emph{The hp version of the BEM with quasi-uniform meshes for a three-dimensional
crack problem: the case of a smooth crack having smooth boundary curve}, Numer. Methods Partial
Differential Eq. 24 (2008), 1159-1180.

\vspace{-.17cm}


\bibitem{heuer01} A. Bespalov, N. Heuer, \emph{The p version of the boundary element method for a 
three-dimensional crack problem}, J. Integral Eq. Appl. 17 (2005), 243-258.

\vspace{-.17cm}


\bibitem{heuer2} A. Bespalov, N. Heuer, \emph{The p version of the boundary element method for hypersingular operators on piecewise plane open surfaces}, Numer. Math. 100 (2005), 185-209.


\vspace*{-.17cm}

\bibitem{chud} I. Yu. Chudinovich, \emph{The boundary equation method in the third initial-boundary value problem of the theory of elasticity. I. Existence theorems}, Math. Methods Appl. Sci. 16 (1993), 203-215. 

\vspace{-.17cm}


\bibitem{dcy} M. Costabel, M. Dauge, Z. Yosibash, \emph{A quasidual function method for extracting edge stress intensity functions}, SIAM Journal on Mathematical Analysis 35 (2004), 1177-1202.

\vspace{-.17cm}


\bibitem{costabel} M. Costabel, F.-J. Sayas, \emph{Time-dependent problems with the boundary integral equation method}, Encyclopedia of Computational Mechanics, Second Edition (ed. E.~Stein, R.~de Borst, T.~J.~R.~Hughes), John Wiley \& Sons, Hoboken, NJ, 2017.

\vspace{-.17cm}


\bibitem{dauge} M. Dauge, \emph{ Elliptic boundary value problems in corner domains}, Lecture Notes in Mathematics 1341, Springer-Verlag, 1988.

\vspace{-.17cm}

\bibitem{Dicredico2022} {G.~Di Credico, \emph{Energetic Boundary Element Method
for 2D Elastodynamics Problems in Time Domain}, Ph.D. thesis, University of Modena and Reggio Emilia (2022), available at http://hdl.handle.net/11380/1265215. }

\vspace{-.17cm}


\bibitem{hsiao1} V.~Dominguez, F.-J.~Sayas, T.~Sanchez-Vizuet, \emph{A fully discrete Calder\'{o}n calculus for the two-dimensional elastic wave equation}, Computers and Mathematics with Applications 69 (2015), 620-635.

\vspace{-.17cm}

\bibitem{Eringen1975} {A.~C.~Eringen, E.~S.~Suhubi, \emph{Elastodynamics}, Academic Press, New York, (1975).}

\vspace{-.17cm}

  \bibitem{setup}
H. Gimperlein, Z. Nezhi, E. P. Stephan,
\emph{A priori error estimates for a time-dependent boundary element method for the acoustic wave equation in a half-space}, Mathematical Methods in the Applied Sciences 40 (2017), 448-462.

\vspace{-.17cm}

\bibitem{review} H. Gimperlein, M. Maischak, E. P. Stephan, \emph{Adaptive time domain boundary element methods and engineering applications}, Journal of Integral Equations and Applications 29 (2017), 75--105.

\vspace{-.17cm}


\bibitem{graded} H. Gimperlein, F. Meyer, C.  \"{O}zdemir, D. Stark, E. P. Stephan, \emph{Boundary elements with mesh refinements for the wave equation}, Numerische Mathematik 139 (2018), 867-912.


\vspace{-.17cm}


\bibitem{contact} H. Gimperlein, F. Meyer, C. \"{O}zdemir, E. P. Stephan, \emph{Time domain boundary elements for dynamic contact problems}, Computer Methods in Applied Mechanics and Engineering 333 (2018), 147-175.



\vspace{-.17cm}


\bibitem{hp} H. Gimperlein, C.  \"{O}zdemir, D. Stark, E. P. Stephan, \emph{hp version time domain boundary elements for the wave equation on quasi-uniform meshes}, Computer Methods in Applied Mechanics and Engineering 356 (2019), 145-174.

\vspace{-.17cm}


\bibitem{adaptive} H.~Gimperlein, C.~\"{O}zdemir, D.~Stark, E.~P.~Stephan,
\emph{A residual a posteriori estimate for the time-domain boundary element method}, Numerische Mathematik 146 (2020), 239-280.

\vspace{-.17cm}


\bibitem{grisvard87} P.~Grisvard, \emph{Edge behavior of the solution of an elliptic problem}, Math.~Nachr.~132 (1987), 281-299.

\vspace{-.17cm}


\bibitem{grisvardcr}  P.~Grisvard, \emph{Le probleme de Dirichlet pour les equations de Lame}, C. R. Acad. Sci. Paris Ser. I Math. 304 (1987), 71-73.

\vspace{-.17cm}


\bibitem{grisvard89}  P.~Grisvard, \emph{Singularit\'{e}s en elasticit\'{e}}, Arch.~Rational Mech.~Anal.~107 (1989), 157-180.

\vspace{-.17cm}


\bibitem{gwinsteph} J. Gwinner, E. P. Stephan, \emph{Advanced Boundary Element Methods -- Treatment of Boundary Value, Transmission and Contact Problems},  Springer Series in Computational Mathematics, vol.~52, 2018.

\vspace{-.17cm}


\bibitem{hd} T.~Ha Duong, \emph{On retarded potential boundary integral equations and their discretizations}, in: Topics in computational wave propagation, pp.~301-336, Lect.~Notes Comput.~Sci.~Eng., 31, Springer, Berlin, 2003.

\vspace{-.17cm}


\bibitem{hsiao2} G. Hsiao, T.~Sanchez-Vizuet, \emph{Time-domain boundary integral methods in linear thermoelasticity},
SIAM Journal of Mathematical Analysis 52 (2020), 2463-2490.

\vspace{-.17cm}


\bibitem{jr} P.~Joly, J.~Rodriguez, \emph{Mathematical aspects of variational boundary integral equations for time dependent wave propagation}, J. Integral Equations Appl. 29 (2017), 137-187.

\vspace{-.17cm}


\bibitem{schanz2} B.~Kager, M.~Schanz, \emph{Fast and data sparse time domain BEM for elastodynamics}, 
Eng. Anal. Bound. Elem. 50 (2015), 212-223.

\vspace{-.17cm}



 \bibitem{kokotov}
 A. Y. Kokotov, P. Neittaanm{\"a}ki, B. A. Plamenevski\v{\i}, 
 \emph{The Neumann problem for the wave equation in a cone},  J. Math. Sci. 102 (2000), 4400-4428.

\vspace{-.17cm}



\bibitem{kprussian}
 A. Y. Kokotov, B. A. Plamenevski\v{\i}, 
{\emph{On the Cauchy-Dirichlet problem for hyperbolic systems in a wedge},  St. Petersburg Math. J. 11 (2000), 497-534.}

\vspace{-.17cm}



  \bibitem{kokotov2}
 A. Y. Kokotov, B. A. Plamenevski\v{\i}, 
 \emph{On the asymptotic behavior of solutions of the {N}eumann problem for hyperbolic systems in domains with conical points},  St. Petersburg Math. J. 16 (2005), 477-506.

\vspace{-.17cm}



 \bibitem{kokotov3}
 A. Y. Kokotov, P. Neittaanm{\"a}ki, B. A. Plamenevski\v{\i}, 
 \emph{Diffraction on a cone: The asymptotics of solutions near the vertex},  J. Math. Sci. 109 (2002), 1894-1910.

\vspace{-.17cm}



\bibitem{kondratiev}
  V. A. Kondratiev,
 \emph{Boundary value problems for elliptic equations in domains with conical or angular points}, Trans. Moscow Math. Soc. 16 (1967), 227-313.

\vspace{-.17cm}



 \bibitem{kozlov}
 V. Kozlov, J. Rossmann
 \emph{On the nonstationary Stokes system in a cone}, J. Differential Equations 260 (2016), 8277-8315.

\vspace{-.17cm}



\bibitem{ms} M. Maischak, E. P. Stephan, \emph{The hp-version of the Boundary Element Method
for the Lam\'{e} Equation in 3D.} Boundary Element Analysis, Lect. Notes
Appl. Comput. Mech. vol. 29 (Springer, Berlin, 2007), pp.~97-112.

\vspace{-.17cm}



\bibitem{matyu}
S. I. Matyukevich, B. A. Plamenevski\v{\i}, {\emph{Elastodynamics in domains with edges},
 St. Petersburg Math. J. 18 (2007), 459-510.}

\vspace{-.17cm}



\bibitem{Monegato1999} G. Monegato, and L. Scuderi, \emph{Numerical integration of functions with boundary singularities}, Journal of Computational and Applied Mathematics 112 (1999), 201-214.

\vspace{-.17cm}


 \bibitem{mueller}
 F. M\"{u}ller, C. Schwab,
 \emph{Finite Elements with mesh refinement for wave equations in polygons}, J. Comp. Appl. Math. 283 (2015), 163-181.

\vspace{-.17cm}


 \bibitem{mueller2}
F. M\"{u}ller, C. Schwab,
 \emph{Finite elements with mesh refinement for elastic wave propagation in polygons}, Math. Methods Appl. Sci. 39 (2016), 5027-5042. 

\vspace{-.17cm}


\bibitem{nazarov} S. Nazarov, B. A. Plamenevski\v{\i}, \emph{Elliptic Problems in Domains with Piecewise Smooth Boundaries}
De Gruyter Expositions in Mathematics, vol.~13, De Gruyter, 1994. 

\vspace{-.17cm}


\bibitem{oy} N. Omer, Z. Yosibash, \emph{Singular asymptotic expansion of the
elastic solution along an edge around
which material properties depend on
the angular coordinate}, Mathematics and Mechanics of Solids
22 (2017),  2288-2308.

\vspace{-.17cm}


\bibitem{disspetersdorff}
T. von Petersdorff, \emph{Randwertprobleme der Elastizit\"{a}tstheorie f\"{u}r Polyeder-Singularit\"{a}ten und Approximation mit Randelementmethoden}, Ph.D. thesis, Technische Universit\"{a}t Darmstadt (1989).

\vspace{-.17cm}


\bibitem{petersdorff}
T. von Petersdorff, E. P. Stephan, \emph{Regularity of mixed boundary value problems in $\mathbb{R}^3$ and boundary element methods on graded meshes},  Math. Methods Appl. Sci. 12 (1990), 229-249.

\vspace{-.17cm}


\bibitem{petersdorff2}
T. von Petersdorff, E. P. Stephan, \emph{Decompositions in edge and corner singularities for the solution of the Dirichlet problem of the Laplacian in a polyhedron},  Math. Nachr. 149 (1990), 71-103. 

\vspace{-.17cm}


\bibitem{petersdorff3}
T. von Petersdorff, E. P. Stephan, \emph{Singularities of the solution of the Laplacian in domains with circular edges}, Appl. Analysis 45 (1992), 281-294.

\vspace{-.17cm}


\bibitem{plamenevskii} B. A. Plamenevski\v{\i},  \emph{On the Dirichlet problem for the wave equation in a cylinder with edges},
  Algebra i Analiz 10 (1998), 197-228.

\vspace{-.17cm}


\bibitem{sayas}
F.-J. Sayas, \emph{Retarded Potentials and Time Domain Boundary Integral Equations: A Road Map}, Springer Series in Computational Mathematics 50 (2016).

\vspace{-.17cm}


\bibitem{schanz} M.~Schanz, W.~Ye, J.~Xiao, \emph{Comparison of the convolution quadrature method and enhanced inverse FFT with application in elastodynamic boundary element method}, Comput. Mech. 57 (2016), 523-536.

\vspace{-.17cm}



\bibitem{schwab} C. Schwab, $p$- and $hp$- finite element methods: theory and applications in solid and fluid mechanics, Oxford University Press, 1998.




\end{thebibliography}

\end{document}